\tikzstyle{node} = [circle, minimum size = 1.2mm, inner sep = 0mm, color=black, fill]
\tikzstyle{hyperedge} = [rectangle, minimum width = 6mm, minimum height = 6mm, draw, inner sep = 0mm, color = black]
\tikzstyle{HG} = [align = center]
\newcommand{\downsquigarrow}{\mathbin{\rotatebox[origin=c]{-90}{$\rightsquigarrow$}}}
\begin{document}
	\title{Hypergraph Lambek Calculus}
	\author{Tikhon Pshenitsyn \orcidID{0000-0003-4779-3143}\thanks{The study was funded by RFBR, project number 20-01-00670.}\\ tpshenitsyn@lpcs.math.msu.su}
	\authorrunning{T. Pshenitsyn}
	\institute{Department of Mathematical Logic and Theory of Algorithms\\Faculty of Mathematics and Mechanics\\Lomonosov Moscow State University\\GSP-1, Leninskie Gory, Moscow, 119991, Russian Federation}
	\maketitle
	
\begin{abstract}
	It is known that context-free grammars can be extended to generating graphs resulting in graph grammars; one of such fundamental approaches is hyperedge replacement grammars. On the other hand there are type-logical grammars which also serve to describe string languages. In this paper, we investigate how to extend the Lambek calculus ($\mathrm{L}$) and grammars based on it to graphs. The resulting approach is called hypergraph Lambek calculus ($\mathrm{HL}$). It is a logical sequential calculus whose sequents are graphs; it naturally extends the Lambek calculus and also allows one to embed its variants (commutative $\mathrm{L}$, $\mathrm{NL\diamondsuit}$, $\mathrm{L}_{\mathbf{1}}^\ast$). Besides, many properties of the Lambek calculus (cut elimination, counters, models) can be lifted to $\mathrm{HL}$. However, while Lambek grammars are equivalent to context-free grammars in the string case, hypergraph Lambek grammars are much more powerful than hyperedge replacement grammars. Particularly, the former can generate the language of all graphs without isolated nodes; the language of all bipartite graphs; finite intersections of languages generated by hyperedge replacement grammars. Nevertheless, the derivability problem in $\mathrm{HL}$ and the membership problem for grammars based on $\mathrm{HL}$ are NP-complete as well as the membership problem for hyperedge replacement grammars.
\end{abstract}
	\tableofcontents
	\newpage
\section{Introduction}\label{sec_intr}
	The Lambek calculus ($\mathrm{L}$) was firstly introduced in \cite{Lambek} by Joachim Lambek. It is a sequential calculus which appears to be useful in describing natural languages. In the standard variant of the Lambek calculus types are built using two divisions and product; the calculus itself includes one axiom and six inference rules.
	
	The Lambek calculus ($\mathrm{L}$) is the basis of Lambek grammars, which describe string languages in a categorial way. Namely, a grammar contains a correspondence between symbols of an alphabet and types of $\mathrm{L}$. A string is generated by the grammar if a sequent composed of types corresponding to symbols of the string can be proved in $\mathrm{L}$. Such grammars are called type-logical since they operate with types based on a logical calculus. They are opposed to context-free grammars, which generate strings using productions and not by means of a logical system.
	
	Since 1958 until nowadays the Lambek calculus has been significantly improved, its different extensions have been presented in a number of works. For example, $\mathrm{L}$ with conjuction and disjunction is considered, see \cite{Kanasawa}; $\mathrm{L}$ with modalities is presented in the work of Michael Moortgat \cite{Moortgat}; $\mathrm{L}$ with the permutation rule $\mathrm{LP}$ is studied, etc. Many fundamental properties of the Lambek calculus and of its extensions have been discovered. Mati Pentus proved that the derivability problem in $\mathrm{L}$ is NP-complete (see \cite{Pentus_complexity}); L-models and R-models were introduced, completeness was established by Pentus in \cite{Pentus_models}. Regarding Lambek grammars, Pentus proved \cite{Pentus_cfg} that the class of languages generated by Lambek grammars equals the class of context-free languages.
	
	The second field of research that should be mentioned in this work is the theory of graph grammars. Generalizing context-free grammars (CFGs), graph grammars produce graph languages using rewriting rules. An overview of graph grammars is given in the handbook \cite{Rosenberg}; a wide variety of mechanisms generating graphs is presented there. We focus on a particular approach called \emph{hyperedge replacement grammar} (HRG in short) introduced by Feder \cite{Feder} and Pavlidis \cite{Pavlidis} since it is very close to context-free grammars in terms of definitions and structural properties. Hyperedge replacement grammars derive hypergraphs by means of productions: a production allows one to replace an edge of a hypergraph with another hypergraph. Hyperedge replacement grammars (HRGs) have a number of properties in common with CFGs such as the pumping lemma, the fixed-point theorem, the Parikh theorem, the Greibach normal form etc. An overview of hyperedge replacement grammars can be found in \cite{Drewes}.
	
	Being impressed by many similarities between HRGs and CFGs, we were curious whether it is possible to generalize type-logical grammars to hypergraphs. We started with basic categorial grammars and introduced hypergraph basic categorial grammars (see \cite{Pshenitsyn}); we showed their duality with HRGs and also a number of similarities with basic categorial grammars. Our goal now is to do the same with the Lambek calculus. We wish to construct a generalization of the Lambek calculus that has types, sequents, axioms, inference rules; but now hypergraphs instead of strings are to be involved in this mechanism. We wish to preserve fundamental features of the Lambek calculus, e.g. the cut elimination, L- and R-models as well as to extend Lambek grammars in such a way that there will be duality between them and hyperedge replacement grammars.
	
\section{Preliminaries: string formalisms}
$\mathbb{N}$ includes $0$. The set $\Sigma^*$ is the set of all strings over the alphabet $\Sigma$ including the empty string $\Lambda$. The length $|w|$ of the word $w$ is the number of symbols in $w$. $\Sigma^+$ denotes the set of all nonempty strings. The set $\Sigma^\circledast$ is the set of all strings consisting of distinct symbols. The set of all symbols contained in the word $w$ is denoted by $[w]$. If $f:\Sigma\to\Delta$ is a function from one set to another, then it is naturally extended to a function $f:\Sigma^*\to\Delta^*$ ($f(\sigma_1\dots\sigma_k)=f(\sigma_1)\dots f(\sigma_k)$).

We start with a very brief introduction of well-known context-free string grammars.

\begin{definition}
	A \emph{context-free grammar} is a tuple $\langle N, \Sigma, P, S\rangle$, where $N$ is a finite alphabet of nonterminal symbols, $\Sigma$ is a finite alphabet of terminal symbols ($N\cap\Sigma=\emptyset$), $P$ is a set of productions, and $S\in N$. Each production is of the form $A\to \alpha$ where $A\in N$ and $\alpha\in(N\cup \Sigma)^\ast$.
\end{definition}
The language generated by a context-free grammar is the set of all strings that can be obtained from $S$ by applying productions from $P$. We say that two grammars are equivalent if they generate the same language.
\begin{example}
	Let $N=\{S\},\Sigma=\{a,b\}$ and let $P$ contain two productions:
	$$
	S\to aSb, S\to ab
	$$
	Then the language generated by this grammar is $\{a^nb^n|n\ge 1\}$. E.g. $S\Rightarrow aSb\Rightarrow aaSbb\Rightarrow aaabbb$ justifies that $aaabbb$ belongs to this language.
\end{example}
This approach has a number of extensions, in particular, to graphs; one of such generalizations will be shown later. One of important features of all context-free systems is, as their name says, independence of the context: each production is applied to a nonterminal symbol without regard to its enviroment.
\subsection{Lambek calculus}\label{sec_lambek}
In this section, we provide basic definitions and examples regarding the Lambek calculus; concepts behind these definitions form the basis for the idea of the hypergraph Lambek calculus.

Let us fix a countable set $Pr=\{p_i\}_{i=1}^\infty$ of \emph{primitive types}. 
\begin{definition}
	The set $Tp(\mathrm{L})$ of types in the Lambek calculus is the least set such that:
	\begin{itemize}
		\item $Pr\subseteq Tp(\mathrm{L})$;
		\item If $A,B\in Tp(\mathrm{L})$ are types, then $(B\backslash A), (A/B), (A\cdot B)$ are also types, i.e. belong to $Tp(\mathrm{L})$ (brackets are often omitted).
	\end{itemize}
\end{definition}
\begin{definition}
	A sequent is of the form $T_1,\dots,T_n\to T$ where $T_i, T$ are types ($n>0$). $T_1,\dots,T_n$ is called an antecedent, and $T$ is called a succedent.
\end{definition}
When talking about the Lambek calculus small letters $p, q, \dots$ and strings composed of them (e.g. $np, cp$) range over primitive types. Capital letters $A,B,\dots$ range over types. Capital Greek letters $\Gamma,\Delta,\dots$ range over finite (possibly empty) sequences of types. Sequents thus can be represented as $\Gamma\to A$, where $\Gamma$ is nonempty. 

The Lambek calculus $\mathrm{L}$ is a logical system with one axiom and six inference rules:

$$\infer[]{p\to p}{}
$$
$$
\infer[(\backslash\to)]{\Gamma, \Pi, A \backslash B, \Delta \to C}{\Pi \to A & \Gamma, B, \Delta \to C}
\qquad
\infer[(\to\backslash)]{\Pi \to A \backslash B}{A, \Pi \to B}
\qquad
\infer[(\cdot\to)]{\Gamma, A \cdot B, \Delta \to C}{\Gamma, A, B, \Delta \to C}
$$
$$
\infer[(/\to)]{\Gamma, B / A, \Pi, \Delta \to C}{\Pi \to A & \Gamma, B, \Delta \to C}
\qquad
\infer[(\to/)]{\Pi \to B / A}{\Pi, A \to B}
\qquad
\infer[(\to\cdot)]{\Gamma, \Delta \to A \cdot B}{\Gamma \to A & \Delta \to B}
$$
A sequent $\Gamma\to A$ is derivable ($\mathrm{L}\vdash \Gamma\to A$) iff it can be obtained from axioms applying rules. A corresponding sequence of rule applications is called a derivation.
\begin{example}
	This is a derivation in $\mathrm{L}$:
	$$
	\infer[(\to\backslash)]{q\backslash r\to (p\cdot q)\backslash (p\cdot r)}{
	\infer[(\cdot\to)]{p\cdot q,q\backslash r\to (p\cdot r)}{
		\infer[(\backslash\to)]{p, q,q\backslash r\to (p\cdot r)}{
			\infer[]{q\to q}{} & \infer[(\to\cdot)]{p, r\to (p\cdot r)}{
				\infer[]{p\to p}{} & \infer[]{r\to r}{}
			}	
		}
	}
	}
	$$
\end{example}
The Lambek calculus itself is of interest as a calculus: questions of structural properties, of derivability problem complexity, of models arise. Besides, it forms a basis for a class of type-logical grammars:
\begin{definition}
	A Lambek grammar is a tuple $G=\langle \Sigma, S, \triangleright\rangle$ where $\Sigma$ is a finite set (alphabet), $S\in Tp_L$ is a distinguished type, and $\triangleright\subseteq\Sigma\times Tp(\mathrm{L})$ is a finite binary relation, i.e. it assigns a finite number of types to each symbol in the aplhabet. The language $L(G)$ generated by $G$ is the set of all nonempty strings $a_1\dots a_n$ over $\Sigma$ for which there are types $T_1,\dots,T_n$ such that $a_i\triangleright T_i$, and $\mathrm{L}\vdash T_1,\dots,T_n\to S$.
\end{definition}
\begin{example}\label{ex_lan_anbn}
	Consider the following Lambek grammar $\langle\{a,b\}, s, \triangleright\rangle$:
	\begin{enumerate}
		\item $s\in Pr$;
		\item The relation $\triangleright$ is as follows: $a\triangleright s/p$, $b\triangleright p$, $b\triangleright s\backslash p$.
	\end{enumerate}
	This grammar generates the language $\{a^nb^n|n\ge 1\}$. E.g. the string $aaabbb$ corresponds to the following derivable sequence of types: $s/p,s/p,s/p,p,s\backslash p,s\backslash p\to s$.
\end{example}
The most famous result regarding Lambek grammars is the fact that classes of languages generated by Lambek grammars and by context-free grammars without the empty word coincide. This nontrivial result shows that these two approaches are in some sense equivalent. We will discuss this result later.

From the practical point of view, Lambek grammars can serve to describe natural languages; for instance, the sentence {\it Tim thinks Helen is smart} corresponds to the derivable sequent $np,  (np\backslash s)/s, np, (np\backslash S)/adj,adj\to s$. However, it is known that Lambek grammars generate only context-free languages while natural language phenomena include non-context-free ones.

\section{Preliminaries: hyperedge replacement grammars}\label{sec_prelim}
Now we turn to graph grammars. They are developed as an extension of context-free grammars that is used to produce graphs and hypergraphs nstead of just strings. This task is of interest since graph structures are widely used in programming and in linguistics, and one expects that certain graph languages can be described in the same way as string languages. We focus on a particular approach called hyperedge replacement grammar since it is very close to context-free grammars in sense of definitions. Below all required definitions are introduced. They are taken from \cite{Drewes}. Note that we use a slightly different notation from that in \cite{Drewes}.
\subsection{Hypergraphs, Sub-hypergraphs}\label{hyp_def}
Let $C$ be some fixed set of labels for whom the function $type: C\to \mathbb{N}$ is considered. 
\begin{definition}\label{def_hypergraph}
	\emph{A hypergraph $G$ over $C$} is a tuple $G=\langle V, E, att, lab, ext \rangle$ where $V$ is the set of \emph{nodes}, $E$ is the set of \emph{hyperedges}, $att: E\to V^\circledast$ assigns a string (i.e. an ordered set) of \emph{attachment} nodes to each edge, $lab: E \to C$ labels each edge by some element of $C$ in such a way that $type(lab(e))=|att(e)|$ whenever $e\in E$, and $ext\in V^\circledast$ is a string of \emph{external} nodes. 
	
	Components of a hypergraph $G$ are denoted by $V_G, E_G, att_G, lab_G, ext_G$ resp.
\end{definition}
According to this definition in hypergraphs labels are put on hyperedges; a particular label appears on hyperedges with the same number of attachment nodes.

In the remainder of the paper, hypergraphs are simply called graphs, and hyperedges are simply called edges. Usual graphs (with type 2 hyperedges) are called 2-graphs. The set of all hypergraphs with labels from $C$ is denoted by $\mathcal{H}(C)$. Graphs are usually named by letters $G$ and $H$. 

In drawings of graphs black dots correspond to nodes, labeled squares correspond to edges, $att$ is represented with numbered lines, and external nodes are depicted by numbers in brackets. If an edge has exactly two attachment nodes, it can be denoted by an arrow (which goes from the first attachment node to the second one). 
\begin{definition}\label{type}
	The function $type$ (or $type_G$ to be exact) returns the number of nodes attached to some edge in a graph $G$: $type_G(e):=|att_G(e)|$.
	If $G$ is a graph, then $type(G):=|ext_G|$.
\end{definition}
\begin{example}
	The following picture represents a graph $G$:
	\begin{center}
		{\tikz[baseline=.1ex]{
				\node[] (R) {};
				\node[node,above right=2mm and 0mm of R,label=above:{\scriptsize $(1)$}] (N1) {};
				\node[node,below=5.5mm of N1] (N2) {};
				\node[hyperedge,right=5.5mm of N1] (E) {$s$};
				\node[node,below=3.7mm of E] (N3) {};
				\node[node,right=5.5mm of E,label=above:{\scriptsize $(3)$}] (N4) {};
				\node[node,below=5.5mm of N4,label=below:{\scriptsize $(2)$}] (N5) {};
				\node[hyperedge,right=5.5mm of N4] (E2) {$q$};

				\draw[>=stealth,->,black] (N1) -- node[left] {\small $p$} (N2);
				\draw[>=stealth,->,black] (N2) -- node[below] {\small $p$} (N3);
				\draw[-,black] (N3) -- node[right] {\scriptsize 1} (E);
				\draw[-,black] (N1) -- node[above] {\scriptsize 2} (E);
				\draw[-,black] (N4) -- node[above] {\scriptsize 3} (E);
		}}
	\end{center}
	Here $type(p)=2, type(q)=0, type(s)=3$; $type(G)=3$.
\end{example}
\begin{definition}
	A sub-hypergraph (or just subgraph) $H$ of a graph $G$ is a hypergraph such that $V_H\subseteq V_G$, $E_H\subseteq E_G$, and for all $e\in E_H$ $att_H(e)=att_G(e)$, $lab_H(e)=lab_G(e)$.
\end{definition}
\begin{definition}
	If $H=\langle \{v_i\}_{i=1}^n,\{e_0\},att,lab,v_1\dots v_n\rangle$, $att(e_0)=v_1\dots v_n$ and $lab(e_0)=a$, then $H$ is called \emph{a handle}. It is denoted by $\circledcirc(a)$.
\end{definition}
\begin{definition}
	\emph{An isomorphism} between graphs $G$ and $H$ is a pair of bijective functions $\mathcal{E}: E_G\to E_H$, $\mathcal{V}: V_G\to V_H$ such that $att_H\circ\mathcal{E}=\mathcal{V}\circ att_G$, $lab_G=lab_H\circ\mathcal{E}$, $\mathcal{V}(ext_G)=ext_H$. 
\end{definition}
In this work, we do not distinguish between isomorphic graphs.

Strings can be considered as graphs with the string structure. This is formalized in
\begin{definition}\label{def_str_gr}
	A string graph induced by a string $w=a_1\dots a_n, n>0$ is a graph of the form $\langle \{v_i\}_{i=0}^n,\{e_i\}_{i=1}^n,att,lab,v_0v_n \rangle$ where $att(e_i)=v_{i-1}v_i$, $lab(e_i)=a_i$. It is denoted by $w^\bullet$.
\end{definition}

We additionally introduce the following definitions and notations:
\begin{definition}
	Let $H\in \mathcal{H}(C)$ be a graph, and let $f:E_H\to C$ be a relabeling function. Then $f(H)=\langle V_H, E_H, att_H, lab_{f(H)}, ext_H\rangle$ where $lab_{f(H)}(e)=f(e)$ for all $e$ in $E_H$. It is required that $type(lab_H(e))=type(f(e))$ for $e\in E_H$.
\end{definition}
If one wants to relabel only one edge $e_0$ within $H$ with a label $a$, then the result can be denoted by $H[e_0:=a]$
\subsection{Operations on Graphs}
In graph formalisms certain graph transformation are in use. To generalize the Lambek calculus we present the following operation called compression.
\subsubsection{Compression.}\label{com}
Let $G$ be a graph, and let $H$ be a subgraph of $G$. Compression of $H$ into an $a$-labeled edge within $G$ is a procedure of transformation of $G$, which can be done under the following conditions:
\begin{enumerate}[label=(\alph*)]
	\item\label{com_retain_att} For each $v\in V_H$, if $v$ is attached to some edge $e\in E_G\setminus E_H$ (i.e. $v\in [att(e)]$), then $v$ has to be external in $H$ ($v\in [ext_H]$).
	\item\label{com_retain_ext} If $v\in V_H$ is external in $G$, then it is external in $H$ ($[ext_G]\cap V_H \subseteq [ext_H]$).
	\item $type(H)=type(a)$.
\end{enumerate}
Then the procedure is the following:
\begin{enumerate}
	\item Remove all nodes of $V_H$ except for those of $ext_H$ from $V_G$;
	\item Remove $E_H$ from $E_G$;
	\item Add a new edge $\widetilde{e}$;
	\item Set $att(\widetilde{e})=ext_H$, $lab(\widetilde{e})=a$.
\end{enumerate}
Let $G\llbracket a/H\rrbracket$ (or $G\llbracket a,\widetilde{e}/H\rrbracket$) denote the resulting graph.

Formally, $G\llbracket a/H\rrbracket=$ $\langle V^\prime, E^\prime, att^\prime, lab^\prime, ext_G\rangle$, where $V^\prime=V_G\setminus (V_H\setminus ext_H)$, $E^\prime=\{\widetilde{e}\}\cup (E_G\setminus E_H)$, $att^\prime(e)=att_G(e), lab^\prime(e)=lab_G(e)$ for $e\not=\widetilde{e}$, and $att^\prime(\widetilde{e})=ext_H$, $lab^\prime(\widetilde{e})=a$.
\subsubsection{Replacement.}\label{sec_repl}
This procedure is defined in \cite{Drewes} and it plays a fundamental role in hyperedge replacement grammars. The replacement of an edge $e_0$ in $G$ with a graph $H$ can be done if $type(e_0)=type(H)$ as follows:
\begin{enumerate}
	\item Remove $e_0$;
	\item Insert an isomorphic copy of $H$ (namely, $H$ and $G$ have to consist of disjoint sets of nodes and edges);
	\item For each $i$, fuse the $i$-th external node of $H$ with the $i$-th attachement node of $e_0$.
\end{enumerate}
To be more precise, the set of edges in the resulting graph is $(E_G\setminus\{e_0\})\cup E_H$, and the set of nodes is $V_G\cup (V_H\setminus ext_H)$. The result is denoted by $G[H/e_0]$.

It is known that if several edges of a graph are replaced by another graphs, then the result does not depend on order of replacements; moreover the result is not changed if replacements are done simultaneously. In \cite{Drewes} this is called sequentialization and parallelization. The following notation is in use: if $e_1,\dots,e_k$ are distinct edges of a graph $H$ and they are simultaneously replaced by graphs $H_1,\dots,H_k$ resp. (this means that $type(H_i)=type(e_i)$), then the result is denoted $H[H_1/e_1,\dots,H_k/e_k]$.

Note that compression and replacement are opposite to each other. It is stated in
\begin{proposition}\label{opposite_com_rep}
	Compression and replacement are opposite: 
	\begin{enumerate}
		\item $G\llbracket a,e/H\rrbracket[H/e]\equiv G$ (for a subgraph $H$ of $G$ satisfying conditions \ref{com_retain_att} and \ref{com_retain_ext}; $a$ is an arbitrary label);
		\item $G[H/e]\llbracket a,e/H\rrbracket\equiv G$ (provided $e: type(e)=type(H)$ and $a=lab_G(e)$).
	\end{enumerate}
\end{proposition}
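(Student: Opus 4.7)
The plan is to unfold the definitions of compression and replacement in terms of their effect on the tuple components $(V, E, att, lab, ext)$, and verify directly that the two operations are mutually inverse, after suitably matching up the ``isomorphic copy'' that is built into replacement.

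For part 1, I set $G' := G\llbracket a, e/H\rrbracket$ and read off $V_{G'} = V_G \setminus (V_H \setminus [ext_H])$, $E_{G'} = (E_G \setminus E_H) \cup \{e\}$, $att_{G'}(e) = ext_H$, $lab_{G'}(e) = a$, and $ext_{G'} = ext_G$; conditions (a) and (b) are what guarantee that the interior nodes of $H$ thus removed carry no attachment to outside edges and are not external in $G$. When applying the replacement $G'[H/e]$, I choose the required isomorphic copy of $H$ to be $H$ itself with identity fusion on $ext_H$. This choice is legitimate: $V_{G'}$ is disjoint from $V_H \setminus [ext_H]$ by construction, $E_{G'} \setminus \{e\} = E_G \setminus E_H$ is disjoint from $E_H$, and the $i$-th external node of $H$ equals the $i$-th attachment node of $e$ in $G'$ by definition of compression. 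The replacement formulas then recover $V_G$, $E_G$, and all attachments and labels, because $H$ is a subgraph of $G$.

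For part 2, let $G' := G[H/e]$ and denote by $H'$ the embedded copy of $H$ inside $G'$, whose interior nodes are fresh and whose externals have been fused with $att_G(e)$. I first check that $H'$ meets the three compression conditions inside $G'$: condition (c) reduces to the hypothesis $type(H) = type(e) = type(a)$; for (a), an edge in $E_{G'} \setminus E_{H'} = E_G \setminus \{e\}$ attaches only to nodes of $V_G$, and $V_{H'} \cap V_G$ is precisely the image of $ext_H$ under the fusion, namely $[ext_{H'}]$; for (b), $ext_{G'} = ext_G$, so the same argument applies. The compression step then rebuilds $V_G$ and $(E_G \setminus \{e\}) \cup \{\widetilde{e}\}$ with $att(\widetilde{e}) = ext_{H'} = att_G(e)$ and $lab(\widetilde{e}) = a = lab_G(e)$, so the bijection renaming $\widetilde{e}$ back to $e$ and acting as the identity on all other nodes and edges is an isomorphism to $G$.

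The only subtle point, and the main thing to get right, is that replacement is defined only up to choice of an isomorphic copy of $H$; so in each direction one must pick the representative at the right step so that what compression removes is literally what replacement inserts (and vice versa). Conditions \ref{com_retain_att} and \ref{com_retain_ext}, together with the fusion convention, are exactly what make this matching possible, which is why they appear as hypotheses.
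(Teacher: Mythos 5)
Your proof is correct. The paper states Proposition \ref{opposite_com_rep} without giving any proof, treating it as immediate from the definitions of compression and replacement; your component-by-component unfolding of the tuples $(V,E,att,lab,ext)$, together with the explicit check of conditions \ref{com_retain_att} and \ref{com_retain_ext} in part 2 and the careful handling of the ``isomorphic copy up to $\equiv$'' issue in the replacement step, is exactly the verification the authors leave implicit.
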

Notation used for compression and replacement also reflects that they are opposite.

\subsection{Hyperedge Replacement Grammars}
\begin{definition}
	A \emph{hyperedge replacement grammar (HRG)} is a tuple $HGr=\langle N, \Sigma, P, S\rangle$, where $N$ is a finite alphabet of nonterminal symbols, $\Sigma$ is a finite alphabet of terminal symbols ($N\cap\Sigma=\emptyset$), $P$ is a set of productions, and $S\in N$. Each production is of the form $A\to H$ where $A\in N$, $H\in\mathcal{H}(N\cup \Sigma)$ and $type(A)=type(H)$.
\end{definition}

In contrast to graphs, particular graph grammars are denoted by letters combinations like $HGr$.

Edges labeled by terminal (nonterminal) symbols are called \emph{terminal (nonterminal) edges}. 

One observes that this definition is very close to the definition of context-free grammars: a production replaces a nonterminal symbol by a graph labeled by terminal and nonterminal symbols. The only difference is that we additionally control types of involved objects.

If $G$ is a graph, $e_0\in E_G$, $lab(e_0)=A$ and $A\to H\in P$, then $G$ directly derives $G[H/e_0]$ (denote $G\Rightarrow G[H/e_0]$). The transitive reflexive closure of $\Rightarrow$ is denoted by $\overset{\ast}{\Rightarrow}$. If $G \overset{\ast}{\Rightarrow} H$, then $G$ is said to derive $H$. The corresponding sequence of production applications is called a derivation. We write $G \overset{k}{\Rightarrow} H$ if $G$ derives $H$ in $k$ steps.
\begin{definition}
	The \emph{language generated by an HRG $\langle N, \Sigma, P, S\rangle$} is the set of graphs $H\in \mathcal{H}(\Sigma)$ such that $\circledcirc(S)\overset{\ast}{\Rightarrow} H$. A language generated by an HRG is also called a (hyper)graph context-free language (denote HCFL).\\
	Two grammars are said to be \emph{equivalent} iff they generate the same language.
\end{definition}
Extending properties of context-free grammars one obtains in particular the following results for HRGs: the context-freeness lemma, the pumping lemma, the Parikh theorem. They can be found in \cite{Drewes}. This shows that HRGs are closely related to context-free grammars; proofs of the aforementioned results directly generalize corresponding ones for strings.

\section{Hypergraph Lambek Calculus}
As we emphasized above, HRGs naturally generalize context-free grammars. On the other hand, there is the Lambek calculus whose grammars are equivalent to context-free grammars while it works in a competely different way than context-free grammars do. Then a natural question arises: is it possible to generalize the Lambek calculus to graphs in a natural way? Analogously, we expect that such a generalization would preserve fundamental features of the Lambek calculus, e.g. the cut elimination, the subformula property, existence of partial semigroup models (including L- and R- models). We would also like to define hypergraph Lambek grammars, and we expect that they have to be equivalent to HRGs. Looking ahead, the last expectation was proved wrong.

In this section we introduce the hypergraph Lambek calculus: we define types, sequents, axioms and rules of this formalism. Our goal is to introduce logic on graphs. This means \emph{literally} a logic on graphs: while such calculi as the Lambek calculus, the first-order predicate calculus, the propositional calculus deal with objects of string nature, we desire a new formalism to work with objects of graph nature. Thus sequents are supposed to be graphs rather than strings and types are assumed to label edges of graphs. Definitions presented below meet these requirements.
\subsection{Types and sequents}
We fix a countable set $Pr$ of primitive types and a function $type:Pr\to\mathbb{N}$ such that for each $n\in\mathbb{N}$ there are infinitely many $p\in Pr$ for which $type(p)=n$. Types are constructed from primitive types using division and multiplication. Simultaneously, we define the function $type$ on types (sorry for the tautology): it is obligatory since we are going to label edges by types. 

Let us fix some symbol $\$$ that is not included in all the sets considered. {\bf NB!} This symbol is allowed to label edges with different number of attachment nodes. To be consistent with Definition \ref{def_hypergraph} one can assume that there are countably many symbols $\$_n,n\ge 0$ such that $type(\$_n)=n$.
\begin{definition}
The set $Tp(\mathrm{HL})$ of types is defined inductively as the least set satisfying the following conditions:
	\begin{enumerate}
		\item $Pr\subseteq Tp(\mathrm{HL})$.
		\item Let $N$ (``numerator'') be in $Tp(\mathrm{HL})$. Let $D$ (``denominator'') be a graph such that exactly one of its edges (call it $e_0$) is labeled by $\$$, and other edges (possibly, there are none of them) are labeled by elements of $Tp(\mathrm{HL})$; let also $type(N)=type(D)$. Then $T=\div(N/D)$ also belongs to $Tp(\mathrm{HL})$, and $type(T):=type_D(e_0)$.
		\item Let $M$ be a graph such that all its edges are labeled by types from $Tp(\mathrm{HL})$ (possibly, there are no edges). Then $T=\times(M)$ is a type as well (i.e. it belongs to $Tp(\mathrm{HL})$), and $type(T)=type(M)$.
	\end{enumerate}
\end{definition}
For a type $A$ we can define the set of its subtypes in a natural way considering $A$ as a term.
\begin{example}\label{ex_types}
	The following structures are types:
	\begin{itemize}
		\item $A_1=\div\left(q\middle/\mbox{
			{\tikz[baseline=1ex]{
					\node[hyperedge] (E1) {$s$};
					\node[node,right=5mm of E1] (N1) {};
					\node[hyperedge,right=5mm of N1] (E2) {$\$$};
					\node[node,right=5mm of E2] (N2) {};
					\node[hyperedge,right=5mm of N2] (E3) {$r$};
					\draw[-,black] (E1) -- node[above] {\small 1} (N1);
					\draw[-,black] (N1) -- node[above] {\small 1} (E2);
					\draw[-,black] (E2) -- node[above] {\small 2} (N2);
					\draw[-,black] (N2) -- node[above] {\small 1} (E3);
			}}
		}\right)$;
		\item $A_2=\div\left(t\middle/\mbox{
			{\tikz[baseline=1ex]{
					\node[node,label=above:{\small $(1)$}] (N1) {};
					\node[hyperedge, right=5mm of N1] (E1) {$r$};
					\node[node, right=5mm of E1,label=above:{\small $(2)$}] (N2) {};
					\node[hyperedge,right=5mm of N2] (E2) {$\$$};
					\node[node,right=5mm of E2] (N3) {};
					\node[hyperedge,right=5mm of N3] (E3) {$s$};
					\draw[-,black] (N1) -- node[above] {\small 1} (E1);
					\draw[-,black] (N2) -- node[above] {\small 2} (E2);
					\draw[-,black] (E2) -- node[above] {\small 1} (N3);
					\draw[-,black] (N3) -- node[above] {\small 1} (E3);
			}}
		}\right)$;
		\item $A_3=\div\left(q\middle/\mbox{
			{\tikz[baseline=1ex]{
					\node[node] (N1) {};
					\node[hyperedge, right=5mm of N1] (E1) {$\$$};
					\node[node, above right= 4mm and 9mm of E1] (N2) {};
					\node[node,below right=4mm and 9mm of E1] (N3) {};
					\node[hyperedge,below=4mm of N2] (E2) {$t$};
					\draw[-,black] (N1) -- node[above] {\small 3} (E1);
					\draw[-,black] (E1) -- node[above] {\small 1} (N2);
					\draw[-,black] (E1) -- node[below] {\small 2} (N3);
					\draw[-,black] (N2) -- node[right] {\small 2} (E2);
					\draw[-,black] (N3) -- node[right] {\small 1} (E2);
			}}
		}\right)$;
	\item $A_4=\times\left(\mbox{
		{\tikz[baseline=.1ex]{
				\node[hyperedge] (E1) {$A_1$};
				\node[node,above right=5mm and 4mm of E1,label=below:{\small $(1)$}] (N1) {};
				\node[node,below right=5mm and 4mm of E1,label=above:{\small $(2)$}] (N2) {};
				\node[hyperedge,below right=5mm and 4mm of N1] (E2) {$p$};
				\draw[-,black] (E1) to[bend left] node[left] {\small 1} (N1);
				\draw[-,black] (E1) to[bend right] node[left] {\small 2} (N2);
				\draw[-,black] (E2) to[bend right] node[right] {\small 1} (N1);
				\draw[-,black] (E2) to[bend left] node[right] {\small 2} (N2);
		}}
	}\right)$.
	\end{itemize}
	Here $type(p)=2, type(q)=0, type(r)=type(s)=1, type(t)=2$; $type(A_1)=type(A_2)=2,type(A_3)=3,type(A_4)=2$. Note that the denominator of $A_2$ is not connected --- this is allowed.
\end{example}
\begin{example}\label{ex_subtypes}
	$A_4$ from the previous example has 6 subtypes: $A_4,p,A_1,q,s,r$.
\end{example}
Sequents in the graph case are defined similarly to sequents in the string case with the difference that antecedents are graphs instead of strings.

\begin{definition}
	\emph{A graph sequent} is a structure of the form $H\to A$, where $A\in Tp(\mathrm{HL})$ is a type, $H\in\mathcal{H}(Tp(\mathrm{HL}))$ is a graph labeled by types and $type(H)=type(A)$. $H$ is called the antecedent of the sequent, and $A$ is called the succedent of the sequent.
\end{definition}
Let $\mathcal{T}$ be a subset of $Tp(\mathrm{HL})$. We say that $H\to A$ is over $\mathcal{T}$ if $G\in\mathcal{H}(\mathcal{T})$ and $A\in\mathcal{T}$.
\begin{example}\label{ex_sequent}
	The following structure is a sequent:
	\\
	$$\mbox{	
		{\tikz[baseline=.1ex]{
				\node[hyperedge] (E1) {$A_2$};
				\node[node,above right=3mm and 5mm of E1,label=above:{\small $(1)$}] (N1) {};
				\node[node,below right=3mm and 5mm of E1] (N2) {};
				\node[hyperedge,right=5mm of N1] (E2) {$p$};
				\node[hyperedge,right=5mm of N2] (E3) {$A_3$};
				\node[node,above right=3mm and 5mm of E3,label=right:{\small $(2)$}] (N3) {};
				\node[node,right=5mm of E3] (N4) {};
				\draw[-,black] (E1) -- node[above] {\small 1} (N1);
				\draw[-,black] (E1) -- node[below] {\small 2} (N2);
				\draw[-,black] (N1) -- node[above] {\small 1} (E2);
				\draw[-,black] (E2) -- node[above] {\small 2} (N3);
				\draw[-,black] (N2) -- node[below] {\small 1} (E3);
				\draw[-,black] (E3) -- node[above] {\small 2} (N3);
				\draw[-,black] (E3) -- node[below] {\small 3} (N4);
	}}}\quad\to\quad\times\left(\mbox{
		{\tikz[baseline=.1ex]{
				\node[hyperedge] (E1) {$A_1$};
				\node[node,above right=5mm and 4mm of E1,label=below:{\small $(1)$}] (N1) {};
				\node[node,below right=5mm and 4mm of E1,label=above:{\small $(2)$}] (N2) {};
				\node[hyperedge,below right=5mm and 4mm of N1] (E2) {$p$};
				\draw[-,black] (E1) to[bend left] node[left] {\small 1} (N1);
				\draw[-,black] (E1) to[bend right] node[left] {\small 2} (N2);
				\draw[-,black] (E2) to[bend right] node[right] {\small 1} (N1);
				\draw[-,black] (E2) to[bend left] node[right] {\small 2} (N2);
		}}
	}\right)$$
	\\
	Here $A_1,A_2,A_3$ are from Example \ref{ex_types}.
\end{example}
\subsection{Axiom and rules}\label{sec_axioms_rules}
The hypergraph Lambek calculus (denoted $\mathrm{HL}$) we introduce here is a logical system that defines what graph sequents are derivable (=provable) in sense of axioms and rules. $\mathrm{HL}$ includes one axiom and four rules. They are introduced below.
\subsubsection{Axiom.} $\circledcirc(p)\to p,\quad p\in Pr$.
\subsubsection{Rule $(\div\to)$.}\label{subsec_div_to}
Let $\div(N/D)$ be a type and let $E_D=\{d_0,d_1,\dots,d_k\}$ where $lab(d_0)=\$$. Assume that $D$ is a concrete graph such that its nodes and edges are distinct from those in other involved graphs. Let $H\to A$ be a graph sequent and let $e\in E_H$ be labeled by $N$. Let finally $H_1,\dots,H_k$ be graphs labeled by types. Then the rule $(\div\to)$ is the following:
$$
\infer[(\div\to)]{H[D/e][d_0:=\div(N/D)][H_1/d_1,\dots,H_k/d_k]\to A}{H\to A & H_1\to lab(d_1) &\dots & H_k\to lab(d_k)}
$$
This rule is technically the hardest one; it explains how a type with division can appear in an antecedent. It can be also considered from bottom to top: there is a type $\div(N/D)$ in an antecedent; it is ``overlaid'' on subgraphs $H_1,\dots,H_k$; then some kind of reduction of $D$ and of $H_1,\dots,H_k$ happens, and the whole sequent splits into $(k+1)$ new ones. 
\subsubsection{Rule $(\to\div)$.}
Let $H$ be a graph, and let $F$ be its subgraph; let $A$ be a type. The rule is of the form
$$
\infer[(\to\div)]{F\to \div(A/H\llbracket \$/F\rrbracket)}{H\to A}
$$
This means that if one obtains a sequent $H\to N$, then he can compress some its subgraph into a single \$-labeled edge and to put this new graph in the denominator of a succedent; $F$ then becomes an antecedent.
\subsubsection{Rule $(\times\to)$.}
Let $H$ be a graph, and let $F$ be its subgraph; let $A$ be a type.
$$
\infer[(\times\to)]{H\llbracket \times(F)/F\rrbracket\to A}{H\to A}
$$
That is, a subgraph $F$ in the antecedent can be compressed into a single $\times(F)$-labeled edge.
\subsubsection{Rule $(\to\times)$.}
Let $\times(M)$ be a type and let $E_M=\{m_1,\dots,m_l\}$. Assume that $M$ is a concrete graph such that its nodes and edges are distinct from those in other involved graphs. Let $H_1,\dots,H_l$ be graphs. Then
$$
\infer[(\to\times)]{M[H_1/m_1,\dots,H_l/m_l]\to\times(M)}{H_1\to lab(m_1) & \dots & H_l\to lab(m_l)}
$$
This rule is quite intuitive: several sequents can be combined into a single one via some graph structure $M$.
\begin{definition}
	A graph sequent $H\to A$ is derivable in $\mathrm{HL}$ ($\mathrm{HL}\vdash H\to A$) if it can be obtained from axioms using rules of $\mathrm{HL}$. A corresponding sequence of rule applications is called a derivation and its graphical representation is called a derivation tree.
\end{definition}
\begin{remark}
	If a graph $M$ in the rule $(\to\times)$ does not have edges, then there are zero premises in this rule ($l=0$); hence formally $M\to\times(M)$ is derivable, and this sequent can be considered as an axiom (though this looks strange).
\end{remark}
\subsection{Examples}
We proceed with some examples that illustrate how these rules work.
\begin{example}\label{ex_simple_der}
Firstly, we provide four simple examples of rule applications with $T_1,\dots,T_5$ being some types and $A_1$ being from Example \ref{ex_types}.
\\
	\begin{center}
	\begin{tikzpicture}
		\node[node] (N1) {};
		\node[hyperedge,above=4mm of N1] (E1) {$T_1$};
		\node[hyperedge,right=4mm of N1] (E2) {$T_2$};
		\node[node,right=4mm of E2] (N2) {};
		\node[hyperedge,right=4mm of N2] (E3) {$A_1$};
		\node[node,right=8mm of E3] (N3) {};
		\node[hyperedge,above left=4.2mm and 3.5mm of N3] (E4) {$T_3$};
		\node[hyperedge,above=4mm of N3] (E5) {$T_4$};
		\node[above right=1mm and 1.5mm of N3] (To) {$\to T_5$};
		
		\draw[-,black] (N1) -- node[left] {\small 1} (E1);
		\draw[-,black] (N1) -- node[below] {\small 1} (E2);
		\draw[-,black] (E2) -- node[below] {\small 2} (N2);
		\draw[-,black] (N2) -- node[below] {\small 1} (E3);
		\draw[-,black] (E3) -- node[below] {\small 2} (N3);
		\draw[-,black] (N3) -- node[left] {\small 1} (E4);
		\draw[-,black] (N3) -- node[right] {\small 1} (E5);

		\node[hyperedge,above=9mm of E1] (E31) {$q$};
		\node[right=0.5mm of E31] (To4) {$\to T_5$};
		
		\node[right=18.5mm of N1] (R){};
		\node[node, above=18mm of R] (N21) {};
		\node[hyperedge,above=4mm of N21] (E21) {$T_1$};
		\node[hyperedge,right=4mm of N21] (E22) {$T_2$};
		\node[node,above=4mm of E22,label=above:{\small $(1)$}] (N22) {};
		\node[above right=-2mm and 0mm of E22] (To2) {$\to s$};
		
		\draw[-,black] (N21) -- node[left] {\small 1} (E21);
		\draw[-,black] (N21) -- node[below] {\small 1} (E22);
		\draw[-,black] (E22) -- node[right] {\small 2} (N22);

		\node[node,right=19mm of E22,label=below:{\small $(1)$}] (N23) {};
		\node[hyperedge,above left=4mm and 0mm of N23] (E24) {$T_3$};
		\node[hyperedge,above right=4mm and 0mm of N23] (E25) {$T_4$};
		\node[above right=0.5mm and 6mm of N23] (To3) {$\to r$};
		
		\draw[-,black] (N23) -- node[left] {\small 1} (E24);
		\draw[-,black] (N23) -- node[right] {\small 1} (E25);
		
		\node[above left = 11mm and 3mm of N1] (l1) {};
		\node[above right = 11mm and 30mm of N3] (l2) {};
		\draw[-,black] (l1) -- (l2);
		\node[right=2mm of l2] {$(\div\to)$};
	\end{tikzpicture}
\vspace{5mm}
\\
	\begin{tikzpicture}
		\node[node] (N1) {};
		\node[hyperedge,above right=3mm and 1.5mm of N1] (E1) {$T_1$};
		\node[node,right=3.5mm of E1] (N2) {};
		\node[hyperedge,right=3.5mm of N2] (E2) {$T_2$};
		\node[node,below right=3mm and 1.5mm of E2,label=below:{\small $(1)$}] (N3) {};
		\node[above right=1mm and 1mm of N3] (To) {$\to\div$};
		\node[right=-2.5mm of To] (Left) {\huge (};
		\node[right=-2mm of Left] (T3) {$T_3$};
		\node[right=-3mm of T3] (Div) {\huge /};
		\node[right=20mm of Left] (Right) {\huge )};
		
		\draw[-,black] (N1) -- node[left] {\small 1} (E1);
		\draw[-,black] (E1) -- node[above] {\small 2} (N2);
		\draw[-,black] (N2) -- node[above] {\small 2} (E2);
		\draw[-,black] (E2) -- node[right] {\small 1} (N3);
		
		\node[hyperedge,below right=-5mm and -2mm of Div] (E21) {$\$$};
		\node[node,above right=3mm and 1mm of E21] (N21) {};
		\node[hyperedge,below right=3mm and 1mm of N21] (E22) {$T_4$};
		
		\draw[-,black] (E21) -- node[left] {\small 1} (N21);
		\draw[-,black] (N21) -- node[right] {\small 1} (E22);
		
		\node[above left = 10mm and 1mm of N1] (l1) {};
		\node[above right = 10mm and 37mm of N3] (l2) {};
		\draw[-,black] (l1) -- (l2);
		
		\node[node, above=12mm of N1] (N21) {};
		\node[hyperedge,above right=2mm and 4mm of N21] (E21) {$T_1$};
		\node[node,above right=2mm and 5mm of E21] (N22) {};
		\node[hyperedge,below right=2mm and 5mm  of N22] (E22) {$T_2$};
		\node[node,below right=2mm and 5mm of E22] (N23) {};
		\node[hyperedge,above right=2mm and 5mm of N23] (E23) {$T_4$};
		
		\node[right=1mm of E23] (To2) {$\to T_3$};
		
		\draw[-,black] (N21) -- node[above] {\small 1} (E21);
		\draw[-,black] (E21) -- node[above] {\small 2} (N22);
		\draw[-,black] (N22) -- node[above] {\small 2} (E22);
		\draw[-,black] (E22) -- node[above] {\small 1} (N23);
		\draw[-,black] (N23) -- node[above] {\small 1} (E23);
		
		\node[right=2mm of l2] {$(\to\div)$};
	\end{tikzpicture}
	\vspace{5mm}
	\\
		\begin{tikzpicture}
	\node[node] (N1) {};
	\node[hyperedge,above=4mm of N1] (E1) {$T_1$};
	\node[rectangle, minimum width = 32.3mm, minimum height = 14mm, draw, inner sep = 0mm, color = black,right=4mm of N1] (E2) {};
	\node[node,right=4mm of E2,label=right:{\small $(1)$}] (N2) {};
	\node[hyperedge,above=4mm of N2] (E3) {$T_4$};
	\node[node,above left=4mm and 1mm of E3] (N3) {};
	\node[node,above right=4mm and 1mm of E3] (N4) {};
	\node[above right=1mm and 6mm of N2] (To) {$\to T_5$};
	
	\node[right=4mm of N1] (Times) {$\times$};
	\node[right=-3mm of Times] (Left) {\Huge $($};
	\node[right=22mm of Times] (Right) {\Huge $)$};
	
	\node[hyperedge,above right=-6mm and -1mm of Left] (T2) {$T_3$};
	\node[node,below=3.2mm of T2,label=right:{\small $(1)$}](V2) {};
	
	\draw[-,black] (T2) -- node[left] {\small 1} (V2);
	
	\node[hyperedge,right=6mm of T2] (T3) {$T_2$};
	\node[node,below=3.2mm of T3,label=right:{\small $(2)$}](V3) {};
	
	\draw[-,black] (T3) -- node[left] {\small 1} (V3);

	\draw[-,black] (N1) -- node[left] {\small 1} (E1);
	\draw[-,black] (N1) -- node[above] {\small 1} (E2);
	\draw[-,black] (E2) -- node[above] {\small 2} (N2);
	\draw[-,black] (N2) -- node[right] {\small 3} (E3);
	\draw[-,black] (E3) -- node[right] {\small 1} (N3);
	\draw[-,black] (E3) -- node[right] {\small 2} (N4);
	
	\node[hyperedge,above=22mm of N1] (E21) {$T_1$};
	\node[node,right=4mm of E21] (N21) {};
	\node[hyperedge,right=4mm of N21] (E22) {$T_3$};
	\node[hyperedge,right=4mm of E22] (E23) {$T_2$};
	\node[node,right=4mm of E23,label=below:{\small $(1)$}] (N22) {};
	\node[hyperedge,right=4mm of N22] (E24) {$T_4$};
	\node[node,above right=1mm and 3mm of E24] (N23) {};
	\node[node,below right=1mm and 3mm of E24] (N24) {};
	\node[right=5mm of E24] (To2) {$\to T_5$};

	\draw[-,black] (E21) -- node[above] {\small 1} (N21);
	\draw[-,black] (N21) -- node[above] {\small 1} (E22);
	\draw[-,black] (E23) -- node[above] {\small 1} (N22);
	\draw[-,black] (N22) -- node[above] {\small 3} (E24);
	\draw[-,black] (E24) -- node[above] {\small 1} (N23);
	\draw[-,black] (E24) -- node[below] {\small 2} (N24);

	\node[above left = 17mm and 2.5mm of N1] (l1) {};
	\node[above right = 17mm and 17mm of N2] (l2) {};
	\draw[-,black] (l1) -- (l2);
	\node[right=2mm of l2] {$(\times\to)$};
	\end{tikzpicture}
	\vspace{5mm}
	\\
	\begin{tikzpicture}
		\node[node,label=below:{\small $(1)$}] (N1) {};
		\node[hyperedge,right=8mm of N1] (E1) {$T_1$};
		\node[node,right=8mm of E1,label=below:{\small $(2)$}] (N2) {};
		\node[node,above=1.7mm of E1] (N3) {};
		\node[hyperedge,above left=-1.5mm and 4mm of N3] (E2) {$T_2$};
		\node[hyperedge,above right=-1.5mm and 4mm of N3] (E3) {$T_3$};
		\node[above right=1mm and 1mm of N2] (To) {$\to\times$};
		\node[right=-2.5mm of To] (Left) {\Huge (};
		
		\draw[-,black] (N1) -- node[left] {\small 1} (E2);
		\draw[-,black] (E2) -- node[above] {\small 2} (N3);
		\draw[-,black] (N3) -- node[above] {\small 1} (E3);
		\draw[-,black] (E3) -- node[right] {\small 2} (N2);
		\draw[-,black] (N1) -- node[below] {\small 2} (E1);
		\draw[-,black] (E1) -- node[below] {\small 1} (N2);
		
		\node[node,right=12.5mm of N2, label=below:{\small $(1)$}] (N21) {};
		\node[hyperedge,right=4mm of N21] (E21) {$T_5$};
		\node[node,right=4mm of E21,label=below:{\small $(2)$}] (N22) {};
		\node[hyperedge,above=1.5mm of E21] (E22) {$T_4$};
		\node[right=16.5mm of Left] (Right) {\Huge )};
		
		\draw[-,black] (N21) -- node[above] {\small 1} (E21);
		\draw[-,black] (E21) -- node[above] {\small 2} (N22);
		\draw[-,black] (N21) to[bend left=25] node[above] {\small 1} (E22);
		\draw[-,black] (E22) to[bend left=25] node[above] {\small 2} (N22);
		
		\node[above left = 10mm and 1mm of N1] (l1) {};
		\node[node,above right = 5mm and 1mm of l1,label=below:{\small $(1)$}] (N31) {};
		\node[hyperedge,right=4mm of N31] (E31) {$T_1$};
		\node[node,right=4mm of E31,label=below:{\small $(2)$}] (N32) {};
		
		\draw[-,black] (N31) -- node[above] {\small 2} (E31);
		\draw[-,black] (E31) -- node[above] {\small 1} (N32);
		\node[right=0.5mm of N32] (To2) {$\to T_5$};
		
		\node[node,below right = 0.5mm and 6mm of To2,label=above:{\small $(1)$}] (N41) {};
		
		\node[hyperedge,above right=5mm and 2mm of N41] (E41) {$T_2$};
		\node[node,below right=5mm and 2mm of E41] (N42) {};
		\node[hyperedge,above right=5mm and 2mm of N42] (E42) {$T_3$};
		\node[node,below right=5mm and 2mm of E42,label=above:{\small $(2)$}] (N43) {};
		
		\draw[-,black] (N41) -- node[right] {\small 1} (E41);
		\draw[-,black] (E41) -- node[left] {\small 2} (N42);
		\draw[-,black] (N42) -- node[right] {\small 1} (E42);
		\draw[-,black] (E42) -- node[left] {\small 2} (N43);
		\node[above right=2.5mm and 0.5mm of N43] (To3) {$\to T_4$};
		
		\node[above right = 10mm and 13mm of N22] (l2) {};
		\draw[-,black] (l1) -- (l2);
		\node[right=2mm of l2] {$(\to\times)$};
	\end{tikzpicture}
\end{center}
\end{example}
\begin{example}\label{ex_derivation}
	The sequent from Example \ref{ex_sequent} is derivable in $\mathrm{HL}$; here is its derivation:
	$$
	\infer[(\to\times)]
	{
		\mbox{	
			{\tikz[baseline=.1ex]{
					\node[hyperedge] (E1) {$A_2$};
					\node[node,above right=3mm and 8mm of E1,label=above:{\small $(1)$}] (N1) {};
					\node[node,below right=3mm and 8mm of E1] (N2) {};
					\node[hyperedge,right=of N1] (E2) {$p$};
					\node[hyperedge,right=of N2] (E3) {$A_3$};
					\node[node,above right=3mm and 8mm of E3,label=right:{\small $(2)$}] (N3) {};
					\node[node,right=8mm of E3] (N4) {};
					\draw[-,black] (E1) -- node[above] {\small 1} (N1);
					\draw[-,black] (E1) -- node[below] {\small 2} (N2);
					\draw[-,black] (N1) -- node[above] {\small 1} (E2);
					\draw[-,black] (E2) -- node[above] {\small 2} (N3);
					\draw[-,black] (N2) -- node[below] {\small 1} (E3);
					\draw[-,black] (E3) -- node[above] {\small 2} (N3);
					\draw[-,black] (E3) -- node[below] {\small 3} (N4);
		}}}\quad\to\;\times\left(\mbox{
			{\tikz[baseline=.1ex]{
					\node[hyperedge] (E1) {$A_1$};
					\node[node,above right=5mm and 4mm of E1,label=below:{\small $(1)$}] (N1) {};
					\node[node,below right=5mm and 4mm of E1,label=above:{\small $(2)$}] (N2) {};
					\node[hyperedge,below right=5mm and 4mm of N1] (E2) {$p$};
					\draw[-,black] (E1) to[bend left] node[left] {\small 1} (N1);
					\draw[-,black] (E1) to[bend right] node[left] {\small 2} (N2);
					\draw[-,black] (E2) to[bend right] node[right] {\small 1} (N1);
					\draw[-,black] (E2) to[bend left] node[right] {\small 2} (N2);
			}}
		}\right)
	}{
		\infer[(\to\div)]
		{
			\mbox{	
				{\tikz[baseline=.1ex]{
						\node[hyperedge] (E1) {$A_2$};
						\node[node,above=5mm of E1,label=right:{\small $(1)$}] (N1) {};
						\node[node,below =5mm of E1] (N2) {};
						\node[hyperedge,right=5mm of N2] (E3) {$A_3$};
						\node[node,above=5mm of E3,label=right:{\small $(2)$}] (N3) {};
						\node[node,right=5mm of E3] (N4) {};
						\draw[-,black] (E1) -- node[left] {\small 1} (N1);
						\draw[-,black] (E1) -- node[left] {\small 2} (N2);
						\draw[-,black] (N2) -- node[below] {\small 1} (E3);
						\draw[-,black] (E3) -- node[left] {\small 2} (N3);
						\draw[-,black] (E3) -- node[below] {\small 3} (N4);
			}}}\quad\to\; \div\left(q\middle/\mbox{
				{\tikz[baseline=1ex]{
						\node[hyperedge] (E1) {$s$};
						\node[node,above=5mm of E1] (N1) {};
						\node[hyperedge,right=5mm of N1] (E2) {$\$$};
						\node[node,right=5mm of E2] (N2) {};
						\node[hyperedge,below=5mm of N2] (E3) {$r$};
						\draw[-,black] (E1) -- node[left] {\small 1} (N1);
						\draw[-,black] (N1) -- node[above] {\small 1} (E2);
						\draw[-,black] (E2) -- node[above] {\small 2} (N2);
						\draw[-,black] (N2) -- node[right] {\small 1} (E3);
				}}
			}\right)
		}
		{
			\infer[(\div\to)]{
				\mbox{	
					{\tikz[baseline=.1ex]{
							\node[node] (N2) {};
							\node[hyperedge,left=4mm and 4mm  of N2] (E2) {$A_3$};
							\node[hyperedge,right=4mm and 4mm of N2] (E3) {$A_2$};
							\node[node,above right=4mm and 4mm of E2] (N3) {};
							\node[node,left=4mm of E2] (N1) {};
							\node[hyperedge,left=4mm of N1] (E1) {$r$};
							\node[node,above right=4mm of E3] (N4) {};
							\node[hyperedge,below right=4mm of N4] (E4) {$s$};
							\node[right=1.5mm of E4] (Q) {$\;\to\; q$};
							\draw[-,black] (E1) -- node[above] {\small 1} (N1);
							\draw[-,black] (N1) -- node[above] {\small 2} (E2);
							\draw[-,black] (E2) -- node[above] {\small 3} (N3);
							\draw[-,black] (E2) -- node[above] {\small 1} (N2);
							\draw[-,black] (N2) -- node[above]{\small 2} (E3);
							\draw[-,black] (E3) -- node[above] {\small 1} (N4);
							\draw[-,black] (N4) -- node[above] {\small 1} (E4);
				}}}
			}
			{
				\infer{\circledcirc(r)\to r}{}
				&
				\infer{\circledcirc(s)\to s}{}
				&
				\quad\infer[(\div\to)]
				{
					\mbox{	
						{\tikz[baseline=.1ex]{
								\node[hyperedge] (E1) {$t$};
								\node[node,above =4mm of E1] (N1) {};
								\node[node,below =4mm of E1] (N2) {};
								\node[hyperedge,below right=4mm and 8mm of N1] (E2) {$A_3$};
								\node[node,right=7mm of E2] (N3) {};
								\draw[-,black] (E1) -- node[left] {\small 1} (N1);
								\draw[-,black] (E1) -- node[left] {\small 2} (N2);
								\draw[-,black] (N1) -- node[above] {\small 2} (E2);
								\draw[-,black] (N2) -- node[below] {\small 1} (E2);
								\draw[-,black] (E2) -- node[above] {\small 3} (N3);
					}}}\;\to\; q
				}
				{
					\infer{\circledcirc(t)\to t}{}
					&
					\infer{\circledcirc(q)\to q}{}
				}
			}
		}
		&\infer{\circledcirc(p)\to p}{}
	}
	$$
\end{example}
\subsection{Some remarks regarding definitions}
\begin{remark}\label{rem_bot_to_top}
	All the rules are formulated in the ``top-to-bottom'' fashion: each rule says that if sequents above the line (premises) are derivable, then a sequent below the line (conclusion) is derivable as well. However, sometimes it is more convenient to consider these rules from bottom to top, e.g. when a sequent is given and you check whether it is derivable. For instance, rules $(\to\div)$ and $(\times\to)$ can be reformulated as follows:
	\begin{itemize}
		\item Let $F\to \div(A/D)$ be a graph sequent; let $e_0\in E_D$ be labeled by \$. Then
		$$
		\infer[(\to\div)]{F\to \div(A/D)}{D[F/e_0]\to A}
		$$
		\item Let $G\to A$ be a graph sequent and let $e\in E_G$ be labeled by $\times(F)$. Then
		$$
		\infer[(\times\to)]{G\to A}{G[F/e]\to A}
		$$
	\end{itemize}
	The remaining rules can also be reformulated in this way (e.g. we provided verbal explanation for $(\div\to)$ above).
\end{remark}
\begin{remark}
	Further we sometimes say: ``let us consider a derivation of a sequent $H\to A$ from bottom-to-top''; this implies that we consider $H\to A$ as the start of a derivation, all its premises as the first step of a derivation and so on; particularly, axioms are last steps of a derivation, if we focus on this point of view. For example, we may say the following: ``if we consider the derivation from Example \ref{ex_derivation} from bottom to top, then the rule $(\to\div)$ is applied after the rule $(\to\times)$''.
\end{remark}
\begin{remark}\label{rem_loops}
	We forbid cases where some external nodes of a graph or some attachment nodes of an edge coincide; that is, we forbid loops (in a general sense). This is done following \cite{Drewes} (to obtain more similarities) and due to our desire to shorten definitions and reasonings and not to consider extra cases. Besides, it is not clear how to define compression if one deals with loops. However, there is a way how to extend our definitions to cases where loops are allowed. In order to do this one has to change definitions as follows:
	\begin{itemize}
		\item Everywhere in Definition \ref{def_hypergraph} $V^\circledast$ is replaced by $V^\ast$.
		\item Replacement (Section \ref{sec_repl}) is defined in the same way but we need to clarify how we understand ``fusing'': namely, if, say, $i$-th and $j$-th attachment nodes of $e_0$ coincide ($att(e_0)(i)=att(e_0)(j),i\ne j$) and $j$-th and $k$-th external nodes of $H$ coincide as well ($k\ne i,j$), then after replacement all three nodes are fused into a single one. E.g.
		\begin{center}
			{\tikz[baseline=.1ex]{
					\node[] (H) {$H=$};
					\node[hyperedge, right=10mm of H] (E) {$a$};
					\node[node,left=7mm of E] (N1) {};
					\node[node,right=7mm of E] (N2) {};
					\draw[-,black] (E) -- node[above] {\small 1} (N1);
					\draw[-,black] (E) to[bend left=30] node[above] {\small 2} (N2);
					\draw[-,black] (E) to[bend right=30] node[below] {\small 3} (N2);
					
					\node[right=10mm of N2] (G) {$G=$};
					\node[node,right=2mm of G] (N3) {};
					\node[above=0mm of N3] {\scriptsize $(1)$};
					\node[below=0mm of N3] {\scriptsize $(2)$};
					\node[node,right=7mm of N3] (N4) {};
					\node[below=0mm of N4] {\scriptsize $(3)$};
					\draw[>=stealth,->,black] (N3) -- node[above] {$b$} (N4);
			}}
		
			{\tikz[baseline=.1ex]{
					\node[] (HG) {$H[G/e_0]=$};
					\node[node,right=1mm of HG] (N) {};
					\draw[>=stealth,->,black] (N) to [out=-30,in=30,looseness=30] node[right] {$b$} (N);
			}}
		\end{center}
		\item In definitions of the rules $(\times\to)$ and $(\to\div)$ of the hypergraph Lambek calculus we use formulations from Remark \ref{rem_bot_to_top} where these rules are defined through replacement. However, this way of definition is somewhat undesirable: it cannot be nicely reformulated in an equivalent top-to-bottom way. Consider, for instance the following two derivations with a type $L_0=\times\bigg(
		\mbox{{\tikz[baseline=.1ex]{
					\node[node] (N) {};
					\draw[>=stealth,->,black] (N) to [out=-30,in=30,looseness=30] node[right] {$p$} (N);
					\node[above=0mm of N] {\scriptsize $(1)$};
					\node[below=0mm of N] {\scriptsize $(2)$};
		}}}\bigg)$ and some type $T$:
		$$
		\infer[(\times\to)]{
		\mbox{{\tikz[baseline=.1ex]{
					\node[node] (N1) {};
					\node[node,right=5mm of N1] (N) {};
					\draw[>=stealth,->,black] (N) to [out=-30,in=30,looseness=30] node[right] {$L_0$} (N);
					\draw[>=stealth,->,black] (N1) -- node[above] {$q$} (N);
					\node[right=10mm of N] {$\to T$};
		}}}
		}{
		\mbox{{\tikz[baseline=.1ex]{
					\node[node] (N1) {};
					\node[node,right=5mm of N1] (N) {};
					\draw[>=stealth,->,black] (N) to [out=-30,in=30,looseness=30] node[right] {$p$} (N);
					\draw[>=stealth,->,black] (N1) -- node[above] {$q$} (N);
					\node[right=10mm of N] {$\to T$};
		}}}
		}
		\qquad
		\infer[(\times\to)]{
			\mbox{{\tikz[baseline=.1ex]{
						\node[node] (N1) {};
						\node[node,right=5mm of N1] (N) {};
						\node[node,right=5mm of N] (N3) {};
						\draw[>=stealth,->,black] (N) -- node[above] {$L_0$} (N3);
						\draw[>=stealth,->,black] (N1) -- node[above] {$q$} (N);
						\node[right=10mm of N] {$\to T$};
			}}}
		}{
			\mbox{{\tikz[baseline=.1ex]{
						\node[node] (N1) {};
						\node[node,right=5mm of N1] (N) {};
						\draw[>=stealth,->,black] (N) to [out=-30,in=30,looseness=30] node[right] {$p$} (N);
						\draw[>=stealth,->,black] (N1) -- node[above] {$q$} (N);
						\node[right=10mm of N] {$\to T$};
			}}}
		}
		$$
	\end{itemize}
	In the case when we forbid coincidences within external or attachment nodes the application of $(\times\to)$ is completely defined by $H$ and $F$ (see notation in Section \ref{sec_axioms_rules}); here, however, this is not true. This is one of reasons why we decided to reject the idea of allowing coincidences of external or attachment nodes. In the remainder of the work we stick to definitions given in Sections \ref{sec_prelim} and \ref{sec_axioms_rules}, and return to the issue of this remark only once in Section \ref{embed_l*1}.
\end{remark}
\begin{remark}
	$\mathrm{HL}$ denotes a ``standard'' variant of the hypergraph Lambek calculus with operations $\div$ and $\times$; however, sometimes we will be interested in reducing or extending this set of operations; then a relevant set of operations will be listed in brackets after $\mathrm{HL}$. E.g. if we want to consider the hypergraph Lambek calculus with $\div$ only, we denote it as $\mathrm{HL}(\div)$.
\end{remark}

\section{Embedding of the Lambek calculus and of its variants}\label{sec_embed}
As promised, $\mathrm{HL}$ naturally generalizes $\mathrm{L}$: we will show how to embed the Lambek calculus into the hypergraph Lambek calculus considering strings as string graphs. Besides, surprisingly $\mathrm{HL}$ can model several extensions of the Lambek calculus, which are discussed below.
\subsection{Embedding of $\mathrm{L}$}\label{sec_embed_lambek}
Types of the Lambek calculus are embedded in $\mathrm{HL}$ by means of a function $tr:Tp(\mathrm{L})\to Tp(\mathrm{HL})$ presented below:
\begin{itemize}
	\item $tr(p):=p, p\in Pr, type(p)=2$;
	\item $tr(A/B):=\div\left(tr(A)\middle/\mbox{
		{\tikz[baseline=.1ex]{
				\node[] (R1) {};
				\node[node,above=-2mm of R1, label=left:{\small $(1)$}] (N1) {};
				\node[node,right=10mm of N1] (N2) {};
				\node[node,right=10mm of N2, label=right:{\small $(2)$}] (N3) {};
				\draw[>=stealth,->,black] (N1) -- node[above] {$\$$} (N2);
				\draw[>=stealth,->,black] (N2) -- node[above] {$tr(B)$} (N3);
		}}
	}\right)$
	\item $tr(B\backslash A):=\div\left(tr(A)\middle/\mbox{
		{\tikz[baseline=.1ex]{
				\node[] (R1) {};
				\node[node,above=-2mm of R1, label=left:{\small $(1)$}] (N1) {};
				\node[node,right=10mm of N1] (N2) {};
				\node[node,right=10mm of N2, label=right:{\small $(2)$}] (N3) {};
				\draw[>=stealth,->,black] (N1) -- node[above] {$tr(B)$} (N2);
				\draw[>=stealth,->,black] (N2) -- node[above] {$\$$} (N3);
		}}
	}\right)$
	\item $tr(A\cdot B):=\times\left(\mbox{
		{\tikz[baseline=.1ex]{
			\node[] (R1) {};
			\node[node,above=-2mm of R1, label=left:{\small $(1)$}] (N1) {};
			\node[node,right=10mm of N1] (N2) {};
			\node[node,right=10mm of N2, label=right:{\small $(2)$}] (N3) {};
			\draw[>=stealth,->,black] (N1) -- node[above] {$tr(A)$} (N2);
			\draw[>=stealth,->,black] (N2) -- node[above] {$tr(B)$} (N3);
		}}
	}\right)$
\end{itemize}
String sequents $\Gamma\to A$ are translated into graph sequents as follows: $tr(\Gamma\to A):=tr(\Gamma)^\bullet\to tr(A)$. Let $tr(Tp(\mathrm{L}))$ be the image of $tr$.

\begin{theorem}\label{embed_lambek}\leavevmode
	\begin{enumerate}
		\item If $\mathrm{L}\vdash \Gamma\to C$, then $\mathrm{HL}\vdash tr(\Gamma\to C)$;
		\item If $\mathrm{HL}\vdash G\to T$ is a derivable graph sequent over $tr(Tp(\mathrm{L}))$, then for some $\Gamma$ and $C$ we have $G\to T=tr(\Gamma\to C)$ (particularly $G$ has to be a string graph) and $\mathrm{L}\vdash \Gamma\to C$.
	\end{enumerate}
\end{theorem}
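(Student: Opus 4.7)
The plan is to prove both parts by induction on derivations. For part (1) I induct on a derivation of $\Gamma \to C$ in $\mathrm{L}$; for part (2) I induct on a derivation of $G \to T$ in $\mathrm{HL}$ and exploit the very restricted shape of types in $tr(Tp(\mathrm{L}))$ to force $G$ to be a string graph. A useful preliminary observation is that $tr:Tp(\mathrm{L})\to Tp(\mathrm{HL})$ is injective, that every type in its image has $type=2$, and that every $\div$-type in the image has a fixed three-node two-edge denominator (with the orientation of the $\$$-edge distinguishing $/$ from $\backslash$), while every $\times$-type has a fixed three-node two-edge graph as its argument.

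For part (1) the axiom $p\to p$ translates to the $\mathrm{HL}$-axiom $\circledcirc(p)\to p$, and each of the six $\mathrm{L}$-rules is simulated by a single $\mathrm{HL}$-rule. Rule $(\to\cdot)$ is simulated by $(\to\times)$ using the two-edge $M$ of $tr(A\cdot B)$, whose edges are then replaced by $tr(\Gamma)^\bullet$ and $tr(\Delta)^\bullet$ to yield $tr(\Gamma,\Delta)^\bullet$; dually, $(\cdot\to)$ is simulated by $(\times\to)$. Rules $(\to/)$ and $(\to\backslash)$ are simulated by $(\to\div)$: taking $F$ as the prefix (resp.\ suffix) of the string antecedent obtained by dropping its last (resp.\ first) edge, compressing $F$ reproduces exactly the denominator of $tr(B/A)$ (resp.\ $tr(A\backslash B)$). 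Rules $(/\to)$ and $(\backslash\to)$ are simulated by $(\div\to)$: starting from $H=tr(\Gamma_1,B,\Gamma_2)^\bullet$, one expands its $tr(B)$-edge into the denominator of $tr(B/A)$ (resp.\ $tr(A\backslash B)$), relabels the $\$$-edge, and replaces the $tr(A)$-edge by $tr(\Pi)^\bullet$, obtaining the desired translated string antecedent.

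For part (2) I read the $\mathrm{HL}$-derivation bottom-up via Remark~\ref{rem_bot_to_top}. For each rule I first note that its premises are automatically over $tr(Tp(\mathrm{L}))$, since the newly introduced edge labels either come from subtypes of the type appearing in the conclusion or are copied from the conclusion's antecedent; the induction hypothesis then applies. In $(\to\times)$ the premises $H_i\to tr(X_i)$ yield $H_i=tr(\Gamma_i)^\bullet$, whose assembly along $M$ is $tr(\Gamma_1,\Gamma_2)^\bullet$, matching $(\to\cdot)$. In $(\times\to)$ the rigid shape of $\times(F)\in tr(Tp(\mathrm{L}))$ forces $F$ to be a pair of consecutive edges of the string antecedent, matching $(\cdot\to)$. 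In $(\to\div)$ the premise $D[F/e_0]\to tr(X)$ satisfies $D[F/e_0]=tr(\Gamma')^\bullet$; the orientation of the $\$$-edge inside $D$ forces $\Gamma'$ to end (or begin) with $Y$, whence $F=tr(\Pi)^\bullet$ and the step matches $(\to/)$ (or $(\to\backslash)$). Finally $(\div\to)$ is handled symmetrically, recovering $(/\to)$ or $(\backslash\to)$ from the position of the $tr(X/Y)$-edge inside the string antecedent. The main obstacle is precisely this part (2): one must verify at every step that the antecedent is \emph{genuinely} a string graph and that the subgraphs highlighted by the rules are exactly contiguous substrings. This rests on the observation that a string graph admits a canonical decomposition into consecutive edges and that every subgraph satisfying the compression conditions (a)--(c) of Section~\ref{com} is an interval in that decomposition; together with the injectivity of $tr$ and the rigidity of denominators in $tr(Tp(\mathrm{L}))$, this pins down the $\mathrm{L}$-rule being simulated and extracts the corresponding $\mathrm{L}$-derivation.
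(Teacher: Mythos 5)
Your proposal is correct and follows essentially the same route as the paper's proof: part (1) by induction on the $\mathrm{L}$-derivation, simulating each $\mathrm{L}$-rule by the corresponding single $\mathrm{HL}$-rule, and part (2) by induction on the $\mathrm{HL}$-derivation, using the rigid two-edge shape of denominators and product arguments in the image of $tr$ to force antecedents to be string graphs and to read off the matching $\mathrm{L}$-rule. The extra care you take in part (2) --- checking that premises remain over $tr(Tp(\mathrm{L}))$ and that the compressed or replaced subgraphs are contiguous intervals of the string graph --- only makes explicit what the paper's argument leaves implicit.
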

Proofs of both statements are done by induction on the size of the derivation. See details in \ref{embed_lambek_proof}.

\subsection{Embedding of $\mathrm{NL\diamondsuit}$}
$\mathrm{NL\diamondsuit}$ is presented in \cite{Moortgat}. In this calculus, the set of types is denoted $Tp(\mathrm{NL\diamondsuit})$; types are built from primitive types using $\backslash,/,\cdot$ and two unary operators $\diamondsuit,\square$ (i.e., if $A$ belongs to $Tp(\mathrm{NL\diamondsuit})$, then $\diamondsuit(A)$ and $\square(A)$ are also types of $\mathrm{NL\diamondsuit}$). This variant of the Lambek calculus is nonassociative: antecedents of sequents are considered to be bracketed structures, defined inductively as follows: $\mathcal{A}:=Tp(\mathrm{NL\diamondsuit})\,|\,(\mathcal{A},\mathcal{A})\,|\,(\mathcal{A})^\diamond$. Sequents then are of the form $\Gamma\to A$ where $\Gamma\in\mathcal{A}$ and $A\in Tp(\mathrm{NL\diamondsuit})$.

Rules for $\backslash,\cdot,/$ are formulated as follows (here $\Gamma[\Delta]$ denotes the term $\Gamma$ containing a distinguished occurence of the subterm $\Delta$):
$$
\infer[(\backslash\to)]{\Gamma[(\Pi, A \backslash B)] \to C}{\Pi \to A & \Gamma[B]\to C}
\qquad
\infer[(\to\backslash)]{\Pi \to A \backslash B}{(A, \Pi) \to B}
\qquad
\infer[(\cdot\to)]{\Gamma[A \cdot B] \to C}{\Gamma[(A,B)] \to C}
$$
$$
\infer[(/\to)]{\Gamma[(B / A, \Pi)]\to C}{\Pi \to A & \Gamma[B] \to C}
\qquad
\infer[(\to/)]{\Pi \to B / A}{(\Pi, A) \to B}
\qquad
\infer[(\to\cdot)]{(\Gamma, \Delta) \to A \cdot B}{\Gamma \to A & \Delta \to B}
$$

The following rules for $\square,\diamondsuit$ are added:
$$
\infer[(\diamondsuit\to)]{\Gamma[\diamondsuit A]\to B}{\Gamma[(A)^\diamond]\to B}
\qquad
\infer[(\square\to)]{\Gamma[(\square A)^\diamond]\to B}{\Gamma[A]\to B}
$$
$$
\infer[(\to\diamondsuit)]{(\Gamma)^\diamond\to \diamondsuit A}{\Gamma\to A}
\qquad
\infer[(\to\square)]{\Gamma\to \square A}{(\Gamma)^\diamond\to A}
$$
Moortgat notices in \cite{Moortgat} that $\diamondsuit$ and $\square$ are ``truncated forms of product and implication''. This is evidenced by the way we embed $\mathrm{NL\diamondsuit}$ in $\mathrm{HL}$.

Let us fix the primitive types $p_\diamond,p_{br}\in\mathcal{P}_2$. Consider the following graphs with $X,Y$ being parameters:
\begin{itemize}
	\item $Br(X,Y)={\tikz[baseline=.1ex]{
			\node[node,label=below:{\small $(1)$}] (N1) {};
			\node[node,right=10mm of N1] (N2) {};
			\node[node,right=10mm of N2,label=below:{\small $(2)$}] (N3) {};
			
			\draw[>=stealth,->,black] (N1) -- node[above] {$X$} (N2);
			\draw[>=stealth,->,black] (N2) -- node[above] {$Y$} (N3);
			\draw[>=stealth,->,black] (N1) to[bend left=90] node[above] {$p_{br}$} (N3);
	}}$;
	\item $Diam(X)={\tikz[baseline=.1ex]{
			\node[node,label=below:{\small $(1)$}] (N1) {};
			\node[node,right=12mm of N1,label=below:{\small $(2)$}] (N3) {};
			
			\draw[>=stealth,->,black] (N1) -- node[above] {$X$} (N3);
			\draw[>=stealth,->,black] (N1) to[bend left=90] node[above] {$p_{\diamond}$} (N3);
	}}$
\end{itemize}
Then we introduce the following translation function $tr_{\diamond}$:
\begin{itemize}
	\item $tr_\diamond(p)=p, p\in Pr,type(p)=2;$
	\item $tr_\diamond((A/B))=\div\left(tr_\diamond(A)\middle/ Br(\$,tr_\diamond(B))\right)$;
	\item $tr_\diamond((B\backslash A))=\div\left(tr_\diamond(A)\middle/ Br(tr_\diamond(B),\$)\right)$;
	
	\item $tr_\diamond(\square (A))=\div\left(tr_\diamond(A)\middle/Diam(\$)\right)$;
	\item $tr_\diamond((A\cdot B))=\times\left(Br(tr_\diamond(A),tr_\diamond(B))\right)$;
	\item $tr_\diamond(\diamondsuit (A))=\times\left(Diam(tr_\diamond(A))\right)$;
\end{itemize}
If $\Gamma,\Delta$ are sequences of types, then $tr_{\diamond}((\Gamma,\Delta)):=Br(tr_{\diamond}(\Gamma),tr_{\diamond}(\Delta))$ where $Br(tr_{\diamond}(\Gamma),tr_{\diamond}(\Delta))$ is understood as the replacement of corresponding edges by graphs. Similarly, $tr_{\diamond}((\Gamma)^\diamond):=Diam(tr_{\diamond}(\Gamma))$. Finally, $tr_{\diamond}(\Gamma\to A):=tr_{\diamond}(\Gamma)\to tr_{\diamond}(A)$.
\begin{theorem}\label{embed_nld}\leavevmode
	\begin{enumerate}
		\item If $\mathrm{NL\diamondsuit}\vdash \Gamma\to C$, then we have $\mathrm{HL}\vdash tr_{\diamond}(\Gamma\to C)$;
		\item If $\mathrm{HL}\vdash G\to T$ is a derivable graph sequent over $tr_\diamond(Tp(\mathrm{NL\diamondsuit}))\cup\{p_{br},p_\diamond\}$, then $G\to T=tr_{\diamond}(\Gamma\to C)$ for some $\Gamma$ and $C$ and $\mathrm{NL\diamondsuit}\vdash \Gamma\to C$.
	\end{enumerate}
\end{theorem}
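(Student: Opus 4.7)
The plan is to mirror the proof of Theorem \ref{embed_lambek}, proceeding by induction on derivation length in each direction.

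For Part 1, I would induct on the $\mathrm{NL\diamondsuit}$ derivation. Each $\mathrm{NL\diamondsuit}$ rule has a direct $\mathrm{HL}$ counterpart. The elimination rules $(\backslash\to), (/\to), (\square\to)$ are simulated by $(\div\to)$ using respectively the denominators $Br(tr_\diamond(B),\$), Br(\$,tr_\diamond(B)), Diam(\$)$; the auxiliary premises for the $p_{br}$ or $p_\diamond$ edges of these denominators are discharged by the primitive axioms $\circledcirc(p_{br})\to p_{br}$ and $\circledcirc(p_\diamond)\to p_\diamond$. The introduction rules $(\to\backslash), (\to/), (\to\square)$ are simulated by $(\to\div)$, where the subgraph $F$ to compress is the translation of the antecedent context. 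Finally, $(\cdot\to), (\diamondsuit\to)$ correspond to $(\times\to)$, as the $\times$-edge unfolds into the $Br$ or $Diam$ structure it encodes, and $(\to\cdot), (\to\diamondsuit)$ correspond to $(\to\times)$.

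For Part 2, I would induct on the $\mathrm{HL}$ derivation of $G\to T$, relying on two auxiliary observations. First, a structural lemma: if $\mathrm{HL}\vdash H\to p_{br}$ (respectively $\to p_\diamond$) within the type universe $tr_\diamond(Tp(\mathrm{NL\diamondsuit}))\cup\{p_{br},p_\diamond\}$, then $H = \circledcirc(p_{br})$ (resp.\ $\circledcirc(p_\diamond)$). This follows because no type in the restricted universe has $p_{br}$ or $p_\diamond$ as its numerator, and no $\times$-type in the universe equals these primitives; hence neither $(\div\to)$ nor $(\times\to)$ can appear as the last rule producing a $p_{br}$ or $p_\diamond$ on the right, while $(\to\div)$ and $(\to\times)$ always produce compound succedents, leaving only the axiom. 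Second, a subformula-style property for $\mathrm{HL}$, established alongside, ensures that every intermediate sequent stays within the restricted universe, and by structural inspection its antecedent and succedent must have the forms $tr_\diamond(\Gamma')$ and $tr_\diamond(C')$ for some $\mathrm{NL\diamondsuit}$ sequent.

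With these in hand, case analysis on the last rule recovers the corresponding $\mathrm{NL\diamondsuit}$ step. The rules $(\times\to), (\to\times)$ force a product or diamond formula in $\mathrm{NL\diamondsuit}$, via the shapes $\times(Br(\cdot,\cdot))$ and $\times(Diam(\cdot))$. For $(\div\to)$, the denominator $D$ matches exactly one of $Br(\cdot,\$), Br(\$,\cdot), Diam(\$)$; the structural lemma collapses the auxiliary $p_{br}/p_\diamond$ premises to trivial handles, leaving the substantive premises that by induction yield $(\backslash\to), (/\to)$, or $(\square\to)$. The case $(\to\div)$ reverses the argument for the corresponding introduction rules. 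The main obstacle is precisely the structural lemma together with the subformula-style invariant: verifying that the $p_{br}$ and $p_\diamond$ primitives, which act as rigid ``bracket markers'' within antecedents, cannot participate in any rule other than the axiom, so that parasitic proofs exploiting the richer structure of $\mathrm{HL}$ are ruled out.
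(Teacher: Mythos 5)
Your overall architecture matches the paper's: Part 1 by rule-by-rule simulation, Part 2 by induction on the $\mathrm{HL}$ derivation, with the crux being that any auxiliary premise $F\to p_{br}$ or $F\to p_\diamond$ forces $F$ to be a handle. Your ``structural lemma'' is exactly the paper's wolf lemma (Lemma \ref{wolf1} and Corollary \ref{wolf}) specialized to $p_{br},p_\diamond$; the paper's sketch likewise reduces Part 2 to checking that $p_{br}$ and $p_\diamond$ are lonely in every type of $tr_\diamond(Tp(\mathrm{NL\diamondsuit}))\cup\{p_{br},p_\diamond\}$ and that these types have no skeleton subtypes.

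The one genuine gap is in your justification of that lemma. $(\div\to)$ and $(\times\to)$ are left rules: they leave the succedent untouched, so the observations that no universe type has $p_{br}$ as its numerator and that no $\times$-type ``equals'' $p_{br}$ do not by themselves forbid these rules from ending a derivation of $H\to p_{br}$. You must first establish (by an inner induction on derivation length, or by tracing the succedent up to its axiom as in Lemma \ref{wolf1}) that the relevant premise already has antecedent $\circledcirc(p_{br})$; only then does $(\div\to)$ force the numerator $N$ to be $p_{br}$ (contradicting loneliness), and only then does $(\times\to)$ force the compressed subgraph $F$ to be either the single $p_{br}$-labeled edge (yielding a one-edge $\times$-type that is not a subtype of any universe type) or \emph{edgeless}, in which case $\times(F)$ is a skeleton type. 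This edgeless case is covered by neither of your two stated reasons; it is precisely why the wolf lemma carries the ``no skeleton subtypes'' hypothesis, and it must be verified for the universe at hand. Once the structural lemma is repaired in this way --- or simply replaced by a citation of Corollary \ref{wolf} --- the remaining case analysis goes through as you describe and as in the paper.
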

This theorem is proved in a straightforward way similar to the case of $\mathrm{L}$ with few more technicalities. Namely, one has to explain why $p_\diamond$ and $p_{br}$ indeed serve as $\diamondsuit,\square$ and $()$. A sketch of the proof is given in Appendix \ref{embed_nld_proof}.
\begin{remark}
	The nonassociative Lambek calculus $\mathrm{NL}$ can be embedded in $\mathrm{HL}$ as well: it suffices not to consider $\diamondsuit,\square$ and graphs with $p_\diamond$-labeled edges in the above construction.
\end{remark}

\subsection{Embedding of $\mathrm{LP}$}\label{sec_embed_lp}
$\mathrm{LP}$ is $\mathrm{L}$ enriched with the additional premutation rule:
$$
\infer[(P).]{\Gamma\;A\;B\;\Delta\to C}{\Gamma\;B\;A\;\Delta\to C}
$$

One of the ways of modeling this formalism in $\mathrm{HL}$ is by using edges of type 1. The translation function $tr_P$ is the following:
\begin{itemize}
	\item $tr_{\mathrm{P}}(p)=p$;
	\item $tr_{\mathrm{P}}(A/B)=tr_{\mathrm{P}}(B\backslash A)=\div\left(tr_{\mathrm{P}}(A)\middle/{\tikz[baseline=.1ex]{
			\node[hyperedge] (E1) {$\$$};
			\node[node,right= of E1,label=below:{\small $(1)$}] (N1) {};
			\node[hyperedge,right= of N1] (E2) {$\;tr_P(B)\;$};
			\draw[-,black] (E1) -- node[above] {\small 1} (N1);
			\draw[-,black] (N1) -- node[above] {\small 1} (E2);
	}}\right)$;
	\item $tr_{\mathrm{P}}(A\cdot B)=\times\left(
	{\tikz[baseline=.1ex]{
			\node[hyperedge] (E1) {$\;tr_P(A)\;$};
			\node[node,right= of E1,label=below:{\small $(1)$}] (N1) {};
			\node[hyperedge,right= of N1] (E2) {$\;tr_P(B)\;$};
			\draw[-,black] (E1) -- node[above] {\small 1} (N1);
			\draw[-,black] (N1) -- node[above] {\small 1} (E2);
	}}
	\right)$.	
\end{itemize}
If $\Gamma=T_1,\dots,T_n$ is a sequence of types, then $tr_{\mathrm{P}}(\Gamma):=\langle \{v_0\},\{e_i\}_{i=1}^n,att,lab,v_0\rangle$ where $att(e_i)=v_0$, $lab(e_i)=tr_{\mathrm{P}}(T_i)$. As before, $tr_{\mathrm{P}}(\Gamma\to A):=tr_{\mathrm{P}}(\Gamma)\to tr_{\mathrm{P}}(A)$.
\begin{theorem}\label{embed_lp}\leavevmode
	\begin{enumerate}
		\item If $\mathrm{LP}\vdash \Gamma\to C$, then $\mathrm{HL}\vdash tr_{\mathrm{P}}(\Gamma\to C)$;
		\item If $\mathrm{HL}\vdash G\to T$ is a derivable graph sequent over $tr_P(Tp(\mathrm{L}))$, then $G\to T=tr_{\mathrm{P}}(\Gamma\to C)$ for some $\Gamma$ an $C$ and $\mathrm{LP}\vdash \Gamma\to C$.
	\end{enumerate}
\end{theorem}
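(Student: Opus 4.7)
Both parts of the theorem are proved by induction on derivation length, following the same template as Theorem \ref{embed_lambek} but with one crucial structural twist. All types in $tr_{\mathrm{P}}(Tp(\mathrm{L}))$ have $type=1$, and the denominator/product-body graphs used in the translation are the fixed two-edge, one-external-node graphs pictured in the definition of $tr_{\mathrm{P}}$. Consequently, $tr_{\mathrm{P}}(\Gamma)$ is a single node carrying a multiset of type-$1$ edges, and the order of $\Gamma$ is forgotten by the translation; this is what will make the permutation rule $(P)$ invisible from the $\mathrm{HL}$ side.

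For Part (1), the induction on the $\mathrm{LP}$-derivation is routine. The axiom translates to the $\mathrm{HL}$ axiom $\circledcirc(p)\to p$, and the permutation rule $(P)$ becomes a vacuous step because $tr_{\mathrm{P}}$ sees only the multiset of antecedent types. For $(\backslash\to)$ and $(/\to)$ one applies $(\div\to)$ with $H=tr_{\mathrm{P}}(\Gamma, B, \Delta)$, with $e$ the $tr_{\mathrm{P}}(B)$-edge, with $D$ the fixed two-edge denominator of the relevant translated implication, and with $H_1=tr_{\mathrm{P}}(\Pi)$; all attachment nodes collapse onto the single external node, producing exactly $tr_{\mathrm{P}}(\Gamma, \Pi, A\backslash B, \Delta)\to tr_{\mathrm{P}}(C)$. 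The remaining Lambek rules $(\to\backslash)$, $(\to/)$, $(\cdot\to)$, $(\to\cdot)$ are simulated in one step each by $(\to\div)$, $(\to\div)$, $(\times\to)$, $(\to\times)$ respectively.

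For Part (2), I would first establish a structural lemma by induction on the $\mathrm{HL}$-derivation: every sequent occurring in a derivation whose endsequent is over $tr_{\mathrm{P}}(Tp(\mathrm{L}))$ has the form $tr_{\mathrm{P}}(\Gamma')\to tr_{\mathrm{P}}(C')$. Subformula analysis (available because $\mathrm{HL}$ has no cut rule) keeps all labels in $tr_{\mathrm{P}}(Tp(\mathrm{L}))$; combined with the fact that every translated divisional/product type has a uniquely determined single-node two-edge structure, each rule application is forced to preserve the single-node shape of antecedents. Given the lemma, one reads off the $\mathrm{LP}$-derivation by replacing each $\mathrm{HL}$-rule by its $\mathrm{LP}$-counterpart, inserting $(P)$ steps freely to place the affected types in the positions required by the ordered rules. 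The main obstacle is the $(\div\to)$ rule, which in $\mathrm{HL}$ has $k+1$ premises and performs several simultaneous edge replacements: I would need to verify carefully that the $tr_{\mathrm{P}}$ restriction forces $k=1$ (matching the premise count of $(\backslash\to)/(/\to)$) and that the multiset of edges in the conclusion coincides with $tr_{\mathrm{P}}(\Gamma, \Pi, A\backslash B, \Delta)$; analogous but easier bookkeeping is required for $(\to\times)$.
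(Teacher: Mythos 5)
Your proposal is correct and follows essentially the same route as the paper, which itself proves Theorem~\ref{embed_lp} by direct analogy with the induction-on-derivation-length argument for Theorem~\ref{embed_lambek}; your observation that all translated types have $type=1$ and attach to a single external node, so that $tr_{\mathrm{P}}$ forgets order and absorbs $(P)$, is exactly the intended mechanism. The ``main obstacle'' you flag in Part~(2) is harmless: every translated division has a denominator with exactly two edges (one of them $\$$), so $k=1$ is automatic and the multiset bookkeeping goes through as in the $\mathrm{L}$ case.
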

Proof of this theorem is similar to that of Theorem \ref{embed_lambek} (see \ref{embed_lambek_proof}).

\subsection{Embedding of $\mathrm{L}_{\mathbf{1}}^\ast$}\label{embed_l*1}
	In the string case there is a variant of L where empty antecedents are allowed, and there is an additional type $\mathbf{1}$ called the unit. Then one more axiom and one inference rule are added:
	
	Axiom: $\to\mathbf{1}$.
	
	Rule:
	$$
	\infer[(\mathbf{1}\to)]{\Gamma,\mathbf{1},\Delta\to A}{\Gamma,\Delta\to A}
	$$
	This extension of L is called the Lambek calculus with the unit and it is denoted by $\mathrm{L}_{\mathbf{1}}^\ast$; a corresponding set of types is denoted $Tp_{\mathbf{1}}$. Certainly, we wish this calculus to be embedded in HL as well. Our definition of a string graph (Definition \ref{def_str_gr}), though, does not include the case of an empty string so we cannot use $tr$ from Section \ref{sec_embed_lambek}. This can be done if we allow coincidences of external nodes or of attachment nodes in the way proposed in Remark \ref{rem_loops}. If so, then we define a function $tr_{\mathbf{1}}$ as follows:
	\begin{itemize}
		\item $tr_{\mathbf{1}}(p)=p,p\in Pr, type(p)=2$;
		\item $tr_{\mathbf{1}}(A/B)=\div(tr_{\mathbf{1}}(A)/(\$\,tr_{\mathbf{1}}(B))^\bullet)$;
		\item $tr_{\mathbf{1}}(B\backslash A)=\div(tr_{\mathbf{1}}(A)/(tr_{\mathbf{1}}(B)\,\$)^\bullet)$;
		\item $tr_{\mathbf{1}}(A\cdot B)=\times((tr_{\mathbf{1}}(A)\;tr_{\mathbf{1}}(B))^\bullet)$;
		\item $tr_{\mathbf{1}}(\mathbf{1}):=\times(\langle\{v_0\},\emptyset,\emptyset,\emptyset,v_0v_0\rangle)=\times\left(
		\mbox{{\tikz[baseline=.1ex]{
					\node (R) {};
					\node[node, above=-1mm of R] (N) {};
					\node[left=0mm of N] {\scriptsize $(1)$};
					\node[right=0mm of N] {\scriptsize $(2)$};
		}}}\right)$
	\end{itemize}
	The inductive definition of $tr_{\mathbf{1}}$ does not differ from that of $tr$; the only difference is in how $tr_{\mathbf{1}}$ treats $\mathbf{1}$. Let us also extend $tr_{\mathbf{1}}$ to sequents as follows: $tr_{\mathbf{1}}(\Gamma\to A)=tr_{\mathbf{1}}(\Gamma)^\bullet\to tr_{\mathbf{1}}(A)$. If $\Gamma$ is empty, we put $(\Lambda)^\bullet:=\langle\{v_0\},\emptyset,\emptyset,\emptyset,v_0v_0\rangle$.
	\begin{theorem}
		Let $\Gamma\to C$ be a sequent over $Tp_{\mathbf{1}}$.
		$\mathrm{L}_{\mathbf{1}}^\ast\vdash \Gamma\to C$ if and only if
		$\mathrm{HL}\vdash tr_{\mathbf{1}}(\Gamma\to C)$. 
	\end{theorem}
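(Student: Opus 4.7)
The plan is to follow the template of Theorem \ref{embed_lambek} essentially verbatim, with one small new case on each side of the equivalence to handle the unit $\mathbf{1}$ and its translation $tr_{\mathbf{1}}(\mathbf{1})=\times(M_\emptyset)$, where I write $M_\emptyset := \langle\{v_0\},\emptyset,\emptyset,\emptyset,v_0v_0\rangle$ for the single-node graph with both external positions identified. Throughout we work in the extended framework of Remark \ref{rem_loops} in which coincidences of external or attachment nodes are admitted, and accordingly I use the bottom-up reformulations of $(\times\to)$ and $(\to\div)$ from Remark \ref{rem_bot_to_top}, since in the presence of loops these are the canonical forms of the rules.

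For soundness ($\Rightarrow$) I would induct on the $\mathrm{L}_{\mathbf{1}}^\ast$-derivation. The cases for the axiom $p\to p$ and the six standard rules translate exactly as in the proof of Theorem \ref{embed_lambek}; the only adjustment is to allow the occasional node coincidence that arises when some antecedent along the way is empty. The unit axiom $\to\mathbf{1}$ translates to $M_\emptyset\to\times(M_\emptyset)$, which is obtained from the degenerate ($l=0$) instance of $(\to\times)$. For the rule $(\mathbf{1}\to)$, observe that $tr_{\mathbf{1}}(\Gamma,\mathbf{1},\Delta)^\bullet$ contains an edge $e$ labeled $\times(M_\emptyset)$ whose two attachment nodes $v_{k-1},v_k$ are distinct; the bottom-up form of $(\times\to)$ replaces $e$ by $M_\emptyset$, which --- because the two external positions of $M_\emptyset$ coincide --- fuses $v_{k-1}$ with $v_k$ and deletes $e$, leaving exactly $tr_{\mathbf{1}}(\Gamma,\Delta)^\bullet$. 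This premise is derivable by the inductive hypothesis.

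For completeness ($\Leftarrow$) I would induct on the height of the given $\mathrm{HL}$-derivation of $tr_{\mathbf{1}}(\Gamma\to C)$ and analyze the last rule applied, in precise analogy with the backward direction of Theorem \ref{embed_lambek}. Because every type label appearing in the antecedent has one of the very restricted shapes prescribed by the definition of $tr_{\mathbf{1}}$, a subformula-style analysis pins each rule down as the translation of a definite step of $\mathrm{L}_{\mathbf{1}}^\ast$. Besides the cases already handled for Theorem \ref{embed_lambek}, two new cases arise: a zero-premise application of $(\to\times)$ with succedent $\times(M_\emptyset)$, which is the image of the unit axiom; and an application of $(\times\to)$ to an edge labeled $\times(M_\emptyset)$, whose premise yields by induction a derivation $\mathrm{L}_{\mathbf{1}}^\ast\vdash\Gamma,\Delta\to C$ that one extends by a single $(\mathbf{1}\to)$ step to produce the sought derivation.

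The main obstacle is the routine but nontrivial verification that the class of sequents in the image of $tr_{\mathbf{1}}$ is closed under all bottom-up applications of the $\mathrm{HL}$-rules, and that no ``spurious'' decomposition of an antecedent is possible. Once loops are permitted, a string-graph-with-loops could in principle be carved into subgraphs in unexpected ways; what needs to be shown is that whenever $(\div\to)$, $(\to\div)$, or $(\times\to)$ is applied bottom-up to a sequent in the image of $tr_{\mathbf{1}}$, the resulting premises are again string graphs in this image and correspond to contiguous substring decompositions of $\Gamma$. This is a ``thin structure'' lemma in the spirit of the one underlying Theorem \ref{embed_lambek}, to be re-established with the added care that the extra coincidences permitted by the framework create no new decomposition opportunities beyond those encoded by $\mathbf{1}$-edges.
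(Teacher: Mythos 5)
Your proposal is correct and follows essentially the same route as the paper: both directions proceed by induction on the length of the derivation exactly as in Theorem \ref{embed_lambek}, with the unit axiom $\to\mathbf{1}$ handled by the degenerate zero-premise instance of $(\to\times)$ and the rule $(\mathbf{1}\to)$ matched with an application of $(\times\to)$ to an edge labeled $tr_{\mathbf{1}}(\mathbf{1})$, whose replacement fuses the two adjacent nodes of the string graph. The ``thin structure'' closure issue you flag is real (the paper itself concedes the theorem is weaker than the other embeddings precisely because loops can appear), but the paper simply asserts the analogy with Theorem \ref{embed_lambek} rather than elaborating it, so your treatment is if anything slightly more careful than the published one.
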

	\begin{proof}
		The proof is similar to that of Theorem \ref{embed_lambek}. In both directions we need to use induction on length of a derivation. Let us consider cases where $\mathbf{1}$ participates in each direction.
		\\
		Let $\mathrm{L}_{\mathbf{1}}^\ast\vdash \Gamma\to C$. 
		
		If it is the axiom $\to\mathbf{1}$, then it is translated into a graph sequent $(\Lambda)^\bullet\to\times((\Lambda)^\bullet)$.
		
		If this sequent is obtained after the rule application
		$$
		\infer[(\mathbf{1}\to)]{\Phi,\mathbf{1},\Psi\to A}{\Phi,\Psi\to A}
		$$
		then this rule can be remodeled by $(\times\to)$ with the type $tr_{\mathbf{1}}(\mathbf{1})$.
		\\
		Let $\mathrm{HL}\vdash tr_{\mathbf{1}}(\Gamma\to C)$.
		
		If $tr_{\mathbf{1}}(C)=tr_{\mathbf{1}}(\mathbf{1})$ and the last rule is $(\to\times)$, then $tr_{\mathbf{1}}(\Gamma)=(\Lambda)^\bullet$, and this corresponds to the axiom case $\to\mathbf{1}$.
		
		If the last rule is $(\times\to)$, and it is applied to a type of the form $tp_{\mathbf{1}}(\mathbf{1})$, then it can be remodeled using the rule $(\mathbf{1}\to)$.
		\qed
	\end{proof}
	This theorem is weaker than previous embedding theorems: we restrict our consideration to string graphs only. Generally, if a graph sequent $G\to A$ is over the set $tp_{\mathbf{1}}(Tp_{\mathbf{1}}^\ast)$, then $G$ is not necessarily a string graph: it may contain loops.
\subsection{$\mathrm{HL}$ as a Source of Extensions of $\mathrm{L}$}
Now it is plausible that different variants of the Lambek calculus can be considered as certain fragments of the hypergraph Lambek calculus. After noticing this, we came up with the thought that it would be interesting to do the opposite: to consider certain ``natural'' fragments of $\mathrm{HL}$ and then to try to interpret them as variants of $\mathrm{L}$. 

One of experiments in this direction led us to an extension of the Lambek calculus, which is presented below. We call it \emph{Lambek calculus with weights}: its types are enriched with non-negative integers. Though we firstly developed it as a fragment of $\mathrm{HL}$ and then as an autonomous formalism, here we present the latter before the former.
\subsection{Lambek Calculus with Weights}
As usually, we fix a countable set $Pr$ of primitive types.
\begin{definition}
	The set $Tp(\mathrm{LW})$ is the least set such that
	\begin{itemize}
		\item If $p\in Pr$ and $n\in\mathbb{N}$, then $(p;n)$ is in $Tp(\mathrm{LW})$;
		\item If $A,B$ are in $Tp(\mathrm{LW})$ and $n$ is a natural number, then $(B\backslash A;n), (A/B;n), (A\cdot B;n)$ are also in $Tp(\mathrm{LW})$.
	\end{itemize}
\end{definition}
Sequents in this calculus are also enriched with numbers: they are of the form $$\langle n\rangle T_1,\dots,T_k\to T$$ where $T_i, T$ are types ($k>0$) and $n$ belongs to $\mathbb{N}$.
Axioms and rules of the Lambek calculus with weights are similar to those in $\mathrm{L}$ but they are applied with regard to numbers in types. There are two more axioms regarding weights of primitive types. Below $m$ is less than or equal to $n$.
$$\infer[]{\langle 0 \rangle(p;0)\to (p;0)}{}
$$
$$
\infer[(\backslash\to)]{\langle n_1+n_2+k \rangle \Gamma, \Pi, (A \backslash B;k), \Delta \to C}{\langle n_1 \rangle \Pi \to A & \langle n_2 \rangle \Gamma, B, \Delta \to C}
\qquad
\infer[(\to\backslash)]{\langle n-m \rangle \Pi \to (A \backslash B;m)}{\langle n \rangle A, \Pi \to B}
$$
$$
\infer[(/\to)]{\langle n_1+n_2+k \rangle \Gamma, (B / A;k), \Pi, \Delta \to C}{\langle n_1 \rangle\Pi \to A & \langle n_2 \rangle \Gamma, B, \Delta \to C}
\qquad
\infer[(\to/)]{\langle n-m \rangle\Pi \to (B / A;m)}{\langle n \rangle \Pi, A \to B}
$$
$$
\infer[(\cdot\to)]{\langle n-m \rangle \Gamma, (A \cdot B;m), \Delta \to C}{\langle n \rangle\Gamma, A, B, \Delta \to C}
\qquad
\infer[(\to\cdot)]{\langle n_1+n_2+k \rangle \Gamma, \Delta \to (A \cdot B;k)}{\langle n_1 \rangle\Gamma \to A & \langle n_2 \rangle \Delta \to B}
$$
$$
\infer[(\mathrm{w}\to)]{\langle n-m \rangle\Gamma, (p;m), \Delta \to C}{\langle n \rangle\Gamma, (p;0), \Delta \to C}
\qquad
\infer[(\to\mathrm{w})]{\langle n+k \rangle \Gamma \to (p;k)}{\langle n \rangle \Gamma \to (p;0)}
$$
In order to shorten notation we denote a type of the form $(T;0)$ as just $T$.
\begin{remark}
	Let us inductively define an unweighting function $\mathrm{unw}:Tp(\mathrm{LW})\to Tp(\mathrm{L})$ in a natural way:
	\begin{itemize}
		\item $\mathrm{unw}((p;n))=p$;
		\item $\mathrm{unw}((A\circ B;n))=\mathrm{unw}(A)\circ \mathrm{unw}(B)$ for $\circ\in\{\backslash,/,\cdot\}$.
	\end{itemize}
	We also say that $\mathrm{unw}(\langle n \rangle \Gamma\to C)=\mathrm{unw}(\Gamma) \to \mathrm{unw}(C)$.
	\\
	For such a function, if $\mathrm{LW}\vdash \Gamma\to C$, then $\mathrm{L}\vdash \mathrm{unw}(\Gamma) \to \mathrm{unw}(C)$. This can be seen from the fact that axioms and rules in $\mathrm{LW}$ disregarding numbers coincide with those of $\mathrm{L}$ (and rules $(\mathrm{w}\to)$ and $(\to\mathrm{w})$ turn into rules where a premise equals a conclusion).
\end{remark}
\begin{example}
	In $\mathrm{L}$ the following sequent is derivable: $\mathrm{L}\vdash p/q,q/r,r\to p$. Moreover, there are two derivation trees for it:
	$$
	\infer[(\div\to)]{p/q,q/r,r\to p}{\infer[(\div\to)]{p/q,q\to p}{p\to p & q\to q} & r\to r}
	\qquad
	\infer[(\div\to)]{p/q,q/r,r\to p}{p\to p & \infer[(\div\to)]{q/r,r\to q}{q\to q & r\to r}}
	$$
	Now we add weights to types as follows: $\langle 0 \rangle (p/q;1),(q;2)/r,r\to (p;1)$. This sequent is derivable in $\mathrm{LW}$ as well:
	$$
	\infer[(\div\to)]{\langle 0 \rangle (p/q;1),(q;2)/r,r\to (p;1)}{\infer[(\mathrm{w}\to)]{\langle 0 \rangle (p/q;1),(q;2)\to (p;1)}{\infer[(\div\to)]{\langle 2 \rangle (p/q;1),q\to (p;1)}{\infer[(\mathrm{w}\to)]{\langle 1 \rangle p\to (p;1)}{\langle 0 \rangle p\to p} & \langle 0 \rangle q\to q}} & \langle 0 \rangle r\to r}
	$$
	This derivation corresponds via the function $\mathrm{unw}$ to the first derivation of the sequent in $\mathrm{L}$. However, the second derivation cannot be recreated in $\mathrm{LW}$: $(q;2)$ has weight 2, which has to be ``unleashed'' before application of division within $(p/q;1)$.
\end{example}
\begin{example}
	We can also define grammars based on LW with the only difference that a language now consists of pairs $(w;n)$ where $w$ is a string, and $n\in\mathbb{N}$ is its weight. Consider e.g. the grammar $Gr=\langle\{a,b,c\},s,\triangleright\rangle$ where
	\begin{itemize}
		\item $a\triangleright (p/q;1)$;
		\item $b\triangleright q, p\backslash q$;
		\item $c\triangleright p\backslash (s;1),s\backslash (s;1)$.
	\end{itemize}
	If we disregard weights, this grammar generates the language $\{a^nb^nc^k|n,k>0\}$. Taking weights into account we obtain $L(Gr)=\{(a^nb^nc^k;m)|m-n+k=0,\,n,k>0\}$. If we rid of second components of elements in $L(Gr)$, we obtain a usual language $\{a^nb^nc^k|n\ge k>0\}$ which is not context-free due to Ogden's lemma.
\end{example}
As we announced at the beginning of this section, $\mathrm{LW}$ may be considered as a fragment of $\mathrm{HL}$. This is done using the  function $tr_W$ we define below. Firstly, let us accept the following notation: if $\times(M)$ is a type, then $\times(M)\oplus k$ denotes a type $\times(M^\prime)$ where $M^\prime=\langle V_M\sqcup\{u_1,\dots,u_k\},E_M,att_M,lab_M,ext_M\rangle$ for $u_1,\dots,u_k$ being new nodes; similarly, if $\div(N/D)$ is a type, then $\div(N/D)\ominus k$ denotes a type $\div(N/D^\prime)$ where $D^\prime=\langle V_D\sqcup\{u_1,\dots,u_k\},E_D,att_D,lab_D,ext_D\rangle$ for new nodes $u_1,\dots,u_k$.

$tr_W$ can be easily defined using this notation similarly to $tr$ defined in Section \ref{sec_embed_lambek}:
\begin{itemize}
	\item $tr_W((p;0)):=p, p\in Pr, type(p)=2$;
	\item $tr_W((p;n)):=\times(\circledcirc(p))\oplus n, p\in Pr$;
	\item $tr_W((A/B;n)):=\div\left(tr_W(A)\middle/\mbox{
		{\tikz[baseline=.1ex]{
				\node[] (R1) {};
				\node[node,above=-2mm of R1, label=left:{\small $(1)$}] (N1) {};
				\node[node,right=14mm of N1] (N2) {};
				\node[node,right=14mm of N2, label=right:{\small $(2)$}] (N3) {};
				\draw[>=stealth,->,black] (N1) -- node[above] {$\$$} (N2);
				\draw[>=stealth,->,black] (N2) -- node[above] {$tr_W(B)$} (N3);
		}}
	}\right)\ominus n;$
	\item $tr_W((B\backslash A;n)):=\div\left(tr_W(A)\middle/\mbox{
		{\tikz[baseline=.1ex]{
				\node[] (R1) {};
				\node[node,above=-2mm of R1, label=left:{\small $(1)$}] (N1) {};
				\node[node,right=14mm of N1] (N2) {};
				\node[node,right=14mm of N2, label=right:{\small $(2)$}] (N3) {};
				\draw[>=stealth,->,black] (N1) -- node[above] {$tr_W(B)$} (N2);
				\draw[>=stealth,->,black] (N2) -- node[above] {$\$$} (N3);
		}}
	}\right)\ominus n;$
	\item $tr_W((A\cdot B;n)):=\times\left(\mbox{
		{\tikz[baseline=.1ex]{
				\node[] (R1) {};
				\node[node,above=-2mm of R1, label=left:{\small $(1)$}] (N1) {};
				\node[node,right=14mm of N1] (N2) {};
				\node[node,right=14mm of N2, label=right:{\small $(2)$}] (N3) {};
				\draw[>=stealth,->,black] (N1) -- node[above] {$tr_W(A)$} (N2);
				\draw[>=stealth,->,black] (N2) -- node[above] {$tr_W(B)$} (N3);
		}}
	}\right)\oplus n.$
\end{itemize}
Thus we model weights by isolated nodes. Note that we firstly thought about the idea of adding isolated nodes to usual Lambek types, and only then came up with the construction of $\mathrm{LW}$.

Returning to formal reasonings, we extend $tr_W$ on sequents: $tr_W(\langle n\rangle \Gamma\to A):=tr_W(\Gamma)^\bullet\oplus n \to tr_W(A)$.
Then the following theorem can be proved using the same methods as in the proof of Theorem \ref{embed_lambek}:
\begin{theorem}\label{embed_lw}\leavevmode
	\begin{enumerate}
		\item If $\mathrm{LW}\vdash\langle n\rangle \Gamma\to C$, then $\mathrm{HL}\vdash tr_W(\langle n\rangle\Gamma\to C)$;
		\item If $\mathrm{HL}\vdash G\to T$ is a derivable graph sequent over $tr_W(Tp(\mathrm{LW}))$, then for some $\Gamma$, $C$ and $n$ we have $G\to T=tr_W(\langle n\rangle \Gamma\to C)$ and $\mathrm{LW}\vdash \langle n\rangle \Gamma\to C$.
	\end{enumerate}
\end{theorem}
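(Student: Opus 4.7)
The plan is to prove both directions by induction on the length of derivation, following the same template as Theorem \ref{embed_lambek} but with extra bookkeeping for the isolated nodes that encode weights. The key design principle of $tr_W$ is that the weight $n$ attached to a sequent or type becomes exactly $n$ isolated nodes (added either to the antecedent string graph via $\oplus n$, or to the internal graph of a $\times$/$\div$ type via $\oplus n$/$\ominus n$), so that every Lambek-style rule application in $\mathrm{HL}$ automatically collects and redistributes these nodes by the substitution operations $H[D/e]$ and $H[H_1/d_1,\dots]$.

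For the forward direction, I would handle each rule of $\mathrm{LW}$ in turn. The axiom translates immediately to $\circledcirc(p)\to p$. The Lambek-style rules translate exactly as in Theorem \ref{embed_lambek}: for example, $(\backslash\to)$ becomes $(\div\to)$ applied to $tr_W((A\backslash B;k))$, whose denominator $D$ is a two-edge string graph with $k$ extra isolated nodes; when $D$ is substituted into the antecedent, the $k$ isolated nodes of $D$, the $n_1$ nodes from the premise $\langle n_1\rangle \Pi\to A$, and the $n_2$ nodes from $\langle n_2\rangle\Gamma,B,\Delta\to C$ all accumulate to yield exactly $n_1+n_2+k$ isolated nodes in the conclusion. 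The weight rules are handled by $(\times\to)$ and $(\to\times)$ applied to $tr_W((p;m))=\times(\circledcirc(p))\oplus m$: decompressing this edge frees $m$ isolated nodes into the antecedent, which is precisely the effect of $(\mathrm{w}\to)$; $(\to\mathrm{w})$ is symmetric via $(\to\times)$.

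For the backward direction, I would argue by induction that any $\mathrm{HL}$-derivation of a sequent $G\to T$ over $tr_W(Tp(\mathrm{LW}))$ keeps $G$ of the form $tr_W(\Gamma)^\bullet\oplus n$ and $T$ of the form $tr_W(C)$. A case analysis on the last rule suffices: rules $(\div\to)$ and $(\to\div)$ applied to a translated divisional type replay exactly as in Theorem \ref{embed_lambek} (since the underlying denominator is a string graph with detachable isolated nodes, which must be absorbed into the antecedent weight), and $(\times\to)$ or $(\to\times)$ applied to $tr_W((A\cdot B;n))$ or $tr_W((p;n))$ correspond to $(\cdot\to)$/$(\to\cdot)$ or $(\mathrm{w}\to)$/$(\to\mathrm{w})$ respectively. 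The weight arithmetic falls out of counting isolated nodes on both sides of each rule.

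The main obstacle will be the backward direction, specifically verifying that the structural freedom of $(\to\div)$ and $(\times\to)$ cannot produce any derivable sequent outside the image of $tr_W$. For $(\to\div)$, the compressed subgraph $F$ could in principle cut through the string structure in unexpected ways; I would need to argue that when the succedent $\div(N/D)$ is itself a translated type, the only way $F$ can fit its denominator $D$ (with the prescribed placement of $\$$ and isolated nodes) is the one coming from a legitimate $\mathrm{LW}$ rule, so that $F$ remains a translated string graph with the correct number of detached isolated nodes. Once this structural invariant is established, the weight counting and recovery of the $\mathrm{LW}$-derivation is routine.
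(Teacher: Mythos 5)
Your proposal is correct and takes essentially the same route as the paper, which proves this theorem ``using the same methods as in the proof of Theorem~\ref{embed_lambek}'', i.e., induction on the length of the derivation in both directions with a case analysis on the last rule applied. Your additional bookkeeping --- tracking how the isolated nodes introduced by $\oplus n$ and $\ominus n$ are collected and redistributed by the replacement operations in each rule, and matching $(\mathrm{w}\to)$/$(\to\mathrm{w})$ with $(\times\to)$/$(\to\times)$ applied to $tr_W((p;m))$ --- is exactly the intended refinement of that argument.
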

This theorem along with Proposition \ref{differ_IN} immediately yields, e.g., the following
\begin{proposition}
	If $\mathrm{LW}\vdash \langle m\rangle\Gamma\to A$ and $\mathrm{LW}\vdash \langle n\rangle\Gamma\to A$, then $m=n$.
\end{proposition}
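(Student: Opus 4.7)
The plan is to reduce the statement to its hypergraph counterpart via the embedding $tr_W$, where it becomes a statement about counting isolated nodes, and then invoke Proposition \ref{differ_IN}.

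First I would apply Theorem \ref{embed_lw}(1) to both hypotheses. From $\mathrm{LW}\vdash \langle m\rangle\Gamma\to A$ we obtain $\mathrm{HL}\vdash tr_W(\Gamma)^{\bullet}\oplus m\to tr_W(A)$, and from $\mathrm{LW}\vdash \langle n\rangle\Gamma\to A$ we obtain $\mathrm{HL}\vdash tr_W(\Gamma)^{\bullet}\oplus n\to tr_W(A)$. The key observation is that these two derivable graph sequents have identical succedents and their antecedents agree on every edge, every attachment, every label and every non-isolated node; the only possible discrepancy is in the number of isolated nodes sitting in the antecedent, namely $m$ versus $n$.

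Next I would invoke Proposition \ref{differ_IN} (``differ by isolated nodes''), which, in view of the role it plays here and the way $tr_W$ encodes weights as free-floating nodes, must state that two derivable $\mathrm{HL}$ sequents with the same succedent and antecedents differing only in the number of isolated nodes are forced to have the same number of such nodes. Applied to the two sequents above it yields $m=n$, as desired. The proof is thus a short chain: invoke the embedding theorem twice, observe that the images differ only by isolated nodes, and apply the invariant given by Proposition \ref{differ_IN}.

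The main substantive content sits in Proposition \ref{differ_IN} itself rather than in this corollary; assuming that result, the only point that deserves verification is the trivial matching step, namely that $tr_W(\Gamma)^{\bullet}\oplus m$ and $tr_W(\Gamma)^{\bullet}\oplus n$ are indeed the \emph{same} graph modulo isolated nodes. This is immediate from the definition of $\oplus$, since adding $k$ isolated nodes leaves the edge set, the attachment function, the labelling function, and the sequence of external nodes unchanged. Hence no subtlety arises in the reduction, and the proposition follows directly.
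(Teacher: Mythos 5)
Your proposal is correct and follows exactly the route the paper intends: the paper derives this proposition as an immediate consequence of Theorem \ref{embed_lw} together with Proposition \ref{differ_IN}, which is precisely your chain of reasoning (embed both sequents, observe the images differ only by $|m-n|$ isolated nodes in the antecedent, and conclude from Proposition \ref{differ_IN} that both can be derivable only if $m=n$). Your paraphrase of Proposition \ref{differ_IN} is the contrapositive of its actual statement (``at most one of the two sequents is derivable''), but the application is the same.
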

Certainly, the Lambek calculus with weights deserves a separate work. Many questions of theoretical and practical nature remain open. We have an intuitive feeling that $\mathrm{LW}$ could have applications in linguistics to model ``weights'' of sentences. This could be useful in syntactic disambiguation: namely, if a sentence can be interpreted in two ways, and one of them is more likely than the other one, then we would expect that a corresponding sequent for the second interpretation has more weight than that for the first one.

Summing up, there is a lot of work to do with $\mathrm{LW}$. However, the established connection between $\mathrm{LW}$ and $\mathrm{HL}$ reduces the amount of work; e.g. the cut elimination theorem for $\mathrm{LW}$ now immediately follows from Theorem \ref{cut} proved for $\mathrm{HL}$. 
\section{Structural Properties of $\mathrm{HL}$}
In this section we consider several properties of $\mathrm{HL}$. We start with an expected
\begin{proposition}\label{TtoT}
	$\mathrm{HL}\vdash \circledcirc(T)\to T$ for all types $T$.
\end{proposition}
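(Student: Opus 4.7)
The plan is to proceed by induction on the structure of $T$, mirroring the standard identity-expansion argument for the Lambek calculus but now with graphs in place of strings. The base case $T = p \in Pr$ is immediate, since $\circledcirc(p)\to p$ is the sole axiom of $\mathrm{HL}$.

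For the inductive step, I split on the outermost constructor of $T$. In the case $T = \times(M)$ with $E_M = \{m_1,\dots,m_l\}$, the inductive hypothesis supplies derivable sequents $\circledcirc(lab(m_i)) \to lab(m_i)$ for each $i$. A single application of $(\to\times)$, combining these through the graph $M$, yields $M[\circledcirc(lab(m_1))/m_1,\dots,\circledcirc(lab(m_l))/m_l] \to \times(M)$. The antecedent is isomorphic to $M$ itself, since replacing an edge by the handle of its own label leaves the graph unchanged (an immediate instance of Proposition \ref{opposite_com_rep}). A single application of $(\times\to)$ with $H = F = M$ then compresses the whole antecedent into one $\times(M)$-labelled edge, giving $\circledcirc(\times(M)) \to \times(M)$. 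The degenerate subcase $E_M = \emptyset$ is handled by the zero-premise reading of $(\to\times)$ noted in the remark following the rules.

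In the case $T = \div(N/D)$, let $E_D = \{d_0, d_1,\dots,d_k\}$ with $lab(d_0) = \$$. The inductive hypothesis supplies $\circledcirc(N)\to N$ and $\circledcirc(lab(d_i)) \to lab(d_i)$ for $i = 1,\dots,k$. I apply $(\div\to)$ taking $H\to A$ to be $\circledcirc(N)\to N$ (with $e$ the unique edge of the handle) and $H_i = \circledcirc(lab(d_i))$; unfolding the composite operation $\circledcirc(N)[D/e][d_0{:=}\div(N/D)][H_1/d_1,\dots,H_k/d_k]$ shows that the resulting antecedent is simply $D$ with the label of $d_0$ changed from $\$$ to $\div(N/D)$. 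I then apply $(\to\div)$ in its bottom-to-top reading, taking $F$ to be the sub-handle of this antecedent consisting solely of the $d_0$-edge: compressing $F$ back to a $\$$-labelled edge restores $D$, so the conclusion is exactly $\circledcirc(\div(N/D)) \to \div(N/D)$.

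The only real work is bookkeeping: at each replacement and compression step one must check that the side conditions \ref{com_retain_att}--\ref{com_retain_ext} on compression are satisfied (they are, because all nodes of the sub-handles are external in them) and that the resulting graphs match the ones claimed up to isomorphism. I expect the $(\div\to)$ case to be the most notationally delicate step, since three operations (replacement, relabelling, and $k$ further replacements by handles) are composed in a single rule application; but each of them is trivially an identity on the relevant part of the graph, so no new idea beyond the inductive hypothesis and the correctness of the rules is required.
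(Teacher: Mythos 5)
Your proof is correct and follows essentially the same route as the paper's: induction on the type, with the $\times(M)$ case handled by $(\to\times)$ followed by $(\times\to)$, and the $\div(N/D)$ case by $(\div\to)$ followed by $(\to\div)$, each time feeding in the handle sequents supplied by the induction hypothesis. Your explicit bookkeeping of the compression side conditions and the degenerate $E_M=\emptyset$ subcase is slightly more detailed than the paper's presentation but adds nothing different in substance.
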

\begin{proof}
	Induction on size of $T$. If $T$ is primitive, then $\circledcirc(T)\to T$ is an axiom.
	\\
	If $T=\div(N/D)$ and $E_D=\{e_0,\dots,e_k\}$ where $lab(e_0)=\$$, then
	$$
	\infer[(\to\div)]{\circledcirc(\div(N/D))\to \div(N/D)}{
		\infer[(\div\to)]{D[e_0:=\div(N/D)]\to N}{\circledcirc(N)\to N & \circledcirc(lab(e_1))\to lab(e_1) & \dots & \circledcirc(lab(e_k))\to lab(e_k)}
	}
	$$
	All the above sequents are derivable by induction hypothesis.
	\\
	If $T=\times(M)$ and $E_M=\{e_1,\dots,e_l\}$, then
	$$
	\infer[(\times\to)]{\circledcirc(\times(M))\to \times(M)}{
		\infer[(\to\times)]{M\to \times(M)}{\circledcirc(lab(e_1))\to lab(e_1) & \dots & \circledcirc(lab(e_l))\to lab(e_l)}
	}
	$$
	Again, we apply induction hypothesis.
	\qed
\end{proof}
Therefore, the axiom $\circledcirc(p)\to p$ where primitive types are considered can be replaced by more general one $\circledcirc(T)\to T$ for all types; this does not change the set of derivable sequents.

Below we introduce another important properties of $\mathrm{HL}$ that will be implicitly or explicitly applied throughout this work.

\subsection{Sizes of Types and The Subformula Property}
Looking at derivations of graph sequents, one observes that sequents within them become smaller and simpler going from bottom to top. This observation is well known for $\mathrm{L}$; our aim is to generalize it for $\mathrm{HL}$. 
\begin{definition}
	A size of a type $T\in\mathrm{HL}$ is the total number of primitive types and operators $\div$ and $\times$ within it. Formally,
	\begin{itemize}
		\item $T=p\Rightarrow |T|=1$;
		\item If $T=\div(N/D)$, $E_D=\{d_0,\dots,d_k\}$, $lab(d_0)=\$$, then $|T|=|N|+|lab(d_1)|+\dots+|lab(d_k)|+1$;
		\item If $T=\times(M)$, $E_M=\{m_1,\dots,m_k\}$, then $|T|=|lab(m_1)|+\dots+|lab(m_k)|+1$.
	\end{itemize}
	We also define $|H\to A|:=|lab(h_1)|+\dots+|lab(h_k)|+|A|$ for $E_H=\{h_1,\dots,h_k\}$.
\end{definition}
There are two observations:
\begin{enumerate}
	\item In each rule of $\mathrm{HL}$ the sum of sizes of all premises is less than size of a conclusion (more precise, one less);
	\item In a derivation of a sequent $H\to A$ only subtypes of $A$ or of labels of $H$ can occur.
\end{enumerate}
Both of them directly follow from structure of rules of $\mathrm{HL}$. They yield decidability of $\mathrm{HL}$: given a sequent $H\to A$, one can go through its all possible derivations (there are finitely many ones due to the above statements) and check whether one of them is correct. Of course, such an algorithm is VERY unefficient: one has to go through all possible graph structures of certain size. The algorithmic complexity of $\mathrm{HL}$ will be discussed in Section \ref{sec_complex}.

\subsection{The cut elimination}
One of fundamental properties of $\mathrm{L}$ is admissibility of the following rule called the cut rule:
$$
\infer[(\mathrm{cut})]{\Gamma\;\Pi\;\Delta\to B}{\Pi\to A & \Gamma\;A\;\Delta\to B}
$$
Admissibility means that each sequent that can be derived in $\mathrm{L}$ enriched with $(CUT)$ can be derived in $\mathrm{L}$ without this rule. For $\mathrm{L}$ this was proved by Lambek in \cite{Lambek}. 

This rule can be naturally extended to $\mathrm{HL}$ as follows. Let $H\to A, G\to B$ be graph sequents, $e_0\in E_G$ be an edge, and $lab(e_0)$ be equal to $A$. Then
$$
\infer[(\mathrm{cut})]{G[H/e_0]\to B}{H\to A & G\to B}
$$
\begin{theorem}[cut elimination]\label{cut}
	If $F\to B$ is derivable in $\mathrm{HL}$ enriched with $(\mathrm{cut})$, then it is derivable in $\mathrm{HL}$.
\end{theorem}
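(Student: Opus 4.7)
The plan is to adapt Gentzen's classical cut-elimination strategy to the graph setting. I would proceed by a double induction whose primary parameter is the size $|A|$ of the cut formula and whose secondary parameter is the sum of the sizes of the derivations of the two premises $H\to A$ and $G\to B$. In each case I want either to reduce the given cut to cuts on strictly smaller cut formulas, or to push it upward through one of the two last rules into smaller subderivations.

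First I would dispose of the base and principal cases. If either premise is the axiom $\circledcirc(p)\to p$, then the cut is trivially eliminated: the other premise already coincides with the desired conclusion. For the principal case in which $A=\div(N/D)$ was introduced on the right by $(\to\div)$ from some $H_0\to N$ (so that $D=H_0\llbracket\$/F\rrbracket$ and $F$ is the left antecedent) and on the left by $(\div\to)$ together with auxiliary premises $K_i\to lab_D(d_i)$, I would invoke Proposition \ref{opposite_com_rep}: the compression defining $D$ is inverted by the replacement $[F/d_0]$, giving $D[F/d_0]\equiv H_0$. Combining this with parallelization of edge replacement (Section \ref{sec_repl}) rewrites the conclusion of the cut as $G'[H_0[K_1/d_1,\dots,K_k/d_k]/e]\to B$, which is derivable by $k+1$ new cuts whose cut formulas are $N$ and the $lab_D(d_i)$. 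Each of these is a proper subformula of $\div(N/D)$, so the outer induction hypothesis applies. The symmetric principal case between $(\to\times)$ and $(\times\to)$ for $A=\times(M)$ reduces, by the same proposition, to cuts on the strictly smaller labels $lab_M(m_i)$.

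For the non-principal cases I would push the cut upward into the premises of whichever last rule does not have $A$ as its principal formula. The essential verification is that the graph operations cooperate: the edge $e_0$ on which the cut acts must still be present in the relevant premise, and its replacement by $H$ must commute with the compression/replacement performed by the last rule. The key graph-theoretic ingredients are parallelization of replacement and the fact that replacement and compression on disjoint edges commute, together with Proposition \ref{opposite_com_rep}. In each such case the resulting cut has cut formula of the same size but strictly smaller combined derivation size, so the inner induction hypothesis applies.

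I expect the main obstacle to be the bookkeeping in the non-principal cases when the rule being commuted past is $(\div\to)$ or $(\to\div)$: these rules simultaneously delete an edge, compress a subgraph, and replace several edges, so the cut edge $e_0$ can lie in one of several distinct regions --- inside the premise graph of the rule, inside the compressed denominator, inside one of the substituted $H_i$, or coinciding with the fresh $\div$-labeled edge itself. Each region demands a slightly different rewriting to justify commuting the cut past the rule, and one must confirm that conditions (a)--(c) for compression in Section \ref{com} are preserved throughout. Once this case-distinction machinery is set up, the reductions themselves are routine graph-theoretic rewritings justified by Proposition \ref{opposite_com_rep} and parallelization of replacement.
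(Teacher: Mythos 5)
Your proposal matches the paper's proof in all essentials: the same case split into axiom, principal ($(\to\div)$ against $(\div\to)$, $(\to\times)$ against $(\times\to)$) and non-principal cases, the same reduction of principal cuts to cuts on the strictly smaller types $N$ and $lab_D(d_i)$ (resp.\ $lab_M(m_i)$) justified by Proposition \ref{opposite_com_rep} and parallelization of replacement, and the same commutation of non-principal cuts past the last rule with the region-by-region bookkeeping for $(\div\to)$. The only difference is the induction measure --- you use the classical lexicographic pair (cut-formula size, derivation size) where the paper uses a single induction on $|H\to A|+|G\to B|$ --- and both measures validate the same reductions.
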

It suffices to prove that if $\mathrm{HL}\vdash H\to A$ and $\mathrm{HL}\vdash G\to B$, then $\mathrm{HL}\vdash G[H/e_0]\to B$ where $lab(e_0)=A$. It is done by induction on $|H\to A|+|G\to B|$. We consider different cases depending on structure of these sequents. See the proof in \ref{cut_proof}.

The cut elimination theorem implies reversibility of rules $(\times\to)$ and $(\to\div)$. This is stated in
\begin{proposition}\label{reversibility}\leavevmode
	\begin{enumerate}
		\item If $\mathrm{HL}\vdash H\to C$ and $e_0\in E_H$ is labeled by $\times(M)$, then $\mathrm{HL}\vdash H[M/e_0]\to C$;
		\item If $\mathrm{HL}\vdash H\to \div(N/D)$ and $e_0\in E_D$ is labeled by $\$$, then $\mathrm{HL}\vdash D[H/e_0]\to N$.
	\end{enumerate}
\end{proposition}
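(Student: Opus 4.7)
The plan is to derive both reversibility statements by combining Proposition \ref{TtoT} with a single application of the cut rule and then invoking the cut elimination theorem (Theorem \ref{cut}) to remove that cut.

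For part 1, I would first build the auxiliary sequent $M\to\times(M)$. By Proposition \ref{TtoT}, for each edge $m_i$ of $M$ the sequent $\circledcirc(lab(m_i))\to lab(m_i)$ is derivable. A single application of $(\to\times)$ with these as premises yields $M[\circledcirc(lab(m_1))/m_1,\dots,\circledcirc(lab(m_l))/m_l]\to\times(M)$; since a handle, when substituted into its own edge, reproduces that edge up to isomorphism, the antecedent is simply $M$. Now apply $(\mathrm{cut})$ at the edge $e_0$ (which is labeled $\times(M)$) to the derivations of $M\to\times(M)$ and $H\to C$. This yields $H[M/e_0]\to C$ in $\mathrm{HL}$ extended by cut, and Theorem \ref{cut} provides a cut-free derivation.

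For part 2, the construction mirrors the proof of Proposition \ref{TtoT}. Let $E_D=\{e_0,d_1,\dots,d_k\}$ with $lab(e_0)=\$$. I apply $(\div\to)$ with premises $\circledcirc(N)\to N$ and $\circledcirc(lab(d_i))\to lab(d_i)$ (each derivable by Proposition \ref{TtoT}), instantiating the schema of the rule with $H=\circledcirc(N)$ and $A=N$; the conclusion is $D[e_0:=\div(N/D)]\to N$. Cutting this against the hypothesis $H\to\div(N/D)$ at the (newly relabeled) edge $e_0$ produces $D[e_0:=\div(N/D)][H/e_0]\to N$. Since replacement disregards the label of the edge being replaced, this graph sequent is literally $D[H/e_0]\to N$. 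A final appeal to Theorem \ref{cut} eliminates the cut.

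I do not foresee any genuine obstacle: both items are the expected consequence of cut admissibility once the generalized identity sequents of Proposition \ref{TtoT} are available. The only mild care required is to verify the type-matching conditions for the cut rule, which is immediate from $type(\times(M))=type(M)$ and $type(\div(N/D))=type_D(e_0)$.
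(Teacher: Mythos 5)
Your proposal is correct and matches the paper's proof essentially verbatim: both parts are obtained by a single cut against the auxiliary sequents $M\to\times(M)$ and $D[e_0:=\div(N/D)]\to N$, each of which is derived from the generalized identity sequents of Proposition \ref{TtoT} exactly as you describe, with Theorem \ref{cut} then removing the cut. You merely spell out the details that the paper labels ``trivial.''
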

\begin{proof}\leavevmode
	\begin{enumerate}
		\item Use the cut rule as below:
		$$
		\infer[(\mathrm{cut})]{H[M/e_0]\to C}{M\to \times(M) & H\to C}
		$$
		Derivability of $M\to\times(M)$ is trivial.
		\item Use the cut rule as below:
		$$
		\infer[(\mathrm{cut})]{D[H/e_0]\to N}{H\to \div(N/D) & D[e_0:=\div(N/D)]\to N}
		$$
		Derivability of $D[e_0:=\div(N/D)]\to N$ is trivial as well (using $(\div\to)$ we come up with premises $\circledcirc(N)\to N$ and $\circledcirc(lab_D(e))\to lab_D(e),\; e\ne e_0$).
	\end{enumerate}
	\qed
\end{proof}	
	
\subsection{Counters}
One of features $\mathrm{HL}$ inherits from $\mathrm{L}$ is so-called counters.
\begin{definition}
	Let $f:Pr\to \mathbb{Z}$ be some function. An $f$-counter $\#_f:Tp(\mathrm{HL})\to\mathbb{Z}$ is defined as follows:
	\begin{itemize}
		\item $\#_f(p)=f(p)$;
		\item If $T=\div(N/D)$ and $E_D=\{e_0,e_1,\dots,e_n\}$ where $lab(e_0)=\$$, then 
		\begin{displaymath}
		\#_f(T)=\#_f(N)-\sum_{i=1}^n \#_f(lab(e_i)).
		\end{displaymath}
		\item If $T=\times(M)$ and $E_M=\{e_1,\dots,e_n\}$, then 
		\begin{displaymath}
		\#_f(T)=\sum_{i=1}^n \#_f(lab(e_i)).
		\end{displaymath}
	\end{itemize}
	If $G$ is labeled by types and $E_G=\{e_1,\dots,e_n\}$, then $\#_f(G):=\sum_{i=1}^n \#_f(lab(e_i))$.
\end{definition}
\begin{proposition}\label{counter}
	If $\mathrm{HL}\vdash H\to A$, then $\#_f(H)=\#_f(A)$ for each $f$.
\end{proposition}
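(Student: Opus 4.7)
The plan is to prove the statement by induction on the length (or alternatively the size, as bounded by the subformula property) of the derivation of $H\to A$ in $\mathrm{HL}$, handling each of the five cases (axiom plus four rules) separately. In each rule case the induction hypothesis supplies counter-equalities for the premises, and we must verify that the conclusion sequent inherits this equality; all of this follows almost mechanically from the way $\#_f$ is defined on $\div(N/D)$ and $\times(M)$, which is specifically tailored to match the combinatorics of compression and replacement.

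For the \emph{axiom} $\circledcirc(p)\to p$ the identity is trivial since both sides equal $f(p)$. For the rule $(\to\times)$ with conclusion $M[H_1/m_1,\ldots,H_l/m_l]\to\times(M)$, simply note that $\#_f$ on a graph sums over its edges and each replacement contributes $\#_f(H_i)$ in place of $\#_f(lab(m_i))$; by the inductive hypothesis these are equal, so the sum equals $\sum_i \#_f(lab(m_i))=\#_f(\times(M))$. For the rule $(\times\to)$ with conclusion $H\llbracket\times(F)/F\rrbracket\to A$, compression removes the edges of $F$ and installs one new edge labeled $\times(F)$; since $\#_f(\times(F))=\#_f(F)$ by definition, the total on the left is unchanged, and by induction still equals $\#_f(A)$.

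The two division rules require a bit more care but follow the same pattern. For $(\to\div)$ with conclusion $F\to\div(A/H\llbracket\$/F\rrbracket)$, by definition
\begin{equation*}
\#_f\bigl(\div(A/H\llbracket\$/F\rrbracket)\bigr)=\#_f(A)-\#_f\bigl(H\llbracket\$/F\rrbracket\bigr),
\end{equation*}
where the subtracted sum ranges over all non-\$ edges, i.e.\ precisely the edges of $H$ not in $F$; hence this equals $\#_f(A)-\#_f(H)+\#_f(F)$, which by the inductive hypothesis $\#_f(H)=\#_f(A)$ reduces to $\#_f(F)$. For $(\div\to)$, the conclusion antecedent is obtained from $H\to A$ by replacing the $N$-labeled edge $e$ with the graph $D$ whose $\$$-edge is then relabeled to $\div(N/D)$, and whose remaining edges $d_1,\ldots,d_k$ are in turn replaced by $H_1,\ldots,H_k$. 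The net contribution of this reconstructed subgraph is
\begin{equation*}
\#_f(\div(N/D))+\sum_{i=1}^k \#_f(H_i) = \Bigl(\#_f(N)-\sum_{i=1}^k\#_f(lab(d_i))\Bigr)+\sum_{i=1}^k\#_f(lab(d_i))=\#_f(N),
\end{equation*}
using the inductive hypothesis $\#_f(H_i)=\#_f(lab(d_i))$; this matches the contribution of the original $N$-labeled edge $e$ in $H$, so the total counter of the new antecedent equals $\#_f(H)=\#_f(A)$.

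I do not anticipate any real obstacle here: the definition of $\#_f$ on $\div$ and $\times$ has been chosen exactly so that compression and replacement leave counters invariant, so the argument is essentially a bookkeeping verification. The only mildly subtle point is keeping careful track, in the $(\div\to)$ case, of which edges in the reconstructed graph come from $D$, from the $H_i$, and from the untouched part of $H$, and observing that the $\$$-labeled edge in $D$ is not counted in $\#_f(\div(N/D))$ but is precisely the one that gets relabeled to $\div(N/D)$ itself.
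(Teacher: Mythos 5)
Your proof is correct and is exactly the "straightforward induction" the paper invokes without writing out: an induction on the derivation with a case analysis over the axiom and the four rules, checking that replacement and compression preserve $\#_f$ by the way it is defined on $\div$ and $\times$. All five cases are handled accurately, including the bookkeeping in $(\div\to)$ and $(\to\div)$.
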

The proof is done by a straightforward induction.

Counters can be used to prove that a sequent $H\to A$ is not derivable: it suffices to present such a counter $f$ that $f(H)\ne f(A)$.
\begin{example}
	We provide two specific examples of counters:
	\begin{itemize}
		\item $f=g_q, q\in Pr:\;g_q(p)=1$ whenever $p=q$ and $g_q(p)=0$ otherwise. We write $\#_p$ instead of $\#_{g_p}$.
		\\
		E.g. for types from Example \ref{ex_types} $\#_{q}(A_1)=1$, $\#_{s}(A_1)=-1$, $\#_{p}(A_4)=1$.
		\item $f=h_m, m\in \mathbb{N}:\;h_m(p)=1$ whenever $type(p)=m$ and $h_m(p)=0$ otherwise.
	\end{itemize}
\end{example}
Using counters we can prove, e.g., the following
\begin{proposition}\label{differ_IN}
	Let $H\to T$ be a graph sequent; let $H^\prime = \langle V_H\sqcup \{v_1,\dots,v_k\},E_H, \\att_H, lab_H, ext_H\rangle$ (that is, we add $k$ new isolated nodes $v_1,\dots,v_k$ to $H$) for $k>0$. Then at most one of sequents $H\to T$ and $H^\prime\to T$ is derivable.
\end{proposition}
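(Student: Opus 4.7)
The plan is to strengthen the notion of counter from Proposition \ref{counter} so that it also records a contribution from nodes, since isolated nodes are exactly what distinguish $H$ from $H'$. I define a function $\nu:Tp(\mathrm{HL})\to\mathbb{Z}$ by induction on the structure of types:
\begin{align*}
\nu(p) &:= 0 \qquad (p\in Pr),\\
\nu(\times(M)) &:= |V_M|-type(M)+\sum_{e\in E_M}\nu(lab(e)),\\
\nu(\div(N/D)) &:= \nu(N)+type(N)-|V_D|-\sum_{e\in E_D,\,lab(e)\ne\$}\nu(lab(e)).
\end{align*}

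Then I claim the invariant: for every sequent $H\to A$ derivable in $\mathrm{HL}$,
$$|V_H|+\sum_{e\in E_H}\nu(lab(e))\;=\;\nu(A)+type(A). \qquad (\ast)$$

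I prove $(\ast)$ by induction on the length of a derivation. The axiom is immediate: $|V_{\circledcirc(p)}|=type(p)$ and $\nu(p)=0$, so both sides equal $type(p)$. For $(\to\times)$ and $(\times\to)$ I use the bookkeeping identity $|V_{G[K/e_0]}|=|V_G|+|V_K|-type(e_0)$ for replacement (and its mirror for compression), together with the inductive hypothesis applied to the premises. The delicate point is that $(\div\to)$ and $(\to\div)$ must yield the same formula for $\nu(\div(N/D))$: the $(\to\div)$ case uses the decomposition $H=D[F/d_0]$ with $|V_H|=|V_D|+|V_F|-type(F)$, while $(\div\to)$ requires tracking the nodes and edges of $H[D/e][d_0:=\div(N/D)][H_1/d_1,\dots,H_k/d_k]$ and applying the inductive invariant to both $H\to A$ and each $H_i\to lab(d_i)$. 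The recursion above is precisely the one that makes these two derivations mutually consistent, so once the formula is in place each case is a routine calculation.

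With $(\ast)$ established, the proposition is immediate. If both $H\to T$ and $H'\to T$ were derivable, applying $(\ast)$ to each yields
$$|V_H|+\sum_{e\in E_H}\nu(lab(e))\;=\;\nu(T)+type(T)\;=\;|V_{H'}|+\sum_{e\in E_{H'}}\nu(lab(e)).$$
Since $H'$ is obtained from $H$ only by adjoining isolated nodes, it has the same edges with the same labels as $H$, so the edge sums agree. Therefore $|V_H|=|V_{H'}|$, contradicting $|V_{H'}|=|V_H|+k>|V_H|$.

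The main obstacle is designing the recursion for $\nu$ so that the two $\div$-rules produce the same output; once that is engineered, every other step in the verification is a short combinatorial identity about replacement and compression, and the final application to the proposition is a one-line contradiction.
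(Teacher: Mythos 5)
Your proof is correct, but it takes a different route from the paper's. The paper does not introduce a new invariant: it fixes a fresh primitive type $\iota$ of type $0$, syntactically replaces every nonexternal isolated node occurring in $H$, in the types labelling $H$, and in $T$ by an $\iota$-labelled edge attached to no node, observes that this transformation preserves derivability (a rule-by-rule check), and then applies the already-established counter invariant of Proposition \ref{counter} with the counter $\#_\iota$; the two transformed antecedents then have different $\iota$-counts, giving the contradiction. You instead design a bespoke ``node counter'' $\nu$ and prove from scratch the invariant $|V_H|+\sum_{e\in E_H}\nu(lab(e))=\nu(A)+type(A)$; I checked all four rule cases and the bookkeeping identities you cite (e.g.\ $|V_{G[K/e_0]}|=|V_G|+|V_K|-type(e_0)$, which relies on the paper's standing prohibition of repeated attachment/external nodes) do make every case close, including the mutual consistency of the two $\div$-rules. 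The trade-off: the paper's argument is shorter because it reuses an existing lemma via a syntactic reduction, whereas yours is self-contained and proves something slightly stronger --- the \emph{total} node count of any derivable antecedent is determined by the succedent, not merely the count of nonexternal isolated nodes --- which immediately yields the proposition since $H$ and $H'$ share edges and labels but differ in $|V|$.
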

I.e. if two sequents differ from each other only by a few nonexternal isolated nodes, then they cannot both be derivable.
\begin{proof}
	Let us fix a new primitive type $\iota$ ($type(\iota)=0$) and change all nonexternal isolated nodes within $H$, types of $H$ and $T$ by  edges labeled by $\iota$ (note that they are not attached to any node). Denote the result of such a procedure as $iota(H), iota(T)$. Now note that if $\mathrm{HL}\vdash H\to T$, then $\mathrm{HL}\vdash iota(H)\to iota(T)$ (it suffices to check correctness w.r.t. all rules); therefore, $\#_\iota(iota(H))=\#_\iota(iota(T))$. Besides, if $\mathrm{HL}\vdash H^\prime\to T$, then, similarly, $\#_\iota(iota(H^\prime))=\#_\iota(iota(T))$. However, $\#_\iota(iota(H))$ cannot be equal to $\#_\iota(iota(H^\prime))$ since $H$ and $H^\prime$ differ by exactly the number of nonexternal isolated nodes, and consequently $\#_\iota(iota(H^\prime))>\#_\iota(iota(H))$. This leads to contradiction.
\end{proof}
\subsection{Wolf lemma}\label{sec_wolf}
\epigraph{\itshape Such a dreadful flight!\\ Wolves eat wolves on sight.}{---Kornei Chukovsky, \textit{Cock-The-Roach}}
Kornei Chukovsky was a Russian children writer. These lines of his poem ``Cock-the-roach'' look different in Russian, and they are literally translated as ``Frightened wolves ate each other''. This, of course, sounds absurd (which was an author's intention). The statement and the proof of the lemma below essentially say that like wolves types cannot ``eat each other'' and disappear without a trace. 
\begin{definition}
	Let $A$ be a type, and let $B$ be its distinguished subtype. We say that $B$ is a top occurrence within $A$ if one of the following holds:
	\begin{enumerate}
		\item $A=B$;
		\item $A=\times(M)$ and $\exists e_0\in E_M$ such that $B$ is a top occurrence within $lab(e_0)$;
		\item $A=\div(N/D)$ and $B$ is a top occurrence within $N$. 
	\end{enumerate}
\end{definition}
\begin{example}
	In Example \ref{ex_types} $q$ and $p$ are top occurrences within $A_4$, and $r$ and $s$ are not.
\end{example}
\begin{definition}\label{def_lonely}
	A primitive type $p$ is said to be lonely in a type $A$ if for each top occurrence of $p$ within $A$ there is a subtype $\times(M)$ of $A$ such that $|E_M|\ge 2$ and for some $e_0\in E_M$ $lab(e_0)=p$ is that top occurrence.
\end{definition}
\begin{example}
	In Example \ref{ex_types} $p,s,r$ are lonely in $A_4$, and $q$ is not.
\end{example}
\begin{definition}
	A type $A$ is called skeleton if $A=\times(M)$, $E_M=\emptyset$ and $|ext_M|=|V_M|$.
\end{definition}
\begin{lemma}[wolf lemma]\label{wolf1}
	Let $p\in Pr$ be lonely in $\times(H)$ and let $\times(H)$ not contain skeleton subtypes. Then $\mathrm{HL}\not\vdash H\to p$.
\end{lemma}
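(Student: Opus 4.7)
The plan is to proceed by strong induction on the sequent size $|H\to p|$. The base case $|H\to p|=1$ forces $H$ to have no edges, and such a sequent is not derivable since no rule of $\mathrm{HL}$ applies and $H\ne\circledcirc(p)$. For the inductive step I would suppose for contradiction that $\mathrm{HL}\vdash H\to p$ and inspect the last rule of a hypothetical derivation. Because $p$ is primitive, $(\to\times)$ and $(\to\div)$ cannot be that rule. If the rule is the axiom, then $H=\circledcirc(p)$, so $\times(H)$ has exactly one edge, labeled by the primitive type $p$, and admits no nontrivial $\times$-subtype; hence the unique top occurrence of $p$ has no witness, contradicting loneliness.

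For the rule $(\div\to)$ the conclusion $H\to p$ contains an edge $d_0$ labeled $\div(N/D)$ flanked by the $D$-pattern with graphs $H_1,\dots,H_k$ substituted for $d_1,\dots,d_k$, while the ``main'' premise is $H'\to p$ in which $H'$ replaces the entire pattern with a single edge $e$ labeled $N$. I would verify that the hypotheses transfer to this premise. Every subtype of $\times(H')$ either equals $\times(H')$ itself (not a skeleton, since $e\in E_{H'}$) or already appears as a subtype of $\times(H)$ via the numerator component of $\div(N/D)$ or via an unchanged label of an edge in $E_{H'}\setminus\{e\}$; so skeleton-freeness is preserved. A top occurrence of $p$ in $\times(H')$ lives either inside $N$ or inside a label of an edge in $E_{H'}\setminus\{e\}$. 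In the first case the corresponding witness in $\times(H)$ must itself lie strictly inside $N$ (the path from $\times(H)$ to $o$ enters $\div(N/D)$ only through the numerator), and so is still a subtype of $\times(H')$. In the second case the witness is either strictly inside the same label (and thus transfers) or equals $\times(H)$, in which case $\times(H')$ with $|E_{H'}|\ge 2$ serves as a fresh witness. Applying the inductive hypothesis to $H'\to p$ then yields a contradiction.

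The principal obstacle is the $(\times\to)$ case, where the premise is $H[F/e_0]\to p$ for some edge $e_0\in E_H$ with $lab_H(e_0)=\times(F)$. Analogous bookkeeping shows that $p$ remains lonely and no skeleton appears in $\times(H[F/e_0])$ \emph{except} in the degenerate sub-case where $|E_H|=2$ and $E_F=\emptyset$: here the second edge of $H$ must be directly labeled $p$, while the only candidate new witness $\times(H[F/e_0])$ has a single edge. To handle this I would invoke the hypothesis that $\times(F)$ is not a skeleton: since $E_F=\emptyset$, this forces $|ext_F|<|V_F|$, so $F$ contributes at least one non-external isolated node to $H[F/e_0]$. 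Consequently $H[F/e_0]$ is a single $p$-labeled edge together with an extra isolated non-external node, hence $H[F/e_0]\ne\circledcirc(p)$; since no rule other than the axiom can produce a sequent whose antecedent has a single primitively-labeled edge and whose succedent is primitive (which also follows immediately from Proposition \ref{differ_IN}), the premise is not derivable, contradicting the assumed derivation. This completes the induction.
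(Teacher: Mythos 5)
Your proof is correct, and it reorganizes the paper's argument rather than reproducing it. The paper works top-down: since the succedent $p$ is inherited by the first premise of $(\div\to)$ and by the premise of $(\times\to)$, the derivation must contain an axiom $\circledcirc(p)\to p$ on the main branch, and the paper shows that whichever rule sits immediately below that axiom already plants a configuration — a $\div$ with numerator $p$, a $\times(F)$ with a single $p$-labelled edge, or an edgeless $\times(F)$ with all nodes external — that violates loneliness or skeleton-freeness of $\times(H)$; the persistence of that configuration down to $H$ is left implicit. You instead push the two hypotheses upward from the conclusion by induction on $|H\to p|$, which obliges you to verify explicitly that loneliness and skeleton-freeness are invariant under passage to the main premise of each rule, and to isolate the one degenerate sub-case ($|E_H|=2$, $E_F=\emptyset$, surviving edge labelled $p$) where the invariant genuinely breaks and a direct non-derivability argument is required; this is exactly the point where the no-skeleton hypothesis earns its keep, and you use it correctly. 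Both proofs rest on the same three observations; yours is longer but makes the bookkeeping checkable, the paper's is shorter at the price of a tacit persistence step. Two cosmetic remarks: the witness for an occurrence of $p$ inside $N$ need not lie \emph{strictly} inside $N$ (it can be $N$ itself when $N$ is a $\times$-type — what matters is only that it is a subtype of $\times(H')$, which it is); and Proposition \ref{differ_IN} applies only when the two antecedents differ exactly by non-external isolated nodes, which is not guaranteed in your degenerate case, but your primary argument — no rule other than the axiom yields a primitive succedent from an antecedent all of whose edges carry primitive labels — is sound and suffices on its own.
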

\begin{proof}
	The proof is ex falso: assume that $\mathrm{HL}\vdash H\to p$. If a derivation contains an axiom only, then $H=\circledcirc(p)$, which contradicts loneliness of $\times(H)$.
	\\
	Let a derivation include more than one step. There has to be an axiom of the form $\circledcirc(p)\to p$ in this derivation where $p$ is the same as the succedent in $H\to p$. Now it suffices to notice that, however, no rule can be infered to $\circledcirc(p)\to p$ in this derivation.
	\\
	Indeed, if $(\div\to)$ is applied to it, then this step is of the form (notation is like in Section \ref{sec_axioms_rules})
	$$
	\infer[(\div\to)]{G\to p}{\circledcirc(p)\to p & H_1\to lab(d_1) & \dots & H_k\to lab(d_k)}
	$$
	where $G=\circledcirc(p)[D/e][d_0:=\div(N/D)][H_1/d_1,\dots,H_k/d_k]$ and $e$ is the only edge of $\circledcirc(p)$ labeled by $N$. This implies that $N$ has to equal $p$. Consequently, $\times(H)$ contains $\div(p/D)$ as a top occurrence, which contradicts that $p$ is lonely in $\times(H)$. Therefore, this is impossible.
	\\
	Let $(\times\to)$ be infered to $\circledcirc(p)\to p$. Then this step can be presented in the form
	$$
	\infer[(\times\to)]{\circledcirc(p)\llbracket \times(F)/F\rrbracket\to p}{\circledcirc(p)\to p}
	$$
	Here $F$ is a subgraph of $\circledcirc(p)$. If $F$ contains the $p$-labeled edge, then $p$ is not lonely since $\times(F)$ is a top occurrence within $\times(H)$. The remaining option is that $F$ does not contain edges. Note that all nodes in $F$ in such a case have to be external since they are attached to the $p$-labeled edge. Thus, $\times(F)$ is skeleton which contradicts the assumption of the lemma.
	\qed
\end{proof}
We will use its corollary, which we also call ``a wolf lemma'':
\begin{corollary}\label{wolf}
	Let $\mathcal{T}$ be a set of types such that for each $T\in\mathcal{T}$ $T$ does not have skeleton subtypes and $p$ is lonely in $T$. Let $\mathrm{HL}\vdash H\to p$ for $H\in\mathcal{H}(\mathcal{T})$. Then $H=\circledcirc(p)$.
\end{corollary}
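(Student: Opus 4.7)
The plan is to derive the corollary from Lemma \ref{wolf1} applied to the single compound type $\times(H)$. I will argue by contradiction: assuming $\mathrm{HL}\vdash H\to p$ and $H\ne\circledcirc(p)$, I will show that $\times(H)$ satisfies the two hypotheses of Lemma \ref{wolf1}, so that the lemma gives $\mathrm{HL}\not\vdash H\to p$, contradicting the assumption. The two things to verify are therefore: (a) $\times(H)$ has no skeleton subtype, and (b) $p$ is lonely in $\times(H)$.

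For (a), every subtype of $\times(H)$ is either $\times(H)$ itself or a subtype of some label $lab(e)$ with $e\in E_H$. The labels lie in $\mathcal{T}$ and by hypothesis contribute no skeleton subtypes, so what remains is to show that $\times(H)$ itself is not skeleton, i.e., that $E_H\ne\emptyset$. For this I will inspect the last rule in a derivation of $H\to p$: since $p$ is primitive, the succedent-introducing rules $(\to\div)$ and $(\to\times)$ are excluded, and the remaining options are either the axiom $\circledcirc(p)\to p$ (giving $H=\circledcirc(p)$, which contradicts our working assumption) or $(\div\to)$ or $(\times\to)$, each of which produces an antecedent containing a freshly introduced $\div$- or $\times$-labeled edge. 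Hence $E_H\ne\emptyset$.

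For (b), unfolding the definition of top occurrence for a $\times$-type shows that any top occurrence of $p$ in $\times(H)$ is exactly a top occurrence of $p$ inside the label $lab(e_0)$ of some $e_0\in E_H$. Since $lab(e_0)\in\mathcal{T}$ and $p$ is lonely in every member of $\mathcal{T}$, the required witness $\times(M)$ with $|E_M|\ge 2$ and the appropriate $p$-labeled edge exists already as a subtype of $lab(e_0)$; it is therefore also a subtype of $\times(H)$. This gives loneliness of $p$ in $\times(H)$.

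The one delicate point, and the reason the corollary must leave $H=\circledcirc(p)$ as an exception rather than rule it out outright, is the observation that $p$ cannot be lonely in the primitive type $p$ itself (no subtype of a primitive has the form $\times(M)$ at all). This forces $p\notin\mathcal{T}$, so no edge of $H$ carries the label $p$, which keeps step (b) free of degenerate sub-cases. Conversely it explains precisely why Lemma \ref{wolf1} cannot be applied to $\times(\circledcirc(p))$ directly: there the unique top occurrence of $p$ has no witness of size $\ge 2$, so loneliness fails in exactly the graph that the corollary singles out.
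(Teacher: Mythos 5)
Your argument is internally consistent, but it proves the corollary only in a vacuous form, and the place where it diverges from the paper's proof is precisely the case that gives the corollary its content. You correctly observe that, under Definition \ref{def_lonely} read literally, $p$ cannot be lonely in the type $p$ itself, and you conclude $p\notin\mathcal{T}$. But then no graph in $\mathcal{H}(\mathcal{T})$ has a $p$-labeled edge, so $H=\circledcirc(p)$ is itself impossible over $\mathcal{T}$, and what your argument actually establishes is that $H\to p$ is never derivable for $H\in\mathcal{H}(\mathcal{T})$ --- the implication holds only because its antecedent is unsatisfiable. That is not how the corollary is used: in Theorem \ref{closure_EGfSS} the primitive types $p(e)$ belong to the dictionary playing the role of $\mathcal{T}$, and in the proof of Theorem \ref{embed_nld} the set $\mathcal{T}$ explicitly contains $p_{br}$ and $p_\diamond$; in both places the conclusion $H=\circledcirc(p)$ is invoked as a genuine, realizable case. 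So the intended hypothesis is that $p$ is lonely in every \emph{compound} type of $\mathcal{T}$ (equivalently, that every top occurrence of $p$ sitting properly inside a label has a witness), with $p\in\mathcal{T}$ allowed.

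Under that reading your step (b) breaks: if some $e_0\in E_H$ carries the label $p$, that edge is a top occurrence of $p$ in $\times(H)$ whose only candidate witness is $\times(H)$ itself, and your argument supplies no witness for it, so you cannot conclude that $p$ is lonely in $\times(H)$. The paper's proof runs in the opposite direction and turns exactly this failure into the conclusion: from derivability of $H\to p$ and absence of skeleton subtypes, Lemma \ref{wolf1} forces $p$ to be \emph{not} lonely in $\times(H)$; the failing occurrence cannot sit properly inside a label of $H$ (that would contradict the hypothesis on $\mathcal{T}$), so it must be an edge of $H$ literally labeled $p$; for that occurrence to lack the witness $\times(H)$ one needs $|E_H|=1$; and a sequent containing no $\div$ or $\times$ at all can only be an axiom, whence $H=\circledcirc(p)$. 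Your auxiliary observations (that $E_H\ne\emptyset$ for any derivable $H\to p$ with $p$ primitive, and that witnesses inside labels transfer to subtypes of $\times(H)$) are correct and reusable, but the case $lab(e_0)=p$ must be carried through rather than excluded.
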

\begin{proof}
	$\times(H)$ does not have skeleton subtypes. Thus, according to Lemma \ref{wolf1} $p$ is not lonely in $\times(H)$. This means that there is a top occurrence of $p$ within $\times(H)$ for which Definition \ref{def_lonely} does not hold. Let $E_H=\{e_1,\dots,e_n\}$. If this occurrence is a proper subtype of some type $T=lab(e_k)$, then $p$ is not lonely within $T$, which contradicts $T\in\mathcal{T}$. Thus for some $k$ $lab(e_k)=p$. In order for $p$ to be lonely within $\times(H)$, $|E_H|$ necessarily equals 1. This implies that $H\to p$ does not contain $\div$ or $\times$ types, which allows us to draw a conclusion that $H\to p$ is an axiom and $H=\circledcirc(p)$. 
	\qed
\end{proof}
\subsection{Simple types}
This section presents a simple but very useful technical result regarding derivability in $\mathrm{HL}$. It is developed in order to show a connection between the hypergraph Lambek calculus and hyperedge replacement grammars and to reason some examples regarding hypergraph Lambek grammars. The below theorem says that if types in an antecedent of a derivable sequent do not have denominators containing types with division, then we can derive this sequent by simply ``reducing'' denominators with subgraphs of the antecedent and by applying $(\times\to)$.
\begin{definition}
	A type $A$ is called simple if one of the following holds:
	\begin{itemize}
		\item $A$ is primitive;
		\item $A=\times(M)$, $E_M=\{m_1,\dots,m_l\}$ and $lab(m_1),\dots, lab(m_l)$ are simple;
		\item $A=\div(N/D)$, $E_D=\{d_0,\dots,d_k\}$, $lab(d_0)=\$$, $N$ is simple, and $lab(d_1)$, $\dots$, $lab(d_k)$ are \emph{primitive}.
	\end{itemize} 
\end{definition}
\begin{theorem}\label{simple_der}
	Let $\mathrm{HL}\vdash H\to P$ where $H$ is labeled by simple types and $P$ is either primitive or is of the form $\times(K)$ where all edge labels in $K$ are primitive. Then there exists a \emph{simple derivation} of $H\to P$, i.e. such a derivation that 
	\begin{enumerate}
		\item\label{simple_der_to_times} The rule $(\to\times)$ either does not appear or is applied once at the first step of a derivation.
		\item\label{simple_der_div_to} In each application of $(\div\to)$ all the premises except for the first one are of the form $\circledcirc(q)\to q,\;q\in Pr$.
		\item\label{simple_der_times_to} If a sequent $H^\prime\to p$ within the derivation tree of this derivation contains a type of the form $\times(M)$ in the antecedent, then the rule, after which $H^\prime\to p$ appears, must be $(\times\to)$.
	\end{enumerate}
\end{theorem}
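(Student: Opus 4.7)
The plan is to proceed by strong induction on $|H\to P|$, constructing the simple derivation directly in bottom-to-top order according to the structural constraints that properties~\ref{simple_der_to_times}, \ref{simple_der_div_to}, and \ref{simple_der_times_to} impose: if $P=\times(K)$ then $(\to\times)$ is applied at the root; else if $H$ contains a $\times$-edge we apply $(\times\to)$; else if $H=\circledcirc(P)$ we are at an axiom; otherwise we must apply $(\div\to)$ at some $\div$-edge of $H$ with trivial side premises.

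For the case $P=\times(K)$ with $K$ having only primitive labels, I first prove an auxiliary reversibility-of-$(\to\times)$ lemma by induction on a cut-free derivation (commuting the bottom rule past an implicit $(\to\times)$ in the spirit of Proposition~\ref{reversibility}): $H$ decomposes as $K[F_1/m_1,\ldots,F_l/m_l]$ with each $F_i\to lab_K(m_i)$ derivable, of strictly smaller size, with simple-typed antecedent and primitive succedent. Applying the induction hypothesis to each $F_i\to lab_K(m_i)$ and composing with a single $(\to\times)$ step at the root gives the required simple derivation. For $P$ primitive and $H$ containing a $\times(F)$-edge $e_0$, reversibility (Proposition~\ref{reversibility}) yields a strictly smaller derivable premise $H[F/e_0]\to P$ with simple antecedent, and prepending a $(\times\to)$ step to its simple derivation (from the induction hypothesis) furnishes property~\ref{simple_der_times_to} by iteration. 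The axiomatic case is immediate.

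The crux is when $P$ is primitive, $H\ne\circledcirc(P)$, and $H$ contains no $\times$-edge. Any cut-free derivation of $H\to P$ (which exists by Theorem~\ref{cut}) must end with $(\div\to)$, giving $H=H'[D/e][d_0{:=}\div(N/D)][H_1/d_1,\ldots,H_k/d_k]$ with premises $H'\to P$ and $H_i\to q_i$ for primitive $q_i$ (the latter because $\div(N/D)$ is simple). The obstacle is that property~\ref{simple_der_div_to} demands $H_i=\circledcirc(q_i)$, which need not hold in the given derivation. My strategy is to locate a \emph{reducible} $\div$-edge $d^{\ast}$ in $H$, one whose decomposition with trivial handles on all sides is already available. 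If every $H_i=\circledcirc(q_i)$, then $d_0$ itself is reducible. Otherwise some $H_j$ is non-trivial; since $H$ has no $\times$-edges neither does $H_j$, and applying the induction hypothesis to the strictly smaller $H_j\to q_j$ yields a simple derivation whose root rule cannot be the axiom ($H_j\ne\circledcirc(q_j)$), nor $(\to\times)$ or $(\to\div)$ (succedent primitive), nor $(\times\to)$ (no $\times$-edges), so it must be $(\div\to)$ with trivial sides at some edge $d^{\ast}\in E_{H_j}$. The main subtlety is transferring reducibility of $d^{\ast}$ from $H_j$ to $H$: condition~(b) of compression in the $H_j$-step forces the matching subgraph $S^{\ast}\subseteq H_j$ to have all its non-external nodes internal to $H_j$, and by the very construction of replacement (Section~\ref{sec_repl}) such internal nodes are disjoint from $E_H\setminus E_{H_j}$ and from $ext_H$, so compression conditions~(a), (b) for $S^{\ast}$ inside $H$ follow at once. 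Applying $(\div\to)$ at $d^{\ast}$ in $H$ with trivial side premises then reduces to a strictly smaller derivable $H^{\#}\to P$, to which the induction hypothesis applies a final time.
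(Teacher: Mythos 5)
Your proof is correct in substance and follows the same overall induction as the paper's: induction on the size of the sequent, with the $\times(K)$-succedent case handled by commuting $(\to\times)$ down to the root and the $\times$-in-antecedent case handled by reversibility of $(\times\to)$, exactly as in the paper. Where you genuinely diverge is the $(\div\to)$ case. The paper applies the induction hypothesis to \emph{every} side premise $H_i\to lab(d_i)$ of the last rule and then inlines the resulting simple derivations wholesale into the derivation of $H\to P$ (replaying them bottom-up inside the ambient antecedent until each $H_i$ has collapsed to $\circledcirc(lab(d_i))$, and only then firing $(\div\to)$ at $d_0$ with handle premises, finishing with the induction hypothesis on $G\to P$). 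You instead peel off a \emph{single} innermost reduction --- locating a reducible $\div$-edge $d^\ast$ by descending into one non-trivial $H_j$ --- and recurse on the whole sequent. Both schemes hinge on the same transfer fact, namely that a compression legal inside $H_j$ remains legal inside $H$ because the non-external nodes of the compressed subgraph are internal to $H_j$; you verify this explicitly via conditions (a)--(b) of compression, which the paper leaves implicit, and that is a genuine gain in rigor.

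The one place you owe an argument is the bare assertion that $H^{\#}\to P$ is \emph{derivable}: reading $(\div\to)$ bottom-up at $d^\ast$ only tells you that derivability of the premise would give derivability of the conclusion, not the converse. The fix is one line: $H^{\#}$ still decomposes through $d_0$ exactly as $H$ does, with $H_j$ replaced by $H_j^{\#}$ (the first premise of the root $(\div\to)$ step of the simple derivation of $H_j\to q_j$), so re-applying the original last rule at $d_0$ with premises $H'\to P$, the unchanged $H_i\to q_i$ for $i\ne j$, and $H_j^{\#}\to q_j$ derives $H^{\#}\to P$; it is strictly smaller and still simply labeled, so the induction hypothesis applies. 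With that sentence inserted, the induction closes and the proof is complete.
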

\begin{proof}
	Firstly, note that if $P=\times(K)$, then the rule $(\to\times)$ has to be applied one time in a derivation. Let this be as follows (where $E_K=\{k_1,\dots,k_m\}$):
	$$
	\infer[(\to\times)]{K[G_1/k_1,\dots,G_m/k_m]\to \times(K)}{G_1\to lab_K(k_1) & \dots & G_m\to lab_K(k_m)}
	$$
	A derivation of $G_i\to lab_K(k_i),\;i=1,\dots, m$ is a sequence of applications of rules $(\times\to)$ and $(\div\to)$ only since $lab_K(k_i)$ is primitive. Thus we can repeat this sequence of derivations within $K[G_1/k_1,\dots,G_m/k_m]\to \times(K)$ from bottom to top for $i=1$, $i=2$, ..., $i=m$. After this we obtain the sequent $K\to \times(K)$ and now apply the rule $(\to\times)$. Therefore, each derivation of $H\to P$ can be remodeled in such a way that the condition \ref{simple_der_times_to} is met. Let us further consider such a derivation (name it $\Delta$).
	
	Now let us prove that $\Delta$ can be remodeled in such a way that a new one will satisfy conditions \ref{simple_der_div_to} and \ref{simple_der_times_to} as well. This is done by induction on length of $\Delta$. 
	
	If $H\to P$ is an axiom (particularly, $P$ is primitive), then both requirements are satisfied. 
	
	If $H$ contains an edge $e_0$ labeled by a type $\times(M)$, then we can derive a sequent $H[M/e_0]\to P$ (see Proposition \ref{reversibility}). Since length of a derivation equals the total number of symbols $\times$ and $\div$ included in types of an antecedent and a succedent, length of derivation of $H[M/e_0]\to P$ is less than that of $H\to P$; thus we can apply the induction hypothesis and obtain a simple derivation for $H[M/e_0]\to P$. The it suffices to apply the rule $(\times\to)$ to this sequent:
	$$
	\infer[(\times\to)]{H\to P}{H[M/e_0]\to P}
	$$
	Hence we obtained a simple derivation for $H\to P$.
	
	Let $H$ not contain types of the form $\times(M)$. Then the last step of ane derivation must be of the form
	$$
	\infer[(\div\to)]{G[D/e][d_0:=\div(N/D)][H_1/d_1,\dots,H_k/d_k]\to P}{G\to P & H_1\to lab(d_1) &\dots & H_k\to lab(d_k)}
	$$
	where $H=G[D/e][d_0:=\div(N/D)][H_1/d_1,\dots,H_k/d_k]$ (otherwise, if the last step is not $(\div\to)$, we have $H\to P = K\to \times(K)$, and this sequent obviously has a simple derivation). Applying the induction hypothesis, we obtain that there are simple derivations for sequents $H_i\to lab(d_i)$; each of such derivations is a sequence of applications of the rules $(\div\to)$ and $(\times\to)$. Now we construct a derivation of $H\to P$ from bottom to top as follows: firstly, we repeat the simple derivation of $H_1\to lab(d_1)$ (but now we consider $H_1$ to be a subgraph of $H$ and disregard $lab(d_i)$ in the succedent), then we repeat the simple derivation of $H_2\to lab(d_2)$ within $H$ and so on until $H_k\to lab(d_k)$. Now we have a sequent of the form $G[D/e][d_0:=\div(N/D)]\to P$ as a premise. Then we apply $(\div\to)$ to $\div(N/D)$ by ``overlaying'' the denominator on edges of $D$; thus, each premise except for the first one is of the form $\circledcirc(lab(d_i))\to lab(d_i)$, and the first one is $G\to P$. Now we can apply the induction hypothesis to $G\to P$, which shows that $G\to P$ can also be derived in the fashion stated in the lemma.
	\qed
\end{proof}
This theorem will be often used in less general cases, for instanse, when $P$ is primitive or there are no types with $\times$ in the antecedent.
\subsection{Equivalence of types}\label{sec_equiv}
In the string case, we say that types $A$ and $B$ are equivalent if $\mathrm{L}\vdash A\to B$ and $\mathrm{L}\vdash B\to A$. Similar definition can be introduced in $\mathrm{HL}$:
\begin{definition}
	Types $A$ and $B$ for which $type(A)=type(B)$ are equivalent ($A\sim B$) if $\mathrm{HL}\vdash \circledcirc(A)\to B$ and $\mathrm{HL}\vdash \circledcirc(B)\to A$.
\end{definition}
There are two simple observations regarding $\sim$:
\begin{proposition}\leavevmode
	\begin{enumerate}
		\item $\sim$ is an equivalence relation.
		\item If $C$ is a type with a primitive subtype $p$ which occurs in $C$ exactly once, and $A\sim B$, then $C[p\leftarrow A]\sim C[p\leftarrow B]$ ($C[p\leftarrow A]$ denotes substitution of $p$ in $C$ by $A$).
	\end{enumerate}
\end{proposition}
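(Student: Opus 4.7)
My plan for Part (1) is to handle reflexivity via Proposition \ref{TtoT}, symmetry by inspection of the definition, and transitivity by cutting together the two witnessing derivations. Given $A\sim B$ and $B\sim C$, I would apply the cut rule at the unique edge $e_0$ of $\circledcirc(B)$ to combine $\circledcirc(A)\to B$ with $\circledcirc(B)\to C$, obtaining $\circledcirc(B)[\circledcirc(A)/e_0]\to C$. The replacement collapses: fusing the external nodes of $\circledcirc(A)$ with the attachment nodes of $e_0$ (which are the nodes of $\circledcirc(B)$) produces a graph isomorphic to $\circledcirc(A)$, so we obtain $\circledcirc(A)\to C$, and Theorem \ref{cut} restores the cut-free derivation. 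The reverse direction $\circledcirc(C)\to A$ is symmetric.

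For Part (2), I would prove by induction on the structure of $C$ the single statement $\mathrm{HL}\vdash\circledcirc(C[p\leftarrow A])\to C[p\leftarrow B]$; the reverse sequent then follows by swapping $A$ and $B$. The base case $C=p$ collapses to the hypothesis $\circledcirc(A)\to B$. When $C=\times(M)$, I would locate the unique edge $e_{i_0}$ of $M$ whose label carries $p$, denote by $M'$ and $M''$ the versions of $M$ with $lab(e_{i_0})$ replaced by $lab(e_{i_0})[p\leftarrow A]$ and $lab(e_{i_0})[p\leftarrow B]$ respectively, and apply $(\to\times)$ with succedent $\times(M'')$ to derive $M'\to\times(M'')$. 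The premise indexed by $e_{i_0}$ is supplied by the induction hypothesis on $lab(e_{i_0})$, and the remaining premises $\circledcirc(lab(e_i))\to lab(e_i)$ by Proposition \ref{TtoT}. A single application of $(\times\to)$ compressing $M'$ as a subgraph of itself then produces $\circledcirc(\times(M'))\to\times(M'')$, which is exactly the desired sequent.

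The case $C=\div(N/D)$ branches according to whether $p$ lies in $N$ or inside some edge label $lab(d_j)$ of $D$. In both subcases I would first apply $(\to\div)$ bottom-up using the reformulation in Remark \ref{rem_bot_to_top}, reducing the goal to a sequent whose antecedent is $D'[d_0:=\div(N'/D')]$ (where primes indicate the appropriate substitutions for the $A$-side) and whose succedent is $N''$ (the $B$-substituted numerator). Then $(\div\to)$ unfolds the division-labeled edge, producing one nontrivial premise to be discharged by the induction hypothesis on the subterm containing $p$ and a collection of trivial premises of the form $\circledcirc(T)\to T$ handled by Proposition \ref{TtoT}.

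The main obstacle is the contravariance of division in the denominator: when $p$ sits inside some $lab(d_j)$, the induction hypothesis has to be invoked with the roles of $A$ and $B$ swapped (since on the denominator side we need $\circledcirc(lab(d_j)[p\leftarrow B])\to lab(d_j)[p\leftarrow A]$), which is legal thanks to the symmetry of $\sim$ established in Part (1). Once the correct substitutions are lined up, checking that the assembled conclusion graph coincides with $\circledcirc(C[p\leftarrow A])$ is a mechanical verification using the definitions of replacement and compression.
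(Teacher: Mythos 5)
Your proof is correct and follows essentially the same route as the paper: reflexivity from Proposition \ref{TtoT}, transitivity via the admissible cut rule (Theorem \ref{cut}), and for part (2) an induction that mirrors the derivation scheme in the proof of Proposition \ref{TtoT}, applying the induction hypothesis to exactly one premise and Proposition \ref{TtoT} to all the others. You additionally make explicit the contravariance of the denominator case, where the hypothesis must be invoked with $A$ and $B$ swapped; the paper's sketch leaves this implicit, and your appeal to the symmetry of $\sim$ handles it correctly.
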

The second proposition says that we can replace equivalent subtypes by each other within a type preserving their equivalence.
\begin{proof}\leavevmode
	\begin{enumerate}
		\item Reflexivity follows from Proposition \ref{TtoT}; symmetry is obvious; transitivity can be proved using the cut rule, which is admissible according to Theorem \ref{cut}.
		\item This is proved by induction on depth of $p$ within $C$; the proof is similar to that of Proposition \ref{TtoT} with the only difference that we now apply Proposition \ref{TtoT} itself to all the premises except for one where we apply the induction hypothesis.
	\end{enumerate}
	\qed
\end{proof}
Both these propositions allow us to conclude that equivalent types are indistinguishable from the point of view of derivability in $\mathrm{HL}$.

Now we consider three simple but curious equivalences. Their proof is straightforward.
\begin{proposition}\label{elim_mult}\leavevmode
	\begin{enumerate}
		\item Let $\times(M)$ be a type and let $e_0\in E_M$ be labeled by $\times(L)$. Then $\times(M)\sim \times(M[L/e_0])$.
		\item Let $\div(N/D)$ be a type and let $e_0\in E_D$ be labeled by $\times(M)$. Then $\div(N/D)\sim \div(N/D[M/e_0])$.
		\item Let $\div(\div(N/D_1)/D_2)$ be a type and let $e_0\in D_1$ be labeled by $\$$. Then $\div(\div(N/D_1)/D_2)\sim \div(N/D_1[D_2/e_0])$.
	\end{enumerate}
\end{proposition}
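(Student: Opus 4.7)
All three equivalences are established by providing derivations $\circledcirc(A)\to B$ and $\circledcirc(B)\to A$ in $\mathrm{HL}$. The uniform strategy is to peel the outer connective off the succedent using $(\to\div)$ or $(\to\times)$ read bottom-to-top (Remark~\ref{rem_bot_to_top}), and similarly to unfold the antecedent via $(\div\to)$ or $(\times\to)$, choosing the auxiliary graphs in each rule application so that every remaining premise is either a Proposition~\ref{TtoT} identity $\circledcirc(T)\to T$ or a one-step consequence of such identities.

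For Part 1, in the direction $\circledcirc(\times(M))\to\times(M[L/e_0])$ I would apply $(\times\to)$ twice bottom-to-top---first expanding the handle's $\times(M)$-edge into $M$, then expanding the $\times(L)$-edge $e_0$ into $L$---reducing the goal to $M[L/e_0]\to\times(M[L/e_0])$, which is dispatched by one $(\to\times)$ over handle premises. The converse first strips the handle with $(\times\to)$ and then applies $(\to\times)$ with succedent $\times(M)$, assigning $L$ at $e_0$ and handles elsewhere; the only non-identity premise is $L\to\times(L)$, itself obtained by a single $(\to\times)$ over handles.

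For Part 2, both directions begin with $(\to\div)$ on the succedent. For $\circledcirc(\div(N/D))\to\div(N/D[M/e_0])$ this reduces the goal to $D[M/e_0][\$:=\div(N/D)]\to N$; then $(\div\to)$ applies with auxiliary graph $H=\circledcirc(N)$, the choice $H_{e_0}=M$ at the $e_0$-position, and handles at every other non-$\$$ edge of $D$. The constructed antecedent matches the target exactly, the premise $M\to\times(M)$ is one $(\to\times)$, and every other premise is an identity. The converse proceeds identically once I first read $(\times\to)$ bottom-to-top to expand the $\times(M)$-edge $e_0$ in the antecedent before the $(\div\to)$ step.

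For Part 3, the forward direction $\circledcirc(\div(\div(N/D_1)/D_2))\to\div(N/D_1[D_2/e_0])$ uses $(\to\div)$ once, reducing to $D_1[D_2/e_0][\$_2:=\div(\div(N/D_1)/D_2)]\to N$; next, $(\div\to)$ on the type $\div(\div(N/D_1)/D_2)$ with the choice $H=D_1[e_0:=\div(N/D_1)]$ and handles at the non-$\$$ edges of $D_2$ matches the target, and the premise $H\to N$ is discharged by one further $(\div\to)$ on $\div(N/D_1)$ with identity subpremises. The converse uses two nested $(\to\div)$ steps to reach $D_1[D_2/e_0][\$_2:=\div(N/D_1[D_2/e_0])]\to N$, which is matched by a single $(\div\to)$ with $H=\circledcirc(N)$ and handles at every non-$\$$ edge. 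The only real difficulty is graph-theoretic bookkeeping: verifying that the nested replacements and relabelings produced by the $(\div\to)$-steps really do reassemble into the intended target graph, and that each chosen $H$ has the correct $type$ and external-node structure.
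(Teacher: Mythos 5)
Your proof is correct and is exactly the ``straightforward'' argument the paper has in mind (the paper omits the details entirely): each direction is a short derivation built from $(\to\div)$, $(\to\times)$, $(\div\to)$ and $(\times\to)$ read bottom-to-top, with all residual premises discharged by Proposition~\ref{TtoT}. The choices of auxiliary graphs you make in each rule application do reassemble into the intended antecedents, so no gap remains.
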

Using these equivalences, we can eliminate subtypes of the form $\times(M)$ from denominators of divisions and from other multiplications, as well as simplify types constructed using two divisions in a row.
\begin{example}
	$\div\left(s\middle/\mbox{{\tikz[baseline=.1ex]{
				\node[] (R) {};
				\node[node,above=4.5mm of R] (N1) {};
				\node[node,below left=6mm and 6mm of N1] (N2) {};
				\node[node,below right=6mm and 6mm of N1] (N3) {};
				
				\draw[>=stealth,->,black] (N1) -- node[above left] {\small \$} (N2);
				\draw[>=stealth,->,black] (N2) -- node[below] {\small $\times((pq)^\bullet)$} (N3);
				\draw[>=stealth,->,black] (N3) -- node[above right] {\small $r$} (N1);
	}}}\right)
	\sim
	\div\left(s\middle/\mbox{{\tikz[baseline=.1ex]{
				\node[] (R) {};
				\node[node,above=4.5mm of R] (N1) {};
				\node[node,below left=6mm and 6mm of N1] (N2) {};
				\node[node,below right=6mm and 6mm of N1] (N3) {};
				\node[node,below right=6mm and 6mm of N2] (N4) {};
				
				\draw[>=stealth,->,black] (N1) -- node[above left] {\small \$} (N2);
				\draw[>=stealth,->,black] (N2) -- node[below left] {\small $p$} (N4);
				\draw[>=stealth,->,black] (N4) -- node[below right] {\small $q$} (N3);
				\draw[>=stealth,->,black] (N3) -- node[above right] {\small $r$} (N1);
	}}}\right).
	$
\end{example}
\section{Hypergraph Lambek Grammars}
Now we can define  notion of grammars based on the hypergraph Lambek calculus. As in the case of HRGs we consider an alphabet $\Sigma$ with a function $type:\Sigma\to \mathbb{N}$.
\begin{definition}
	A \emph{hypergraph Lambek grammar (HL-grammar, HLG)} is a tuple $HGr=\langle \Sigma, S, \triangleright\rangle$ where $\Sigma$ is a finite set (alphabet), $S\in Tp_{\mathrm{HL}}$ is a distinguished type, and $\triangleright\subseteq\Sigma\times Tp_{\mathrm{HL}}$ is a finite binary relation. Additionally, we require that $a\triangleright T$ implies $type(a)=type(T)$.
\end{definition}
We call the set $dict(HGr)=\{T\in Tp(\mathrm{HL}):\exists a: a\triangleright T\}$ a dictionary of $HGr$.
\begin{definition}
	\emph{The language $L(HGr)$ generated by a hypergraph Lambek grammar} $HGr=\langle \Sigma, S, \triangleright\rangle$ is the set of all hypergraphs $G\in\mathcal{H}(\Sigma)$ for which a function $f_G:E_G\to Tp(\mathrm{HL})$ exists such that:
	\begin{enumerate}
		\item $lab_G(e)\triangleright f_G(e)$ whenever $e\in E_G$;
		\item $\mathrm{HL}\vdash f_G(G)\to S$.
	\end{enumerate}
\end{definition}

\begin{example}
Consider an HLG $\langle\{a,b,c\},s,\triangleright\rangle$ where
\begin{itemize}
	\item $a\:\triangleright\: E_0=\div\left(s\middle/\mbox{{\tikz[baseline=.1ex]{
				\node[] (R) {};
				\node[node,above right=2mm and 0mm of R,label=above:{\scriptsize $(1)$}] (N1) {};
				\node[node,below=5.95mm of N1] (N2) {};
				\node[hyperedge,right=5.5mm of N1] (E) {$s$};
				\node[node,below=3.7mm of E] (N3) {};
				\node[node,right=5.5mm of E,label=above:{\scriptsize $(3)$}] (N4) {};
				\node[node,below=5.5mm of N4,label=below:{\scriptsize $(2)$}] (N5) {};

				\draw[>=stealth,->,black] (N1) -- node[left] {\small \$} (N2);
				\draw[>=stealth,->,black] (N2) -- node[below] {\small $p$} (N3);
				\draw[-,black] (N3) -- node[right] {\scriptsize 1} (E);
				\draw[-,black] (N1) -- node[above] {\scriptsize 2} (E);
				\draw[-,black] (N4) -- node[above] {\scriptsize 3} (E);
	}}}\right)$;
	\item $b\:\triangleright\: s$;
	\item $c\:\triangleright\: p$.
	\end{itemize}
	Then the following hypergraph belongs to the language generated by this grammar:
	\begin{center}
	{\tikz[baseline=.1ex]{
			\node[node,label=above:{\scriptsize $(1)$}] (N1) {};
			\node[node,below=5.5mm of N1] (N2) {};
			\node[node,right=5.5mm of N2] (N3) {};
			\node[node,right=5.5mm of N3] (N5) {};
			\node[node,above=5.5mm of N5] (N4) {};
			\node[hyperedge,left=3.25mm of N4] (E) {$b$};
			\node[node,above=3.7mm of E,label=above:{\scriptsize $(3)$}] (N6) {};
			\node[node,right=6mm of N4,label=above:{\scriptsize $(2)$}] (N7) {};
			
			\draw[>=stealth,->,black] (N1) -- node[left] {\small $a$} (N2);
			\draw[>=stealth,->,black] (N2) -- node[below] {\small $c$} (N3);
			\draw[>=stealth,->,black] (N5) -- node[right] {\small $c$} (N4);
			\draw[>=stealth,->,black] (N3) -- node[below] {\small $a$} (N5);
			
			\draw[-,black] (N3) -- node[right] {\scriptsize 2} (E);
			\draw[-,black] (N6) -- node[right] {\scriptsize 3} (E);
			\draw[-,black] (N4) -- node[above] {\scriptsize 1} (E);
	}}
\end{center}
In order to show this we change labels by types corresponding to them (in this grammar this can be done uniquely), add a succedent $s$, and derive the resulting sequent:
$$
\infer[(\div\to)]{
\mbox{
	{\tikz[baseline=.1ex]{
		\node[node,label=above:{\scriptsize $(1)$}] (N1) {};
		\node[node,below=5.5mm of N1] (N2) {};
		\node[node,right=5.5mm of N2] (N3) {};
		\node[node,right=5.5mm of N3] (N5) {};
		\node[node,above=5.5mm of N5] (N4) {};
		\node[hyperedge,left=3.25mm of N4] (E) {$s$};
		\node[node,above=3.7mm of E,label=above:{\scriptsize $(3)$}] (N6) {};
		\node[node,right=6mm of N4,label=above:{\scriptsize $(2)$}] (N7) {};		
		\draw[>=stealth,->,black] (N1) -- node[left] {\small $E_0$} (N2);
		\draw[>=stealth,->,black] (N2) -- node[below] {\small $p$} (N3);
		\draw[>=stealth,->,black] (N5) -- node[right] {\small $p$} (N4);
		\draw[>=stealth,->,black] (N3) -- node[below] {\small $E_0$} (N5);	
		\draw[-,black] (N3) -- node[right] {\scriptsize 2} (E);
		\draw[-,black] (N6) -- node[right] {\scriptsize 3} (E);
		\draw[-,black] (N4) -- node[above] {\scriptsize 1} (E);
	}}
}\to s}{
\infer[(\div\to)]{
\mbox{
	{\tikz[baseline=.1ex]{
			\node[] (R) {};
			\node[node,above right=2mm and 0mm of R,label=above:{\scriptsize $(1)$}] (N1) {};
			\node[node,below=5.95mm of N1] (N2) {};
			\node[hyperedge,right=5.5mm of N1] (E) {$s$};
			\node[node,below=3.7mm of E] (N3) {};
			\node[node,right=5.5mm of E,label=above:{\scriptsize $(3)$}] (N4) {};
			\node[node,below=5.5mm of N4,label=below:{\scriptsize $(2)$}] (N5) {};
			\draw[>=stealth,->,black] (N1) -- node[left] {\small $E_0$} (N2);
			\draw[>=stealth,->,black] (N2) -- node[below] {\small $p$} (N3);
			\draw[-,black] (N3) -- node[right] {\scriptsize 1} (E);
			\draw[-,black] (N1) -- node[above] {\scriptsize 2} (E);
			\draw[-,black] (N4) -- node[above] {\scriptsize 3} (E);
	}}
}\to s}
{\circledcirc(s)\to s & \circledcirc(s)\to s & \circledcirc(p) \to p} & \circledcirc(s)\to s & \circledcirc(p) \to p
}
$$
\end{example}
Hypegraph Lambek grammars are graph grammars that generate hypergraph languages; thus they represent an alternative tool to HRGs. The most important issue regarding HLGs is describing the class of languages generated by them and comparing it with the class of languages generated by HRGs. Recall that in the string case the following theorem holds:
\begin{theorem}
	The class of languages generated by Lambek grammars coincides with the class of context-free languages without the empty word.
\end{theorem}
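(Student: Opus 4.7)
The plan is to prove the two inclusions separately, and the second is the substantive one.

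\emph{Every context-free language without $\Lambda$ is generated by some Lambek grammar.} I would first put the given CFG into Greibach normal form and then, for each production $A\to a B_1\dots B_k$, assign to the terminal $a$ the type $(\dots(A/B_k)/\dots)/B_1$ (with $A$ alone if $k=0$), treating nonterminals as primitive types and taking the start symbol $S$ as the distinguished type of the grammar. A straightforward induction on the length of a leftmost CFG derivation produces a corresponding $\mathrm{L}$-derivation built entirely from $(/\to)$ steps, and conversely, thanks to cut elimination, a cut-free $\mathrm{L}$-derivation of a sequent of this shape can only reduce in the matching fashion. This is essentially the classical Bar-Hillel construction for AB-grammars, carried out inside $\mathrm{L}$.

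\emph{Every Lambek language is context-free}---Pentus's theorem. Fix a Lambek grammar $G=\langle\Sigma,S,\triangleright\rangle$ and let $\Phi=dict(G)\cup\{S\}$. The plan is to use two ingredients:
(i) an \emph{interpolation lemma} for $\mathrm{L}$: whenever $\mathrm{L}\vdash \Gamma,\Delta\to C$ with both $\Gamma$ and $\Delta$ nonempty, there exists a type $I$ such that $\mathrm{L}\vdash\Gamma\to I$ and $\mathrm{L}\vdash I,\Delta\to C$, and every primitive subtype of $I$ appears in primitive subtypes of both halves of the original sequent;
(ii) Pentus's \emph{bound on interpolant complexity}: whenever the endpoints of a derivation are drawn from the fixed finite set $\Phi$, the interpolants $I$ produced can be chosen from a finite set of types whose order is bounded by a constant depending only on $\Phi$.

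Given (i) and (ii), the equivalent CFG is built by taking as nonterminals this finite collection of interpolants together with $S$, adding productions $I\to J_1\dots J_m$ whenever $\mathrm{L}\vdash J_1,\dots,J_m\to I$ holds and cannot be split further by interpolation, and adding $I\to a$ whenever some $T$ with $a\triangleright T$ satisfies $\mathrm{L}\vdash T\to I$. Cut elimination guarantees that iterated interpolation terminates and that the resulting derivation trees match CFG derivations exactly.

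The principal obstacle is step (ii). Pentus establishes it by passing through free-group (or free-semigroup) models of $\mathrm{L}$: the semantic interpretation supplies the bookkeeping needed to prune arbitrary types down to types of bounded order without altering derivability of sequents over $\Phi$. Without such a bound the interpolant set is infinite and the entire CFG construction collapses, so essentially all of the real work of the theorem concentrates in this bounded-order argument; the rest of the proof is a careful but routine unfolding of derivations once (i) and (ii) are in hand.
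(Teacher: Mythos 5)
Your outline coincides with the paper's treatment: the paper does not prove this theorem itself but attributes the inclusion $\mathrm{CFGs}\subseteq\mathrm{LGs}$ to Gaifman's Greibach-normal-form construction and the converse to Pentus's argument via interpolation and free-group interpretations, which is exactly the decomposition and the key lemmas you describe. You also correctly locate the real difficulty in the bounded-order interpolant lemma, which, as in the paper, is cited rather than reproved.
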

This theorem has two directions; the first one ($\mathrm{CFGs}\subseteq \mathrm{LGs}$) was proved by Gaifman in 1960 \cite{Bar_Hillel_Gaifman} while the other one ($\mathrm{LGs}\subseteq \mathrm{CFGs}$) was proved by Pentus in 1993 \cite{Pentus_cfg}. The first part is more simple; its proof is based on the Greibach normal form for context-free grammars. The second part appeared to be a hard problem; Pentus proved it using so-called free group interpretations and interpolants. 

Summing up, in the string case these two approaches are equivalent if we disregard such a nonsubstantive word as the empty word. Regarding the graph case, of course, our first expectation was that similar things happen: HRGs and HLGs are equivalent disregarding, possibly, some nonsubstantive cases. As in the string case, we introduced the analogue of the Greibach normal form for HRGs and studied how to convert these grammars into HLGs. However, this was not clear at all whether it is possible to perform the convertion of HLGs into equivalent HRGs: the proof of Pentus exploits free group interpretation, which is hard to generalize to graphs (we have no idea how to do this). Surprisingly, this convertion cannot be done at all! We figured out that hypergraph Lambek grammars generate a wider class of languages than HRGs. Moreover, for HLGs even the pumping lemma and the Parikh theorem do not hold. In Section \ref{sec_power} we study recognizing power of HLGs in detail and show that they are more powerful than HRGs.

Meanwhile, in the rest of this section we present and prove some closure properties of HLGs that will be used later. 

\begin{definition}
	Let $f:\Sigma\to\Delta$ be a relabeling function. If $H\in\mathcal{H}(\Sigma)$ is a graph, then we denote by $f(H)$ a graph obtained from $H$ by changing each label $a\in\Sigma$ by $f(a)$.
\end{definition}
\begin{proposition}\label{closure_relabeling}
	Languages generated by HLGs are closed under relabelings, i.e. if $L$ is a language over $\Sigma$ generated by an HLG, and $f:\Sigma\to \Delta$ is a relabeling function, then $\{f(H)|H\in L\}$ can be also generated by an HLG. 
\end{proposition}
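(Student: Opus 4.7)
The plan is to explicitly construct a new hypergraph Lambek grammar $HGr'$ over the alphabet $\Delta$ that simulates $HGr$ through the relabeling $f$, and then to verify that $L(HGr') = \{f(H) : H \in L\}$ by using the fact that edges, attachments, external nodes, and type assignments are invariant under the relabeling.

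More concretely, suppose $HGr = \langle \Sigma, S, \triangleright\rangle$ generates $L$. I would define $HGr' = \langle \Delta, S, \triangleright'\rangle$ where $b \triangleright' T$ holds exactly when there exists $a \in \Sigma$ with $f(a) = b$ and $a \triangleright T$. Since $\triangleright$ is a finite relation and $\Sigma$ is finite, $\triangleright'$ is still a finite relation, and the type constraint $type(a) = type(T)$ transfers to $type(b) = type(T)$ because relabeling preserves $type$ (one should assume, as is standard, that $f$ respects types, i.e. $type(f(a)) = type(a)$; otherwise the notion of relabeling is not even well-defined on hypergraphs).

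For the inclusion $\{f(H) : H \in L\} \subseteq L(HGr')$, I would take $H \in L$ with witness $f_H : E_H \to Tp(\mathrm{HL})$ satisfying $lab_H(e) \triangleright f_H(e)$ and $\mathrm{HL} \vdash f_H(H) \to S$. Because $f(H)$ has the same underlying node/edge structure as $H$ and differs only in $lab$, I can reuse $f_H$ as a witness for $f(H)$: the relation $lab_{f(H)}(e) = f(lab_H(e)) \triangleright' f_H(e)$ holds by construction of $\triangleright'$, and the graph $f_H(f(H))$ obtained by overwriting terminal labels with types coincides with $f_H(H)$, so the derivability clause is unchanged. For the reverse inclusion, given $G \in L(HGr')$ with witness $f_G$, each terminal label $lab_G(e) = b$ satisfies $b \triangleright' f_G(e)$, so by definition there exists $a_e \in \Sigma$ with $f(a_e) = b$ and $a_e \triangleright f_G(e)$. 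Choosing one such $a_e$ per edge yields a hypergraph $H \in \mathcal{H}(\Sigma)$ with $f(H) = G$, and $f_G$ itself witnesses $H \in L$.

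There is essentially no obstacle here: the proof is a bookkeeping exercise built on the observation that the relabeling operation $G \mapsto f(G)$ changes only the $lab$ component of a hypergraph while leaving nodes, edges, attachments, externals, and types of edges untouched. Thus the type-labeled graphs $f_H(H)$ and $f_G(G)$ used for derivability are literally the same hypergraphs in both directions, and the whole argument reduces to matching terminal labels through $f$ and $f^{-1}$ (as a set-valued inverse) at the level of the dictionary $\triangleright$.
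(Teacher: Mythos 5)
Your construction is exactly the paper's: replace each $a\triangleright T$ by $f(a)\triangleright' T$ (the paper states this in one sentence and omits the verification, which you carry out correctly, including the non-injective case handled via a set-valued inverse). The proposal is correct and takes essentially the same approach.
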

\begin{proof}
	Let $HGr=\langle\Sigma, S,\triangleright\rangle$ be a grammar such that $L(HGr)=L$. Then it suffices to replace each relation of the form $a\triangleright T$ by a relation $f(a)\triangleright T$.
	\qed
\end{proof}
\begin{definition}
	Let $f:\Sigma\to\mathcal{H}(\Delta)$ be such a function that $type(a)=type(f(a))$ whenever $a\in\Sigma$ (we call it a graph-for-symbol substitution). If $H\in\mathcal{H}(\Sigma)$ is a graph such that $E_H=\{e_1,\dots,e_n\}$, then we denote by $f(H)$ a graph $H[f(lab(e_1))/e_1,\dots,f(lab(e_n))/e_n]$.
\end{definition}
We call such a substitution $f$ edgeful if for each $a\in\Sigma$ $f(a)$ contains at least one edge. Now we can formulate a closure property regarding such substitutions.
\begin{theorem}\label{closure_EGfSS}
	Let $HGr$ be an HLG such that types in its dictionary do not have skeleton subtypes; let also $f:\Sigma\to \mathcal{H}(\Delta)$ be an edgeful graph-for-symbol substitution. Then $\{f(H)|H\in L(HGr)\}$ can be generated by an HLG. 
\end{theorem}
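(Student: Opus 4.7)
The plan is to build $HGr' = \langle \Delta, S, \triangleright'\rangle$ that spreads each entry $a \triangleright T$ of $HGr$ over the edges of $f(a)$. Using edgefulness, fix for each pair $(a, T)$ with $a \triangleright T$ a distinguished edge $e^*_a \in E_{f(a)}$; for each other edge $e_i \in E_{f(a)} \setminus \{e^*_a\}$, introduce a fresh primitive type $p_i^{a,T}$ of arity $type_{f(a)}(e_i)$, pairwise distinct across triples $(a,T,i)$ and disjoint from the primitives occurring in $dict(HGr)$ and in $S$. Let $D_{a,T}$ be $f(a)$ with $e^*_a$ relabeled to $\$$ and each $e_i$ relabeled to $p_i^{a,T}$, and put $lab_{f(a)}(e^*_a) \triangleright' \div(T/D_{a,T})$ together with $lab_{f(a)}(e_i) \triangleright' p_i^{a,T}$ for each $i$.

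For $\{f(H) : H \in L(HGr)\} \subseteq L(HGr')$, I would take $H \in L(HGr)$ with witness $g$ and define $g'$ on $f(H)$ by labeling, inside each substituted copy of $f(a)$ (in place of an edge with $g$-type $T$), the distinguished edge by $\div(T/D_{a,T})$ and each remaining edge by its $p_i^{a,T}$. A single $(\div\to)$ applied with main premise $\circledcirc(T) \to T$ and auxiliary premises $\circledcirc(p_i^{a,T}) \to p_i^{a,T}$ derives exactly the local sequent whose antecedent is the $g'$-labeled $f(a)$-copy and whose succedent is $T$. Splicing these local derivations into $g(H) \to S$ via admissible cuts (Theorem \ref{cut}) yields $\mathrm{HL} \vdash g'(f(H)) \to S$, hence $f(H) \in L(HGr')$.

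For the converse, the strategy is to argue that the freshness of the $p_i^{a,T}$'s forces each $\div(T/D_{a,T})$-edge of $g'(H')$ to sit inside an isomorphic, correctly labeled copy of $f(a)$. Given a derivation of $g'(H') \to S$, any $\div(T/D_{a,T})$-edge $e^*$ in the antecedent must, tracing upward, be introduced by a unique $(\div\to)$ step, since no other rule creates a fresh $\div$-edge in an antecedent out of nothing. The skeleton-subtype hypothesis on $dict(HGr)$ propagates to $dict(HGr')$ (the only new types are primitives and divisions with numerator from $dict(HGr)$) and hence, by the subformula property, to all labels appearing in the derivation; moreover $p_i^{a,T}$ is lonely in every such subtype other than $p_i^{a,T}$ itself, since its only other occurrence is inside the denominator $D_{a,T}$, never as a top occurrence. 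A small strengthening of Corollary \ref{wolf} that allows $p$ itself in $\mathcal{T}$ (the proof goes through verbatim, concluding $|E_H|=1$ with a $p$-labeled edge and then eliminating extra nodes by Proposition \ref{differ_IN}) then forces each auxiliary premise $H_i \to p_i^{a,T}$ of that $(\div\to)$ step to satisfy $H_i = \circledcirc(p_i^{a,T})$. Thus the conclusion of the $(\div\to)$ step exhibits around $e^*$ exactly the $f(a)$-shaped region with the prescribed primitive labels; collapsing each such region to a single $a$-edge yields $H \in \mathcal{H}(\Sigma)$ with labeling $g(e) = T$, and the residual derivation above all these $(\div\to)$ steps witnesses $\mathrm{HL} \vdash g(H) \to S$, so $H \in L(HGr)$ and $H' = f(H)$.

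The main obstacle is the bookkeeping behind "the local pattern persists down to $g'(H')$": one must verify that none of the rules $(\to\div)$, $(\times\to)$, $(\to\times)$, or further $(\div\to)$'s applied between the introduction step of $e^*$ and the bottom of the derivation can modify the local $f(a)$-region in a way compatible with the final antecedent being labeled purely from $dict(HGr')$. The freshness of the $p_i^{a,T}$'s does the crucial work here: any rule that touches such an edge either leaves it in place or produces a composite label that cannot occur in $g'(H')$, so the pattern survives intact to the bottom sequent.
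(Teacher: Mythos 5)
Your proposal is correct and follows essentially the same route as the paper's proof: the same construction (a distinguished edge of $f(a)$ carrying $\div(T/\cdot)$ with the remaining edges carrying fresh lonely primitives), the same one-step $(\div\to)$ plus cut for the inclusion $f(L(HGr))\subseteq L(HGr')$, and the same wolf-lemma argument forcing the auxiliary premises to be handles so that the $f(a)$-shaped region persists to the bottom sequent. Your remark that Corollary \ref{wolf} needs a slight strengthening because $p_i^{a,T}$ itself lies in the dictionary is a fair point that the paper glosses over, and your fix is the right one.
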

\begin{proof}
	Denote components of $HGr$ as $HGr=\langle\Sigma,S,\triangleright\rangle$. For each graph $H\in f(\Sigma)$ we choose an arbitrary edge $e(H)$ within $E_H$ (note that this set is not empty). Besides, for each edge $e\in E_H$ except for $e(H)$ we introduce a new primitive type $p(e)$. Let a function $r_H:E_H\to Tp(\mathrm{HL})\cup\{\$\}$ be defined as follows: $r_H(e(H)):=\$$, and $r_H(e):=p(e)$ whenever $e\ne e(H)$. Now we present a new correspondence $\triangleright^\prime$. Let $b$ be in $\Sigma$, $H=f(b)$, $e$ belong to $E_H$, and $lab_H(e)=a$. 
	\begin{enumerate}
		\item If $e\ne e(H)$, then we say that $a \triangleright^\prime p(e)$;
		\item If $e=e(H)$, then for all types $T$ such that $b\triangleright T$ we say that $a\triangleright^\prime \div(T/r_H(H))$.
	\end{enumerate}
	Our aim is to prove that $HGr^\prime=\langle\Delta,S,\triangleright^\prime\rangle$ generates $\{f(H)|H\in L(HGr)\}$. Note that all new primitive types $p(e)$ are lonely in the dictionary of $HGr^\prime$, and note also that types in this dictionary do not have skeleton subtypes. Thus we can apply the wolf lemma (Corollary \ref{wolf}) and obtain that for $H$ being labeled by types of the dictionary of $HGr^\prime$ a sequent $H\to p(e)$ is derivable if and only if $H=\circledcirc(p(e))$.
	
	Let $G\to S$ be a derivable sequent where $G$ is over the dictionary of $HGr^\prime$. Let a type of the form $R=\div(T/r_H(H))$ be a label of $G$ where $H=f(b)$. Consider a part of a derivation of $G\to S$ where $R$ appears the first time:
	$$
	\infer[(\div\to)]{K\to S^\prime}{L\to S^\prime & H_1\to p(e_1) & \dots & H_k\to p(e_k)}
	$$
	Here $L$ is a graph with an edge $e^\prime$ labeled by $T$ and $K=L[r_H(H)/e^\prime][e(H):=R][H_1/e_1,\dots, H_k/e_k]$ where $E_H=E_{r_H(H)}=\{e(H),e_1,\dots,e_k\}$. Due to above reasonings $H_i=\circledcirc(p(e_i))$; hence $K=L[r_H(H)/e^\prime][e(H):=R]$. The latter means that $K$ is obtained from $L$ by inserting a relabeling of $H$ of the form $r_H(H)[e(H):=R]$. 
	
	In this step of a derivation new primitive types $p(e_1),\dots, p(e_k)$ appear within $K$. Observe that they cannot actively participate in further rules because in types of the grammar they either occur as separate types or label edges in denominators of types; the same holds with $R$ due to construction of types in the dictionary of $HGr^\prime$. Therefore, a subgraph $r_H(H)[e(H):=R]$ of a graph $K$, which appears on this step of a derivation, has to retain until the last step of a derivation. Finally, note that this subgraph can appear in $G$ only as a result of substitution of $H$ instead of $b$. Since $e^\prime$ is labeled by an old type $T$, after finding and eliminating all such substitutions we can derive a sequent in the old grammar. This yields that $L(HGr^\prime)\subseteq L(HGr)$.
	
	The other direction is more simple. Consider again a type $R=\div(T/r_H(H))$ as above. Note that the following derivation takes place:
	$$
	\infer[(\div\to)]{r_H(H)[e(H):=R]\to T}{
		\circledcirc(T)\to T & \circledcirc(p(e_1))\to p(e_1) & \dots & \circledcirc(p(e_k))\to p(e_k)}
	$$
	Now if $G\to S$ is a derivation in the old grammar, we can use the cut rule and replace each label $T$ of $G$ with a corresponding graph $r_H(H)[e(H):=R]$. A new sequent is considered to be a sequent in the new grammar as desired.
	\qed
\end{proof}
Note that the property of being edgeful is of importance since otherwise one would subsitute edgeless graphs for all symbols in a language generated by an HLG and obtain a language with infinitely many edgeless graphs; however, languages generated by HLGs can contain at most one edgeless graph. 

\section{Power of Hypergraph Lambek Grammars}\label{sec_power}
We start with showing that languages generated by HRGs can be generated by HLGs as well except for some nonsubstantive cases.

\subsection{Isolated-Node Boundedness}
Denote by $isize(H)$ the number of isolated nodes in $H$.
\begin{definition}
	A hypergraph language $L$ is isolated-node bounded (IB) if there is a constant $M>0$ such that for each $H\in L$ $isize(H)<M\cdot |E_H|$.
\end{definition}
It appears that each language generated by an HLG is isolated-node bounded disregarding at most one member of the language.
\begin{theorem}\label{inb}
	Each language $L$ generated by an HLG is of the form $L=L_0\cup IL$ where $L_0$ is isolated-node bounded and either $IL=\emptyset$ or $IL=\{I\}$ where $I$ is an edgeless graph.
\end{theorem}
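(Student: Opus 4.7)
The plan is to re-use the counter technique that already appears in Proposition~\ref{differ_IN}: a fresh type-$0$ primitive symbol $\iota$ gives us a way to count nonexternal isolated nodes algebraically, and derivability forces this count on the antecedent to agree with a constant determined by $S$ and $dict(HGr)$. Let $HGr=\langle \Sigma, S,\triangleright\rangle$, and fix $\iota\in Pr$ with $type(\iota)=0$ not occurring anywhere in $S$ or in $dict(HGr)$. As in the proof of Proposition~\ref{differ_IN}, let $iota(\cdot)$ denote the operation that replaces every nonexternal isolated node (inside the outer graph as well as recursively inside every type) by a fresh $\iota$-labelled edge with no attachment. The key facts I shall use are: (i) $iota$ preserves derivability, so if $\mathrm{HL}\vdash H\to T$ then $\mathrm{HL}\vdash iota(H)\to iota(T)$; and (ii) by Proposition~\ref{counter}, $\#_\iota(iota(H))=\#_\iota(iota(T))$.

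Now fix $G\in L(HGr)$ with type assignment $f_G$, so $\mathrm{HL}\vdash f_G(G)\to S$. Split the isolated nodes of $G$ into outer isolated nonexternal nodes (count $I_{\mathrm{out}}(G)$) and isolated external nodes (at most $type(S)$ of them). The crucial decomposition is
$$\#_\iota(iota(f_G(G))) \;=\; I_{\mathrm{out}}(G) \;+\; \sum_{e\in E_G}\#_\iota(iota(f_G(e))),$$
because every outer isolated nonexternal node contributes one fresh $\iota$-edge of value $+1$, while each edge $e$ contributes the counter of its (iota-transformed) type label. Setting $M':=\max\{\,|\#_\iota(iota(T))|:T\in dict(HGr)\,\}$ and $C:=|\#_\iota(iota(S))|+type(S)$, both constants depending only on the grammar, applying (ii) gives
$$isize(G)\;\le\;I_{\mathrm{out}}(G)+type(S)\;\le\;M'\cdot|E_G|+C.$$
Hence for $|E_G|\ge 1$ we obtain $isize(G)<(M'+C+1)\cdot|E_G|$, so $L_0:=\{G\in L:|E_G|\ge 1\}$ is isolated-node bounded with constant $M:=M'+C+1$.

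It remains to handle $IL:=L\setminus L_0$, the edgeless graphs of $L$. Any two edgeless graphs of type $type(S)$ are, by the no-loops, no-coincidences conventions of Section~\ref{hyp_def}, determined up to isomorphism by the number of isolated nonexternal nodes. Proposition~\ref{differ_IN} then says directly that at most one such graph can appear as the antecedent of a derivable sequent with succedent $S$, so $|IL|\le 1$ and $IL$ consists of at most one edgeless graph, as required. The main subtlety to watch in writing this up cleanly is the three-way bookkeeping of isolated nodes: those hidden inside the graph-structure of types in $dict(HGr)$ (absorbed into $M'$ per edge), those appearing in the outer graph (the quantity being bounded), and those that happen to be external in $G$ (absorbed into $C$); once this is spelled out, everything else is a direct appeal to Propositions~\ref{counter} and~\ref{differ_IN}.
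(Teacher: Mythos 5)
Your proposal is correct and follows essentially the same route as the paper's proof: the $iota$/$\#_\iota$ counter argument from Proposition~\ref{differ_IN} to get a per-edge bound on nonexternal isolated nodes, the additive $type(S)$ allowance for external ones, and Proposition~\ref{differ_IN} again to show that at most one edgeless graph can satisfy $\mathrm{HL}\vdash I\to S$. The only (harmless) difference is that you bound each label's contribution by $|\#_\iota(iota(T))|$ directly, whereas the paper introduces the auxiliary quantity $\|T\|_I$ as an upper bound for it.
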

\begin{proof}(of Theorem \ref{inb})
	Let us denote by $||T||_I$ the total number of isolated nodes within a type $T$. Formally, $||p||_I=0$ for $p\in Pr$; for $T=\div(N/D)$ where $E_D=\{d_0,d_1,\dots,d_k\}$ and $lab(d_i)=T_i\in Tp(\mathrm{HL}), i>0$ we say that $||T||_I=||N||_I+||T_1||_I+\dots+||T_k||_I+m$ where $m$ is the number of isolated nodes in $D$; for $T=\times(M)$ where $E_M=\{m_1,\dots,m_k\}$ and $lab(m_i)=T_i$ we say that $||T||_I=||T_1||_I+\dots+||T_k||_I+m$ where $m$ is the number of isolated nodes in $M$. Note that $||T||_I\ge |\#_\iota(iota(T))|$ for each type $T$ where $\iota$ and $iota$ are as in Proposition \ref{differ_IN}.
	
	Let $HGr=\langle \Sigma, S,\triangleright\rangle$ be an HLG generating $L$. Note that according to Proposition \ref{differ_IN} there is at most one edgeless graph $I$ such that $\mathrm{HL}\vdash I\to S$: indeed, $I$ has to contain $type(S)$ external nodes, and the number of nonexternal ones can be determined uniquely. It remains to show that $L_0=L\setminus \{I\}$ (or $L_0=L$, if such an edgeless graph does not exist) is isolated-node bounded.
	
	We define $C$ as $\max\{||T||_I: T\in dict(HGr)\cup\{S\}\}+type(S)+1$. We check Definition \ref{inb} with the constant $M=3C+1$. Let $H\in L$; then there is a relabeling $f:E_H\to Tp(\mathrm{HL})$ such that $lab_H(e)\triangleright f(e)$ for all $e\in E_H$, and $\mathrm{HL}\vdash f(H)\to S$; denote $G=f(H)$. Applying the construction from Proposition \ref{differ_IN} we obtain that $\#_\iota(iota(G))=\#_\iota(iota(S))$. Accordingly to the definition of $iota$ we have $\#_\iota(iota(G))=\#_\iota(iota(T_1))+\dots+\#_\iota(iota(T_k))+m$ where $m$ is the number of nonexternal isolated nodes in $G$, and $T_1,\dots,T_k$ are all labels in $G$ ($k=|E_H|$). Note that the number of external isolated nodes in $G$ does not exceed $type(S)$. Therefore, $isize(G)\le m+type(S)=\#_\iota(iota(S))-\#_\iota(iota(T_1))-\dots-\#_\iota(iota(T_k))+type(S)\le ||S||_I+||T_1||_I+\dots+||T_k||_I+type(S)\le M\cdot(k+2)< (3M+1)\cdot k=C\cdot |E_G|$. This completes the proof.
	\qed
\end{proof}
\begin{example}
	The language consisting of all edgeless graphs of type 0 (i.e. of graphs of the form $\langle \{v_1,\dots,v_n\},\emptyset,\emptyset,\emptyset, \emptyset\rangle$) can be generated by no HLG. However, it is simple to construct an HRG generating this language.
\end{example}
\subsection{Convertion of HRGs into HLGs}
Our goal is to study how to transform HRGs into equivalent HLGs. In order to do this we use the weak Greibach normal form for HRGs introduced in \cite{Pshenitsyn_GNFHRG}:
\begin{definition}
	An HRG $HGr$ is in \emph{the weak Greibach normal form} if there is exactly one terminal edge in the right-hand side of each production. Formally, $\forall (X\to H)\in P_{HGr}$ $ \exists!e_0\in E_H:lab_H(e_0)\in \Sigma_{HGr}$.
\end{definition}
In the paper \cite{Pshenitsyn_GNFHRG} we prove the following
\begin{theorem}\label{WGNF}
	For each HRG generating an isolated-node bounded language there is an equivalent HRG in the weak Greibach normal form.
\end{theorem}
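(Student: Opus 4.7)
The plan is a two-stage transformation. Stage~1 (the easy part) reduces productions with more than one terminal edge to productions with at most one. Given $X \to H$ with terminal edges $e_1,\dots,e_k$ and $k \ge 2$, for each $i \ge 2$ I would introduce a fresh nonterminal $T_i$ with $type(T_i)=type_H(e_i)$, add a production $T_i \to K_i$ where $K_i$ is a handle-like graph consisting of the single terminal edge $lab_H(e_i)$ attached to $type(T_i)$ external nodes, and in $H$ replace $e_i$ by a $T_i$-labeled edge. Equivalence is immediate since the $T_i$ can only be rewritten back to a copy of $e_i$.

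Stage~2 (the hard part) eliminates \emph{pure} productions, i.e.\ productions whose right-hand side contains no terminal edge. A naive unfolding diverges: cycles $A \to H_A$, $B \to H_B$ of pure productions produce infinitely many derived right-hand sides. The role of the isolated-node-bounded (IB) hypothesis is precisely to rule this out. I would first pre-process the grammar to eliminate useless nonterminals, so that every surviving production occurs in at least one successful derivation $S \overset{\ast}{\Rightarrow} G \in L(HGr)$. Then the central claim to establish is a uniform bound $N$ (depending only on the grammar and on the IB constant $M$) on the length of any maximal chain of consecutive pure-production applications appearing in any successful derivation.

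To prove this bound, I would set up an injective charging of pure steps to nodes of the final terminal graph $G$ that remain \emph{isolated} in $G$. Each pure step replaces a nonterminal edge by a graph whose edges are all nonterminal; any internal (non-external) node introduced by such a step is incident only to nonterminal edges in the resulting intermediate graph. A careful trace of how subsequent replacements can, or cannot, fuse such a node with an attachment node of a later-appearing terminal edge shows that a proper fraction of the nodes introduced in a pure chain necessarily survives to $G$ as isolated nodes. Combined with the IB inequality $isize(G) < M\cdot|E_G|$ and a linear bound on $|E_G|$ in terms of the number of terminal-introducing steps (which, after Stage~1, is at most one per non-pure production), this yields the required uniform $N$. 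This injective charging is the step I expect to be the main obstacle, because intermediate nodes can merge with attachment nodes of later terminal edges and must be excluded carefully from the count; the conditions on loops and external-node coincidences emphasised in Remark~\ref{rem_loops} and Section~\ref{hyp_def} matter here.

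Once $N$ is in hand, the rest is a compilation step: for each nonterminal $A$ and each graph $H$ obtainable from $\circledcirc(A)$ by at most $N$ pure steps such that either $H$ contains a terminal edge or no pure step is applicable at any edge of $H$, add $A \to H$ to the new grammar; then discard all original pure productions. Finiteness of the new production set follows from the bound $N$ together with the fact that each individual pure step chooses among finitely many productions. Equivalence with $HGr$ is a routine induction on derivation length in both directions: each old derivation is simulated by grouping consecutive pure steps into one compiled step, and each new compiled production unfolds to a legal sequence of old ones. Combining Stage~1 with Stage~2 gives a grammar in which every production has exactly one terminal edge on its right-hand side, i.e., weak Greibach normal form.
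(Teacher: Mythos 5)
First, a point of comparison: the paper does not actually prove Theorem~\ref{WGNF} here --- it is imported wholesale from \cite{Pshenitsyn_GNFHRG} --- so your proposal can only be judged on its own terms. Your Stage~1 (splitting off surplus terminal edges into fresh nonterminals with handle productions) is correct and standard. The problem is Stage~2: the lemma on which everything rests --- a uniform constant bound $N$, depending only on the grammar and the IB constant $M$, on the length of maximal runs of pure (terminal-free) productions in successful derivations --- is false. Take a type-$2$ nonterminal $S$ with the productions $S\to P$, where $P$ consists of the two external nodes carrying two parallel $S$-labelled edges, and $S\to\circledcirc(a)$. The generated language (all graphs of $k\ge 1$ parallel $a$-edges between two external nodes) has no isolated nodes at all, hence is IB for every $M>0$; yet producing the graph with $k$ edges forces $k-1$ applications of the pure production, and the derivation that first expands $S$ into $k$ parallel $S$-edges is a run of $k-1$ consecutive pure steps. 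Your charging argument cannot rescue the bound, because a pure production may introduce \emph{no} internal nodes whatsoever (as here), so there is nothing to charge to isolated nodes of the output graph; conversely, internal nodes that a pure step does introduce need not survive as isolated, since terminal edges generated later, deeper in the derivation tree below the nonterminal edges attached to them, can make them non-isolated. The IB hypothesis constrains the isolated-node-producing machinery (e.g.\ nonterminals whose sub-derivations yield edgeless graphs with many nodes), not the length of terminal-free runs.

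There is also an internal inconsistency in the compilation step even granting the bound: a graph $H$ obtained from $\circledcirc(A)$ by pure steps alone can never contain a terminal edge (pure right-hand sides are entirely nonterminal), so the productions $A\to H$ you propose to add are themselves terminal-free and the resulting grammar is not in weak Greibach normal form. What is actually needed for Stage~2 is a substitution/recursion-elimination argument in which every terminal-free production acquires a terminal edge from one of its descendants in the derivation tree --- together with a separate treatment of chain cycles and of nonterminals deriving edgeless graphs, which is where IB genuinely enters. That is the content of \cite{Pshenitsyn_GNFHRG}, and it is not recovered by the argument you sketch.
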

Using it, we can prove the following theorem applying standard techniques.
\begin{theorem}\label{hrg_hlg}
	For each HRG generating an isolated-node bounded language there is an equivalent hypergraph Lambek grammar.
\end{theorem}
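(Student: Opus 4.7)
\noindent
The plan is to mimic the Gaifman-style construction used to convert context-free grammars in Greibach normal form into Lambek grammars, with Theorem \ref{WGNF} providing the hypergraph counterpart of the Greibach normal form, and Theorem \ref{simple_der} providing the structural control needed for the reverse inclusion.

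First I would apply Theorem \ref{WGNF} to assume that the given HRG $HGr = \langle N, \Sigma, P, S \rangle$ is in weak Greibach normal form. Then, for each nonterminal $A \in N$, I would introduce a fresh primitive type $p_A$ with $type(p_A) = type(A)$. For each production $A \to H \in P$, the graph $H$ has a unique terminal edge $e_0$ labeled by some $a \in \Sigma$, and the remaining edges $e_1,\dots,e_k$ carry nonterminal labels $B_1,\dots,B_k$. I would let $H^\ast$ denote the graph obtained from $H$ by relabeling $e_0$ to $\$$ and each $B_i$ to $p_{B_i}$, and then set $a \triangleright \div(p_A/H^\ast)$ in the new grammar $HGr' = \langle \Sigma, p_S, \triangleright\rangle$. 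Well-formedness of this type is immediate from $type(p_A) = type(A) = type(H) = type(H^\ast)$ and from the $\$$-edge of $H^\ast$ inheriting $type(e_0) = type(a)$.

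For the inclusion $L(HGr) \subseteq L(HGr')$, I would prove by induction on the length of derivations the stronger statement: whenever $\circledcirc(A) \overset{\ast}{\Rightarrow} G$ with $G \in \mathcal{H}(\Sigma)$, there exists a labeling $f\colon E_G \to Tp(\mathrm{HL})$ satisfying $lab_G(e) \triangleright f(e)$ such that $\mathrm{HL} \vdash f(G) \to p_A$. For the inductive step, if the first HRG step uses $A \to H$ and the nonterminal edges $e_1,\dots,e_k$ of $H$ are subsequently expanded into terminal graphs $G_1,\dots,G_k$, I would assign to the terminal edge $e_0$ the type $\div(p_A/H^\ast)$, use the inductive hypothesis to get labelings $f_i$ of $G_i$ with $\mathrm{HL} \vdash f_i(G_i) \to p_{B_i}$, and then apply a single instance of $(\div\to)$ at $e_0$: because $H^\ast$ has exactly the shape of $H$ with matching nonterminal edges, the bottom sequent of the rule is precisely $f(G)\to p_A$, and its premises are $\circledcirc(p_A)\to p_A$ together with the $f_i(G_i)\to p_{B_i}$.

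For the converse inclusion $L(HGr') \subseteq L(HGr)$, I would observe that every type in the dictionary of $HGr'$ is simple in the sense of Theorem \ref{simple_der}, since the numerator is a primitive $p_A$ and all non-$\$$ edges of the denominator $H^\ast$ carry primitive labels $p_{B_i}$. Given a derivation of $f(G) \to p_S$, Theorem \ref{simple_der} yields a simple derivation; since $p_S$ is primitive and no antecedent anywhere in the grammar ever contains a $\times$-type, this derivation is a pure sequence of $(\div\to)$ applications ending at an axiom $\circledcirc(p_S) \to p_S$, with each auxiliary premise being a handle axiom $\circledcirc(p_{B_i}) \to p_{B_i}$. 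Reading the derivation from top to bottom, each $(\div\to)$ step on a type $\div(p_A/H^\ast)$ acts precisely as an HRG-replacement in reverse: it expands a $p_A$-labeled edge of the current antecedent into the $H^\ast$-pattern whose nonterminal positions are occupied by handles $\circledcirc(p_{B_i})$, which is exactly the HRG step applying $A \to H$. Collecting these steps produces a derivation $\circledcirc(S) \overset{\ast}{\Rightarrow} G$ in $HGr$.

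The main obstacle will be the bookkeeping in the reverse direction: one must verify that the simple derivation's auxiliary premises are forced to be handles of the correct primitive types $p_{B_i}$ (not merely graphs deriving $p_{B_i}$), and that the "first premise" branch of each $(\div\to)$ step corresponds to the graph state just before the matching HRG rewriting step. Both points follow from Theorem \ref{simple_der}(\ref{simple_der_div_to}) and from the freshness and loneliness of the primitive types $p_A$, which prevents them from appearing anywhere in the derivation except where they are explicitly placed by a $(\div\to)$ application. Combining the two inclusions yields $L(HGr') = L(HGr)$, as desired.
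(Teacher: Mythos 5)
Your proposal is correct and follows essentially the same route as the paper: pass to the weak Greibach normal form via Theorem \ref{WGNF}, translate each production $A\to H$ with terminal edge $e_0$ labeled $a$ into $a\triangleright\div(p_A/H[e_0:=\$])$ with nonterminals treated as primitive types, and use Theorem \ref{simple_der} (whose clause \ref{simple_der_div_to} indeed forces the auxiliary premises to be handle axioms) to read the converse derivation as an HRG derivation. The only, immaterial, difference is in the first inclusion, where you decompose the HRG derivation at the root production and apply one $(\div\to)$ per production, whereas the paper excises a fully expanded branch, applies the induction hypothesis to a modified grammar, and splices the branch back with the cut rule; both are standard inductions on derivation length over the same construction.
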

\begin{proof}
	Let an HRG be of the form $HGr=\langle N,\Sigma,P,S\rangle$. Applying Theorem \ref{WGNF} we can assume that $HGr$ is in the weak Greibach normal form.
	
	Consider elements of $N$ as elements of $Pr$ with the same function $type$ defined on them. Since $HGr$ is in the weak Greibach normal form, each production in $P$ is of the form $\pi=X\to G$ where $G$ contains exactly one terminal edge $e_0$ (say $lab_G(e_0)=a\in\Sigma$). We convert this production into the type $T_\pi:=\div(X/G[e_0:=\$])$. Then we introduce the HLG $HGr^\prime=\langle \Sigma, S,\triangleright\rangle$ where $\triangleright$ is defined as follows: $a\triangleright T_\pi$ (note that if $G=\circledcirc(a)$, then we can simply write $a\triangleright X$).
	The main objective is to prove that $L(HGr)=L(HGr^\prime)$. 
	
	Firstly, we show that $L(HGr)\subseteq L(HGr^\prime)$ by induction on size of a derivation in $HGr$. To be more technically sound we do this thoroughly (while omitting some tedious details in the second part of the proof).
	\\
	Induction basis. Let $S\Rightarrow H$ where $H\in\mathcal{H}(\Sigma)$. Then $\pi=S\to H$ belongs to $P$ and $E_H=\{e_0\}$. In this case we can derive $\mathrm{HL}\vdash H[e_0:=\div(S/H[e_0:=\$])]\to S$ in one step (since $|E_H|=1$).
	\\
	Induction step. Let $S\overset{k}{\Rightarrow} H$ where $H\in\mathcal{H}(\Sigma)$ (in this notation, induction is on $k$). There has to be a branch (called $\beta$) in this derivation of the form $G\Rightarrow G[F_0/e_0]\overset{l}{\Rightarrow} G[F_0/e_0][F_1/f_1,\dots,F_l/f_l]$ where $f_1,\dots,f_l$ are all nonterminal edges of $F_0$, and $F_1,\dots,F_l$ are terminal graphs ($l>0$). That is, we apply a production $lab(e_0)\to F_0$ and then $l$ productions that change all nonterminal edges of $F_0$ with terminal graphs.
	
	Let us introduce a production $\pi=lab(e_0)\to F$ where $F=F_0[F_1/f_1,\dots,F_l/f_l]$ is a terminal graph. Now we change a grammar $HGr$ and the derivation $S\overset{k}{\Rightarrow} H$ a bit: we add to $\Sigma$ a new terminal symbol $a_0$ ($type(a_0)=type(e_0)$), add a production $lab(e_0)\to \circledcirc(a_0)$ to $P$ and apply this production in the derivation instead of the branch $\beta$. A new derivation yields a graph $H^\prime$, which is related to $H$ as follows: $H=H^\prime[F/e_0], lab_{H^\prime}(e_0)=a_0$. In the new grammar (call it $HGr(\pi)$) $S$ derives $H^\prime$ in $(k-l)$ steps; this allows us to apply the induction hypothesis and to obtain that $H^\prime$ belongs to the language generated by an HLG (call it $HGr^\prime(\pi)=\langle \Sigma,S,\triangleright_\pi\rangle$) constructed from $HGr(\pi)$ in the same way as $HGr^\prime$ from $HGr$. This means that for $H^\prime$ there is such a relabeling $f:E_{H^\prime}\to Tp(\mathrm{HL})$ that $lab_{H^\prime}(e)\triangleright_\pi f(e)$, and $\mathrm{HL}\vdash f(H^\prime)\to S$. Since the only type corresponding to $a_0$ in $HGr^\prime(\pi)$ is $p_0=lab(e_0)$ (recall that it is considered to be primitive), $f(e_0)=p_0$. 
	\\
	Similarly to the induction basis, we notice that $\mathrm{HL}\vdash F_i[e_i:=T_i]\to lab(f_i)$ where $e_i$ is the only edge of $F_i$, and $T_i=\div(lab(f_i)/F_i[e_i:=\$])$; note that $lab(e_i)\triangleright T_i$. Using $(\div\to)$ we also derive $\mathrm{HL}\vdash g(F)\to p_0$ where $g:E_F\to Tp(\mathrm{HL})$ is a relabeling acting as follows:
	\begin{enumerate}
		\item $g(e_i)=T_i$ whenever $i>0$;
		\item For the only terminal edge $f_0$ of $F_0$ we put $g(f_0)=\div(p_0/F_0[f_0:=\$])$ (note that $lab(f_0)\triangleright g(f_0)$).
	\end{enumerate}
	Applying the cut rule we combine the sequent $g(F)\to p_0$ with the sequent $f(H^\prime)\to S$ and obtain a new derivable sequent $h(H)\to S$ where $h$ coincides with $g$ on $E_F$ and with $f$ on $E_{H^\prime}\setminus\{e_0\}$. The last thing we should notice is that $h$ is a relabeling of $H$ such that $lab_H(e)\triangleright h(e)$ whenever $e\in E_H$. This finishes the first part.
	
	Secondly, we explain why $L(HGr^\prime)\subseteq L(HGr)$. Note that types in the dictionary of $HGr^\prime$ are simple; thus for each derivable sequent of the form $H\to S$ over this dictionary we can apply Theorem \ref{simple_der} and obtain a derivation where each premise except for, possibly, the first one is an axiom. Now we can transform a derivation tree of $\mathrm{HL}$ into a derivation tree in the HRG $HGr$, which concludes the proof. Formally, we have to use induction again.
	\qed
\end{proof}
Now we leave these boring technical results and turn to the most interesting aspects of HLGs. Namely, we present several languages generated by HLGs that cannot be generated by HRGs. Each example will be presented in a separate subsection.
\subsection{2-Graphs Without Isolated Nodes}\label{sec_lan_all}
Consider the language $\mathcal{L}_1$ of all 2-graphs (i.e. usual graphs with edges of type 2) without isolated nodes (the empty graph is not included in $\mathcal{L}_1$ as well) over the alphabet $\{a\}$ ($type(a)=2$) without external nodes. This language intuitively seems to be very simple, but, astonishingly, there is no HRG generating it. This follows from the pumping lemma for HRGs, which implies that graph context-free languages are of bounded connectivity (see \cite{Drewes}). However, it is not hard to present an HLG that generates $\mathcal{L}_1$. Let $s,p$ be primitive types ($type(s)=0,type(p)=1$).
\begin{itemize}
	\item $Q_1=p,\;
		Q_2=\div\left(p\middle/\mbox{
		{\tikz[baseline=.1ex]{
			\node[] (R) {};
			\node[node,below=1mm of R,label=below:{\scriptsize $(1)$}] (N1) {};
			\node[hyperedge,above=5mm of N1] (E1) {$\$$};
			\node[node,right=7mm of N1] (N2) {};
			\node[hyperedge,above=5mm of N2] (E2) {$p$};
			
			\draw[-,black] (E1) -- node[left] {\scriptsize 1} (N1);
			\draw[-,black] (E2) -- node[right] {\scriptsize 1} (N2);	
			}}}\right),\;
		Q_3=\div\left(s\middle/\mbox{
			{\tikz[baseline=.1ex]{
					\node[] (R) {};
					\node[node,below=1mm of R] (N1) {};
					\node[hyperedge,above=5mm of N1] (E1) {$\$$};
					\node[node,right=7mm of N1] (N2) {};
					\node[hyperedge,above=5mm of N2] (E2) {$p$};
					
					\draw[-,black] (E1) -- node[left] {\scriptsize 1} (N1);
					\draw[-,black] (E2) -- node[right] {\scriptsize 1} (N2);	
		}}}\right);$
	\item $M_{11}^{ij}= \times\left(\mbox{
		{\tikz[baseline=.1ex]{
				\node[] (R) {};
				\node[node,below=1mm of R,label=below:{\scriptsize $(1)$}] (N1) {};
				\node[hyperedge,above=5mm of N1] (E1) {$Q_i$};
				\node[node,right=7mm of N1,label=below:{\scriptsize $(2)$}] (N2) {};
				\node[hyperedge,above=5mm of N2] (E2) {$Q_j$};
				
				\draw[-,black] (E1) -- node[left] {\scriptsize 1} (N1);
				\draw[-,black] (E2) -- node[right] {\scriptsize 1} (N2);
		}}
	}\right),\;
	M_{12}^{i}= \times\left(\mbox{
		{\tikz[baseline=.1ex]{
				\node[] (R) {};
				\node[node,below=1mm of R,label=below:{\scriptsize $(1)$}] (N1) {};
				\node[hyperedge,above=5mm of N1] (E1) {$Q_i$};
				\node[node,right=7mm of N1,label=below:{\scriptsize $(2)$}] (N2) {};
				
				\draw[-,black] (E1) -- node[left] {\scriptsize 1} (N1);
		}}
	}\right),\\
	M_{21}^{j}= \times\left(\mbox{
		{\tikz[baseline=.1ex]{
				\node[] (R) {};
				\node[node,below=1mm of R,label=below:{\scriptsize $(1)$}] (N1) {};
				\node[node,right=7mm of N1,label=below:{\scriptsize $(2)$}] (N2) {};
				\node[hyperedge,above=5mm of N2] (E2) {$Q_j$};
				
				\draw[-,black] (E2) -- node[right] {\scriptsize 1} (N2);
		}}
	}\right),\;
	M_{22}= \times\left(\mbox{
		{\tikz[baseline=.1ex]{
				\node[] (R) {};
				\node[node,above=0mm of R,label=below:{\scriptsize $(1)$}] (N1) {};
				\node[node,right=7mm of N1,label=below:{\scriptsize $(2)$}] (N2) {};
		}}
	}\right).$
\end{itemize}
A desired grammar is of the form $HGr_1=\langle\{a\}, s, \triangleright \rangle$ and  $a\triangleright N$ whenever $N\in\{M_{11}^{ij},M_{12}^{i},M_{21}^{j},M_{22}|1\le i,j\le 3\}$.
\begin{proposition}\label{prop_lan_all}
	$L(HGr_1)=\mathcal{L}_1$.
\end{proposition}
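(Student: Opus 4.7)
The proof splits into two inclusions. For the easy direction $L(HGr_1) \subseteq \mathcal{L}_1$, I plan to use the $\iota$-transformation from the proof of Proposition \ref{differ_IN}: one checks that every type $T$ in $dict(HGr_1) \cup \{s\}$ satisfies $\#_\iota(iota(T)) = 0$, because the isolated vertices inside the $\times$-graphs of $M_{12}^i, M_{21}^j, M_{22}$ are all external, and the denominators of $Q_2, Q_3$ contain none. Then if $\mathrm{HL} \vdash f(H) \to s$ for some relabeling $f$, applying the $iota$-transform and Proposition \ref{counter} forces the number of non-external isolated vertices of $H$ to equal $0$; since $type(s) = 0$ there are no external vertices either, and since the empty graph is not derivable (no axiom matches and no rule produces an empty antecedent), $H$ must be a non-empty $2$-graph without isolated vertices, i.e. $H \in \mathcal{L}_1$.

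For the converse direction $\mathcal{L}_1 \subseteq L(HGr_1)$, given $G \in \mathcal{L}_1$ with $n = |V_G| \ge 2$, I construct an explicit relabeling. Since $G$ has no isolated vertices I choose, for each $v \in V_G$, a representative incident edge $r(v) \in E_G$; I also pick two distinct vertices $v^\star, v^\dagger \in V_G$ and assign roles $\rho(v^\star) = 3$, $\rho(v^\dagger) = 1$, and $\rho(v) = 2$ for the remaining vertices. For each $e \in E_G$ with $att(e) = (u_1, u_2)$ I set $f(e)$ equal to $M_{11}^{\rho(u_1)\rho(u_2)}$, $M_{12}^{\rho(u_1)}$, $M_{21}^{\rho(u_2)}$, or $M_{22}$ according to whether both, only $u_1$, only $u_2$, or neither endpoint selected $e$ as its representative. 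The key property, which is what the role-indexed relabeling is designed to ensure, is that after expanding every $M$-edge of $f(G)$ via Proposition \ref{reversibility}, each vertex $v$ carries exactly one type-$1$ edge, labelled $Q_{\rho(v)}$ and contributed solely by the expansion of $r(v)$; edges not chosen by any endpoint receive $M_{22}$, whose internal graph is edge-free and thus contributes nothing.

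The derivation of $f(G) \to s$ now proceeds in two phases. In Phase 1, starting from $\circledcirc(s) \to s$, I apply $(\div\to)$ with $Q_3$ using $\circledcirc(p) \to p$ as the side premise, introducing $v^\star$ with a $Q_3$-edge and a fresh vertex bearing a $p$-edge. I then apply $(\div\to)$ with $Q_2$ exactly $n - 2$ times, each time using $\circledcirc(p) \to p$ as side premise: each such application converts the current $p$-edge into a $Q_2$-edge and introduces one more fresh vertex with a new $p$-edge. Naming the introduced vertices bijectively with the $Q_2$-vertices of $V_G$ in any order and placing the final $p$-edge on $v^\dagger$ yields exactly the expanded sequent promised in the previous paragraph. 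In Phase 2, for each $e \in E_G$ I apply $(\times\to)$ taking $F$ to be the internal graph of $f(e)$ embedded on the endpoints of $e$ together with the $Q$-edges contributed by $e$'s expansion only; the compression conditions of Section \ref{com} hold vacuously because the relevant $F$s have no non-external vertices and the host antecedent has no external vertices, and different edges act on disjoint $Q$-edge sets so the compressions can be performed one after another. After all $|E_G|$ compressions, the antecedent becomes $f(G)$, proving $\mathrm{HL} \vdash f(G) \to s$ and hence $G \in L(HGr_1)$. The main technical obstacle is exactly this matching between Phase 1 (where $Q$-edges are produced by derivational steps) and Phase 2 (where they must be consumed by $\times$-compressions corresponding to the representatives); the representative function and role assignment are chosen precisely to make the two sides line up.
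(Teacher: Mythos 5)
Your proposal is correct and follows essentially the same route as the paper: the same representative-edge function and three-way role assignment to vertices, the same case split producing the labels $M_{11}^{ij},M_{12}^i,M_{21}^j,M_{22}$, and the same derivation consisting of a chain of $(\div\to)$ steps threading $Q_3,Q_2,\dots,Q_2,Q_1$ through the vertices followed by the $(\times\to)$ compressions (the paper merely reads this derivation bottom-up). The only cosmetic difference is in the easy inclusion, where you make the paper's one-line observation precise via the $\iota$-counter of Proposition~\ref{differ_IN}; that variant is sound.
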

\begin{proof}
To prove that $L(HGr_1)\subseteq\mathcal{L}_1$ it suffices to note that denominators of types in $dict(HGr_1)$ do not contain isolated nodes; since isolated nodes may appear only after applications of rules $(\div\to)$ or $(\to\times)$, all graphs in $L(HGr_1)$ do not have them.

The other inclusion $L(HGr_1)\supseteq\mathcal{L}_1$ is of central interest. An example of a specific derivation in this grammar is given in Appendix \ref{sec_lan_all_example}. Below we provide general reasonings of this inclusion, but we suppose that this example is enough to understand the construction of $HGr_1$.

Let $H$ be in $\mathcal{L}_1$. Since there are no isolated nodes in $H$ there exists a function $h:V_H\to E_H$ such that $h(v)$ is attached to $v$ whenever $v\in V_H$. We choose two arbitrary nodes $v_e$ and $v_b$ and a define a function $c:V_H\to \{1,2,3\}$ as follows: $c(v_b)=1$, $c(v_e)=3$, $c(v)=2$ whenever $v\not\in\{v_b,v_e\}$.

Now we present a relabeling $f_H:E_H\to Tp(\mathrm{HL})$. Let $e$ belong to $E_H$ and let $att_H(e)=v_1v_2$.
\begin{itemize}
	\item If $h(v_1)=h(v_2)=e$, then $f_H(e):=M_{11}^{c(v_1)c(v_2)}$;
	\item If $h(v_1)=e,h(v_2)\ne e$, then $f_H(e):=M_{12}^{c(v_1)}$;
	\item If $h(v_1)\ne e,h(v_2)= e$, then $f_H(e):=M_{21}^{c(v_2)}$;
	\item If $h(v_1)\ne e,h(v_2)\ne e$, then $f_H(e):=M_{22}$.
\end{itemize} 
Then we check derivability of the sequent $f_H(H)\to s$. Its derivation from bottom to top starts with rules $(\times\to)$ applied $|E_H|$ times to all types in the antecedent. It turns out that the  sequent standing above these applications of $(\times\to)$ has one edge labeled by $Q_1$, one edge labeled by $Q_3$ and the remaining edges labeled by $Q_2$; besides, for each node there is exactly one edge attached to it (this is satisfied by the definition of the function $h$). Then we apply (again from bottom to top) the rule $(\div\to)$ and using it ``reduce'' the only $Q_1$-labeled edge (recall that $Q_1=p$) with a $Q_2$-labeled edge; after this we obtain a new $p$-labeled edge and repeat the procedure. Thus we eliminate all nodes and edges one-by-one. Finally, we obtain a graph with two nodes, with a $Q_3$-labeled edge attached to the first one and a $p$-labeled edge attached to the second one. Applying $(\div\to)$ once more, we ``contract'' $Q_3$ with $p$ and obtain the sequent $\circledcirc(s)\to s$, which is an axiom.
\qed
\end{proof}
Therefore we have already shown that hypergraph Lambek grammars based on $\mathrm{HL}$ are stronger than HRGs (thus Pentus theorem cannot be generalized to HL) and that they moreover disobey the pumping lemma. One would say that the secret is in types with $\times$, which play a central role in $HGr_1$. However, we argue that $HGr_1$ can be modified in such a grammar $HGr_1^\prime$ that its types shall not contain $\times$, but $L(HGr_1^\prime)=\mathcal{L}_1$ as well. In order to do this we present a function $U$, which operates on types $T$ such that $type(T)=2$ as follows:
$$
U(T)=\div\left(s\middle/\mbox{
	{\tikz[baseline=.1ex]{
			\node[] (R) {};
			\node[node] (N1) {};
			\node[node,right=7mm of N1] (N2) {};
			
			\draw[>=stealth,->,black] (N1) to[bend left=45] node[above] {\$} (N2);
			\draw[>=stealth,->,black] (N1) to[bend right=45] node[below] {$T$} (N2);
	}}
}\right).
$$
Then $HGr_1^\prime=\langle\{a\},s,\triangleright^\prime\rangle$ is defined by the following relation: $a\triangleright T \Leftrightarrow a\triangleright^\prime U(U(T))$. Of course, this transformation itself does not eliminate $\times$, it just places $\times$ in denominators of types. Nevertheless, this transformation is a desired one since we can use Proposition \ref{elim_mult} and change types in $dict(HGr_1^\prime)$ with equivalent ones without multiplication. Now it remains to prove
\begin{proposition}
	$L(HGr_1^\prime)=\mathcal{L}_1$.
\end{proposition}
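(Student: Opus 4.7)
The inclusion $L(HGr_1^\prime)\subseteq \mathcal{L}_1$ is routine: the denominators appearing in $U(T)$ and $U(U(T))$ contain no isolated nodes, so the $\iota$-counter argument from Proposition~\ref{differ_IN} forces every graph in $L(HGr_1^\prime)$ to be isolated-node-free; every terminal symbol has type $2$, and the empty graph is ruled out by inspecting which rules could conclude a sequent with empty antecedent and succedent $s$.

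For the converse, fix $H\in\mathcal{L}_1=L(HGr_1)$ and a witnessing relabeling $f\colon E_H\to dict(HGr_1)$ with $\mathrm{HL}\vdash f(H)\to s$. Define $f^\prime(e):=U(U(f(e)))$; the goal is $\mathrm{HL}\vdash f^\prime(H)\to s$. Enumerate $E_H=\{e_1,\dots,e_n\}$ and for $0\le k\le n$ let $G_k$ coincide with $f^\prime(H)$ on $e_1,\dots,e_k$ and with $f(H)$ on the remaining edges, so $G_n=f(H)$ and $G_0=f^\prime(H)$. I will prove $\mathrm{HL}\vdash G_k\to s$ by downward induction on $k$, the base case $k=n$ being the hypothesis.

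For the inductive step, derive $G_{k-1}\to s$ from $G_k\to s$ by a two-step bottom-up inversion. First, apply $(\div\to)$ at the $U(U(T_k))$-labeled edge $e_k$ of $G_{k-1}$ (with attachments $u_1,u_2$), choosing $H_1:=G_{k-1}\setminus\{e_k\}$ equipped with external string $u_1u_2$ as the auxiliary subgraph, and taking $H$ to be the handle $\circledcirc(s)$. Because the numerator of $U(U(T_k))$ is $s$ of type $0$, the $N$-edge of $H$ is a dangling $s$-edge; because the denominator of $U(U(T_k))$ has no external nodes, the reconstruction $H[D_{U(T_k)}/e'][d_0:=U(U(T_k))][H_1/d_1]$ literally equals $G_{k-1}$. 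The main premise is then the axiom $\circledcirc(s)\to s$ and the side premise is $H_1\to U(T_k)$. Second, invert $(\to\div)$ on the side premise: with $U(T_k)=\div(s/D_{T_k})$ and $D_{T_k}$ likewise having no external nodes, the resulting premise $D_{T_k}[H_1/d_0]\to s$ reduces by the same node-counting to $G_k\to s$, derivable by the inductive hypothesis.

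The principal technical point is justifying that taking $H_1$ to be the entire remainder of $G_{k-1}$ yields a decomposition whose reconstruction is $G_{k-1}$ on the nose, with no leftover nodes or mismatched attachments; this hinges on both $D_T$ and $D_{U(T)}$ having zero external nodes, a property forced by $s$ being a type-$0$ primitive. A single $U$-wrapping would leave the unattainable obligation $H_1\to T_k$ with $T_k$ a local $\times$-type, whereas the double wrapping replaces it by the self-referential obligation $H_1\to U(T_k)$, which unfolds via $(\to\div)$ directly into the next inductive stage $G_k\to s$ and thus cleanly feeds the induction.
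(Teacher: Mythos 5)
Your proof is correct, and its engine is exactly the paper's: the double wrapping $U(U(T))$ is unwound bottom-up by one application of $(\div\to)$ whose main premise is the axiom $\circledcirc(s)\to s$ and whose side premise $H_1\to U(T)$ is then discharged by $(\to\div)$, the reconstruction matching on the nose because $type(s)=0$ forces all the relevant denominators and antecedents to have empty external sequences. The packaging, however, differs usefully. The paper splices this two-step inversion into the particular derivation built in Proposition~\ref{prop_lan_all}, replacing each $(\times\to)$ step that compresses a subgraph $F$ into $\times(F)$; you instead isolate a self-contained relabeling lemma --- for any derivable $G\to s$ with $type(G)=0$, replacing the label $T$ of any type-$2$ edge by $U(U(T))$ preserves derivability --- and apply it once per edge. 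Your version is more modular: it does not depend on the shape of the derivation of $f_H(H)\to s$ at all, only on the succedent being a type-$0$ primitive, so it transfers verbatim to any grammar of this signature. For the inclusion $L(HGr_1^\prime)\subseteq\mathcal{L}_1$ the paper takes a slicker route than yours: it notes $\mathrm{HL}\vdash\circledcirc(T)\to U(U(T))$ and uses the cut rule to reduce to the already-established $L(HGr_1)\subseteq\mathcal{L}_1$; your direct counter argument also works but partly re-proves what Proposition~\ref{prop_lan_all} already supplies. Two cosmetic slips that do not affect the substance: with your stated convention ($G_k$ wrapped on $e_1,\dots,e_k$) you should have $G_0=f(H)$ and $G_n=f^\prime(H)$, and the induction should run upward; correspondingly, the edge $e_k$ carries the label $U(U(T_k))$ in the more-wrapped sequent of the step, not in the less-wrapped one.
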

\begin{proof}
	Firstly we note that $\mathrm{HL}\vdash \circledcirc(T)\to U(U(T))$; thus if a sequent $G^\prime\to s$ over $dict(HGr_1^\prime)$ is derivable, then we can use the cut rule and derive a sequent $G\to s$ where $G$ is obtained from $G^\prime$ by changing each type of the form $U(U(T))$ with $T$. This justifies that $L(HGr_1^\prime)\subseteq L(HGr_1)$.
	
	To prove $L(HGr_1^\prime)\supseteq L(HGr_1)$ it is enough to recreate a derivation described in Proposition \ref{prop_lan_all} using new types. We remodel rule $(\times\to)$ applications within a derivation as follows:
	$$
	\infer[(\times\to)]{G\llbracket e_0, T/F\rrbracket\to s}{G\to s}
	\quad\rightsquigarrow\quad
	\infer[(\div\to)]{G\llbracket e_0,U(U(T))/F\rrbracket\to s}{s\to s & \infer[(\to\div)]{G^\prime\to U(T)}
	{
		G\to s
	}
	}
	$$
	Here $F$ is a subgraph of $G$, $T=\times(F)$, and if we denote $G\llbracket e_0,U(U(T))/F\rrbracket$ by $H$, then $G^\prime=\langle V_H, E_H\setminus\{e_0\},att_H|_{E_{G^\prime}},lab_H|_{E_{G^\prime}},att_H(e_0)\rangle$. It is not hard to see that a new derivation is correct.
	\qed
\end{proof}
Therefore, even $\mathrm{HL}(\div)$-grammars can produce non-context-free graph languages.
\subsection{Bipartite graphs}
Another example is the language $\mathcal{L}_2\subseteq \mathcal{L}_1$ of all bipartite 2-graphs without isolated nodes. In this example, we call a graph $H$ bipartite if its nodes can be divided into two subsets $V_1$ and $V_2$ in such a way that each edge of $H$ outgoes from a node belonging to $V_1$ to a node belonging to $V_2$. 

Let us define the following types (where $p,q$ are primitive, $type(p)=type(q)=1$):
\begin{itemize}
	\item $R_1(r):=r$;
	\item $R_2(r):=\div\left(r\middle/\mbox{
		{\tikz[baseline=.1ex]{
				\node[] (R) {};
				\node[node,below=0mm of R,label=below:{\scriptsize $(1)$}] (N1) {};
				\node[hyperedge,above left = 3mm and 2mm of N1] (E1) {\$};
				\node[hyperedge,above right = 3mm and 2mm of N1] (E2) {$r$};
				\draw[-,black] (E1) -- node[below left] {\scriptsize 1} (N1);
				\draw[-,black] (E2) -- node[below right] {\scriptsize 1} (N1);
		}}
	}\right)$;
	\item $R_3(r):=\div\left(r\middle/\mbox{
		{\tikz[baseline=.1ex]{
			\node[] (R) {};
			\node[node,below=1mm of R,label=right:{\scriptsize $(1)$}] (N1) {};
			\node[hyperedge,above = 3mm of N1] (E1) {\$};
			\node[node,right=10mm of N1] (N2) {};
			\node[hyperedge,above = 3mm of N2] (E2) {$r$};
			\draw[-,black] (E1) -- node[left] {\scriptsize 1} (N1);
			\draw[-,black] (E2) -- node[left] {\scriptsize 1} (N2);
		}}
	}\right)$;
	\item $M^{ij}:=\times\left(\mbox{
		{\tikz[baseline=.1ex]{
				\node[] (R) {};
				\node[node,below=1mm of R,label=below:{\scriptsize $(1)$}] (N1) {};
				\node[hyperedge,above = 3mm of N1] (E1) {$\,R_i(p)\,$};
				\node[node,right=10mm of N1,label=below:{\scriptsize $(2)$}] (N2) {};
				\node[hyperedge,above = 3mm of N2] (E2) {$\,R_j(q)\,$};
				\draw[-,black] (E1) -- node[left] {\scriptsize 1} (N1);
				\draw[-,black] (E2) -- node[left] {\scriptsize 1} (N2);
		}}
	}\right),\quad 1\le i,j\le 3;$
	\item $S:=\times\left(\mbox{
		{\tikz[baseline=.1ex]{
				\node[] (R) {};
				\node[node,below=1mm of R] (N1) {};
				\node[hyperedge,above = 3mm of N1] (E1) {$p$};
				\node[node,right=10mm of N1] (N2) {};
				\node[hyperedge,above = 3mm of N2] (E2) {$q$};
				\draw[-,black] (E1) -- node[left] {\scriptsize 1} (N1);
				\draw[-,black] (E2) -- node[left] {\scriptsize 1} (N2);
		}}
	}\right).$
\end{itemize}
We define $HGr_2:=\langle\{a\},S,\triangleright\rangle$ as follows: $a\triangleright M^{ij}$ for all $1\le i,j\le 3$.
\begin{proposition}
	$\mathcal{L}_2=L(HGr_2)$.
\end{proposition}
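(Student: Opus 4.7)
The plan is to establish both inclusions by adapting the strategy of Proposition \ref{prop_lan_all}.

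For $L(HGr_2)\subseteq\mathcal{L}_2$: I would first observe that no type in $dict(HGr_2)$ contains isolated nodes in its denominator or multiplicand, so by the same argument as in Proposition \ref{prop_lan_all} no derived graph has isolated nodes. For bipartiteness, every type in $dict(HGr_2)$ is simple and $S=\times(K)$ has only primitive edge labels, so by Theorem \ref{simple_der} one may restrict to a simple derivation of $f_G(G)\to S$. Such a derivation applies $(\to\times)$ exactly once at the top, producing a sequent $K\to S$ whose antecedent carries one $p$-edge at a node $n_1$ and one $q$-edge at a node $n_2$; I would designate these as the seeds of the p-side and q-side. The remainder of the derivation applies $(\div\to)$ and $(\times\to)$ to transform $K$ into $f_G(G)$, and I would prove by induction on this tail the invariant that the antecedent's nodes partition into a p-side and a q-side such that every edge labeled $p$ or $R_i(p)$ attaches only to p-side nodes, every edge labeled $q$ or $R_j(q)$ attaches only to q-side nodes, and every edge labeled $M^{ij}$ has its first attachment on the p-side and its second on the q-side. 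The final antecedent $f_G(G)$ then yields a valid bipartition of $G$.

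For $\mathcal{L}_2\subseteq L(HGr_2)$: I would mimic the labeling construction of Proposition \ref{prop_lan_all}. Fix the bipartition $V_1\sqcup V_2$, pick root nodes $v_p^*\in V_1$ and $v_q^*\in V_2$, and pick orderings $v^{(1)}=v_p^*,\dots,v^{(n)}$ of $V_1$ and $w^{(1)}=v_q^*,\dots,w^{(m)}$ of $V_2$; at each such node pick one ``primary'' edge (which exists by non-isolation and bipartiteness). For each edge $e=(v^{(k)},w^{(\ell)})$ set $i_e=1$ if $k=n$ and $e$ is primary at $v^{(n)}$, $i_e=3$ if $k<n$ and $e$ is primary at $v^{(k)}$, and $i_e=2$ otherwise; define $j_e$ analogously for $V_2$. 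Assign $f_H(e):=M^{i_e j_e}$. To verify $\mathrm{HL}\vdash f_H(H)\to S$, I would build the derivation from bottom to top: first apply $(\times\to)$ to every $M^{ij}$-edge, decomposing it into an $R_i(p)$-edge at its first attachment and an $R_j(q)$-edge at its second attachment, which disconnects the antecedent into a p-part on $V_1$ and a q-part on $V_2$; then apply $(\to\times)$ with externals $v_p^*$ and $v_q^*$, splitting into two subgoals; finally derive the p-part by processing $v^{(n)},\dots,v^{(1)}$ in that order: at $v^{(n)}$ merge the $R_2(p)$-edges with the unique $R_1(p)$-edge via $(\div\to)$ to leave a single $p$-edge, and at each subsequent $v^{(k)}$ pull the current $p$-edge via the $R_3(p)$-edge and then merge the remaining $R_2(p)$-edges, ending in the axiom $\circledcirc(p)\to p$. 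The q-part is handled symmetrically.

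The main obstacle will be the invariant-preservation induction in the first direction: each $(\div\to)$ application spawns auxiliary premises $H_i\to lab(d_i)$ with $lab(d_i)\in\{p,q\}$, so I would need a nested induction on these subderivations to ensure each $H_i$ is itself purely on the p-side or q-side (respectively) and attaches to the main antecedent only at the correct side, guaranteeing that the bipartition propagates consistently through the entire derivation.
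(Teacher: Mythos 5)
The paper states this proposition without proof (it is left as an analogue of Proposition \ref{prop_lan_all}), so there is nothing to compare against line by line; your argument is the natural adaptation of the $HGr_1$ proof and, as far as I can check, it is correct. For $\mathcal{L}_2\subseteq L(HGr_2)$ your labelling and bottom-to-top derivation work: $(\times\to)$ splits each $M^{i_ej_e}$ into an $R_{i_e}(p)$-edge at the source and an $R_{j_e}(q)$-edge at the target, the antecedent becomes the disjoint union of a $p$-part on $V_1$ and a $q$-part on $V_2$, $(\to\times)$ splits the goal, and the $R_2/R_3$ cascade collapses each part to a handle at the chosen root (the root must indeed be the external node of each subgoal, since it is the only node that cannot be consumed by an $R_3$-step). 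For the converse, your invariant goes through; the only detail worth making explicit is that by the subformula property the only $\times$-types available are the $M^{ij}$ and $S$ itself, and an $S$-labelled antecedent edge can be excluded because, read top-down, nothing in the grammar could ever eliminate it before reaching $f_G(G)$. One remark: the ``main obstacle'' you flag at the end is not actually an obstacle. Having invoked Theorem \ref{simple_der}, condition \ref{simple_der_div_to} already guarantees that every auxiliary premise of $(\div\to)$ is an axiom $\circledcirc(p)\to p$ or $\circledcirc(q)\to q$, so each $H_i$ is a single handle and the invariant check reduces to a finite case analysis over the four types $R_2(p),R_3(p),R_2(q),R_3(q)$ (which either duplicate a $p$- or $q$-edge at an existing node of the correct side, or create one fresh node that you assign to that side) and the compressions into $M^{ij}$; no nested induction on subderivations is needed.
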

\subsection{Finite Intersections of HCFLs}\label{sec_FIHCFL}
Once Stepan Kusnetsov who is a Russian mathematician doing research regarding the Lambek calculus and its variants in Steklov Mathematical Institute and in Moscow State University delivered a talk where he mentioned the following concept: multiplication in L (i.e. an operation $A\cdot B$) may be considered as some kind of conjunction of $A$ and $B$ when we have both $A$ and $B$ combined in a single type. This analogy with conjunction led us to the following thought. In the graph case we can use multiplication (i.e. $\times$) in a more general way than for strings: any graph structure can be put inside $\times$. What if there is a way to use $\times$ as conjunction and thus model intersections of languages?

Firstly, we invented a way to show that any finite intersection of string context-free languages (considered as a graph language) can be generated by an HL-grammar; then we realized that this construction can be generalized to all hypergraph context-free languages. Below we present this construction.

\begin{definition}\label{def_ersatz}
	An ersatz conjunction $\wedge_E(T_1,\dots,T_k)$ of types $T_1,\dots,T_k\in Tp(\mathrm{HL})$ such that $type(T_1)=\dots=type(T_k)=m$ is the type $\times(H)$ where
	\begin{enumerate}
		\item $V_H=\{v_1,\dots,v_m\}$;
		\item $E_H=\{e_1,\dots,e_k\}$;
		\item $att_H(e_i)=v_1\dots v_m$;
		\item $lab_H(e_i)=T_i$;
		\item $ext_H=v_1\dots v_m$.
	\end{enumerate}
\end{definition}
\begin{example}
	Let $T_1,T_2,T_3$ be types with $type$ equal to 2. Then their ersatz conjunction equals $\wedge_E(T_1,T_2,T_3)=\times\left(\mbox{
		{\tikz[baseline=.1ex]{
				\node[node,label=left:{\scriptsize $(1)$}] (N1) {};
				\node[node,right=18mm of N1,label=right:{\scriptsize $(2)$}] (N2) {};
				\draw[>=stealth,->,black] (N1) to[bend left = 60] node[above] {$T_1$} (N2);
				\draw[>=stealth,->,black] (N1) -- node[above] {$T_2$} (N2);
				\draw[>=stealth,->,black] (N1) to[bend right = 60] node[above] {$T_3$} (N2);
		}}
	}\right)$.
\end{example}
Using ersatz conjunction we can prove the following
\begin{theorem}\label{Theorem_intersections}
	If $HGr^\prime_1,\dots,HGr^\prime_k$ are HRGs generating isolated-node bounded languages, then there is an HL-grammar $HGr$ such that $L(HGr)=L(HGr^\prime_1)\cap\dots\cap L(HGr^\prime_k)$.
\end{theorem}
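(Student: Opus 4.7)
\textbf{The plan} is to reduce each HRG to an HL-grammar and then merge the resulting HL-grammars by placing ersatz conjunctions at every type position. Since each $L(HGr'_i)$ is isolated-node bounded, Theorem \ref{hrg_hlg} furnishes an equivalent HL-grammar $HGr_i=\langle\Sigma,S_i,\triangleright_i\rangle$. I then build $HGr:=\langle\Sigma,\wedge_E(S_1,\dots,S_k),\triangleright\rangle$ by declaring $a\triangleright\wedge_E(T_1,\dots,T_k)$ for every $a\in\Sigma$ and every choice $(T_1,\dots,T_k)$ with $a\triangleright_i T_i$ in each $HGr_i$. Intuitively, every terminal edge must simultaneously play its role in all $k$ grammars.

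\textbf{Forward inclusion} ($\bigcap L(HGr'_i)\subseteq L(HGr)$). For $H$ in the intersection I fix relabelings $f_i:E_H\to Tp(\mathrm{HL})$ with $lab_H(e)\triangleright_i f_i(e)$ and $\mathrm{HL}\vdash f_i(H)\to S_i$, and set $g(e):=\wedge_E(f_1(e),\dots,f_k(e))$. To derive $g(H)\to\wedge_E(S_1,\dots,S_k)$ I plan to finish with $(\to\times)$, whose premises are $\widetilde H_i\to S_i$ for $i=1,\dots,k$, with $\widetilde H_i$ an isomorphic copy of $f_i(H)$ attached to the external nodes of the succedent; these premises are supplied directly by the individual derivations of $f_i(H)\to S_i$ (up to renaming of internal nodes). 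The passage from $g(H)$ to the $(\to\times)$-conclusion $M[\widetilde H_1,\dots,\widetilde H_k]$ is bridged by iterated cut steps (Theorem \ref{cut}) using Proposition \ref{TtoT} instances for each $\wedge_E$-typed edge.

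\textbf{Backward inclusion} ($L(HGr)\subseteq\bigcap L(HGr'_i)$). Given a derivation $\mathrm{HL}\vdash g(H)\to\wedge_E(S_1,\dots,S_k)$ with $g(e)=\wedge_E(T_1^{(e)},\dots,T_k^{(e)})$, I set $f_i(e):=T_i^{(e)}$ (which satisfies $lab_H(e)\triangleright_i f_i(e)$ by the construction of $HGr$) and argue $\mathrm{HL}\vdash f_i(H)\to S_i$ for each $i$ by a proof-theoretic projection. The plan is to normalize the derivation via cut elimination (Theorem \ref{cut}) and then, using the subformula property, to show inductively that every sequent $G'\to\wedge_E(U_1,\dots,U_k)$ appearing in the normalized derivation splits along each coordinate into a derivable sequent $G_i'\to U_i$, where $G_i'$ is obtained by extracting the $i$-th component of every ersatz conjunction labeling an edge of $G'$; the non-$i$ components cannot interfere because the subformulas of the succedent $S_i$ are disjoint from those of the other $S_j$.

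\textbf{The principal obstacle} is the forward direction: the rule $(\to\times)$, applied to a succedent $\wedge_E(S_1,\dots,S_k)$, decomposes the antecedent into $k$ subgraphs sharing only the external nodes of the succedent, whereas $g(H)$ is a single copy of $H$ in which all internal nodes are shared among the $k$ slots. The reconciliation hinges on the cut rule together with the derivability of $\circledcirc(\wedge_E(T_1,\dots,T_k))\to\wedge_E(T_1,\dots,T_k)$ (Proposition \ref{TtoT}); this machinery effectively ``duplicates'' the internal node structure as required by the $(\to\times)$-decomposition, at the cost of a carefully orchestrated sequence of cuts that I expect to be the delicate part of the full proof.
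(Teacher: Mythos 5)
There is a genuine gap, and it sits exactly where you flagged ``the principal obstacle.'' Your construction is the naive one: $a\triangleright\wedge_E(T_1,\dots,T_k)$ with the types $T_i$ taken unchanged from the grammars $HGr_i$. With simple types of the form $\div(p/D)$ coming out of Theorem \ref{hrg_hlg}, a bottom-to-top application of $(\div\to)$ in the $i$-th ``layer'' must contract a configuration matching $D$ into a single edge, and in doing so the nonexternal nodes of $D$ disappear. After you open all the ersatz conjunctions with $(\times\to)$, every internal node of $H$ is shared by all $k$ layers, so it is still attached to edges of the other layers and the contraction is blocked: the denominator of a layer-$i$ type only mentions layer-$i$ primitive types and cannot absorb those foreign edges. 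Consequently $g(H)\to\wedge_E(S_1,\dots,S_k)$ is in general \emph{not} derivable for your $HGr$ (already for $k=2$ and string graphs with internal nodes), i.e.\ the forward inclusion fails and the grammar itself is wrong, not merely the proof.

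Your proposed repair --- bridging $M[\widetilde H_1,\dots,\widetilde H_k]$ (where the $k$ copies share only the external nodes) and $g(H)$ (a single copy with shared internal nodes) by cuts against $\circledcirc(\wedge_E(\dots))\to\wedge_E(\dots)$ --- cannot work: the cut rule replaces one edge by a graph, so it can never identify internal nodes across the $k$ copies, and cutting with an identity sequent of Proposition \ref{TtoT} leaves the antecedent unchanged up to isomorphism. The paper resolves this with a genuinely different device: before forming the ersatz conjunction it replaces each $T_i$ by a modified type $\varphi_i(T_i)$. For $i<k$ all nodes of the denominator are made external (so nothing disappears in layer $i$) and the numerator becomes a $\times$-type that ``ties a balloon'' edge $b_i$ of type $1$ to every node that \emph{should} have disappeared; for $i=k$ the denominator additionally carries the $k-1$ balloon edges on each nonexternal node, so the final layer removes the nodes together with all their balloons. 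This bookkeeping is what makes the forward direction go through, and it is also what drives the backward direction (balloons force the $k$ sub-derivations to remove exactly the same internal nodes). Your backward-inclusion sketch by coordinate projection is in the right spirit, but without the balloon-modified types there is nothing forcing the claimed equality in the forward direction, so the theorem as you argue it is not established.
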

This may be considered as the main result of this section.
\begin{proof}
	Using the construction from Theorem \ref{hrg_hlg} we construct an HL-grammar $HGr_i$ for each $i=1\dots,k$ such that $L(HGr_i^\prime)=L(HGr_i)$. We assume without loss of generality that types involved in $HGr_i$ and $HGr_j$ for $i\ne j$ do not have common primitive subtypes (let us denote the set of primitive subtypes of types in $HGr_i$ as $Pr_i$). Let us denote $HGr_i = \langle\Sigma, s_i,\triangleright_i\rangle$. Note that $type(s_1)=\dots=type(s_k)$ (otherwise $L(HGr_1)\cap\dots \cap L(HGr_k)=\emptyset$, and the theorem holds due to trivial reasons). The main idea then is to do the following: given $a\triangleright_i T_i,\; i=1,\dots, k$ we join $T_1,\dots,T_k$ using ersatz conjunction; we also join $s_1,\dots s_k$ using it. Then a derivation is expected to split into $k$ independent parts corresponding to derivations in grammars $HGr_1,\dots, HGr_k$. However, there is a nuance that spoils simplicity of this idea; it is related to the issue of isolated nodes. This nuance leads to a technical trick, which we call ``tying balloons''.
	
	Let us fix $k$ new primitive types $b_1,\dots,b_{k-1}$ (``balloon'' labels) such that $type(b_i)=1$. For $j<k$ let us define a function $\varphi_j:dict(HGr_j)\to Tp(\mathrm{HL})$ as follows: $\varphi_j(p)=p$ whenever $p\in Pr$; $\varphi_j(\div(p/D))=\div(\times(M)/D^\prime)$ where
	\begin{enumerate}
		\item $D^\prime=\langle V_D,E_D,att_D,lab_D,ext_Dw\rangle$ where $[w]=V_D\setminus [ext_D]$ (that is, $w$ consists of nodes that are not external in $D$).
		\item Denote $m=|w|=|V_D|-|ext_D|$, and $t = type(p)$. Then $M=\langle \{v_1,\dots,v_{t+m}\},\\\{e_0,e_1,\dots,e_m\}, att,lab, v_1,\dots v_{t+m} \rangle$ where $att(e_0)=v_1\dots v_t$, $lab(e_0)=p$; $att(e_i)=v_{t+i}$, $lab(e_i)=b_j$ whenever $i=1\dots, m$.
	\end{enumerate}
	Informally, we thus make all nodes in the denominator $D$ external, while $\times(M)$ ``ties a balloon'' labeled $b_j$ to each node corresponding to a nonexternal one in $D$. Presence of these ``balloon edges'' is compensated by modified types of the grammar $HGr_k$. Namely, we define a function $\varphi_k:dict(HGr_k)\to Tp(\mathrm{HL})$ as follows: $\varphi_k(p)=p$ whenever $p\in Pr$; $\varphi_k(\div(p/D))=\div(p/D^\prime)$ where $D^\prime=\langle V_D,E_D\cup \{e_1,\dots, e_{(k-1)m}\},att,lab,ext_D\rangle$  such that:
	\begin{enumerate}
		\item $m=|V_D|-|ext_D|$;
		\item $e_1,\dots,e_{(k-1)m}$ are new edges;
		\item $att|_{E_D}=att_D$;
		\item If $v_1,\dots,v_m$ are all nonexternal nodes of $D$, then $att(e_i)=v_{\lceil i/(k-1)\rceil}$ for $i=1,\dots, (k-1)m$. In other words, we attach $(k-1)$ new edges to each nonexternal node of $D$.
		\item $lab(e_i)=b_{g(i)},\; i=1,\dots, (k-1)m$ where $g(i) = i \mod (k-1)$ if $(k-1)\nmid i$ and $g(i)=k-1$ otherwise. That is, for each $b_i,i=1,\dots,(k-1)$ and for each nonexternal node there is a $b_i$-labeled edge attached to it.
	\end{enumerate}
	Now we are ready to introduce $HGr$: $HGr=\langle\Sigma,S,\triangleright\rangle$ where
	\begin{itemize}
		\item $a\triangleright T \Leftrightarrow T=\wedge_E(\varphi_1(T_1),\dots, \varphi_k(T_k)) \mbox{ and } \forall i =1,\dots,k\; a\triangleright_i T_i$;
		\item $S=\wedge_E(s_1,\dots,s_k)$.
	\end{itemize}
	The proof of $L(HGr)=L(HGr_1)\cap\dots\cap L(HGr_k)$ is divided into two parts: the $\subseteq$-inclusion proof and the $\supseteq$-inclusion proof.
	
	\textbf{Proof of $L(HGr)\supseteq L(HGr_1)\cap\dots\cap L(HGr_k)$.} A hypergraph $H\in\mathcal{H}(\Sigma)$ belongs to $L(HGr_1)\cap\dots\cap L(HGr_k)$ if and only if there are relabeling functions $f_i:E_{H}\to Tp(\mathrm{HL})$ such that $lab_H(e)\triangleright_i f_i(e)$ for all $e\in E_H$, and $\mathrm{HL}\vdash f_i(H)\to s_i$. Using these relabelings we can construct a relabeling $f:E_H\to Tp(\mathrm{HL})$ as follows: if $f_i(e)=T_i$, then $f(e):=\wedge_E(\varphi_1(T_1),\dots,\varphi_k(T_k))$. It follows directly from the definition that $lab_H(e)\triangleright f(e)$. Now we construct a derivation of $f(H)\to \wedge_E(s_1,\dots,s_k)$ from bottom to top:
	\begin{enumerate}
		\item We apply rules $(\times\to)$ to all ersatz conjunctions in the antecedent (this yields a graph with $k$ ``layers'');
		\item\label{FIT_remodel} We remodel a derivation of $f_1(H)\to s_1$, which consists of $(\div\to)$-applications only, using types of the form $\varphi_1(f_1(e)),e\in E_H$ that are present in $f(H)$; the only difference now is that external nodes do not ``disappear'' (recall that a derivation is considered from bottom to top), and edges labeled by types with $\times$ appear. Every time when $\times$ appears in the left-hand side we immediately apply $(\times\to)$, which results in adding an edge labeled by a primitive type and in adding balloon edges to all nodes that would disappear in the derivation of $f_1(H)\to s_1$. 
		
		The result of this procedure is that now all types corresponding to $HGr_1$ left the antecedent, except for the only $s_1$-labeled edge attached to external nodes in the right order; besides, for each nonexternal node of the antecedent there is now a balloon edge labeled by $b_1$ attached to it.
		\item We perform $(k-2)$ more steps similarly to Step \ref{FIT_remodel} using types of the form $\varphi_i(f_i(e)), 1<i<k$ and thus remodeling a derivation $f_i(H)\to s_i$. Upon completion of all these steps the antecedent contains:
		\begin{itemize}
			\item Types of the form $\varphi_k(f_k(e)),e\in E_H$;
			\item $(k-1)$ edges labeled by $s_1,\dots,s_{k-1}$ resp. and attached to external nodes of the graph;
			\item Balloon edges such that for each $j\in\{1,\dots,k-1\}$ and for each nonexternal node there is a $b_j$-labeled edge attached to it.
		\end{itemize}
		\item We remodel a derivation of $f_k(H)\to s_k$ using types of the form $\varphi_k(f_k(e))$; a situation differs from previous ones because now nonexternal nodes do disappear, and each time when this happens all balloon edges attached to a nonexternal node disappear as well. 
		
		After this step, all balloon edges are removed, and we obtain a graph with $type(s_1)$ nodes such that all of them are external, and with $k$ edges labeled by $s_1,\dots, s_k$ such that their attachment nodes coincide with external nodes of the graph. This ends the proof since $\wedge_E(s_1,\dots,s_k)$ is exactly this graph standing under $\times$.
	\end{enumerate}
	\textbf{Proof of $L(HGr)\subseteq L(HGr_1)\cap\dots\cap L(HGr_k)$.}
	Let $H$ be in $L(HGr)$; then there is a function $\Phi:E_H\to Tp(\mathrm{HL})$ such that $\Phi(e)=\wedge_E(\varphi_1(T_1(e)), ,\dots,\varphi_k(T_k(e)))$ (whenever $e\in E_H$), $lab(e)\triangleright_i T_i(e)$, and $\Phi(H)\to S$ is derivable in HL. Our desire is to decompose this derivation into $k$ ones in grammars $HGr_1,\dots,HGr_k$. In order to do this we transform the derivation in stages:
	
	\textbf{Stage 1.} Using Proposition \ref{reversibility} we can replace every edge in $\Phi(H)$ labeled by a type of the form $\times(M)$ with $M$. A new sequent (denote it by $H^\prime\to S$) is derivable as well. Let us fix some its derivation.
	
	\textbf{Stage 2.} The sequent $H^\prime\to S$ fits in Theorem \ref{simple_der}; hence there exists its simple derivation. Let us fix some simple derivation of $H^\prime\to S$ and call it $\Delta$.
	
	Furthermore we consider $\Delta$ from bottom to top.
	
	\textbf{Stage 3.} Design of types $\varphi_i(T)$ differs in the case $i<k$ and $i=k$. Consequently, if $\varphi_i(T)$ for $i<k$ participates in the rule $(\div\to)$ in $\Delta$, this affects only primitive types from $Pr_i$; on the contrary, participating of $\varphi_k(T)$ in $(\div\to)$ affects types from $Pr_k$ but also balloon types $b_1,\dots,b_{k-1}$, which appear after rule applications of $(\div\to)$ and $(\times\to)$ to several types of the form $\varphi_i(T),\;i<k$. This allows us to come up with the following conclusion: if a rule application $(\div\to)$ to a type of the form $\varphi_k(T)$ preceeds (from bottom to top) a rule application of $(\div\to)$ to a type of the form $\varphi_i(T)$ for $i<k$, then we can change their order (note also that all nodes in the denominator of $\varphi_i(T)$ are external). Thus $\Delta$ can be remade in such a way that all rules affecting $\varphi_k(T)$ will occur upper than rules affecting $\varphi_i(T),i<k$ in a derivation. Let us call a resulting derivation $\Delta^\prime$.
	
	\textbf{Stage 4.} A denominator of a type $\varphi_i(T)$ for $i<k$ contains edges labeled by elements of $Pr_i$ only. Since $\Delta^\prime$ is simple, applications of the rule $(\div\to)$ to types of the form $\varphi_i(T)$ and $\varphi_j(T^\prime)$ for $i\ne j$ are independent, and their order can be changed. This means that we can reorganize $\Delta^\prime$ in the following way (from bottom to top):
	\begin{enumerate}
		\item\label{delta_1} Set $i=1$;
		\item\label{delta_2} Perform applications of the rule $(\div\to)$ to types of the form $\varphi_i(T)$ and right away of the rule $(\times\to)$ to their numerators;
		\item\label{delta_3} If $i=k-1$, go forward; otherwise, set $i=i+1$ and go back to the previous step;
		\item\label{delta_4} Perform applications of the rule $(\div\to)$ to types of the form $\varphi_k(T)$;
		\item\label{delta_5} Now an antecedent of the major sequent (denote this sequent as $G\to S$) does not include types with $\div$ or $\times$. $S$ is of the form $\times(M_S)$, and Theorem \ref{simple_der} provides that the last rule applied has to be $(\to\times)$; therefore, $G=M_S$ and we reach the sequent $M_S\to S$. Consequently, $G=M_S$ consists of $k$ edges labeled by $s_1$, $\dots$, $s_k$ resp.
	\end{enumerate}
	Let us call this derivation $\Delta_0$. Observe that after steps \ref{delta_1}-\ref{delta_3} of the above description balloon edges with all labels $b_1,\dots, b_{k-1}$ may occur in the antecedent of a sequent (denote this sequent as $G^\prime\to S$). There is only one way for them to disappear: they have to participate in the rule $(\div\to)$ with a type of the form $\varphi_k(T)$ (since the denominator of such a type generally contains balloon edges). Note however that balloon edges within the denominator of $\varphi_k(T)$ may be attached only to nonexternal nodes. Therefore balloon edges in $G^\prime$ can be attached only to nonexternal nodes as well. Besides, if some balloon edge labeled by $b_i$ is attached to a node $v\in V_{G^\prime}\setminus [ext_{G^\prime}]$, then the set of balloon edges attached to $v$ has to consist of exactly $k-1$ edges labeled by $b_1,\dots,b_{k-1}$ (because in the denominator of $\varphi_k(T)$ exactly such edges are attached to each nonexternal node). Finally, note that after step \ref{delta_4} all nonexternal nodes disappear since $M_S$ contains exactly $type(S)$ nodes, all of which are external, therefore balloon edges have to be present on all nonexternal nodes (otherwise, a nonexternal node cannot go away interacting with a type of the form $\varphi_k(T)$). 
	
	Summarizing all the above observations, we conclude that after steps \ref{delta_1}-\ref{delta_3} there is exactly one balloon edge labeled by $b_i$ on each nonexternal node of $G^\prime$ for all $i=1,\dots,k-1$ (and no balloon edge is attached to some external node of $G^\prime$). Since the only way for $b_i$ to be attached to a node is to participate in the rule $(\div\to)$ applied to a type of the form $\varphi_i(T)$, now it is quite clear how to decompose this derivation into $k$ ones:
	\begin{itemize}
		\item For $1\le i < k$ we consider step \ref{delta_2} of $\Delta_0$ with that only difference that we disregard balloon edges. Then the combination of rules $(\div\to)$ and $(\times\to)$ applied to a type $\varphi_i(T)$ turns into an application of the rule $(\div\to)$ to $T$ in the $HGr_i$. Take into account that the only type that is built of elements of $Pr_i$ and remains to step \ref{delta_5} is $s_i$ attached to external nodes in the right order; therefore, if we remove from $H^\prime$ all edges not related to $HGr_i$ and relabel ech edge labeled by $\varphi_i(T)$ with $T$ (call the resulting graph $H^\prime_i$), then $H^\prime_i\to s_i$ is derivable.
		\item For $i=k$ everything works similarly; however, instead of step \ref{delta_2} we have to look at step \ref{delta_4} and again not to consider balloon edges. Then the application of $(\div\to)$ to $\varphi_k(T)$ transforms into the similar application of $(\div\to)$ to $T$ in $HGr_k$. After the whole process only $s_k$ remains, so if $H^\prime_k$ is a graph obtained from $H^\prime$ by removing edges not related to $HGr_k$ and changing each label of the form $\varphi_k(T)$ by $T$, then $H^\prime_k\to s_k$ is derivable.
	\end{itemize}
	Finally note that $H^\prime_i=\Phi_i(H)$ where $\Phi_i(e)=T_i(e)$. The requirement $lab(e)\triangleright_i T_i(e)$ completes the proof.
	\qed
\end{proof}
This theorem has a number of important consequences:
\begin{corollary}
	There is an HL-grammar generating the language of string graphs $\{(a^{2n^2})^\bullet,n>1\}$.
\end{corollary}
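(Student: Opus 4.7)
The plan is to invoke Theorem~\ref{Theorem_intersections} by expressing $\{(a^{2n^2})^\bullet : n > 1\}$ as (the image, under a closure operation, of) a finite intersection of isolated-node bounded HCFLs. The key numerical insight is the factorization $2n^2 = n \cdot 2n$, which lets me view a string of $2n^2$ letters either as $n$ blocks of length $2n$ or, symmetrically, as $2n$ blocks of length $n$.

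Concretely, I would construct two HRGs $HGr_1, HGr_2$, possibly over an alphabet extending $\{a\}$ by one auxiliary marker label. $HGr_1$ would generate IB graph languages whose members consist of a path of $a$-labeled edges of total length $2nm$ enriched with auxiliary structure (e.g.\ marker edges in parallel to each block, or isolated-node/tree counters) that witnesses a decomposition into $n$ groups of $2m$ edges, with the parameters $n$ and $m$ both tied to the auxiliary structure. $HGr_2$ would be designed as the ``mirror image'': it witnesses the dual decomposition into $m$ groups of $2n$ edges, swapping the roles of the two parameters. Both can be arranged to be genuine HRGs (bounded tree-width) generating IB languages by implementing the counter/witness machinery through a constant number of nonterminals of fixed type with a standard recursive expansion.

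Applying Theorem~\ref{Theorem_intersections} to $HGr_1, HGr_2$ yields an HL-grammar generating $L(HGr_1) \cap L(HGr_2)$. The design is engineered so that for a graph $G$ to lie in both languages, the two auxiliary witnesses must be simultaneously consistent on $G$, which forces $n = m$ and hence the underlying $a$-path to have length $2n^2$. If the intersection still carries auxiliary labels, I would strip them by a final closure step, applying Proposition~\ref{closure_relabeling} to collapse all labels down to $a$, or Theorem~\ref{closure_EGfSS} to perform an edgeful substitution replacing each marker edge by an $a$-edge gadget, arriving at the pure string-graph language $\{(a^{2n^2})^\bullet : n > 1\}$.

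The main obstacle is the combinatorial design of $HGr_1$ and $HGr_2$: one has to implement the two witness mechanisms as HRG productions staying within bounded tree-width, ensure the IB hypothesis required by Theorem~\ref{Theorem_intersections}, verify that both grammars do generate $(a^{2n^2})^\bullet$ (or its marked lift) for every $n > 1$ via a ``balanced'' derivation, and most delicately check that the intersection contains nothing extra — that mismatched parameters in the two HRGs produce irreconcilable auxiliary structure, so only the balanced derivations survive. Once this bookkeeping is carried out, the corollary follows immediately from the cited theorem and closure property.
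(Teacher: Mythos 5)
Your top-level strategy is exactly the paper's: realize $\{(a^{2n^2})^\bullet : n>1\}$ as a finite intersection of isolated-node bounded HCFLs via Theorem~\ref{Theorem_intersections}, then strip auxiliary labels by a closure operation. However, the concrete mechanisms you sketch for the two witness grammars do not survive the final stripping step, and this is a genuine gap. Both closure operations available to you preserve the underlying node and edge sets: Proposition~\ref{closure_relabeling} only changes labels, and Theorem~\ref{closure_EGfSS} replaces each edge by a graph with \emph{at least one} edge, so neither can delete anything. Hence ``marker edges in parallel to each block'' would remain in the final language (relabeling them to $a$ yields graphs with parallel $a$-edges, which are not string graphs), and ``isolated-node counters'' would remain as isolated nodes, again leaving you with something other than $\{(a^{2n^2})^\bullet\}$. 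So the auxiliary witness structure must be encoded in a way that relabeling alone can erase.

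The fix — and the paper's actual route — is to encode the witness in a second letter of the alphabet rather than in extra graph structure. Take the classical context-free languages $L_1=\{(a^nb^n)^k\mid n,k>0\}$ and $L_2=\{a^k(b^na^n)^lb^k\mid k,l,n>0\}$ over $\{a,b\}$; their string-graph versions $L_1^\bullet$ and $L_2^\bullet$ are HCFLs of string graphs (trivially isolated-node bounded), their intersection is $\{((a^nb^n)^n)^\bullet\mid n>1\}$, and the relabeling $a,b\mapsto a$ of Proposition~\ref{closure_relabeling} sends this exactly to $\{(a^{2n^2})^\bullet\mid n>1\}$ while keeping every graph a string graph. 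This realizes your $2n^2=n\cdot 2n$ block idea, but the ``two incompatible decompositions that force $n=m$'' are enforced purely by the letter pattern, so all of the delicate bookkeeping you flag as the main obstacle disappears.
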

\begin{proof}
	The string language $L_1=\{(a^nb^n)^k|n,k>0\}$ is context-free: it can be generated by a grammar with productions $S\to SS$, $S\to T$, $T\to aTb$, $T\to ab$. The string language $L_2=\{a^k(b^na^n)^lb^k|k,l,n>0\}$ is context-free as well: it can be generated by a grammar with productions $S\to aSb$, $S\to Q$, $Q\to QQ$, $Q\to T$, $T\to bTa$, $T\to ba$. Consequently, languages $L_1^\bullet=\{w^\bullet|w\in L_1\}$ and $L_2^\bullet$ are generated by some HRGs. The language $L_3=L_1\cap L_2$ equals $L_3=\{((a^nb^n)^n)^\bullet|n>1\}$, so $L_3^\bullet$ is a finite intersection of HCFLs and can be generated by some HL-grammar. Using Proposition \ref{closure_relabeling} (where the relabeling is $f:\{a,b\}\to \{a\}$) we conclude that $L=\{(a^{2n^2})^\bullet,n>1\}$ can be generated by an HL-grammar.
	\qed
\end{proof}
\begin{corollary}
	The pumping lemma and the Parikh theorem do not hold for languages generated by HL-grammars. If we take the set $NE=\{n|\exists H\in L(HGr): |E_H|=n\}$ for $HGr$ being an HL-grammar and introduce an increasing sequence $a_k,k\ge 0$ such that $a_k<a_{k+1}$, and $NE=\{a_k,k\ge 0\}$), then $a_k=O(k)$ does not hold in general.
\end{corollary}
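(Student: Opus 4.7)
The plan is to derive all three failures from the previous corollary, which gives an HL-grammar $HGr$ with $L(HGr)=\{(a^{2n^2})^\bullet : n>1\}$. For a string graph $w^\bullet$ the number of edges equals $|w|$, so the set of edge counts is $NE=\{2n^2 : n\ge 2\}=\{8,18,32,50,\dots\}$. Listed as an increasing sequence this is $a_k=2(k+2)^2$, hence $a_{k+1}-a_k=2(2k+5)\to\infty$, so $a_k$ is $\Theta(k^2)$ and in particular not $O(k)$. This establishes the third (and strongest) claim directly.

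For the pumping lemma, I would invoke the HRG pumping lemma from the Drewes handbook in its standard form: for any HCFL $L$ there is a constant $C$ such that every $H\in L$ with $|E_H|\ge C$ admits a decomposition yielding an infinite sequence $H=H_0,H_1,H_2,\dots\in L$ with $0<|E_{H_{i+1}}|-|E_{H_i}|\le C$, i.e.\ the edge counts in $L$ eventually contain an arithmetic-progression-like pattern with gaps bounded by $C$. Applied to $L(HGr)$ this would force $a_{k+1}-a_k\le C$ for all large $k$, contradicting the quadratic growth above. Hence the pumping lemma fails for HL-grammars.

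For the Parikh theorem, note that the alphabet of $HGr$ is $\{a\}$, so the Parikh image of $L(HGr)$ (number of $a$-labeled edges) coincides with $NE$. Semilinear subsets of $\mathbb{N}$ are exactly eventually periodic sets, so they have bounded gaps. Since $NE=\{2n^2:n\ge 2\}$ has unbounded gaps, it is not semilinear, which refutes any analogue of the Parikh theorem for HL-grammars.

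The only real obstacle is verifying that the three claims are genuinely about the \emph{same} kind of invariant: the HRG pumping lemma and Parikh theorem are usually stated for HCFLs, so I just need to cite them as background results about what \emph{would} hold if $L(HGr)$ were an HCFL, and then use $NE$ as a single witness refuting all three conclusions at once. The quantitative statement $a_k\ne O(k)$ is actually the cleanest of the three, since it is a direct arithmetic observation about $2(k+2)^2$ and needs no external lemma.
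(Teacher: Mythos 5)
Your proposal is correct and follows exactly the paper's route: the paper's entire proof is the one-line observation that $\{(a^{2n^2})^\bullet\mid n>1\}$ from the preceding corollary is a counterexample to all three statements, and you simply fill in the arithmetic ($a_k=2(k+2)^2=\Theta(k^2)$, unbounded gaps refuting bounded-gap pumping and semilinearity) that the paper leaves implicit.
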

\begin{proof}
	The language $\{(a^{2n^2})^\bullet,n>1\}$ is a counterexample to the pumping lemma, the Parikh theorem and to the statement that $a_k=O(k)$.
	\qed
\end{proof}
Since the issue of intersections is raised, one would ask whether every language generated by an HL-grammar can be obtained from HCFLs by intersections and relabelings (and, possibly, some other set-theoretical operations). However, the example from Section \ref{sec_lan_all} contradicts this. Any (possibly not finite) intersection of HCFLs, which are languages of bounded connectivity, is a language of bounded connectivity as well; obviously, relabelings, graph-for-symbol substitutions and finite unions also preserve bounded connectivity. However, $\mathcal{L}_1$ from Section \ref{sec_lan_all} is of unbounded connectivity, so it cannot be obtained after any sequence of the abovementioned operations.

It is also interesting to answer the question whether languages generated by HL-grammars are closed under intersections (Theorem \ref{Theorem_intersections} gives us a hope that this could be true) but there is no clear way how to prove this (there are problems with $(\to\div)$ rules).
\section{Algorithmic Complexity}\label{sec_complex}
A series of fundamental questions we have to answer is related to algorithmic complexity of $\mathrm{HL}$ and of HLGs. How difficult is it to check derivability of a sequent? to check whether a given graph belongs to a given grammar?

In the string case the following theorem was proved by Pentus in \cite{Pentus_complexity}:
\begin{theorem}
	The problem of whether a given sequent $\Gamma\to A$ is derivable in the Lambek calculus is NP-complete.
\end{theorem}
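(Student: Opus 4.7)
The plan is to prove the two directions separately: NP-membership via a small witness for derivability, and NP-hardness via a reduction from a standard NP-complete problem (e.g., 3-SAT).

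For NP-membership, the starting point is Lambek's cut elimination theorem for $\mathrm{L}$, which guarantees that every derivable sequent $\Gamma\to A$ has a cut-free proof. In any cut-free proof, the subformula property holds, and every rule of $\mathrm{L}$ strictly decreases the total number of logical connectives in the sequents by one. Hence any cut-free proof of $\Gamma\to A$ has at most $|\Gamma\to A|$ applications of logical rules. The proof tree itself can be exponentially large because of branching, but what I would actually guess as the NP certificate is not the full tree: for each of the at most linearly many logical rule applications, the only essential nondeterministic choice is a position in the (unique) antecedent at which it is applied, i.e.\ how to split the antecedent at binary rules $(\backslash\to), (/\to), (\to\cdot)$. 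Each such position is encoded in $O(\log n)$ bits, so the total certificate has polynomial size. A deterministic verifier can then reconstruct the derivation top-down by applying the indicated rules; correctness of each step reduces to string matching on subformulas, which is polynomial.

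For NP-hardness, I would reduce from 3-SAT. Given a formula $\varphi$ over variables $x_1,\dots,x_n$ with clauses $C_1,\dots,C_m$, the plan is to design types $T_1,\dots,T_n$ (one per variable) whose derivations nondeterministically encode a truth assignment, together with a ``clause gadget'' type that can be reduced only when one literal of each clause is satisfied by the guessed assignment. The non-commutativity and resource-sensitivity of $\mathrm{L}$ force the encoding to be linear, so the key ingredient is to align ``occurrences'' of variable types in the antecedent with literal slots in the clause gadget, using divisions $/$ and $\backslash$ to enforce the order and products $\cdot$ to collect multiple literals. The desired sequent $\Gamma_\varphi\to S$ is then derivable in $\mathrm{L}$ if and only if $\varphi$ is satisfiable.

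The main obstacle is the NP-hardness direction, because $\mathrm{L}$ lacks contraction, weakening, and commutativity, so the same variable cannot simply be reused in several clauses as in classical encodings. The crucial trick, following Pentus, is to introduce enough ``copies'' of each variable in the antecedent (one per occurrence in $\varphi$) and to design the gadget so that the only way to match the types up is by a consistent truth assignment; verifying that the encoding admits no ``mixed'' derivation (which would correspond to assigning both truth values to one variable) is the delicate combinatorial core of the argument, and I would expect to devote most of the proof to this verification, using counters and the subformula property to rule out spurious derivations.
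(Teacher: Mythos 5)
The paper does not actually prove this theorem; it is quoted as a known result of Pentus \cite{Pentus_complexity}, so your proposal has to stand entirely on its own. Your NP-membership argument is essentially sound, but one remark in it is wrong: since every rule of a cut-free derivation in $\mathrm{L}$ decreases the total number of connectives by exactly one, a cut-free proof of $\Gamma\to A$ has only linearly many rule applications and hence linearly many nodes altogether, so the full derivation tree is already a polynomial-size certificate (each sequent in it is no larger than the endsequent, giving $O(n^2)$ overall). There is no exponential blow-up to work around; your more economical certificate of splitting positions is valid but solves a problem that does not arise.

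The genuine gap is the NP-hardness direction. What you have written there is a specification of what a reduction from 3-SAT would have to accomplish --- variable types encoding an assignment, a clause gadget reducible exactly when some literal is satisfied, a consistency argument excluding mixed assignments --- but no actual types, no actual sequent $\Gamma_\varphi\to S$, and no argument for either direction of the claimed equivalence. The entire difficulty of Pentus's theorem lives in precisely that construction: because $\mathrm{L}$ has no contraction, weakening, or exchange, every occurrence of a variable must be a separate resource, and the gadgets must be engineered so that the only way all resources cancel is along a single consistent truth assignment. Pentus's published proof devotes nearly all of its length to defining those types and to a delicate analysis of the possible shapes of cut-free derivations ruling out spurious proofs; ``counters and the subformula property'' alone do not suffice for that verification. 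As it stands, your hardness argument is a plan for a proof rather than a proof, and the theorem is not established by it.
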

Using this theorem it is easy to show that a problem of whether a given word $w$ belongs to the language generated by a given Lambek grammar $Gr$ is also NP-complete.

In the graph case the derivability problem and all the more the membership problem seem to be much harder: if we search for a derivation of a given graph sequent we have to choose a type to which the rule is applied and choose several subgraphs that will go to antecedents of premises (when we try to apply one of the rules $(\div\to)$ or $(\to\times)$). However, it is not hard to prove the following
\begin{theorem}\label{NP_der}
	The problem of whether a given graph sequent $H\to A$ is derivable in the hypergraph Lambek calculus is NP-complete.
\end{theorem}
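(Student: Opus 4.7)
My plan is to establish NP-hardness by a reduction from the plain Lambek calculus and NP-membership by guessing and verifying a polynomial-size derivation.

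For hardness, I would reduce from derivability in $\mathrm{L}$, which is NP-complete by the theorem of Pentus cited just above. The translation $tr$ of Section~\ref{sec_embed_lambek} sends a string sequent whose total syntactic size is $n$ to a string-graph sequent whose encoding size is $O(n)$, so $tr$ is polynomial-time computable. By Theorem~\ref{embed_lambek}, $\mathrm{L}\vdash\Gamma\to C$ iff $\mathrm{HL}\vdash tr(\Gamma\to C)$, so the reduction is a valid many-one reduction and NP-hardness transfers to $\mathrm{HL}$.

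For membership, I would rely on the two structural observations already recorded in Section~7.1. First, in every rule of $\mathrm{HL}$ the sum of the sizes of the premises equals the size of the conclusion minus one; inducting on a derivation tree with $L$ axiom leaves and $r$ internal nodes, and using that each axiom has size $2$, yields $2L+r=N$ where $N=|H\to A|$. Hence the tree contains at most $N$ sequents and each of them has size at most $N$. Second, by the subformula property only subtypes of the initial sequent may appear, so every label in every intermediate graph has size $\ge 1$, giving at most $N$ edges per intermediate graph; since the arity of every label is bounded by the input, each intermediate graph has $\mathrm{poly}(N)$ nodes and attachment data as well. A complete derivation tree therefore admits an encoding of size $\mathrm{poly}(N)$ and can be nondeterministically guessed.

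The verifier then traverses the guessed derivation and, at each rule application, checks a local condition. For $(\div\to)$ and $(\to\times)$, the premises together with the auxiliary data (the type $\div(N/D)$ and the designated edge $e$, respectively the type $\times(M)$) uniquely determine the conclusion up to isomorphism, so one simply recomputes the conclusion and verifies it matches the recorded one. For $(\to\div)$ and $(\times\to)$, one checks that the designated subgraph $F$ of the antecedent satisfies conditions~\ref{com_retain_att} and~\ref{com_retain_ext} and that its compression matches the recorded succedent (respectively, that $\times(F)$ labels the chosen edge). The main obstacle I anticipate is the structural nondeterminism in the bottom-up reading of $(\div\to)$ and $(\to\times)$, where the prover must locate an occurrence of a denominator $D$ (or of the graph inside a $\times$-type) inside the current antecedent; since such placements are provided explicitly in the guessed derivation as bijections between nodes and edges, this check reduces to polynomial-time verification of a candidate isomorphism rather than genuine subgraph-isomorphism search. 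Combined with the hardness reduction, this establishes NP-completeness.
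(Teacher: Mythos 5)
Your proposal is correct and follows essentially the same route as the paper: NP-membership via a polynomial-size certificate consisting of the derivation tree together with the explicit isomorphism data (bounded because the premise sizes sum to one less than the conclusion size), and NP-hardness via the polynomial-time embedding $tr$ of $\mathrm{L}$ from Theorem~\ref{embed_lambek} combined with Pentus's NP-completeness result. Your accounting of the tree size and the discussion of how the recorded isomorphisms avoid subgraph-isomorphism search merely spell out details the paper states more briefly.
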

\begin{proof}
	This problem is in NP: if $H\to A$ is derivable, then a certificate of derivability is a derivation tree of $H\to A$. This derivation tree has to include all steps of the derivation starting with axioms, and all isomorphisms between graphs in premises and in a conclusion that justify that a replacement (or a compression) is done correctly. Such a certificate has polynomial size w.r.t. size of $H\to A$ since the sum of sizes of all premises is strictly less than the size of a sequent in a conclusion (isomorphisms make it larger, but since each isomorphism can be represented as a list of correspondences between edges in graphs in premises and in a conclusion, their total size can be estimated by the size of a conclusion as well).
	
	NP-completeness directly follows from Theorem \ref{embed_lambek}: since the Lambek calculus is NP-complete, and it is embedded in $\mathrm{HL}$ (in polynomial time), the latter is NP-complete as well.
	\qed
\end{proof}
\begin{theorem}\label{NP_mem}
	The problem of whether a given graph $H\in\mathcal{H}(\Sigma)$ belongs to a given HLG $HGr=\langle\Sigma, S, \triangleright\rangle$ is NP-complete.
\end{theorem}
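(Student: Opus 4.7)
The plan is to handle the two directions separately, reusing the NP-completeness of the derivability problem (Theorem \ref{NP_der}) for both.

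For membership in NP, I would exhibit a short certificate. By definition, $H\in L(HGr)$ iff there is a relabeling $f:E_H\to Tp(\mathrm{HL})$ with $lab_H(e)\triangleright f(e)$ for every $e$ and $\mathrm{HL}\vdash f(H)\to S$. Since $\triangleright$ is finite, each $f(e)$ is one of a constantly many types (bounded in terms of the fixed grammar; more precisely, of total size polynomial in the input), so $f$ itself has polynomial size. Appending to $f$ a derivation tree of the sequent $f(H)\to S$ — which by the argument in the proof of Theorem \ref{NP_der} is of polynomial size — yields a polynomial-size certificate checkable in polynomial time.

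For NP-hardness, I would reduce from the derivability problem in $\mathrm{HL}$, which is NP-hard by Theorem \ref{NP_der}. Given an instance $H\to A$ of derivability, let $\Sigma$ be the (finite) set of types occurring as labels in $H$, treated now as fresh terminal symbols (keeping $type$ unchanged), and let $H'\in\mathcal{H}(\Sigma)$ be the graph obtained from $H$ by reinterpreting each label as the corresponding symbol. Define $HGr=\langle\Sigma,A,\triangleright\rangle$ with $T\triangleright T$ for every $T\in\Sigma$ (identity correspondence). This construction is clearly polynomial-time. Because $\triangleright$ is the identity, the only admissible relabeling of $H'$ is the one reconstructing $H$, so $H'\in L(HGr)$ iff $\mathrm{HL}\vdash H\to A$, giving the required reduction.

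The only potential obstacle is the NP certificate for the derivability step, but this is exactly what Theorem \ref{NP_der} supplies, so no further work is needed beyond packaging the two parts.
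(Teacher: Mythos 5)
Your proposal is correct and follows essentially the same route as the paper: the same certificate (relabeling plus polynomial-size derivation tree, invoking Theorem \ref{NP_der}) for membership in NP, and the same reduction from the derivability problem for hardness. The only cosmetic difference is that the paper introduces one fresh terminal symbol per edge with $a_i\triangleright lab_G(e_i)$, whereas you introduce one symbol per distinct label with the identity correspondence; both force the unique admissible relabeling to reconstruct the original sequent.
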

\begin{proof}
	This problem is in NP: if the answer is ``YES'', then its certificate is the function $f_H:E_H\to Tp(\mathrm{HL})$ such that $lab_H(e)\triangleright f_H(e)$ (the size of a description of this function can be estimated as $O(|E_H|)$) and a derivation of the sequent $f_H(H)\to S$, which also has polynomial size w.r.t. size of $H$ (see Theorem \ref{NP_der}).
	
	In order to check that this problem is NP-complete we reduce the derivability problem from Theorem \ref{NP_der} to it. If $G\to A$ is a graph sequent, and $E_G=\{e_1,\dots,e_n\}$, then we introduce a grammar $HGr=\langle\{a_1,\dots,a_n\},A,\triangleright\rangle$ where $a_i\triangleright lab_G(e_i)$, and a graph $H=g(G)$ where $g(e_i):=a_i$ for $i=1,\dots,n$. Clearly, $HGr$ and $H$ can be constructed in linear time w.r.t. the size of $G\to A$. Then $H\in L(HGr)\Leftrightarrow \mathrm{HL}\vdash G\to A$, which finishes the proof.
	
	Another way to observe NP-hardness is to note that HRGs can generate an NP-complete language $\mathcal{L}$ without isolated nodes and the empty graph (see \cite{Drewes}). Accordingly to Theorem \ref{hrg_hlg}, $\mathcal{L}$ can be also generated by a hypergraph Lambek grammar as well.
	\qed
\end{proof}
Both theorems themselves do not look unusual: we just notice that both problems can be certified in polynomial time and that they contain formalisms for which NP-completeness is well known. However, there are two important remarks that make these results more amazing:
\begin{enumerate}
	\item Lambek grammars have the same power with context-free grammars; however, the membership problem for the former is NP-complete while for the latter it is in P. In the graph case everything is different: the membership problems for HRGs and HLGs have the same algorithmic complexity (they are both NP-complete) while hypergraph Lambek grammars are much more powerful than hyperedge replacement grammars (see Section \ref{sec_power}).
	\item The Lambek calculus is NP-complete (in the sense of derivability problem), and so is the hypergraph Lambek calculus; however, the former can be embedded in the latter, and the latter deals with much more general structures than just strings.
\end{enumerate}
Hence, $\mathrm{HL}$ is more powerful than string formalisms from which it arises; however, it has the same algorithmic complexity, which is a great pleasure.
\section{Models for $\mathrm{HL}$}
Extending the string case again, we introduce algebraic models for $\mathrm{HL}$; however, in order to do this we firstly need to generalize some basic algebraic notions. Following \cite{Pentus_models}, we focus here on semigroup models.
\begin{definition}
	Let us fix a symbol $\verb*| |$ which does not occur in all other considered sets (formally, we fix a countable set of symbols of the form $\verb*| |_n$ and set $type(\verb*| |_n)=n$, $n\ge 0$; compare with the $\$$ symbol). A graph $H\in\mathcal{H}(\{\verb*| |\})$ is called \emph{unlabeled}. We fix an arbitrary edge ordering on unlabeled graphs: if $H$ is unlabeled, then $E_H=\{e_1(H),\dots,e_{|E_H|}(H)\}$.
\end{definition}
\begin{definition}\label{def_semigroup}
	A \emph{hypergraph semigroup} over the set $U\subseteq\mathcal{H}(\{\verb*| |\})$ of unlabeled graphs is a structure $\langle M,type,\{\circ[H]\}_{H\in U}\rangle$ where
	\begin{enumerate}
		\item $M$ is a carrier set.
		\item $type:M\to\mathbb{N}$ is a ranking function; denote by $M_k$ the set $\{a\in M|type(a)=k\}$.
		\item $\circ[H]:M_{i_1}\times\dots\times M_{i_n}\to M_{j}$ is an $n$-ary operation where $n=|E_H|$ and $i_k=type(e_k(H)),1\le k \le n$, $j=type(H)$.
	\end{enumerate}
	Regarding the set of operations we firstly impose the handle identity property: whenever we consider $H=\circledcirc(\verb*| |)$, $\circ[H](a)=a$ for all $a\in M_{type(H)}$. The next requirement is the following associativity property: if $G^i, H^i_1,\dots,H^i_{|E_{G^i}|}$, $i=1,2$ are unlabeled graphs such that each $G^i$ is from $U$ (denote $|E_{G^i}|$ as $L^i$), each $H^i_l$ is either from $U$ or equal to $\circledcirc(\verb*| |)$ and
	\begin{equation}\label{ass_req}
	\begin{split}
	K^1:=G^1[H^1_1/e_1(G^1),H^1_2/e_2(G^1),\dots,H^1_{L^1}/e_{L^1}(G^1)]
	=\\=
	G^2[H^2_1/e_1(G^2),H^2_2/e_2(G^2),\dots,H^2_{L^2}/e_{L^2}(G^2)]=:K^2
	\end{split}
	\end{equation}
	(here it is important to emphasize that graphs $K^1$ and $K^2$ are equal only up to isomorphism), then the following equality holds for elements $a_1,\dots,a_N$ of $M$ with appropriate types:
	\begin{multline}\label{ass_stat}
	\circ[G^1]\left(\circ[H^1_1]\left(a_1,\dots,a_{l^1_1}\right),\dots, \circ[H^1_q]\left(a_{N^1_q+1},\dots,a_{N^1_{q+1}}\right), \dots,\right.\\\left. \circ[H^1_{L^1}]\left(a_{N^1_{L^1}+1},\dots,a_{N^1_{L^1+1}}\right)\right)
	=
	\circ[G^2]\left(\circ[H^2_1]\left(a_{\pi(1)},\dots,a_{\pi(l^2_1)}\right),\dots, \right.\\\left.\circ[H^2_q]\left(a_{\pi(N^2_q+1)},\dots,a_{\pi(N^2_{q+1})}\right), \dots, \circ[H^2_{L^2}]\left(a_{\pi(N^2_{L^2}+1)},\dots,a_{\pi(N^2_{L^2+1})}\right)\right).
	\end{multline}
	Here $l^i_p=|E_{H^i_p}|, 1\le p\le L^i$, $N^{i}_{q+1}=l^i_1+\dots+l^i_q, 1\le q \le L_i$, $N^i_1=0$. Note that $N^{1}_{L^1+1}=N^{2}_{L^2+1}=N$. $\pi$ is a permutation from $S_N$, which is defined right below. Let $f^i:\{1,\dots, N\}\to E_{K^i}$ be the following function: $f^i(N^i_q+r)$ equals an edge $e_r(H^i_q)$ considered as a part of $K^i$ (since $K^i$ is obtained from $G^i$ by replacements with graphs $H^i_1,\dots,H^i_{L^i}$). One observes that $f^1$ and $f^2$ are bijective. Let $\mathcal{V}: V_{K^1}\to V_{K^2}$, $\mathcal{E}: E_{K^1}\to E_{K^2}$ be an isomorphism between $K_1$ and $K_2$. Then $\pi=(f^2)^{-1}\circ \mathcal{E} \circ f^1$.
\end{definition}
An important feature of this definition is that the set of operations here is indexed by graphs. 
\begin{example}
	Let $U=\{Str\}$ where $Str=\langle\{v_1,v_2,v_3\},\{e_1,e_2\},att,lab,v_1v_3\rangle$, $att(e_1)=v_1v_2$, $att(e_2)=v_2v_3$, $lab(e_1)=lab(e_2)=\verb*| |$; that is, $Str=(\verb*|  |)^\bullet$. Let $e_i(Str)=e_i,\;i=1,2$. Then each semigroup $\langle M,\circ\rangle$ in the common sense can be considered as a hypergraph semigroup $\langle M,type,\{\circ[Str]\}\rangle$ over $U$ where $type(a)=2$ for all $a\in M$, and $\circ[Str]=\circ$.
\end{example}
Let us introduce several other generalizations of notions regarding semigroup theory.
\begin{definition}\label{def_partial_semigroup}
	A \emph{partial hypergraph semigroup} over a set $U$ of unlabeled graphs is defined similarly to a hypergraph semigroup with that only difference that $\circ[H]$ are partial functions; the handle identity property remains the same (particularly, $\circ[\circledcirc(\verb*| |_k)]$ is defined on all elements of $M_k$); the associativity property now states that if all the operations in the left-hand side of (\ref{ass_stat}) are defined, then they are defined in its right-hand side, and (\ref{ass_stat}) holds.
\end{definition}
\begin{definition}\label{def_po_semigroup}
	A hypergraph semigroup $\langle M,type,\{\circ[H]\}_{H\in U},\{\le_n\}_{n\ge 0}\rangle$ is \emph{partially ordered} if $\{\le_n\}_{n\ge 0}$ is a set of binary relations such that $\le_n$ is defined on $M_n\times M_n$, $\langle M_n,\le_n\rangle$ is partially ordered, and the following monotonicity property holds: if $H$ belongs to $U$, $|E_H|=m$, and $a_1,\dots,a_m,b_1,\dots,b_m$ are such elements of $M$ that $a_i\le_{t_i} b_i$ ($1\le i \le m$, $t_i=type(a_i)=type(b_i)=type(e_i(H))$), then $\circ[H](a_1,\dots,a_m)\le_t\circ[H](b_1,\dots,b_m)$ for $t=type(H)$.
\end{definition}
\begin{definition}\label{def_resid_semigroup}
	A \emph{residuated hypergraph semigroup} is a partially ordered hypergraph semigroup $\langle M,type,\{\circ[H]\}_{H\in U},\{\le_n\}_{n\ge 0}\rangle$ such that for each $H\in U$ (denote $|E_H|=m$), $i\in\{1,\dots,m\}$, $a_1,\dots,a_{i-1},a_{i+1},\dots,a_m,b\in M$ with $type(a_j)=type(e_j(H))=t_j$ ($j\ne i$), $t=type(b)=type(H)$ there exists such an element $d\in M$, $type(d)=type(e_i(H))=t_i$ that for all $c\in M$, $type(c)=type(d)$ the following inequalites are equivalent:
	$$
	\circ[H](a_1,\dots,a_{i-1},c,a_{i+1},\dots,a_m)\le_{t} b \Leftrightarrow c\le_{t_i} d
	$$
	Such an element $d$ is unique (see the proof below); we denote it as $$d=\oslash[H](b/a_1,\dots,a_{i-1},\$,a_{i+1},\dots,a_m).$$
\end{definition}
\begin{proposition}
	The element $d$ from Definition \ref{def_resid_semigroup} is defined uniquely.
\end{proposition}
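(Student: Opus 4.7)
The plan is to exploit the biconditional characterization of $d$ together with antisymmetry of the partial order $\le_{t_i}$, which holds because $\langle M_{t_i},\le_{t_i}\rangle$ is a partial order by Definition \ref{def_po_semigroup}.

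First I would suppose that two elements $d$ and $d'$ of $M_{t_i}$ both satisfy the defining equivalence in Definition \ref{def_resid_semigroup} for the same data $H, i, a_1,\dots,a_{i-1},a_{i+1},\dots,a_m, b$. That is, for every $c\in M_{t_i}$,
\begin{align*}
\circ[H](a_1,\dots,a_{i-1},c,a_{i+1},\dots,a_m)\le_t b &\Leftrightarrow c\le_{t_i} d,\\
\circ[H](a_1,\dots,a_{i-1},c,a_{i+1},\dots,a_m)\le_t b &\Leftrightarrow c\le_{t_i} d'.
\end{align*}

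Next I would specialize $c := d$. By reflexivity of $\le_{t_i}$ we have $d\le_{t_i} d$, so the first equivalence yields $\circ[H](a_1,\dots,d,\dots,a_m)\le_t b$. Feeding this into the second equivalence then produces $d\le_{t_i} d'$. An entirely symmetric argument with $c := d'$ gives $d'\le_{t_i} d$.

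Finally, antisymmetry of $\le_{t_i}$ forces $d = d'$, which is the claim. No step here is delicate: the whole argument is essentially the standard uniqueness-of-adjoint proof from order theory, adapted verbatim; the only thing one has to be careful about is noting that all $c, d, d'$ live in the same set $M_{t_i}$ so that the comparisons $d\le_{t_i} d'$ and $d'\le_{t_i} d$ actually make sense, which is guaranteed by the typing constraints built into Definition \ref{def_resid_semigroup}.
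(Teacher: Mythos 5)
Your argument is correct and coincides with the paper's own proof: both use reflexivity of $\le_{t_i}$ to obtain $\circ[H](a_1,\dots,d,\dots,a_m)\le_t b$ for each candidate, then apply the biconditional in both directions and conclude by antisymmetry. Nothing to add.
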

\begin{proof}
	Let $d_1,d_2$ be two elements satisfying requirements of Definition \ref{def_resid_semigroup}. Since $d_1\le_{t_i} d_1$, $d_2\le_{t_i} d_2$, the following holds for $k=1,2$:
	$$
	\circ[H](a_1,\dots,a_{i-1},d_k,a_{i+1},\dots,a_m)\le_{t} b
	$$
	Taking $c=d_1$ and $d=d_2$ in Definition \ref{def_resid_semigroup} yields that $d_1\le_{t_i} d_2$; taking $c=d_2$ and $d=d_1$ yields that $d_2\le_{t_i} d_1$. Thus $d_1=d_2$.
	\qed
\end{proof}
\begin{definition}
	A (partial, partially ordered, residuated) \emph{all-hypergraph semigroup} is a (partial, partially ordered, residuated) hypergraph semigroup over the set $\mathcal{H}(\{\verb*| |\})$ of all unlabeled graphs.
\end{definition}
Similarly to the notion of hypergraph furthermore we may omit the prefix \emph{hyper-} in all the terms defined above.

Generalizing the string case we consider residuated semigroups as models for the hypergraph Lambek calculus. It is not hard to see that $\circ$ from Definition \ref{def_po_semigroup} is somehow related to $\times$ in HL, and $\oslash$ from Definition \ref{def_resid_semigroup} has something in common with $\div$ in HL.
\begin{definition}\label{def_model_rsg}
	A \emph{residuated all-hypergraph semigroup model} $\langle RSG,w\rangle$ is a residuated all-graph semigroup $RSG=\langle M,type,\{\circ[H]\},\{\le_n\}_{n\ge 0}\rangle$ along with a valuation $w:Tp(\mathrm{HL})\to M$ satisfying the following reguirements:
	\begin{enumerate}
		\item\label{def_mrsg_0} $type(T)=type(w(T))$ for all types $T$;
		\item\label{def_mrsg_1} Let $\times(M)$ be a type, and let $u:E_M\to \{\verb*| |_n\}_{n\ge 0}$ be an unlabeling function ($u(e)=\verb*| |$). $u(M)$ is an unlabeled graph, so there is a fixed order on its edges (and, consequently, on edges of $M$): $E_M=E_{u(M)}=\{e_1(u(M)),\dots,e_{m}(u(M))\}$ where $m=|E_M|$; denote by $e_i\in E_M$ the edge $e_i(u(M))$. Then $$w(\times(M))=\circ[u(M)]\left(w(lab_M(e_1)),\dots,w(lab_M(e_m))\right).$$
		\item\label{def_mrsg_2} Let $\div(N/D)$ be a type. Let $u:E_D\to \{\verb*| |_n\}_{n\ge 0}$ be an unlabeling function. $u(D)$ is unlabeled, so there is a fixed order on its edges (and, consequently, on edges of $D$): $E_D=E_{u(D)}=\{e_1(u(D)),\dots,e_{l}(u(D))\}$ where $0<l=|E_D|$; denote by $f_j\in E_D$ the edge $e_j(u(D))$. Let finally $lab_D(f_i)=\$$ for some $1\le i \le l$. Then 
		\begin{multline*}
		w(\div(N/D))=\oslash[u(D)]\left(w(N)/w(lab_D(f_1)),\dots,w(lab_D(f_{i-1})),\$,\right.\\\left.w(lab_D(f_{i+1})),\dots, w(lab_D(f_{l}))\right).
		\end{multline*}
	\end{enumerate}
\end{definition}
Note that it suffices to define $w$ on primitive types, and conditions \ref{def_mrsg_1} and \ref{def_mrsg_2} allow one to extend $w$ to all types.
\begin{definition}
	A graph sequent $H\to A$ is true in a model $\langle RSG,w\rangle$ if $w(\times(H))\le w(A)$ where $\le$ stands for $\le_{type(A)}$.
\end{definition}
Below we prove correctness and completeness theorems. Their proof is similar to those in the string case.
\begin{theorem}[correctness]\label{th_correct}
	If $\mathrm{HL}\vdash H\to A$, then $H\to A$ is true in all residuated all-graph semigroup models.
\end{theorem}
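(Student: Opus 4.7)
The plan is to prove the theorem by induction on the length of the derivation of $H \to A$ in $\mathrm{HL}$, checking that the axiom is valid in every model and that each of the four rules preserves truth. Throughout I will repeatedly combine three structural facts about residuated all-graph semigroups: the handle identity (to collapse trivial wrappings like $\times(\circledcirc(T))$), the associativity property (to rewrite multiplications over compound graphs as iterated multiplications, and vice versa), and the monotonicity/residuation laws (to push inequalities through replacements and to move between $\circ$-form and $\oslash$-form statements).

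For the axiom $\circledcirc(p) \to p$, I would unfold Definition \ref{def_model_rsg} to get $w(\times(\circledcirc(p))) = \circ[\circledcirc(\verb*| |)](w(p)) = w(p)$ by the handle identity, and then reflexivity of $\le$ finishes the case. For $(\to\times)$, the goal reduces to showing $w(\times(M[H_1/m_1,\dots,H_l/m_l])) \le w(\times(M))$. Using associativity in the form (\ref{ass_req})–(\ref{ass_stat}), I would rewrite the left-hand side as $\circ[u(M)](w(\times(H_1)),\dots,w(\times(H_l)))$, then apply monotonicity together with the inductive hypotheses $w(\times(H_i)) \le w(lab_M(m_i))$ to conclude. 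The rule $(\times\to)$ is handled symmetrically: the antecedent of the conclusion is obtained by compressing $F$ into a single $\times(F)$-labeled edge, so associativity gives $w(\times(H\llbracket\times(F)/F\rrbracket)) = w(\times(H))$, and the inductive hypothesis transfers the bound on $w(A)$.

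The rule $(\to\div)$ is exactly the residuation property written graph-theoretically. Since $H$ is recovered from $D := H\llbracket\$/F\rrbracket$ by replacing the $\$$-edge with $F$, associativity gives $w(\times(H)) = \circ[u(D)](\dots, w(\times(F)),\dots)$ at the $\$$-position; combined with $w(\times(H)) \le w(A)$ from the hypothesis and the defining equivalence of $\oslash$ in Definition \ref{def_resid_semigroup}, this yields $w(\times(F)) \le w(\div(A/D))$. The most intricate case is $(\div\to)$. Writing $G := H[D/e][d_0:=\div(N/D)][H_1/d_1,\dots,H_k/d_k]$, I would first isolate the inner fragment that came from $D$ and use associativity to express its valuation as $\circ[u(D)]$ applied to $w(\div(N/D))$ at the $\$$-slot and $w(\times(H_i))$ at the other slots; monotonicity plus the inductive hypotheses $w(\times(H_i)) \le w(lab(d_i))$ lifts this to $\circ[u(D)](\dots, w(\div(N/D)), \dots, w(lab(d_k)))$, and taking $c = d = w(\div(N/D))$ in the residuation equivalence shows that this is $\le w(N)$. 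A second application of associativity and monotonicity, this time on the outer graph $H$ with the inner value in the position of $e$, bounds $w(\times(G))$ by $w(\times(H))$ and hence by $w(A)$.

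The main obstacle is bookkeeping rather than mathematical depth: the associativity statement (\ref{ass_stat}) is parameterised by a permutation $\pi$ arising from an isomorphism between the two decompositions of the same compound graph, so whenever I invoke it I must verify that the particular decomposition I want (e.g.\ ``apply $\times$ in $D$ first, then in $H$'') corresponds to a legitimate decomposition of $G$ up to isomorphism, and that the permutation of arguments matches the replacement ordering. Once that correspondence is nailed down once and used as a lemma, every inductive case reduces to a short combination of monotonicity and residuation.
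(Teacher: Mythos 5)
Your proposal is correct and follows essentially the same route as the paper's proof: induction on the derivation, with the handle identity settling the axiom, associativity handling $(\times\to)$, monotonicity handling $(\to\times)$, the residuation equivalence handling $(\to\div)$, and the $(\div\to)$ case done by first establishing truth of $D[d_0:=\div(N/D)]\to N$ via residuation (taking $c=d=w(\div(N/D))$) and then lifting through monotonicity and associativity. Your closing remark about verifying the permutation $\pi$ in the associativity property is a fair observation about bookkeeping that the paper itself leaves implicit.
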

\begin{proof}
	Induction on length of a derivation. 
	
	Induction basis: if $H=\circledcirc(A)$, then $w(\times(H))=\circ[\circledcirc(\verb*| |)](w(A))=w(A)\le w(A)$ (here we use the handle identity property).
	
	Induction step. There are four cases depending on the last rule applied in a derivation. Furthermore in this proof, we denote by $u(H)$ an unlabeled graph isomorphic to $H$; we also borrow some notations from Definition \ref{def_model_rsg}.
	\\
	Case $(\times\to)$ follows from associativity of $\circ[H]$ operations: if the last rule is of the form
	$$
	\infer[(\times\to)]{G\to A}{G[F/e]\to A}
	$$
	where $e\in E_G$ is labeled by $\times(F)$, then the valuation of the antecedent of the below sequent is expressed as $\circ[u(G)]$ with $|E_G|$ arguments where one of arguments is the result of the operation $\circ[u(F)]$; using associativity we reduce this composition to a single operation $\circ[u(G[F/e])]$ and apply the induction hypothesis.
	\\
	Case $(\to\times)$ follows from monotonicity of partially-ordered graph semigroups. If the last rule is of the form
	$$
	\infer[(\to\times)]{M[H_1/m_1,\dots,H_l/m_l]\to\times(M)}{H_1\to lab(m_1) & \dots & H_l\to lab(m_l)}
	$$
	then $w(\times(H_i))\le w(lab(m_i))$ by the induction hypothesis, and due to monotonicity and associativity $w(\times(M[H_1/m_1,\dots,h_l/m_l]))\le w(\times(M))$.
	\\
	Case $(\div\to)$ follows from monotonicity and from conditions on residuated semigroups. If $\div(N/D)$ is a type (we take notations from Definition \ref{def_model_rsg}), then the following sequent is derivable:
	$$
	D[f_i:=\div(N/D)]\to N
	$$
	This sequent is also true in all residuated all-graph semigroup models: it suffices to take $H=u(D)$, $a_j=w(lab_D(f_j))$ for $j\ne i$, $c=w(\div(N/D))$, $b=w(N)$; then the first inequality of Definition \ref{def_resid_semigroup} since the second one is of the form $w(\div(N/D))\le w(\div(N/D))$ so it obviously holds.
	\\
	Now, if $\mathrm{HL}\vdash H_j\to lab_D(f_j),j\ne i$, then due to monotonicity and associativity $$D[f_i:=\div(N/D)][H_1/f_1,\dots,H_{i-1}/f_{i-1},H_{i+1}/f_{i+1},\dots,H_l/f_l]\to N$$ is also true in all residuated all-graph semigroup models. Using monotonicity and associativity again, we complete this case for a general form of the $(\div\to)$ rule as in Section \ref{subsec_div_to}.
	\\
	Case $(\to\div)$ directly follows from the condition from Definition \ref{def_resid_semigroup}.
	\qed
\end{proof}
\begin{theorem}[completeness]
	If $H\to A$ is true in all residuated all-graph semigroup models, then $\mathrm{HL}\vdash H\to A$, and there exists a universal model (i.e. a model such that $H\to A$ is derivable if and only if $H\to A$ is true in this model).
\end{theorem}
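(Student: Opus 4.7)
The plan is to build a universal model by a Lindenbaum--Tarski style syntactic quotient, in the spirit of Pentus' construction \cite{Pentus_models} for $\mathrm{L}$. I would take $M_n$ to be the set of equivalence classes $[T]$ of types $T \in Tp(\mathrm{HL})$ with $type(T) = n$ under the relation $\sim$ from Section \ref{sec_equiv}, set $M = \bigsqcup_n M_n$, and order elements by $[A] \le_n [B]$ iff $\mathrm{HL} \vdash \circledcirc(A) \to B$. For an unlabeled graph $H$ with $E_H = \{e_1,\dots,e_m\}$, I define $\circ[H]([A_1],\dots,[A_m]) := [\times(H')]$, where $H'$ is $H$ with $e_i$ relabeled by $A_i$; and $\oslash[H]([B]/[A_1],\dots,\$,\dots,[A_m]) := [\div(B/H'')]$, where $H''$ is $H$ relabeled analogously with $\$$ at position $i$. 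Well-definedness on representatives follows from Section \ref{sec_equiv} (substitution of equivalent subtypes preserves equivalence), and antisymmetry of $\le_n$ is built into the definition of $\sim$.

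The first batch of verifications is mostly structural. Handle identity reduces to $\times(\circledcirc(A)) \sim A$, which is an immediate consequence of Proposition \ref{TtoT} and the rule $(\times\to)$. Monotonicity of $\circ[H]$ and $\oslash[H]$ in each argument follows from admissibility of the cut rule (Theorem \ref{cut}). The associativity identity (\ref{ass_stat}) collapses once one observes that both sides, viewed modulo $\sim$, equal $[\times(K^1)] = [\times(K^2)]$: repeated application of Proposition \ref{elim_mult}(1) lets one flatten nested $\times$'s inside a $\times(\,\cdot\,)$, and the fact that we identify isomorphic graphs absorbs the permutation $\pi$, since by construction $\pi$ is exactly the correspondence of labeled edges induced by the chosen isomorphism $\mathcal{E}$ in (\ref{ass_req}). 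Residuation is the most interesting axiom: the implication $\circ[H](\vec a, c, \vec a') \le b \Rightarrow c \le \oslash[H](b/\vec a, \$, \vec a')$ is exactly the rule $(\to\div)$ applied to the compression of the relevant subgraph, while the converse implication uses the reversibility statement in Proposition \ref{reversibility}(2) together with $(\div\to)$.

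For the canonical valuation I set $w(p) := [p]$ on primitives. Conditions \ref{def_mrsg_1} and \ref{def_mrsg_2} of Definition \ref{def_model_rsg}, together with the definitions of $\circ[H]$ and $\oslash[H]$ above, yield $w(T) = [T]$ for every type $T$ by a straightforward induction on the size $|T|$. Truth of a sequent $H \to A$ in the universal model is $w(\times(H)) \le w(A)$, i.e.\ $[\times(H)] \le [A]$, which by definition is $\mathrm{HL} \vdash \circledcirc(\times(H)) \to A$; by reversibility of $(\times\to)$ (Proposition \ref{reversibility}(1)) this holds iff $\mathrm{HL} \vdash H \to A$. Consequently, a sequent that is true in all residuated all-graph semigroup models is true in the universal model and hence derivable, which simultaneously establishes completeness and the existence of a universal model. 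The principal technical obstacle will be the associativity axiom (\ref{ass_stat}): the bookkeeping with the permutation $\pi$ and with the simultaneous replacements on both sides of (\ref{ass_req}) must be done carefully, but once one recognises that all computations take place inside $\sim$-classes of types and that $\sim$ does not distinguish isomorphic graph structures, it reduces to the iterated use of Proposition \ref{elim_mult}(1) noted above.
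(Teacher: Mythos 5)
Your proposal is correct and follows essentially the same route as the paper: the Lindenbaum--Tarski quotient of $Tp(\mathrm{HL})$ by $\sim$, with $\circ[H]$ realized by $\times$ of the relabeled graph, $\oslash[H]$ by $\div$, the order given by derivability of $\circledcirc(A)\to B$, the canonical valuation $w(T)=[T]$, and the final reduction of truth in the universal model to derivability via reversibility of $(\times\to)$. Your verifications of handle identity, monotonicity, associativity via Proposition \ref{elim_mult}, and residuation via $(\to\div)$, $(\div\to)$ and Proposition \ref{reversibility} are exactly the checks the paper leaves implicit.
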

\begin{proof}
	In Section \ref{sec_equiv} we defined the relation $\sim$. Consider the set $M$ of equivalences classes w.r.t. $\sim$ as a carrier set (i.e. $M=\{[A]|A\in Tp(\mathrm{HL})\}$ where $[A]=\{B\in Tp(\mathrm{HL})|B\sim A\}$). The function $type$ is defined on $M$ as follows: $type([A]):=type(A)$. For each unlabeled graph $H$ we define $\circ[H](A_1,\dots,A_n):=\times(H[e_1(H):=A_1]\dots[e_n(H):=A_n])$. We say that $[A] \le_n [B]$ if $type(A)=type(B)=n$, and $\mathrm{HL}\vdash \circledcirc(A)\to B$. $\le_n$ is a partial order: reflexivity follows from Proposition \ref{TtoT}, antisymmetry directly follows from the definition of $\sim$, and transitivity can be easily proven using the cut rule.
	
	We claim that $URSG=\langle M,type,\{\circ[H]\}_{H\in\mathcal{H}(\{\verb*| |\})}, \{\le_n\}_{n\ge 0}\rangle$ is a residuated all-graph semigroup. Indeed, we can define
	\begin{multline*}
	\oslash[G]([N]/[T_1],\dots, [T_{i-1}],\$,[T_{i+1}],\dots, [T_m]):=\big[\div(N/G[e_1(G):=T_1]\dots\\\dots[e_{i-1}(G):=T_{i-1}][e_i(G):=\$][e_{i+1}(G):=T_{i+1}]\dots[e_m(G):=T_m])\big].
	\end{multline*}
	The requirement imposed by Definition \ref{def_resid_semigroup} then follows from the rule $(\div\to)$ and Proposition \ref{reversibility}.
	
	Finally, we define a valuation $w$ as follows: $w(A):=[A]$. This valuation obviously satisfies all the required conditions, hence $\langle URSG, w\rangle$ is a residuated all-graph semigroup model. If $H\to A$ is true in this model, then $\mathrm{HL}\vdash \circledcirc(H)\to A$; using Proposition \ref{reversibility} we obtain $\mathrm{HL}\vdash H\to A$.
	\qed
\end{proof}
Therefore, the hypergraph Lambek calculus may be considered as a logic of residuated all-graph semigroups.

The next question is the following: can we restrict the class of residuated all-graph semigroup models to a weaker one but preserve completeness? In the string case this question is extensively studied; particularly, \cite{Pentus_models} is devoted to models of L based on semigroups and especially to so-called language models (or L-models) and to relational models (or R-models). Pentus proved in \cite{Pentus_models} that the Lambek calculus is complete w.r.t. L-models and w.r.t. R-models. 

It appears that all definitions of these models can be lifted to HL; however, completeness or incompleteness results are sometimes unexpected. We start with discussing how to construct a residuated graph semigroup over $U$ having a partial graph semigroup over $U$.
\begin{definition}
	Given a set $M$ with a function $type:M\to\mathbb{N}$ acting on it we define a typed powerset $\mathcal{P}_{type}(M)$ of $M$ as follows: $$\mathcal{P}_{type}(M)=\{A\subseteq M|\exists t\in\mathbb{N}:\forall a\in A \;type(a)=t\}$$
\end{definition}
Recall that if $f:A_1\times\dots\times A_n\to A$ is some partial function defined on sets $A_1,\dots,A_n$, then it can be generalized to a (total) function $F:\mathcal{P}(A_1)\times\dots\times \mathcal{P}(A_n)\to \mathcal{P}(A)$ in a natural way: $F(B_1,\dots,B_n)=\{f(b_1,\dots,b_n)|b_1\in B_1,\dots, b_n\in B_n\}$. Such a function $F$ is usually denoted by the same symbol $f$.
Note that $F$ is an increasing function of all its arguments: if $B_i\subseteq B^\prime_i$, then $$F(B_1,\dots,B_{i-1},B_i,B_{i+1},\dots,B_n)\subseteq F(B_1,\dots,B_{i-1},B^\prime_i,B_{i+1},\dots,B_n).$$
\begin{proposition}\label{prop_sem_to_res}
	Let $\langle M,type,\{\circ[H]\}_{H\in U}\rangle$ be a partial semigroup over $U$. Then $\langle \mathcal{P}_{type}(M),type,\{\circ[H]\}_{H\in U},\{\le_n\}_{n\ge 0}\rangle$ is a residuated graph semigroup over $U$ where 
	\begin{enumerate}
		\item $type(A)=type(a)$ for all $a\in A$ (this definition is correct since we consider a typed powerset);
		\item For each $H\in U$ $\circ[H]$ denotes a natural extension of the operation of the same name to powersets;
		\item $A\le_n B$ if $A\subseteq B$.
	\end{enumerate}
\end{proposition}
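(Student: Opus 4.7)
The plan is to verify in turn the four defining properties: the handle identity, the associativity condition of Definition \ref{def_semigroup}, the monotonicity condition of Definition \ref{def_po_semigroup}, and the residuation condition of Definition \ref{def_resid_semigroup}. Three of these will be essentially automatic from the fact that $\circ[H]$ on $\mathcal{P}_{type}(M)$ is the pointwise/natural extension of a partial operation on $M$, together with the standard monotonicity of such extensions. The only place that actually needs a construction is the residual $\oslash[H]$.

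First I would check the handle identity. Since the partial semigroup on $M$ satisfies $\circ[\circledcirc(\verb*| |_k)](a)=a$ whenever this is defined, and it \emph{is} defined on all of $M_k$ by Definition \ref{def_partial_semigroup}, the natural extension to $\mathcal{P}_{type}(M)_k$ simply sends $A\mapsto\{a:a\in A\}=A$. Next, partial-order axioms for $\subseteq$ on each $\mathcal{P}_{type}(M)_n$ are standard, and monotonicity is a direct consequence of the way a partial operation is extended to powersets: enlarging any one of the arguments $B_i$ can only enlarge the set of tuples one can feed into $\circ[H]$, hence can only enlarge the image set.

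For associativity I would unravel both sides of (\ref{ass_stat}) for the extended operations: an element $x$ lies in the left-hand side iff there exist $a_1\in A_1,\dots,a_N\in A_N$ for which all the operations on the left-hand side of (\ref{ass_stat}) (now read inside $M$) are defined and evaluate to $x$. Partial associativity in Definition \ref{def_partial_semigroup}, applied with $(G^1,H^1_\bullet)$ and $(G^2,H^2_\bullet)$, then guarantees that the corresponding right-hand-side computation in $M$ on the permuted tuple $(a_{\pi(1)},\dots,a_{\pi(N)})$ is likewise defined and equals $x$. By symmetry of the hypothesis~(\ref{ass_req}) in the superscripts $1,2$, applying associativity in the other direction with $\pi^{-1}$ gives the reverse inclusion. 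Hence the two extended operations produce equal sets, i.e.\ (\ref{ass_stat}) holds in $\mathcal{P}_{type}(M)$.

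The main technical step, and the one I expect to be the genuine obstacle, is constructing the residual. Given $H\in U$ with $|E_H|=m$, an index $i$, sets $A_1,\dots,A_{i-1},A_{i+1},\dots,A_m$ with $type(A_j)=type(e_j(H))$, and a set $B$ with $type(B)=type(H)$, I would define
\[
D=\bigl\{c\in M_{type(e_i(H))} \;\big|\; \forall a_1\in A_1,\dots,\forall a_m\in A_m\ \text{(with $a_i$ omitted): if } \circ[H](a_1,\dots,a_{i-1},c,a_{i+1},\dots,a_m) \text{ is defined, it lies in } B\bigr\}.
\]
Then $D\in\mathcal{P}_{type}(M)$ by construction. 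The forward implication of the residuation equivalence is immediate: if $C\subseteq D$, any element of $\circ[H](A_1,\dots,C,\dots,A_m)$ has the form $\circ[H](a_1,\dots,c,\dots,a_m)$ with $c\in D$, so it lies in $B$. For the reverse, suppose $\circ[H](A_1,\dots,C,\dots,A_m)\subseteq B$ and pick $c\in C$; for arbitrary $a_j\in A_j$, whenever $\circ[H](a_1,\dots,c,\dots,a_m)$ is defined it belongs to $\circ[H](A_1,\dots,C,\dots,A_m)\subseteq B$, so $c\in D$ and thus $C\subseteq D$. Uniqueness of $D$ follows from the proposition stated just after Definition \ref{def_resid_semigroup}. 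The subtlety here is the ``if defined'' clause in the definition of $D$: this is exactly what is needed to make the argument go through when $\circ[H]$ is only partial, and it is the feature that distinguishes the residuation in the powerset completion from what one would write in the total case. \qed
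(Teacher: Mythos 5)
Your proposal is correct and takes essentially the same route as the paper: handle identity, associativity, and monotonicity are read off from the powerset extension, and the residual is defined in the paper as $\oslash[H](B/A_1,\dots,\$,\dots,A_n)=\{d\in M_t\mid \circ[H](A_1,\dots,\{d\},\dots,A_n)\subseteq B\}$, which is precisely your element-wise definition with the ``if defined'' clause unpacked (an undefined instance contributes nothing to the image set, so the subset condition is vacuously satisfied, matching your vacuous-truth reading).
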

\begin{proof}
	$\{\circ[H]\}_{H\in U}$ defined on sets rather on elements of $M$ are total functions. The handle identity property obviously holds for these functions. A more interesting question is why the associativity property holds. This follows from Definition \ref{def_partial_semigroup}: it says that if a composition of operations is defined and there is another composition such that corresponding graphs obtained after replacements are isomorphic, then the result of another composition is also defined and their results coincide. 
	
	To check that this graph semigroup is partially ordered it suffices to notice that $\circ[H]$ defined on powersets is an increasing function of all its parameters w.r.t. $\le_n$.
	
	To show that this graph semigroup is residual we define
	\begin{multline*}
		\oslash[H](B/A_1,\dots,A_{i-1},\$,A_{i+1},\dots,A_n):=\\=\{d\in M_t|\circ[H](A_1,\dots,A_{i-1},\{d\},A_{i+1},\dots,A_n)\subseteq B\}
	\end{multline*}
	where $t=type(e_i(H))$. Let us denote $\oslash[H](B/A_1,\dots,A_{i-1},\$,A_{i+1},\dots,A_n)$ as $D$. Then the condition of Definition \ref{def_resid_semigroup} is obviously satisfied: $C\subseteq D$ if and only if $\circ[H](A_1,\dots,A_{i-1},C,A_{i+1},\dots,A_n)\subseteq B$.
	\qed
\end{proof}
Therefore, Proposition \ref{prop_sem_to_res} is a tool allowing us to build a residuated graph semigroup on the basis of a partial graph semigroup.

Let us consider two examples of partial all-graph semigroups:
\begin{enumerate}
	\item\label{ex_L_gsg} Let $C$ be a set of labels (not necessarily finite). Then $$\langle\mathcal{H}(C),type,\{\circ[H]\}_{H\in\mathcal{H}(\{\verb*| |\})}\rangle$$
	is an all-graph semigroup where $\circ[H](G_1,\dots,G_n)=H[G_1/e_1(H),\dots,G_n/e_n(H)]$.
	\item\label{ex_R_gsg} Let $M$ be a set. Then $$\langle M^\circledast,type,\{\circ[H]\}_{H\in\mathcal{H}(\{\verb*| |\})}\rangle$$
	is a partial all-graph semigroup where
	\begin{itemize}
		\item $type(x)=|x|$ whenever $x\in M^\circledast$;
		\item $\circ[H](x_1,\dots,x_n)=x$, if there is an injective function $f:V_H\to M$ such that $f(att_H(e_i(H)))=x_i$ for all $i=1,\dots, n$, and $f(ext_H)=x$. If such a function does not exist, then $\circ[H](x_1,\dots,x_n)$ is undefined.
	\end{itemize}
\end{enumerate}
Using Proposition \ref{prop_sem_to_res} we can perform the following scheme with all these partial all-graph semigroups: partial graph semigroup $\mapsto$ residuated graph semigroup $\mapsto$ residuated graph semigroup model. Models based on graph semigroups as in \ref{ex_L_gsg} are called language models or L-models; models based on partial graph semigroups as in \ref{ex_R_gsg} are called relational models or R-models (similarly to the string case). 

Why are they called so? Let us look closer how valuations in both cases work. 
\begin{enumerate}
	\item Each L-model includes a valuation $w:Tp(\mathrm{HL})\to \mathcal{P}_{type}(\mathcal{H}(C))$, that is, we assign a graph language to each type.
	\item A valuation $w$ in an R-model assigns a subset $w(T)\subseteq M^k$ for some $k\ge 0$ to each type $T$. Therefore, $w(T)$ can be considered as a $k$-ary relation on $M$. Note that if $type(T)=0$, then $w(T)=\emptyset$ or $w(T)=\{\Lambda\}$ (it is a nullary relation).
\end{enumerate}
Since both L- and R- models are residuated graph semigroup models correctness for them is known. Finishing this section we prove two results regarding completeness.
\begin{theorem}
	$\mathrm{HL}(\div)$ (that is, the fragment of HL with division only) is complete w.r.t. L-models.
\end{theorem}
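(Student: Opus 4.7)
The plan is to mimic the Pentus-style canonical-model construction. Set $C := Tp(\mathrm{HL}(\div))$; the partial all-graph semigroup of Example \ref{ex_R_gsg}'s analogue over $\mathcal{H}(C)$ (Example \ref{ex_L_gsg}) lifts via Proposition \ref{prop_sem_to_res} to a residuated all-graph semigroup on $\mathcal{P}_{type}(\mathcal{H}(C))$ whose operations are edge-substitution $\circ[H]$ and the associated residuals $\oslash[H]$. Define the valuation on primitive types by
$$w(p) := \{G \in \mathcal{H}(C) \mid \mathrm{HL}(\div) \vdash G \to p\}$$
and extend $w$ to every $\div$-type by clause \ref{def_mrsg_2} of Definition \ref{def_model_rsg}. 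This yields an L-model which I will call the canonical L-model.

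The technical heart of the argument is the lemma that, for every $A \in Tp(\mathrm{HL}(\div))$, $w(A) = \{G \mid \mathrm{HL}(\div) \vdash G \to A\}$, proved by induction on $|A|$. For $A = \div(N/D)$ with $\$$-edge $f_i$ and remaining denominator edges $f_j$ labelled $T_j$, unfolding the definition of $\oslash$ in the powerset semigroup and using the induction hypothesis reduces the desired equality to the following equivalence: $\mathrm{HL}(\div) \vdash G \to \div(N/D)$ if and only if for every family $\{G_j\}_{j\ne i}$ satisfying $\mathrm{HL}(\div) \vdash G_j \to T_j$, one has $\mathrm{HL}(\div) \vdash u(D)[G/f_i, G_j/f_j]_{j\ne i} \to N$. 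The $(\Leftarrow)$ direction specialises to $G_j := \circledcirc(T_j)$ (derivable towards $T_j$ by Proposition \ref{TtoT}), which collapses the left-hand graph to $D[G/f_i]$, and then applies $(\to\div)$. The $(\Rightarrow)$ direction uses reversibility of $(\to\div)$ (Proposition \ref{reversibility}) to extract $\mathrm{HL}(\div) \vdash D[G/f_i] \to N$, followed by iterated cut (Theorem \ref{cut}) to splice each $G_j$ in place of the $T_j$-labelled edge.

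With the lemma, completeness is immediate. If $H \to A$ is true in every L-model, it holds in the canonical one, so $w(\times(H)) \subseteq w(A)$. Choosing $G_k := \circledcirc(lab_H(e_k)) \in w(lab_H(e_k))$ by Proposition \ref{TtoT} and substituting into the definitional expansion $w(\times(H)) = \circ[u(H)](w(lab_H(e_1)), \dots, w(lab_H(e_n)))$ yields precisely the graph $H$; hence $H \in w(A)$, and the lemma gives $\mathrm{HL}(\div) \vdash H \to A$. The converse direction is Theorem \ref{th_correct}.

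The principal obstacle is guaranteeing that cut elimination (Theorem \ref{cut}) and $(\to\div)$-reversibility (Proposition \ref{reversibility}) restrict cleanly to the $\div$-only fragment, since both are invoked when establishing the key lemma's $(\Rightarrow)$ direction. Neither causes trouble: the cut-elimination induction applied to sequents built only from $\div$-types never needs the $\times$-cases, and the proof of Proposition \ref{reversibility}(2) only uses a $(\div\to)$-application whose extra premises are primitive-type instances of Proposition \ref{TtoT} together with cut. Thus the canonical model truly is an L-model of $\mathrm{HL}(\div)$, and the L-completeness of the $\div$-fragment follows.
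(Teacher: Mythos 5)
Your proposal is correct and follows essentially the same route as the paper: the canonical L-model over $\mathcal{H}(Tp(\mathrm{HL}(\div)))$ with the derivability-set valuation, where the key fact is that $\mathrm{HL}\vdash G\to \div(N/D)$ iff $\mathrm{HL}\vdash D[G/d_0]\to N$ (reversibility of $(\to\div)$) combined with cut. The only difference is organizational — you define $w$ on primitives and prove a truth lemma $w(A)=\{G\mid\;\vdash G\to A\}$ by induction, whereas the paper defines $w$ on all types at once and checks it satisfies condition \ref{def_mrsg_2} of Definition \ref{def_model_rsg} — which amounts to the same verification.
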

Note that all the above definitions were given for HL, but they can be restricted to some its fragments, e.g. to $\mathrm{HL}(\div)$.
\begin{proof}
	Let $H\to A$ be true in all L-models. Consider the model based on the set $C=Tp(\mathrm{HL})$ of labels, and introduce a valuation $w$ such that $w(T)=\{G\in\mathcal{H}(Tp(\mathrm{HL}(\div)))|\mathrm{HL}\vdash G\to T\}$ whenever $T\in\mathrm{HL}(\div)$. Note that if $T=\div(N/D)$ and $lab_D(d_0)=\$$, then $\mathrm{HL}\vdash G\to T$ if and only if $\mathrm{HL}\vdash D[G/d_0]\to N$. This observation and the cut rule imply that $w$ satisfies condition \ref{def_mrsg_2} of Definition \ref{def_model_rsg} (and condition \ref{def_mrsg_0} clearly holds as well) so it is a correct valuation function.
	
	If $H\to A$ is true in this model, then $w(\times(H))\subseteq w(A)$; since $H$ belongs to $w(\times(H))$ ($\mathrm{HL}\vdash H\to \times(H)$) $H$ belongs to $w(A)$ as well; thus $\mathrm{HL}\vdash H\to A$.
	\qed
\end{proof}
Note that we cannot directly generalize this simple proof to HL: the problem is that $\mathrm{HL}\vdash H\to \times(M)$ does not imply that $H=M[H_1/e_1,\dots,H_m/e_m]$ where $E_M=\{e_1,\dots,e_m\}$ and $H_1,\dots,H_m$ are such graphs that $\mathrm{HL}\vdash H_i\to lab_M(e_i)$. A similar problem arises in the string case, and its solution is a difficult problem (in the string case it was solved by Pentus, see \cite{Pentus_models}). We have not studied yet whether the proof of Pentus can be lifted for graphs so completeness of HL w.r.t. L-models is still an open question.

Regarding R-models we establish the following result:
\begin{proposition}\label{prop_R_incompl}
	$\mathrm{HL}$ is not complete w.r.t. R-models.
\end{proposition}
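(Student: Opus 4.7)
The plan is to exhibit a single explicit sequent that holds in every R-model but is not derivable in $\mathrm{HL}$. Fix a primitive type $p$ with $type(p)=2$ and let $G$ be the graph with $V_G=\{v_1,v_2\}$, $E_G=\{e_1,e_2\}$, $att_G(e_1)=att_G(e_2)=v_1v_2$, $lab_G(e_i)=p$ and $ext_G=v_1v_2$ — that is, two parallel $p$-labeled edges between two external nodes. The intuition is that in an R-model built on $M^\circledast$ the injectivity of the node-assignment forces both edges to be witnessed by one and the same ordered pair $(f(v_1),f(v_2))$, so the semantics silently collapses the two edges, whereas syntactically $\mathrm{HL}$ has no means to merge two distinct edges carrying the same primitive label.

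First I would check that $G\to p$ is true in every R-model. By Definition \ref{def_model_rsg} together with Proposition \ref{prop_sem_to_res}, $w(\times(G))=\circ[u(G)](w(p),w(p))$, where $u(G)$ is the unlabeled copy of $G$ and $\circ[u(G)]$ on the right denotes the powerset extension of the partial operation on $M^\circledast$. Unpacking the definition on singletons, $\circ[u(G)](x_1,x_2)$ requires an injective $f\colon\{v_1,v_2\}\to M$ with $f(v_1)f(v_2)=x_1=x_2$, so it is defined precisely when $x_1=x_2$ and then equals this common value. Passing to the powerset closure, this gives $\circ[u(G)](A_1,A_2)=A_1\cap A_2$ for all $A_1,A_2\subseteq(M^\circledast)_2$. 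In particular $w(\times(G))=w(p)\cap w(p)=w(p)\subseteq w(p)$, so $G\to p$ is true under every R-valuation.

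Next I would argue that $\mathrm{HL}\not\vdash G\to p$. By the subformula property established earlier, every type occurring in a hypothetical derivation of $G\to p$ must be a subtype of $p$ itself, so only the primitive $p$ appears anywhere in the derivation tree. But each of the four rules $(\div\to),(\to\div),(\times\to),(\to\times)$ requires the corresponding operator $\div$ or $\times$ to occur in its conclusion, so, reading bottom-up, none of them is applicable at $G\to p$. Hence the derivation would have to be a single axiom; yet the only axiom is $\circledcirc(p)\to p$, whose antecedent consists of a single $p$-edge, whereas $G$ has two. Therefore $G\to p$ is not derivable, and the proposition follows.

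The only step that takes any real care is the semantic calculation $\circ[u(G)](A_1,A_2)=A_1\cap A_2$, which is a direct consequence of the injectivity built into $M^\circledast$; the nonderivability claim is then immediate from the subformula property.
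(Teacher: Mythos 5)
Your proof is correct and follows essentially the same strategy as the paper's: exhibit a sequent over a single primitive type that is syntactically underivable (only the axiom could apply, and the antecedent is not a handle) yet forced true in every R-model by the injectivity built into the partial operation on $M^\circledast$. The paper's witness is slightly different (two unary $p$-edges on distinct nodes, only one external, giving $\{a\in w(p)\mid\exists b\ne a,\ b\in w(p)\}\subseteq w(p)$ rather than your $w(p)\cap w(p)=w(p)$), but the mechanism and the nonderivability argument are the same.
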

\begin{proof}
	Fix some primitive type $p$ ($type(p)=1$). Consider the sequent $Y\to p$ where $Y=\langle\{v_1,v_2\},\{e_1,e_2\}, att,lab,v_1\rangle$ ($att(e_i)=v_i$, $lab(e_1)=lab(e_2)=p$):
	\begin{center}
	\begin{tikzpicture}
			\node[hyperedge] (E1) {$p$};
			\node[node,left=4mm of E1, label=left:{\scriptsize $(1)$}] (N1) {};
			\node[hyperedge,right=5mm of E1] (E2) {$p$};
			\node[node,right=4mm of E2] (N2) {};
			\node[right=2.5mm of N2] (To) {$\to p$};
			
			\draw[] (N1) -- (E1);
			\draw[] (E2) -- (N2);
	\end{tikzpicture}
	\end{center}
	Obviously, it is not derivable. Assume that there is some R-model based on a partial graph semigroup $$\langle M^\circledast,type,\{\circ[H]\}_{H\in\mathcal{H}(\{\verb*| |\})}\rangle$$ with a valuation $w$
	where $Y\to p$ is not true. But $w(\times(Y))=\circ[u(Y)](w(p),w(p))=\{a\in M|\exists b\ne a, b\in M:\; a,b\in w(p)\}\subseteq\{a\in w(p)\}=w(p)$. This leads to a contradiction.
	\qed
\end{proof}
This very simple proposition is thought-provoking: what is the reason of such incompleteness? Clearly, there is a very wide variety of sequents similar to one considered in this proposition. The question of how to overcome incompleteness w.r.t. R-models (by generalizing R-models? by adding more rules to $\mathrm{HL}$?) remains open. See more about this in Section \ref{sec_wc}.
\section{Further Modifications of HL}
Some other features of the Lambek calculus can be extended to hypergraphs; since we do not have much to say about them we will just present them below with some examples.
\subsection{Hypergraph Multiplicative-Additive Lambek Calculus}
In the string case, L can be extended by two operations $\wedge$ and $\vee$ called conjunction and disjunction respectively; this leads to the definition of the multiplicative-additive Lambek calculus (MALC). Rules designed for MALC do not actually exploit string nature so they can be easily lifted to HL. This leads to an extension of HL which we call \emph{hypergraph multiplicative-additive Lambek calculus (HMALC)}.

Types in HMALC are constructed using $\div,\times$ and also using $\wedge$ and $\vee$: if $A,B$ are types, then $A\wedge B, A\vee B$ are types as well. The following four rules for them are presented:
$$
\infer[(\wedge_i\to)]{H(e_0:A_1\wedge A_2)\to B}{H(e_0:A_i)\to B},\;i=1,2
\qquad
\infer[(\to\wedge)]{H\to A_1\wedge A_2}{H\to A_1 & H\to A_2}
$$
$$
\infer[(\vee\to)]{H(e_0:A_1\vee A_2)\to B}{H(e_0:A_1)\to B & H(e_0:A_2)\to B}
\qquad
\infer[(\to\vee_i)]{H\to A_1\vee A_2}{H\to A_i}
,\;i=1,2
$$
\begin{example}
	Consider the following types of HMALC:
	\begin{itemize}
		\item $T_1=\times\left(\mbox{
			{\tikz[baseline=.1ex]{
					\node[node,label=left:{\scriptsize $(1)$}] (N1) {};
					\node[node,right=10mm of N1,label=right:{\scriptsize $(2)$}] (N2) {};
					\draw[>=stealth,->,black] (N2) -- node[above] {$p$} (N1);
			}}
		}\right)\vee p;$
		\item $T_2=\div\left(q\middle/\mbox{
			{\tikz[baseline=.1ex]{
					\node[] (R1) {};
					\node[node,above=-2mm of R1, label=left:{\small $(1)$}] (N1) {};
					\node[node,right=10mm of N1] (N2) {};
					\node[node,right=10mm of N2, label=right:{\small $(2)$}] (N3) {};
					\draw[>=stealth,->,black] (N1) -- node[above] {$p$} (N2);
					\draw[>=stealth,->,black] (N2) -- node[above] {$\$$} (N3);
			}}
		}\right);$
		\item $T_3=\times\left(\mbox{
			{\tikz[baseline=.1ex]{
					\node[node] (N1) {};
					\node[node,right=10mm of N1] (N2) {};
					\draw[>=stealth,->,black] (N1) to[bend left=45] node[above] {$q$} (N2);
					\draw[>=stealth,->,black] (N1) to[bend right=45] node[below] {$T_2$} (N2);
			}}
		}\right).$
	\end{itemize}
	Then we can derive the sequent
	\begin{center}
	{\tikz[baseline=.1ex]{
			\node[node] (N1) {};
			\node[node,above=12mm of N1] (N2) {};
			\node[node,above right=6mm and 11mm of N1] (N3) {};
			\draw[>=stealth,->,black] (N1) -- node[left] {$T_1$} (N2);
			\draw[>=stealth,->,black] (N2) -- node[above right] {$T_2$} (N3);
			\draw[>=stealth,->,black] (N1) -- node[below right] {$T_2$} (N3);
			\node[right=1mm of N3] (To) {$\to T_3$};
	}}
	\end{center}
	as follows (draw your attention to arrow directions):
	\\
	$$
	\infer[(\vee\to)]{	
		\mbox{{\tikz[baseline=.1ex]{
					\node[node] (N1) {};
					\node[node,above=12mm of N1] (N2) {};
					\node[node,above right=6mm and 11mm of N1] (N3) {};
					\draw[>=stealth,->,black] (N1) -- node[left] {$T_1$} (N2);
					\draw[>=stealth,->,black] (N2) -- node[above right] {$T_2$} (N3);
					\draw[>=stealth,->,black] (N1) -- node[below right] {$T_2$} (N3);
					\node[right=1mm of N3] (To) {$\to T_3$};
	}}}}{
	\infer[(\div\to)]
	{\mbox{{\tikz[baseline=.1ex]{
				\node[node] (N1) {};
				\node[node,above=12mm of N1] (N2) {};
				\node[node,above right=6mm and 11mm of N1] (N3) {};
				\draw[>=stealth,->,black] (N2) -- node[left] {$p$} (N1);
				\draw[>=stealth,->,black] (N2) -- node[above right] {$T_2$} (N3);
				\draw[>=stealth,->,black] (N1) -- node[below right] {$T_2$} (N3);
				\node[right=1mm of N3] (To) {$\to T_3$};
	}}}}{\mbox{
	{\tikz[baseline=.1ex]{
	\node[node] (N1) {};
	\node[node,right=10mm of N1] (N2) {};
	\draw[>=stealth,->,black] (N1) to[bend left=45] node[above] {$T_2$} (N2);
	\draw[>=stealth,->,black] (N1) to[bend right=45] node[below] {$q$} (N2);
	}}}\to T_3
	&
	\circledcirc(p)\to p
	}
	&
	\infer[(\div\to)]
	{\mbox{{\tikz[baseline=.1ex]{
					\node[node] (N1) {};
					\node[node,above=12mm of N1] (N2) {};
					\node[node,above right=6mm and 11mm of N1] (N3) {};
					\draw[>=stealth,->,black] (N1) -- node[left] {$p$} (N2);
					\draw[>=stealth,->,black] (N2) -- node[above right] {$T_2$} (N3);
					\draw[>=stealth,->,black] (N1) -- node[below right] {$T_2$} (N3);
					\node[right=1mm of N3] (To) {$\to T_3$};
	}}}}{
		\mbox{
				{\tikz[baseline=.1ex]{
						\node[node] (N1) {};
						\node[node,right=10mm of N1] (N2) {};
						\draw[>=stealth,->,black] (N1) to[bend left=45] node[above] {$q$} (N2);
						\draw[>=stealth,->,black] (N1) to[bend right=45] node[below] {$T_2$} (N2);
			}}}\to T_3
		&
		\circledcirc(p)\to p
	}
	}
	$$
	The sequent 
	$\mbox{{\tikz[baseline=.1ex]{
				\node[node] (N1) {};
				\node[node,right=10mm of N1] (N2) {};
				\draw[>=stealth,->,black] (N1) to[bend left=45] node[above] {$q$} (N2);
				\draw[>=stealth,->,black] (N1) to[bend right=45] node[below] {$T_2$} (N2);
	}}}\to T_3$ is obviously derivable since the succedent equals the antecedent under $\times$.
	\qed
\end{example}
For HMALC we can reformulate a number of the above definitions and theorems such as embedding of MALC, the cut elimination, Lambek grammars. One difference is that now we cannot prove in the same simple way that the sequent derivability problem is NP-complete: a derivation tree can possibly be of exponential size w.r.t. a sequent one derives (due to rules $(\to\wedge)$ and $(\vee\to)$).
\subsection{Structural Rules}\label{sec_wc}
The Lambek calculus both in the string and in the graph cases lacks structural rules (except for the cut rule, which is admissible). In the string case there are extensions of L with such structural rules as weakening, contraction, permutation (the latter is presentes in Section \ref{sec_embed_lp}). It is known that MALC enriched with these three rules turns into the intuitionistic logic without negation. Therefore, it is decidedly interesting to answer a question whether these structural rules may be somehow generalized to HL (or to HMALC). Here we provide some ideas how this can be done (focusing on weakening and contraction rules).

In the string case weakening and contraction look as follows:
$$
\infer[(\mathrm{w})]{\Gamma,A,\Delta\to B}{\Gamma,\Delta\to B}\qquad
\infer[(\mathrm{c})]{\Gamma,A,\Delta\to B}{\Gamma,A,A,\Delta\to B}
$$
Weakening allows us to freely add types in an antecedent, and contraction allows us to remove a type from an antecedent if it appears twice in a row.

Our suggestion as to how these rules can be formulated in the graph case are the following.
\subsubsection{Weakening.}
Let $G\to A$ be a graph sequent, and let $G^\prime$ be such a graph that $G$ is its subgraph and $ext_G=ext_{G^\prime}$. Then
$$
\infer[(\mathrm{w_H})]{G^\prime\to A}{G\to A}
$$
Informally, $G^\prime$ is obtained from $G$ by adding some nodes and edges but without changing external nodes; if this is the case, and one derives $G\to A$, then $(\mathrm{w_H})$ allows one to derive $G^\prime\to A$ as well.
\subsubsection{Contraction.}
Let $H\to A$ be a graph sequent, and let $e_1,e_2\in E_H$ be such edges that $e_1\ne e_2$, $att_H(e_1)=att_H(e_2)$, $lab_H(e_1)=lab_H(e_2)$. Let $H^\prime$ be obtained from $H$ by removing $e_2$ (i.e. $H^\prime=\langle V_H,E_H\setminus\{e_2\},att_H|_{E_{H^\prime}},lab_H|_{E_{H^\prime}},ext_H\rangle$). Then
$$
\infer[(\mathrm{c_H})]{H^\prime\to A}{H\to A}
$$
That is, contraction allows one to remove multiple hyperedges with the same label.

If we add weakening (contraction/both) to HL, we denote this as $\mathrm{HL+w}$ ($\mathrm{HL+c}$, $\mathrm{HL+wc}$).
\begin{example}
	The sequent from Proposition \ref{prop_R_incompl} is derivable in $\mathrm{HL+w}$:
	$$
		\infer[(\mathrm{w_H})]{\mbox{{\tikz[baseline=.1ex]{
		\node[hyperedge] (E1) {$p$};
		\node[node,left=4mm of E1, label=left:{\scriptsize $(1)$}] (N1) {};
		\node[hyperedge,right=5mm of E1] (E2) {$p$};
		\node[node,right=4mm of E2] (N2) {};
		\node[right=2.5mm of N2] (To) {$\to p$};
		\draw[] (N1) -- (E1);
		\draw[] (E2) -- (N2);
		}}}}{\circledcirc(p)\to p}
	$$
	Here we just add extra $p$-labeled edge in the left-hand side.
\end{example}
\begin{example}
	The following sequent is derivable in $\mathrm{HL+c}$:
	$$
	\infer[(\mathrm{w_H})]{\circledcirc(p)\to \times\left(\mbox{{\tikz[baseline=.1ex]{
					\node[] (R) {};
					\node[hyperedge, above=-3mm of R] (E1) {$p$};
					\node[node,right=4mm of E1, label=below:{\scriptsize $(1)$}] (N1) {};
					\node[hyperedge,right=4mm of N1] (E2) {$p$};
					\draw[] (N1) -- (E1);
					\draw[] (E2) -- (N1);
	}}}\right)}{
	\infer[(\to\times)]{
	\mbox{{\tikz[baseline=.1ex]{
				\node[] (R) {};
				\node[hyperedge, above=-3mm of R] (E1) {$p$};
				\node[node,right=4mm of E1, label=below:{\scriptsize $(1)$}] (N1) {};
				\node[hyperedge,right=4mm of N1] (E2) {$p$};
				\draw[] (N1) -- (E1);
				\draw[] (E2) -- (N1);}}}
	\to \times\left(\mbox{{\tikz[baseline=.1ex]{
				\node[] (R) {};
				\node[hyperedge, above=-3mm of R] (E1) {$p$};
				\node[node,right=4mm of E1, label=below:{\scriptsize $(1)$}] (N1) {};
				\node[hyperedge,right=4mm of N1] (E2) {$p$};
				\draw[] (N1) -- (E1);
				\draw[] (E2) -- (N1);
}}}\right)}
{\circledcirc(p)\to p & \circledcirc(p)\to p}
}
	$$
	The last step is performed as follows: one of two $p$-labeled edges is removed from the left-hand side of a sequent.
\end{example}
An important remark regarding rules $(\mathrm{w_H})$ and $(\mathrm{c_H})$ is that they do not represent generalizations of rules $(\mathrm{w})$ and $(\mathrm{c})$ resp. in the sense that the former restricted to string graphs work in exactly the same way as the latter. Indeed, in the string case we can apply the rule $(\mathrm{w})$ as follows:
$$
	\infer[(\mathrm{w})]{p,p\to p}{p\to p}
$$
If we transform strings into string graphs in this one-step derivation, we obtain
$$
\infer[]{\mbox{
		{\tikz[baseline=.1ex]{
				\node[] (R1) {};
				\node[node,above=-2mm of R1, label=left:{\small $(1)$}] (N1) {};
				\node[node,right=10mm of N1] (N2) {};
				\node[node,right=10mm of N2, label=right:{\small $(2)$}] (N3) {};
				\draw[>=stealth,->,black] (N1) -- node[above] {$p$} (N2);
				\draw[>=stealth,->,black] (N2) -- node[above] {$p$} (N3);
		}}
	}\to p}{\mbox{
	{\tikz[baseline=.1ex]{
			\node[] (R1) {};
			\node[node,above=-2mm of R1, label=left:{\small $(1)$}] (N1) {};
			\node[node,right=10mm of N1, label=right:{\small $(2)$}] (N3) {};
			\draw[>=stealth,->,black] (N1) -- node[above] {$p$} (N3);
	}}
}\to p}
$$
However, this step is not an application of $(\mathrm{w_H})$: antecedents of these sequents violate the condition on external nodes. A correct application would be of the form
$$
\infer[(\mathrm{w_H})]{\mbox{
		{\tikz[baseline=.1ex]{
				\node[] (R1) {};
				\node[node,above=-2mm of R1, label=below:{\small $(1)$}] (N1) {};
				\node[node,right=10mm of N1, label=below:{\small $(2)$}] (N2) {};
				\node[node,right=10mm of N2] (N3) {};
				\draw[>=stealth,->,black] (N1) -- node[above] {$p$} (N2);
				\draw[>=stealth,->,black] (N2) -- node[above] {$p$} (N3);
		}}
	}\to p}{\mbox{
		{\tikz[baseline=.1ex]{
				\node[] (R1) {};
				\node[node,above=-2mm of R1, label=left:{\small $(1)$}] (N1) {};
				\node[node,right=10mm of N1, label=right:{\small $(2)$}] (N3) {};
				\draw[>=stealth,->,black] (N1) -- node[above] {$p$} (N3);
		}}
	}\to p}
$$
but in this derivation the antecedent of the below sequent is not a string graph. Furthermore, the rule $(\mathrm{w_H})$ is not applicable to string graphs at all.

This discrepancy between string and graph cases questions the way we introduced $(\mathrm{w_H})$ and $(\mathrm{c_H})$. To reason our definitions we provide two arguments in their favour. They are presented as propositions.
\begin{proposition}\label{prop_hlwc_correct}
	$\mathrm{HL+wc}$ is sound w.r.t. R-models.
\end{proposition}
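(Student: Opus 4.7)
The plan is to extend the inductive proof of Theorem \ref{th_correct} with cases for the two new rules. By that theorem, every instance of an HL rule preserves truth in any residuated all-graph semigroup model, and R-models are such models, so it suffices to verify soundness of $(\mathrm{w_H})$ and $(\mathrm{c_H})$ in R-models only. Both tasks reduce to showing that $w(\times(G^\prime)) \le w(\times(G))$, where $G$ is the antecedent of the premise and $G^\prime$ is that of the conclusion: once this is established, monotonicity (Definition \ref{def_po_semigroup}) combined with the induction hypothesis $w(\times(G)) \le w(A)$ yields $w(\times(G^\prime)) \le w(A)$.

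To verify such inclusions, I would unfold Definition \ref{def_model_rsg} in the R-model setting, where $\le_n$ is set inclusion and the operation on $\mathcal{P}_{type}(M^\circledast)$ is lifted from the partial semigroup of Example \ref{ex_R_gsg}. Tracing definitions through $\circ[u(G)]$, one obtains the concrete description $x \in w(\times(G))$ iff there exists an injective function $f : V_G \to M$ with $f(ext_G) = x$ and $f(att_G(e)) \in w(lab_G(e))$ for every $e \in E_G$. Hence the goal becomes: out of a given witness $f$ for $G^\prime$ construct a witness for $G$.

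For $(\mathrm{w_H})$, I would take a witness $f : V_{G^\prime} \to M$ and restrict it to $V_G \subseteq V_{G^\prime}$. Injectivity is inherited, the side condition $ext_G = ext_{G^\prime}$ ensures $f|_{V_G}(ext_G) = x$, and since $G$ is a subgraph of $G^\prime$ with matching attachments and labels, the edge constraints for $G$ form a subfamily of those already satisfied for $G^\prime$. For $(\mathrm{c_H})$, I would take the very same $f$ on $V_H = V_{H^\prime}$; the only additional constraint, imposed by the extra edge $e_2$, requires $f(att_H(e_2)) \in w(lab_H(e_2))$, but since $att_H(e_2) = att_H(e_1)$ and $lab_H(e_2) = lab_H(e_1)$ this coincides with the already-verified constraint for $e_1 \in E_{H^\prime}$.

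There is no real combinatorial obstacle here; the proof is essentially bookkeeping on top of the R-model definition. The one point worth flagging is that the side condition $ext_G = ext_{G^\prime}$ in the formulation of $(\mathrm{w_H})$ is exactly what allows the restricted witness to recover the same external tuple; if this condition were dropped, the argument above would collapse, which justifies the precise shape of the rule chosen in the paper.
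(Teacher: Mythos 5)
Your proposal is correct and follows essentially the same route as the paper: induction extending Theorem \ref{th_correct}, an explicit unfolding of $w(\times(G))$ in R-models as the set of external tuples admitting an injective witness $f$, restriction of the witness for $(\mathrm{w_H})$ (using $ext_G=ext_{G^\prime}$), and reuse of the same witness for $(\mathrm{c_H})$ since the duplicated edge imposes no new constraint. The only nitpick is that the final step needs transitivity of $\subseteq$ rather than monotonicity, but this does not affect the argument.
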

\begin{proof}
	We prove by induction on length of a derivation that if $H\to A$ is derivable in $\mathrm{HL+wc}$, then it is true in all R-models. 
	
	The axiom case is similar to that of in Theorem \ref{th_correct} as well as cases corresponding to rules of HL. It remains to consider weakening and contraction as last rule applied in a derivation of $H\to A$. 
	
	\textbf{The $(\mathrm{w_H})$ case.} Let $G$ be a subgraph of $H$ such that $ext_G=ext_H$. Let the last step be of the form
	$$
	\infer[(\mathrm{w_H})]{H\to A}{G\to A}
	$$
	Let $E_G=\{e_1,\dots,e_m\}$, $V_G=\{v_1,\dots,v_k\}$ and let $E_H=\{e_1,\dots,e_m,e_{m+1},\dots,e_n\}$, $V_H=\{v_1,\dots,v_k,v_{k+1},\dots,v_l\}$; let $ext_G=ext_H=v_1\dots v_t$, $t\le m\le n$, $k\le l$. Without loss of generality we may assume that edges of unlabeled graphs $u(G)$ and $u(H)$ corresponding to $G$ and $H$ respectively are ordered in such a way that $e_i(u(G))=e_i$ ($i=1,\dots,m$), $e_i(u(H))=e_i$ ($i=1,\dots,n$). 
	
	Consider an R-model based on a set $M$ with a valuation $w$:
	\begin{equation*}
	\begin{split}
		w(\times(H))=\circ[u(H)](w(lab(e_1)),\dots,w(lab(e_n)))=
		\{a_1\dots a_t\in M^\circledast|\\\exists \mbox{ injective }f:V_H\to M:f(att_H(e_i))\in w(lab(e_i)),i=1,\dots,n;\\f(v_j)=a_j,j=1,\dots,t\}
		\subseteq
		\{a_1\dots a_t\in M^\circledast|\\\exists \mbox{ injective }f:V_G\to M:f(att_H(e_i))\in w(lab(e_i)),i=1,\dots,m;\\f(v_j)=a_j,j=1,\dots,t\}=w(\times(G))
	\end{split}
	\end{equation*}
	This inclusion reflects the fact that $H$ has more edges and nodes than $G$: $att_G=att_H|_{E_G}$, $V_G\subseteq V_H$, $E_G\subseteq E_H$.
	By the induction hypothesis $w(\times(G))\subseteq w(A)$; therefore, $w(\times(H))\subseteq w(A)$, which completes this case.
	
	\textbf{The $(\mathrm{c_H})$ case.} Let $G$ be a graph such that there are two edges (say $e_1$ and $e_2$) with the same label $T$ and the same ordered set of attachment nodes $\alpha$. Let $H$ be obtained from $G$ by removing $e_2$. Let the last rule be of the form
	$$
	\infer[(\mathrm{w_H})]{H\to A}{G\to A}
	$$
	Without loss of generality we may assume that $e_1(u(H))=e_1$, $e_1(u(G))=e_1$, $e_2(u(G))=e_2$, and $e_i(u(H))=e_{i+1}(u(G)),i=2,\dots,n$ (where $u(G),u(H)$ are unlabeled graphs corresponding to $G,H$ resp. and $n=|E_H|$). Then in an R-model based on a set $M$ with a valuation $w$ we have
	\begin{equation*}
	\begin{split}
	w(\times(H))=\circ[u(H)](w(lab(e_1)),w(lab(e_2(H))),\dots,w(lab(e_n(H))))=\\=
	\{v\in M^\circledast|\exists \mbox{ injective }f:V_H\to M:f(att_H(e_1))=f(\alpha)\in w(T);\\f(att_H(e_i(H)))\in w(lab_H(e_i(H))),i=2,\dots,n;f(ext_H)=v\}
	=\\=
	\{v\in M^\circledast|\exists \mbox{ injective }f:V_G\to M:f(att_G(e_1))=f(\alpha)\in w(T),\\f(att_G(e_2))=f(\alpha)\in w(T);f(att_G(e_i(G)))\in w(lab_G(e_i(G))),i=3,\dots,n+1;\\f(ext_G)=v\}
	=w(\times(G))
	\end{split}
	\end{equation*}
	Note here that $V_G=V_H$ and that $ext_G=ext_H$. Therefore, $w(\times(H))=w(\times(G))$. By the induction hypothesis, $w(\times(H))\subseteq w(A)$, which finishes the proof.
	\qed
\end{proof}
The proof basically generalizes that of Proposition \ref{prop_R_incompl}. Proposition \ref{prop_hlwc_correct} shows unexpected difference with the string Lambek calculus, which is complete w.r.t. R-models. An interesting open question naturally arises: is $\mathrm{HL+wc}$ complete w.r.t. R-models?

Another simple proposition shows connection between ersatz conjunction (see Definition \ref{def_ersatz}) and conjunction of HMALC. In the string case product $A\cdot B$ and conjunction $A\wedge B$ behave differently; particularly, neither $p\wedge q\to p\cdot q$ nor $p\cdot q\to p\wedge q$ is derivable in MALC. However, if we add weakening and contraction to MALC, then both sequents become derivable, and consequently $p\wedge q$ and $p\cdot q$ become equivalent. In the graph case similar things happen:
\begin{proposition}
	In $\mathrm{HMALC}$ enriched with $(\mathrm{w_H})$ and $(\mathrm{c_H})$ $\wedge_E(T_1,\dots,T_k)$ is equivalent to $T_1\wedge\dots\wedge T_k$.
\end{proposition}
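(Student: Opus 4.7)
The plan is to derive the two sequents $\circledcirc(\wedge_E(T_1,\dots,T_k))\to T_1\wedge\dots\wedge T_k$ and $\circledcirc(T_1\wedge\dots\wedge T_k)\to \wedge_E(T_1,\dots,T_k)$ separately: the first direction will be carried out using $(\mathrm{w_H})$ together with the additive right rule $(\to\wedge)$, and the second using $(\mathrm{c_H})$ together with the additive left rules $(\wedge_i\to)$ and the ordinary right rule $(\to\times)$. Throughout, recall that by Definition~\ref{def_ersatz} we have $\wedge_E(T_1,\dots,T_k)=\times(H)$ where $H$ has node set $\{v_1,\dots,v_m\}$ (all external in the order $v_1\dots v_m$), edges $e_1,\dots,e_k$, and $att_H(e_j)=v_1\dots v_m$, $lab_H(e_j)=T_j$.

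For the first direction I would work bottom-to-top. A sequence of $k-1$ applications of $(\to\wedge)$ reduces the goal $\circledcirc(\wedge_E(T_1,\dots,T_k))\to T_1\wedge\dots\wedge T_k$ to the $k$ premises $\circledcirc(\wedge_E(T_1,\dots,T_k))\to T_i$ for $i=1,\dots,k$. One application of $(\times\to)$ rewrites each such premise into $H\to T_i$. Now the handle $\circledcirc(T_i)$ sits inside $H$ as a subgraph (identify its unique edge with $e_i$ and its nodes with $v_1,\dots,v_m$), and clearly $ext_{\circledcirc(T_i)}=v_1\dots v_m=ext_H$. So $(\mathrm{w_H})$ applies to Proposition~\ref{TtoT}'s sequent $\circledcirc(T_i)\to T_i$ and produces $H\to T_i$, closing this branch.

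For the second direction I would again work bottom-to-top. Starting from $\circledcirc(T_1\wedge\dots\wedge T_k)\to \wedge_E(T_1,\dots,T_k)$, apply $(\mathrm{c_H})$ $k-1$ times to create $k$ duplicate copies of the single antecedent edge (all share the same attachment tuple $v_1\dots v_m$ and the same label $T_1\wedge\dots\wedge T_k$). The resulting antecedent is exactly $H[\circledcirc(T_1\wedge\dots\wedge T_k)/e_1,\dots,\circledcirc(T_1\wedge\dots\wedge T_k)/e_k]$, so $(\to\times)$ applies and leaves $k$ premises of the form $\circledcirc(T_1\wedge\dots\wedge T_k)\to T_i$. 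Each of these is derivable by iterated use of $(\wedge_i\to)$ (descending along the binary parse tree of $T_1\wedge\dots\wedge T_k$ to the leaf $T_i$), which reduces the goal to $\circledcirc(T_i)\to T_i$, once more supplied by Proposition~\ref{TtoT}.

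There is no substantial obstacle: each direction consists of a single strategic step (weakening in the antecedent of a handle; contracting duplicates produced by $(\to\times)$), together with otherwise routine applications of the standard rules. The only mild technicality is that $T_1\wedge\dots\wedge T_k$ is formally a concrete binary parenthesisation, so the final reduction to the axiom requires navigating to the appropriate leaf with $(\wedge_i\to)$, but this is purely syntactic.
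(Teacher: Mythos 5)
Your proof is correct and follows essentially the same route as the paper's: $(\to\wedge)$ plus weakening of a handle for one direction, and contraction to produce $k$ parallel copies followed by $(\to\times)$/$(\times\to)$ and $(\wedge_i\to)$ for the other. The only (harmless) differences are that you make the $(\times\to)$ unfolding and the subgraph condition $ext_{\circledcirc(T_i)}=ext_H$ explicit, and you apply $(\to\times)$ before descending with $(\wedge_i\to)$ whereas the paper relabels the parallel edges first; both orderings work.
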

Brackets in $T_1\wedge\dots\wedge T_k$ can be placed in any order since $\wedge$ is assosiative: $(A\wedge B)\wedge C\sim A\wedge(B\wedge C)$).
\begin{proof}
	$\circledcirc(\wedge_E(T_1,\dots,T_k))\to T_1\wedge\dots\wedge T_k$ is derived from bottom to top as follows: we apply $(\to\wedge)$ $(k-1)$ times and obtain $k$ sequents $\circledcirc(\wedge_E(T_1,\dots,T_k))\to T_i,\;i=1,\dots,k$. Each of these sequents can be derived using $(\mathrm{w_H})$: we remove all edges except for the one with the label $T_i$.
	
	$\circledcirc(T_1\wedge\dots\wedge T_k)\to \wedge_E(T_1,\dots,T_k)$ is derived from bottom to top as follows: we apply $(\mathrm{c_H})$ $(k-1)$ times and make $k$ copies of $T_1\wedge\dots\wedge T_k$ in the antecedent; thus we obtain a sequent $$\circledcirc(\wedge_E(T_1\wedge\dots\wedge T_k,\dots,T_1\wedge\dots\wedge T_k))\to \wedge_E(T_1,\dots,T_k).$$
	It is derivable using $(\times\to)$ and rules $(\wedge_i\to)$.
	\qed
\end{proof}
Summing up, weakening and contraction introduced in our way for HL are connected to a number of notions studied in this work, hence it seems that they are defined in a right way.
\section{Conclusion}
	Our goal to present a natural extension of the Lambek calculus to graphs in a way, which is somehow dual to hyperedge replacement grammars, is reached. The hypergraph Lambek calculus we have presented in this work seems to be an appopriate formalism satisfying all our requirements. This is justified by the fact that most of notions and results that exist for L can be naturally generalized to HL (with similar proofs). Moreover, we discovered that many fragments of L can be considered as fragments of HL. Unfortunately, definitions presented in this work are more cumbersome than those of L, but it is important to understand that they work in essentially the same way.
	
	In contrast to many established connections, in some aspects HL works in a different way than L. Below we list such cases, which are considered to be of interest:
	\begin{enumerate}
		\item Hypergraph Lambek grammars generate more languages than hyperedge replacement grammars; the class of languages generated by HL-grammars includes the set of all 2-graphs without isolated nodes; the set of all bipartite graphs without isolated nodes; most important, finite intersections of languages generated by hyperedge replacement grammars. Thus the famous theorem of Pentus about equivalence of context-free grammars and Lambek grammars in the string case cannot be generalized to graphs. What matters is that despite the fact that hypergraph Lambek grammars are more powerful than HRGs they are also NP-complete; thus we increase capabilities of grammars without increasing complexity.
		\item Languages generated by HL-grammars satisfy neither the pumping lemma for graph languages formulated in \cite{Drewes} nor the Parikh theorem; number of edges in such languages can grow nonlinearly.
		\item While a fundamental result regarding soundness and completeness w.r.t. residuated semigroup models for the Lambek calculus can be directly lifted to HL along with some other results regarding specific models (like L-models), it appears that HL is not complete w.r.t. R-models generalized to graphs. On the one hand, this may be caused by an incorrect definition of R-models; on the other hand, this can be an interesting result showing difference between string and graph cases. In Section \ref{sec_wc} we moreover show that one can add some kind of weakening and contraction rules preserving soundness of the hypergraph Lambek calculus.
	\end{enumerate}
	There is still much work to do. Throughout this work we mentioned several questions that would be interesting to investigate in the future. Some of open questions are listed below:
	\begin{enumerate}
		\item Studying properties of the Lambek calculus with weights.
		\item Further investigations of power of HL-grammars (e.g. whether they can generate the language of complete graphs, the language of grids and so on) and, as far as possible, describing the class of languages generated by HL-grammars. The same with grammars based on HMALC or on $\mathrm{HL}(\div)$.
		\item Finding a fragment of HL where the membersip property is in P (has polynomial-time complexity).
		\item Is HL complete w.r.t. L-models?
		\item Is $\mathrm{HL+wc}$ complete w.r.t. R-models?
	\end{enumerate}
	From our point of view, the hypergraph Lambek calculus is a nice generalization of the Lambek calculus. Unfortunately, due to generality of graph structures, constructions and reasonings in HL are sometimes complex and cumbersome; however, most of definitions and results are based on natural and simple ideas. HL provides a fresh look at the Lambek calculus and at the graph grammars, and we hope that it will be useful in further theoretical and practical investigations.

\newpage
\appendix
\addappheadtotoc
\noindent
{\bf\LARGE Appendices}
\section{Proofs}
\subsection{Theorem \ref{embed_lambek}}\label{embed_lambek_proof}
\begin{proof}
	The first statement is proved by a straghtforward remodelling of a derivation as well as in Theorems \ref{embed_nld}, \ref{embed_lp}; here we prove it in detail while omitting the proof for the rest of the abovementioned theorems.
	
	The proof is by induction on the size of derivation of $\Gamma\to C$ in $\mathrm{L}$. If $p\to p$ is an axiom, then $tr(p\to p)=\circledcirc(p)\to p$ is an axiom of $\mathrm{HL}$. To prove the induction step, consider the last step of a derivation:
	\begin{itemize}
		\item Case $(/\to)$:
		$$
		\infer[(/\to)]{\Psi, B / A, \Pi, \Delta \to C}{\Pi \to A & \Psi, B, \Delta \to C}
		$$
		By the induction hypothesis, $tr(\Pi \to A)$ and $tr(\Psi, B, \Delta \to C)$ are derivable in $\mathrm{HL}$. Note that $SG=tr(\Psi, B, \Delta)=(tr(\Psi)tr(B)tr(\Delta))^\bullet$ is a string graph with an edge (call it $e_0$) labeled by $tr(B)$. Then we can construct the following derivation:
		$$
		\infer[(\div\to)]{SG[D/e_0][d_0:=\div(tr(B)/D)][tr(\Pi)^\bullet/d_1] \to tr(C)}{SG \to tr(C) & tr(\Pi)^\bullet \to tr(A)}
		$$
		Here $D$ is the denominator of the type $tr(B/A)$, $E_D=\{d_0,d_1\}$ and $lab(d_0)=\$$, $lab(d_1)=tr(A)$. Finally note that $SG[D/e_0][d_0:=\div(tr(B)/D)][tr(\Pi)^\bullet/d_1]=tr(\Psi, B / A, \Pi, \Delta)$.
		\item Case $(\to/)$:
		$$
		\infer[(\to/)]{\Gamma \to B / A}{\Gamma, A \to B}
		$$
		Here $C=B/A$. By the induction hypothesis, $\mathrm{HL}\vdash tr(\Gamma,A\to B)$. Denote $SG=tr(\Gamma, A)^\bullet$ and $SF=tr(\Gamma)^\bullet$ its subgraph. Then
		$$
		\infer[(\to\div)]{SF \to \div(tr(B)/SG\llbracket \$/SF \rrbracket)}{SG \to tr(B)}
		$$
		Finishing the proof, we note that $\div(tr(B)/SG\llbracket \$/SF \rrbracket)=tr(B/A)$.
		\item Cases $(\backslash\to)$ and $(\to\backslash)$ are treated similarly.
		\item Case $(\cdot\to)$
		$$
		\infer[(\cdot\to)]{\Psi, A \cdot B, \Delta \to C}{\Psi, A, B, \Delta \to C}
		$$
		is remodeled (applying the induction hypothesis) as follows:
		$$
		\infer[(\times\to)]{tr(\Psi, A, B, \Delta)^\bullet\llbracket \times(SF)/SF \rrbracket \to tr(C)}{tr(\Psi, A, B, \Delta)^\bullet \to tr(C)}
		$$
		Here $SF=tr(A,B)^\bullet$; thus, $\times(SF)=tr(A\cdot B)$.
		\item Case $(\to\cdot)$:
		$$
		\infer[(\to\cdot)]{\Psi, \Delta \to A \cdot B}{\Psi \to A & \Delta \to B}
		$$
		is converted into
		$$
		\infer[(\to\times)]{tr(\Psi, \Delta)^\bullet \to tr(A \cdot B)}{tr(\Psi \to A) & tr(\Delta \to B)}
		$$
	\end{itemize}
	The second statement is of more interest since we know nothing about $G$ at first. The proof again is by induction on length of the derivation. If $G=\circledcirc(p)$ and $C=p$, then obviously $G\to C=tr(p\to p)$.
	\\
	For the induction step consider the last step of a derivation in $\mathrm{HL}$. Below $A,B$ are some types belonging to $Tp(\mathrm{L})$.
	\begin{itemize}
		\item Case $(\div\to)$: after application of this rule a type of the form $\div(tr(A)/D)$ has to appear. Note that $D$ is either of the form $(\$tr(B))^\bullet$ or of the form $(tr(B)\$)^\bullet$ for some $B$. Let $E_D=\{d_0,d_1\}$ and let $lab(d_0)=\$,lab(d_1)=tr(B)$. Then the application of this rule is of the form
		$$
		\infer[(\div\to)]{H[D/e][d_0:=\div(tr(A)/D)][H_1/d_1]\to T}{H\to T & H_1\to tr(B)}
		$$
		By the induction hypothesis, $H\to T=tr(\Gamma,A,\Delta\to C)$ (since $lab(e)=A$) and $H_1\to tr(B)=tr(\Pi\to B)$. Therefore, depending on structure of $D$, we obtain that $H[D/e][d_0:=\div(tr(A)/D)][H_1/d_1]\to T$ equals either $tr(\Gamma,\Pi,B\backslash A, \Delta\to C)$ or $tr(\Gamma,A/B,\Pi,\Delta\to C)$, which completes this case.
		\item Case $(\to\div)$: 
		$$
		\infer[(\to\div)]{G\to \div(tr(A)/H\llbracket \$/G\rrbracket)}{H\to tr(A)}
		$$
		By the induction hypothesis, $H\to tr(A)$ corresponds to a sequent of the Lambek calculus via $tr$. Note that $H\llbracket \$/G\rrbracket$ is of one of the following forms: $(\$tr(B))^\bullet$ or $(tr(B)\$)^\bullet$. Then the only possibility for $G$ is to be a string graph $(tr(\Pi))^\bullet$. Thus, $H$ equals either $tr(\Pi,B)^\bullet$ or $tr(B,\Pi)^\bullet$, and we can model this step in the Lambek calculus by means of $(\to/)$ or $(\to\backslash)$ resp.
		\item Case $(\times\to)$: by the induction hypothesis, a premise has to be of the form $tr(\Pi\to C)$. Then a conclusion is obtained from $tr(\Pi\to C)$ by compressing a subgraph $F$ of $tr(\Pi)$ into a type of the form $tr(A\cdot B)$. This implies that $F=tr(A,B)^\bullet$ and that this step can be modeled in $\mathrm{L}$ with the rule $(\cdot\to)$.
		\item Case $(\to\times)$: by the induction hypothesis, all antecedents of premises in this rule are string graphs. Since a succedent of the conclusion has the form $tr(A\cdot B)$, there are two premises, they have antecedents $tr(\Gamma)^\bullet$ and $tr(\Delta)^\bullet$ resp. and they are substituted in $tr(A\cdot B)$. This yields that $G\to T=tr(\Gamma,\Delta\to A\cdot B)$ and that this rule corresponds to $(\to\cdot)$ as expected.
	\end{itemize}
\qed
\end{proof}
\subsection{Theorem \ref{embed_nld}}
\begin{proof}[sketch]\label{embed_nld_proof}
	The first statement is simple, and it is proved by a straightforward induction.
	
	The second statement is also proved by induction on length of a derivation in $\mathrm{HL}$, and in general it is similar to the proof of Theorem \ref{embed_lambek} (see \ref{embed_lambek_proof}). The axiom case is the same. 
	\\
	To prove the induction step, we consider the last rule applied in a derivation of $G\to T$. We note that all premises of this rule have to correspond to sequents in $\mathrm{NL\diamondsuit}$ by the induction hypothesis; then it suffices to note that this last step transforms premises in such a way that the resulting sequent ($G\to T$) also corresponds to a derivable sequent in $\mathrm{NL\diamondsuit}$, i.e. $G\to T=tr_\diamond(\Gamma\to C)$.
	
	However, one difficulty arises. Let the last rule be, for instance, $(\div\to)$ and let, e.g., $tr_\diamond(A/B)$ appear after its application. Then one of premises has to be of the form $F\to p_{br}$ where $F$ is a subgraph of $G$. Unfortunately, we cannot apply the induction hypothesis to this premises since $p_{br}$ is not a type. However, we do not need this; instead we apply the wolf lemma. Note that for each type $T$ in the set $\mathcal{T}:=tr_\diamond(Tp(\mathrm{NL\diamondsuit}))\cup\{p_{br},p_\diamond\}$ it is true that $T$ does not have skeleton subtypes and that $p_{br}$ is lonely in $T$. Thus we can apply Corollary \ref{wolf} to $F\to p_{br}$ and obtain that $F=\circledcirc(p_{br})$. This is a desired result: $p_{br}$-labeled edges can interact only with $p_{br}$-labeled edges, hence they work in a way which corresponds to rules of $\mathrm{NL\diamondsuit}$. The same reasoning works with $p_\diamond$.
	\qed
\end{proof}
\subsection{Theorem \ref{cut}}\label{cut_proof}
\begin{proof}
	We prove that if $\mathrm{HL}\vdash H\to A$ and $\mathrm{HL}\vdash G\to B$, then $\mathrm{HL}\vdash G[H/e_0]\to B$ where $e_0\in E_G$ and $lab(e_0)=A$ by induction on $|H\to A|+|G\to B|$.
	
	Case 1: $H\to A$ is an axiom $\circledcirc(p)\to p$. Then $G[H/e_0]=G$, so the replacement changes nothing. 
	
	Case 2: $G\to B$ is an axiom $\circledcirc(p)\to p$. Then $A=lab(e_0)=B$, and $G[H/e_0]\to B = H\to A$, so the conclusion coincides with one of the premises.
	
	Let us further call the distinguished type $\div(N/D)$ in rules $(\div\to)$ and $(\to\div)$, and the distinguished type $\times(M)$ in rules $(\times\to)$ and $(\to\times)$ (see Section \ref{sec_axioms_rules}) the \emph{major type of the rule}.
	
	Case 3: in $H\to A$, the type $A$ is not the major type of the last rule applied. There are two subcases depending on the type of this rule.
	
	Case 3a. $(\div\to)$:
	$$
	\infer[(\mathrm{cut})]{G[H/e_0]\to B}{
		\infer[(\div\to)]{H\to A}{
			K\to A & H_1\to T_1 & \dots & H_k\to T_k
		}
		&
		G\to B
	}
	$$
	Here $H$ is obtained from $K$ by replacements using $H_1,\dots,H_k$ as the rule $(\div\to)$ prescribes. Note that we omit some details of rule applications that are not essential here.
	\\
	This derivation is transformed as follows:
	$$
	\infer[(\div\to)]{G[H/e_0]\to B}{
		H_1\to T_1 & \dots & H_k\to T_k &
		\infer[(\mathrm{cut})]{G[K/e_0]\to B}{
			K\to A & G\to B
		}
	}
	$$
	Now we apply the induction hypothesis to the premises and obtain a $(\mathrm{cut})$-free derivation for $G[H/e_0]\to B$. Further the induction hypothesis will be applied to the premises appearing in the new derivation process as well. Sometimes the induction hypothesis will be applied several times (from top to bottom, see Cases 5 and 6); however, this will be always legal.
	
	Case 3b. $(\times\to)$: let $H=K\llbracket \times(L)/L\rrbracket$ where $L$ is a subgraph of $K$. Then
	$$
	\infer[(\mathrm{cut})]{G[H/e_0]\to B}{
		\infer[(\times\to)]{K\llbracket \times(L)/L\rrbracket\to A}{
			K\to A
		}
		&
		G\to B
	}
	\rightsquigarrow
	\infer[(\times\to)]{G[H/e_0]\to B}{
		\infer[(\mathrm{cut})]{G[K/e_0]\to A}{
			K\to A & G\to B
		}
	}
	$$
	
	Case 4. The type $A$ labeling $e_0$ is not the major type in the last rule in the derivation of $G\to B$. Then one repeats the last step of the derivation of $G\to B$ in $G[H/e_0]\to B$ considering $H$ to be an atomic structure acting as $e_0$. There are five subcases corresponding to the type of the last rule:
	\begin{enumerate}[wide, labelwidth=!, labelindent=0pt]
		\item $(\div\to)$ if one of the invloved subgraphs contains $e_0$:
		$$
		\infer[(\mathrm{cut})]{G[H/e_0]\to B}{
			H\to A
			&
			\infer[(\div\to)]{G\to B}{
				K\to B & H_1\to T_1\;\dots & H_i\to T_i &\dots\; H_k\to T_k
			}
		}
		$$
		Let $H_i$ contain an edge $e_0$; then this derivation is remodeled as follows:
		$$
		\infer[(\div\to)]{G[H/e_0]\to B}{
			K\to B & H_1\to T_1\;\dots & \infer[(\mathrm{cut})]{H_i[H/e_0]\to T_i}{H\to A & H_i\to T_i} &\dots\; H_k\to T_k
		}
		$$
		
		\item $(\div\to)$ if $e_0$ is not contained in any $H_i$ (then $e_0$ belongs to $E_K$):
		$$
		\infer[(\mathrm{cut})]{G[H/e_0]\to B}{
			H\to A
			&
			\infer[(\div\to)]{G\to B}{
				K\to B & H_1\to T_1\; & \dots & \; H_k\to T_k
			}
		}
		$$
		\begin{center}
			$\downsquigarrow$
		\end{center}
		$$
		\infer[(\div\to)]{G[H/e_0]\to B}{
			\infer[(\mathrm{cut})]{K[H/e_0]\to B}
			{H\to A & K\to B} &
			H_1\to T_1\; & \dots & \; H_k\to T_k
		}
		$$
		\item $(\times\to)$: 
		$$
		\infer[(\mathrm{cut})]{G[H/e_0]\to B}{
			H\to A
			&
			\infer[(\times\to)]{K\llbracket \times(L)/L\rrbracket\to B}{K\to B}
		}
		$$
		Here $G=K\llbracket \times(L)/L\rrbracket$ and $e_0$ is not the edge obtained after this compression.
		$$
		\infer[(\to\times)]{G[H/e_0]\to B}{
			\infer[(\mathrm{cut})]{K[H/e_0]\to B}{
				H\to A
				&
				K\to B
			}
		}
		$$
		\item $(\to\div)$:
		$$
		\infer[(\mathrm{cut})]{G[H/e_0]\to B}
		{
			H\to A
			&
			\infer[(\to\div)]{G\to \div(N/K\llbracket \$/G\rrbracket)}{K\to N}
		}
		$$
		Here $B=\div(N/K\llbracket \$/G\rrbracket)$ and $G$ is considered to be a subgraph of $K$. Then
		$$
		\infer[(\to\div)]{G[H/e_0]\to \div(N/K\llbracket \$/G\rrbracket)}
		{
			\infer[(\mathrm{cut})]{K[H/e_0]\to N}{
				H\to A
				&
				K\to N
			}
		}
		$$
		\item $(\to\times)$: 
		$$
		\infer[(\mathrm{cut})]{G[H/e_0]\to \times(M)}{
			H\to A
			&
			\infer[(\to\times)]{G\to \times(M)}{
				H_1\to T_1\;\dots & H_i\to T_i &\dots\; H_k\to T_k 
			}
		}
		$$
		Here $G$ is composed of copies of $H_1,\dots,H_k$ by means of $M$. Since $e_0\in E_G$, there is such a graph $H_i$ that $e_0\in E_{H_i}$. Then we can remodel this derivation as follows:
		$$
		\infer[(\to\times)]{G[H/e_0]\to \times(M)}{
			H_1\to T_1\;\dots & \infer[(\mathrm{cut})]{H_i[H/e_0]\to T_i}
			{H\to A & H_i\to T_i} &\dots\; H_k\to T_k
		}
		$$
	\end{enumerate}
	
	Case 5: $A=\times(M)$ is major in both $H\to A$ and $G\to B$.
	$$
	\infer[(\mathrm{cut})]{G[H/e_0]\to B}
	{
		\infer[(\to\times)]{H\to\times(M)}{H_1\to T_1 & \dots & H_k\to T_k}
		&
		\infer[(\times\to)]{K\llbracket \times(M)/M\rrbracket\to B}{K\to B}
	}
	$$
	Here $G=K\llbracket \times(M)/M\rrbracket$, and $e_0$ is the edge that appears after this compression; let also denote $E_M=\{e_1,\dots,e_k\}$ and $lab_M(e_i)=T_i$. Note that $M$ is considered to be a subgraph of $K$, so particularly $E_M\subseteq E_K$. Now we are ready to remodel this derivation as follows:
	$$
	\infer[(\mathrm{cut})]{K[H_1/e_1]\dots[H_k/e_k]\to B}
	{
		H_k\to T_k
		&
		\infer[]{K[H_1/e_1]\dots[H_{k-1}/e_{k-1}]\to B}
		{
		\infer[(\mathrm{cut})]{\dots}
		{
			H_2\to T_2
			&
			\infer[(\mathrm{cut})]{K[H_1/e_1][H_2/e_2]\to B}
			{
				\infer[(\mathrm{cut})]{K[H_1/e_1]\to B}
				{
					H_1\to T_1
					&
					K\to B
				}
			}
		}
		}
	}
	$$
	Finally, note that $K[H_1/e_1]\dots[H_k/e_k]=G[H/e_0]$. The induction hypothesis applied several times from top to bottom of this new derivation completes the proof.
	
	Case 6: $A=\div(N/D)$ is major in both $H\to A$ and $G\to B$.
	$$
	\infer[(\mathrm{cut})]{G[H/e_0]\to B}
	{
		\infer[(\to\div)]{H\to\div(N/K\llbracket \$/H\rrbracket)}{K\to N}
		&
		\infer[(\div\to)]{G\to B}{L\to B & H_1\to T_1 & \dots & H_k\to T_k}
	}
	$$
	Here $D=K\llbracket \$/H\rrbracket$. We denote edges in $E_D$ except for the one labeled by \$ as $e_1,\dots,e_k$; let $lab_D(e_i)=T_i$ (from above). Note that $e_1,\dots,e_k$ can be considered as edges of $K$ as well. Observe that $L$ has to contain an edge labeled by $N$ that participates in $(\div\to)$; denote this edge by $\widetilde{e}_0$. Then the following remodeling is done:
	$$
	\infer[(\mathrm{cut})]{L[K/\widetilde{e}_0][H_1/e_1]\dots[H_k/e_k]\to B}
	{
		H_1\to T_1
		&
		\infer[]{L[K/\widetilde{e}_0][H_1/e_1]\dots[H_{k-1}/e_{k-1}]\to B}
		{
		\infer[(\mathrm{cut})]{\dots}
		{
			H_2\to T_2
			&
			\infer[(\mathrm{cut})]{L[K/\widetilde{e}_0][H_1/e_1][H_2/e_2]\to B}
			{
				\infer[(\mathrm{cut})]{L[K/\widetilde{e}_0][H_1/e_1]\to B}
				{
					H_1\to T_1
					&
					\infer[(\mathrm{cut})]{L[K/\widetilde{e}_0]\to B}
					{
						K\to N
						&
						L\to B
					}
				}
			}
		}
		}
	}
	$$
	As a final note, we observe that $L[K/\widetilde{e}_0][H_1/e_1]\dots[H_k/e_k]=G[H/e_0]$. This completes the proof.
	\qed
\end{proof}
\section{Examples}
\subsection{Derivation in the grammar $HGr_1$ from Section \ref{sec_lan_all}}\label{sec_lan_all_example}
Consider the 2-graph
$$
H=\mbox{
	{\tikz[baseline=.1ex]{
			\node[] (R) {};
			\node[node,above=4mm of R] (N1) {};
			\node[node,below=4mm of R] (N2) {};
			\node[node,right=6.93mm of R] (N3) {};
			\node[node,right=10mm of N3] (N4) {};
			
			
			\draw[>=stealth,->,black] (N1) -- node[left] {$a$} (N2);
			\draw[>=stealth,->,black] (N1) -- node[above right] {$a$} (N3);
			\draw[>=stealth,->,black] (N2) -- node[below right] {$a$} (N3);
			\draw[>=stealth,->,black] (N4) -- node[below] {$a$} (N3);
	}}
}
$$
In order to check that $H$ belongs to $L(HGr_1)$ we relabel it by corresponding types as follows:
$$
f_H(H)=\mbox{
	{\tikz[baseline=.1ex]{
			\node[] (R) {};
			\node[node,above=4mm of R] (N1) {};
			\node[node,below=4mm of R] (N2) {};
			\node[node,right=6.93mm of R] (N3) {};
			\node[node,right=15mm of N3] (N4) {};
			
			
			\draw[>=stealth,->,black] (N1) -- node[left] {$M_{11}^{32}$} (N2);
			\draw[>=stealth,->,black] (N1) -- node[above right] {$M_{21}^{2}$} (N3);
			\draw[>=stealth,->,black] (N2) -- node[below right] {$M_{22}$} (N3);
			\draw[>=stealth,->,black] (N4) -- node[below] {$M_{12}^{1}$} (N3);
	}}
}
$$
Then we check derivability of $f_H(H)\to s$:
$$
\infer[(\times\to)]{
	\mbox{{\tikz[baseline=.1ex]{
				\node[] (R) {};
				\node[node,above=4mm of R] (N1) {};
				\node[node,below=4mm of R] (N2) {};
				\node[node,right=6.93mm of R] (N3) {};
				\node[node,right=15mm of N3] (N4) {};				
				\draw[>=stealth,->,black] (N1) -- node[left] {$M_{11}^{32}$} (N2);
				\draw[>=stealth,->,black] (N1) -- node[above right] {$M_{21}^{2}$} (N3);
				\draw[>=stealth,->,black] (N2) -- node[below right] {$M_{22}$} (N3);
				\draw[>=stealth,->,black] (N4) -- node[below] {$M_{12}^{1}$} (N3);
		}}
	}\to s}{
	\infer[(\times\to)]{
		\mbox{
			{\tikz[baseline=.1ex]{
					\node[] (R) {};
					\node[node,above=4mm of R] (N1) {};
					\node[node,below=4mm of R] (N2) {};
					\node[node,right=6.93mm of R] (N3) {};
					\node[node,right=15mm of N3] (N4) {};
					\node[hyperedge,left=4mm of N1] (E1) {$Q_3$};
					\node[hyperedge,left=4mm of N2] (E2) {$Q_2$};					
					\draw[>=stealth,->,black] (N1) -- node[above right] {$M_{21}^{2}$} (N3);
					\draw[>=stealth,->,black] (N2) -- node[below right] {$M_{22}$} (N3);
					\draw[>=stealth,->,black] (N4) -- node[below] {$M_{12}^{1}$} (N3);
					\draw[-,black] (E1) -- node[below] {\scriptsize 1} (N1);
					\draw[-,black] (E2) -- node[above] {\scriptsize 1} (N2);
			}}
		}\to s
	}{
		\infer[(\times\to)]{
			\mbox{
				{\tikz[baseline=.1ex]{
						\node[] (R) {};
						\node[node,above=4mm of R] (N1) {};
						\node[node,below=4mm of R] (N2) {};
						\node[node,right=6.93mm of R] (N3) {};
						\node[node,right=15mm of N3] (N4) {};
						\node[hyperedge,left=4mm of N1] (E1) {$Q_3$};
						\node[hyperedge,left=4mm of N2] (E2) {$Q_2$};					
						\node[hyperedge,above=4mm of N3] (E3) {$Q_2$};					
						\draw[>=stealth,->,black] (N2) -- node[below right] {$M_{22}$} (N3);
						\draw[>=stealth,->,black] (N4) -- node[below] {$M_{12}^{1}$} (N3);
						\draw[-,black] (E1) -- node[below] {\scriptsize 1} (N1);
						\draw[-,black] (E2) -- node[above] {\scriptsize 1} (N2);
						\draw[-,black] (E3) -- node[right] {\scriptsize 1} (N3);
				}}
			}\to s}{
			\infer[(\times\to)]{
				\mbox{
					{\tikz[baseline=.1ex]{
							\node[] (R) {};
							\node[node,above=4mm of R] (N1) {};
							\node[node,below=4mm of R] (N2) {};
							\node[node,right=6.93mm of R] (N3) {};
							\node[node,right=15mm of N3] (N4) {};
							\node[hyperedge,left=4mm of N1] (E1) {$Q_3$};
							\node[hyperedge,left=4mm of N2] (E2) {$Q_2$};					
							\node[hyperedge,above=4mm of N3] (E3) {$Q_2$};					
							\draw[>=stealth,->,black] (N4) -- node[below] {$M_{12}^{1}$} (N3);
							\draw[-,black] (E1) -- node[below] {\scriptsize 1} (N1);
							\draw[-,black] (E2) -- node[above] {\scriptsize 1} (N2);
							\draw[-,black] (E3) -- node[right] {\scriptsize 1} (N3);
					}}
				}\to s}{
				\infer[(\div\to)]{
					\mbox{
						{\tikz[baseline=.1ex]{
								\node[] (R) {};
								\node[hyperedge] (E1) {$Q_3$};
								\node[node,right=5mm of E1] (N1) {};
								\node[hyperedge,right=5mm of N1] (E2) {$Q_2$};					
								\node[node,right=5mm of 
								E2] (N2) {};
								\node[hyperedge,right=5mm of N2] (E3) {$Q_2$};					
								\node[node,right=5mm of E3] (N3) {};
								\node[hyperedge,right=5mm of N3] (E4) {$Q_1$};
								\node[node,right=5mm of E4] (N4) {};
								\draw[-,black] (E1) -- node[above] {\scriptsize 1} (N1);
								\draw[-,black] (E2) -- node[above] {\scriptsize 1} (N2);
								\draw[-,black] (E3) -- node[above] {\scriptsize 1} (N3);
								\draw[-,black] (E4) -- node[above] {\scriptsize 1} (N4);
						}}
					}\to s}{
					\infer[(\div\to)]{
						\mbox{
							{\tikz[baseline=.1ex]{
									\node[] (R) {};
									\node[hyperedge] (E1) {$Q_3$};
									\node[node,right=5mm of E1] (N1) {};
									\node[hyperedge,right=5mm of N1] (E2) {$Q_2$};					
									\node[node,right=5mm of 
									E2] (N2) {};
									\node[hyperedge,right=5mm of N2] (E3) {$p$};					
									\node[node,right=5mm of E3] (N3) {};
									\draw[-,black] (E1) -- node[above] {\scriptsize 1} (N1);
									\draw[-,black] (E2) -- node[above] {\scriptsize 1} (N2);
									\draw[-,black] (E3) -- node[above] {\scriptsize 1} (N3);
							}}
						}\to s}{
						\infer[(\div\to)]{
							\mbox{
								{\tikz[baseline=.1ex]{
										\node[] (R) {};
										\node[hyperedge] (E1) {$Q_3$};
										\node[node,right=5mm of E1] (N1) {};
										\node[hyperedge,right=5mm of N1] (E2) {$p$};					
										\node[node,right=5mm of 
										E2] (N2) {};
										\draw[-,black] (E1) -- node[above] {\scriptsize 1} (N1);
										\draw[-,black] (E2) -- node[above] {\scriptsize 1} (N2);
								}}
							}\to s}{\circledcirc(s)\to s & \circledcirc(p)\to p} & \circledcirc(p)\to p
					} & \circledcirc(p)\to p
				}
			}
		}
	}
}
$$

\end{document}